\documentclass[reqno,12pt]{amsart}
\usepackage{amsmath,amsfonts,amssymb,amsthm}
\allowdisplaybreaks
\usepackage{graphics}
\usepackage{epsfig}
\usepackage[usenames,dvipsnames]{color}
\usepackage{tikz}
\usepackage{mathtools}
\usepackage[colorlinks=true,linkcolor=black,menucolor=black,citecolor=black]{hyperref}
\usepackage[margin = .75in]{geometry}
\usepackage{cite}
\usepackage{enumerate}
\usepackage{soul}

\renewcommand{\d}{\ensuremath{\mathrm{d}}}
\newcommand{\R}{\ensuremath{\mathbb{R}}}

\newcommand{\N}{\ensuremath{\mathbb{N}}}

\newcommand{\eps}{\ensuremath{\varepsilon}}

\newcommand{\vertiii}[1]{{\left\lvert\kern-0.25ex\left\lvert\kern-0.25ex\left\lvert #1
    \right\rvert\kern-0.25ex\right\rvert\kern-0.25ex\right\rvert}}
\renewcommand{\d}{\ensuremath{{\rm d}}}

\xdefinecolor{lightblue}{RGB}{160,220,255}

\newtheorem{theorem}{Theorem}[section]

\newtheorem{proposition}[theorem]{Proposition}

\newtheorem{lemma}[theorem]{Lemma}

\numberwithin{equation}{section}
\makeatletter
\def\@setthanks{\def\thanks##1{\par##1}\thankses}
\makeatother

\begin{document}
%
\title[Strong solutions and stability for non-Newtonian thin fluid films]{Strong solutions and stability for a thin-film equation of shear-thinning fluids with contact line in partial wetting}
\keywords{Thin-film equation, higher-order degenerate-parabolic equation, lubrication approximation, non-Newtonian fluids, power-law fluids}
\subjclass[2020]{35K65, 35K35, 76D08, 76A05}
\thanks{%
\begin{tabular}{@{}p{0.875\textwidth}@{\hspace{1em}}c@{}}
The authors acknowledge the \emph{Lorentz Center} in Leiden for its hospitality during the workshop \emph{Nonlinear diffusion models: analytical \& numerical challenges} in January 2026, where fruitful discussions regarding this work took place. This publication is part of the project \emph{Codimension two free boundary problems} (with project number \emph{VI.Vidi.223.019} of the research program \emph{ENW Vidi}) which is financed by the \emph{Dutch Research Council} (\emph{NWO}). Both MVG and KN have received funding from this project during the preparation of this paper.
&
\raisebox{-.9\height}{\includegraphics[height=.85in]{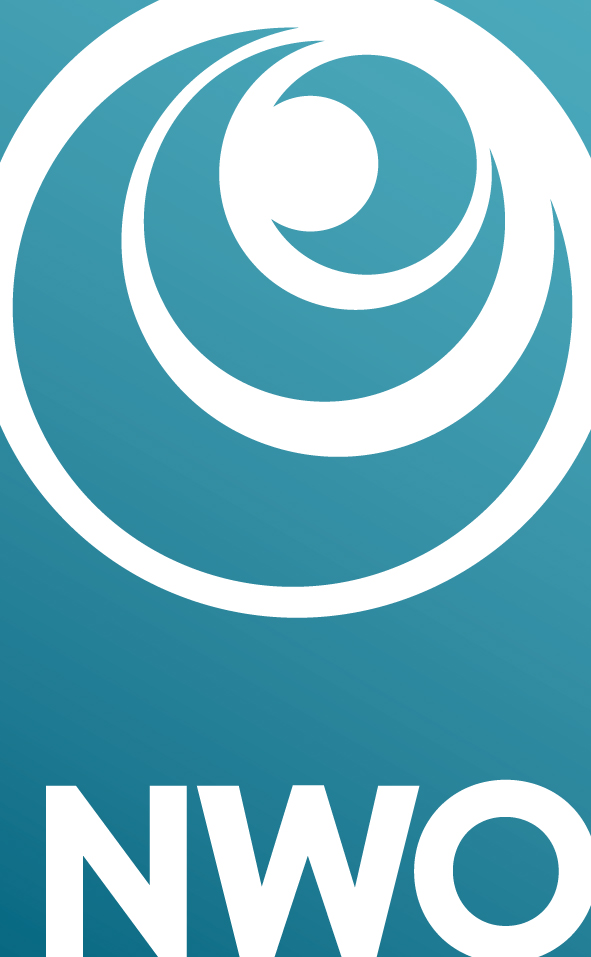}}
\end{tabular}%
}
\date{\today}
\author{Manuel~V.~Gnann}
\address[Manuel~V.~Gnann]{Delft Institute of Applied Mathematics, Faculty of Electrical Engineering, Mathematics and Computer Science, Delft University of Technology, Mekelweg 4, 2628CD Delft, Netherlands}
\email{M.V.Gnann@tudelft.nl}
\author{Christina~Lienstromberg}
\address[Christina~Lienstromberg]{Institute of Analysis, Dynamics and Modeling, University of Stuttgart, Pfaffenwaldring 57, 70569 stuttgart, Germany}
\email{christina.lienstromberg@mathematik.uni-stuttgart.de}
\author{Katerina~Nik}
\address[Katerina~Nik]{King Abdullah University of Science and Technology (KAUST), CEMSE Division, Thuwal 23955-
6900, Saudi Arabia}
\email{katerina.nik@kaust.edu.sa}
\begin{abstract}
We consider a power-law thin-film equation for strongly shear-thinning fluids. Weak solutions to this equation have been constructed more than twenty years ago by Ansini and Giacomelli. Here, we pass over to analyzing strong solutions with nonzero contact angle (partial-wetting regime), and place emphasis on studying the behavior of solutions near points where the film height vanishes (the contact-line region) by considering perturbations of a linear profile. The leading-order equation in von-Mises coordinates shows similarities with the evolution equation for the $p$-Laplace, though being of fourth order. Using a time discretization, we reduce the leading-order problem to finding a variational solution, and pass to the limit in the discretization scheme on suitably estimating higher-order nonlinear terms in conjunction with compactness arguments. This proves existence and asymptotic stability of strong solutions that are perturbations of the linear profile, and yields control on the contact-line velocity on carefully tracking singular terms in our estimates. While we believe that the transformed equation shows mathematical features the analysis of which stands on its own merit, it also physically corroborates shear thinning behavior as an alternative in resolving the no-slip paradox, as opposed to more standard approaches like introducing slip at the liquid-solid interface. 
\end{abstract}
\maketitle
\tableofcontents
\section{Introduction}
\subsection{Setting}
Consider the non-Newtonian thin-film equation
\begin{subequations}\label{free}
\begin{equation}\label{tfe}
h_t + \big(h^{\alpha+2} |h_{yyy}|^{\alpha-1} h_{yyy}\big)_y = 0 \quad \text{in } \{h > 0\},
\end{equation}
where the dependent variable $h = h(t,y)$ describes the height of an incompressible viscous thin film as a function of time $t \ge 0$ and lateral position (base point) $y \in \R$. The parameter $\alpha > 0$ is the so-called flow-behavior exponent which features the strain-dependent rheology of the fluid. More precisely, for $0 < \alpha < 1$, the corresponding fluid is shear-thickening, while it is Newtonian for $\alpha=1$ and shear-thinning for $\alpha > 1$. In this paper, we consider the strongly shear-thinning regime in which $\alpha > 2$. Equation~\eqref{tfe} can be derived through a lubrication approximation (small film heights $h$ at moderate Reynolds number) from the underlying (Navier-) Stokes system for a non-Newtonian power-law fluid with a no-slip condition at the substrate. For a formal derivation in case of the related Ellis law, see for instance \cite{WeidnerSchwartz1994}.

\bigskip

We solve \eqref{tfe} with the contact-angle condition
\begin{equation}\label{angle}
\lim_{\{h > 0\} \owns \tilde y \to y} |\partial_{\tilde y} h(t,\tilde y)| = 1 \quad \text{for } y \in \partial\{h > 0\},
\end{equation}
and the condition for the contact-line velocity
\begin{equation}\label{contact-vel}
\lim_{\{h > 0\} \owns \tilde y \to Y_0}
 h^{\alpha+1}(t,\tilde y) |\partial^3_{\tilde{y}} h(t,\tilde y)|^{\alpha -1} \partial^3_{\tilde{y}} h(t,\tilde y)=\frac{\d Y_0}{\d t}
\quad \text{for the contact lines } Y_0 \in \partial\{h > 0\},
\end{equation}
\end{subequations}
which implies no flux at the triple junction (the contact line), where liquid, air, and solid meet. A special solution to \eqref{free} is the linear profile
\begin{equation}\label{linear-profile}
h = y_+ \coloneqq \begin{cases} y & \text{ for } y > 0, \\ 0 & \text{ for } y \le 0. \end{cases}
\end{equation}
The present paper deals with perturbations of this linear profile \eqref{linear-profile}. Specifically, this enables us to precisely study the properties of \eqref{free} close to the contact line $\partial\{h > 0\}$, separating wetted from dry regions. Here, we prove that for $\alpha > 2$ strong solutions to \eqref{free} with moving free boundary exist, thus rigorously justifying shear-thinning behavior as an alternative to assuming slip at the liquid-solid interface in resolving the no-slip paradox. Indeed, for the Newtonian thin-film equation with no-slip condition at the substrate
\[
h_t + (h^3 h_{yyy})_y = 0 \quad \text{in } \{h > 0\},
\]
the contact line cannot move. See the first discussion of the no-slip paradox in \cite{HuhScriven1971} and the reviews \cite{deGennes1985,OronDavisBankoff1995,BonnEggers2009} for formal results from the physics literature. Its regularization on introducing (nonlinear) slip at the liquid-solid interface is given by
\begin{equation}\label{tfe-n}
h_t + (h^n h_{yyy})_y = 0 \quad \text{in } \{h > 0\},
\end{equation}
where $n \in [1,3)$. Then it is somewhat natural also from the mathematical perspective to ask for regularizations thereby incorporating the term $h_{yyy}$ in the flux. 
In this regard, from the physical perspective \eqref{tfe} turns out to be a natural regularization.
See \cite{WeidnerSchwartz1994,ansini_giacomelli_2002} for a discussion of the no-slip paradox in the case of Ellis' law.

\subsection{Transformation onto a fixed domain: von-Mises transform}
Next, we transform the free-boundary problem \eqref{free} onto a fixed domain. Therefore, we suppose $\{h > 0\} = (Y(t,0),\infty)$ and use the von-Mises transform (cf.~Figure~\ref{fig:mises})
\begin{equation}\label{mises}
h(t,Y(t,x)) = x.
\end{equation}
\begin{figure}[htp]
\centering
\begin{tikzpicture}[scale=1]
\path[fill=lightblue] (1.5,0) to [out=33.7,in=195] (9,3) -- (9,0) -- (1.5,0);

\draw [very thick,->] (-1.2,0) -- (10,0);
\draw [very thick,->] (0,-1.2) -- (0,5.2);

\draw[very thick,red,dashed] (1.5,0) to (9,5);
\draw[very thick,blue] (1.5,0) to [out=33.7,in=195] (9,3);

\draw [gray,dashed] (5.1,-.2) -- (5.1,2.4);
\draw [gray,dashed] (6.7,-.8) -- (6.7,2.4);
\draw [gray,dashed] (1.5,0) -- (1.5,-.8);
\draw [gray,dashed] (0,2.4) -- (6.7,2.4);

\draw [thick,blue,dashed,<->] (0,-.2) -- (5.1,-.2);
\draw [blue] (2.55,-.2) node[anchor=north] {$Y(t,x)$};

\draw [thick,red,dashed,<->] (1.5,-.8) -- (6.7,-.8);
\draw [red] (4.1,-.8) node[anchor=north] {$x$};

\draw (0,2.4) node[anchor=east] {$h(t,\color{blue}Y(t,x)\color{black}) = \color{red}x\color{black}$};

\draw (2,3) node[anchor=east] {gas};

\draw [blue] (8.5,1.5) node[anchor=east] {liquid};

\draw [thick,violet,->] (1.4,.8) -- (1.5,0);
\draw [violet] (1.4,.8) node[anchor=south] {triple junction};

\draw (0,4.9) node[anchor=east] {film height $h$};

\draw (8.7,0) node[anchor=north] {base point $x$ or $y$};

\end{tikzpicture}
\caption{Schematic plot of a liquid thin film and the von-Minses transform \eqref{mises}.}
\label{fig:mises}
\end{figure}
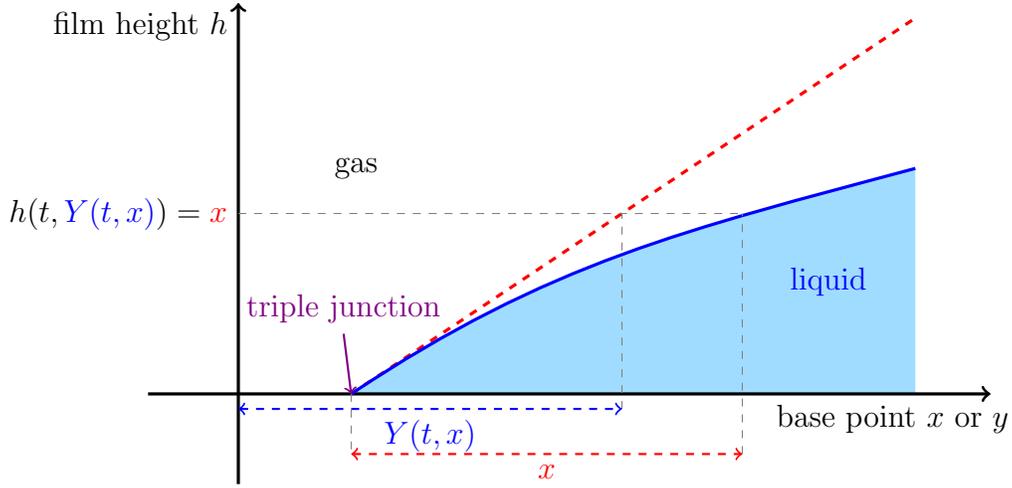
The transformation \eqref{mises} is well-defined provided $[0,\infty) \owns x \mapsto Y(t,x)$ is, for $t \ge 0$ fixed, a strictly increasing profile. We differentiate \eqref{mises} with respect to $t$ and $x$ and obtain
\begin{equation}\label{mises-der}
h_y(t,Y(t,x)) \, Y_x(t,x) = 1 \quad \text{and} \quad \partial_t h(t,Y(t,x)) + h_y(t,Y(t,x)) \, Y_t(t,x) = 0.
\end{equation}
Defining
\begin{equation}\label{f-mises}
F \coloneq Y_x^{-1}
\end{equation}
and using
\begin{equation}\label{trafo-der}
\partial_y = F \partial_x,
\end{equation}
we obtain with \eqref{mises} and \eqref{mises-der} that
\begin{equation}\label{hy-mises}
h_y(t,Y(t,x)) = F(t,x).
\end{equation}
Additionally using \eqref{tfe} in combination with \eqref{mises}, \eqref{mises-der}, \eqref{trafo-der}, and \eqref{hy-mises}, we find
\begin{equation}\label{y-t}
F \partial_t Y = F \partial_x \big( x^{\alpha+2} \, |F\partial_x(F\partial_xF)|^{\alpha-1} \, F\partial_x(F\partial_xF) \big) \quad \text{for } t,x > 0,
\end{equation}
so that with $F_t \stackrel{\eqref{f-mises}}{=} - F^2 Y_{xt}$ and noting that $\partial_x(F\partial_xF) = \frac 1 2 \partial_x^2 F^2$, we obtain
\begin{subequations}\label{problem-mises}
\begin{equation}\label{eq-mises}
F_t + 2^{-\alpha} \, F^2 \, \partial_x^2 \big( x^{\alpha+2} \, F^{\alpha} g_{\alpha}(\partial_x^2 F^2) \big) = 0 \quad \text{for } t,x > 0,
\end{equation}
where
\begin{align}
g_\alpha(v) &\coloneqq |v|^{\alpha-1} v. \label{def-galpha-intro}
\end{align}
With \eqref{angle} and \eqref{hy-mises}, the contact-angle condition translates into
\begin{equation}\label{angle-mises}
F = 1 \quad \text{at } x = 0.
\end{equation}
With \eqref{contact-vel} the condition for the contact-line velocity takes the form
\begin{equation}\label{contact-vel-mises}
\lim_{x\downarrow 0} \, x^{\alpha+1} g_\alpha(F\partial_x^2 F^2) = 2^\alpha \partial_t Y(t,0).
\end{equation}
\end{subequations}
\subsection{Leading-order equation}
Multiplying the evolution equation \eqref{eq-mises} by $F$, we have
\[
2^{\alpha-1} (F^2)_t + (F^2)^{\frac 3 2} \, \partial_x^2 \big( x^{\alpha+2} \, (F^2)^{\frac \alpha 2} g_{\alpha}(\partial_x^2 F^2) \big) = 0 \quad \text{for } t,x > 0,
\]
so that writing
\begin{equation}\label{def-u}
F^2 \eqqcolon 1 + u,
\end{equation}
and rescaling time according to
\begin{equation}\label{rescale-time}
t \longmapsto 2^{\alpha-1} t,
\end{equation}
we have
\begin{align*}
u_t + (1+u)^{\frac{3}{2}} \, \partial_x^2 \big( x^{\alpha+2} \, (1+u)^{\frac{\alpha}{2}}  g_{\alpha}(\partial_x^2 u) \big) = 0 \quad \text{for } t,x > 0.
\end{align*}
Therefore, equation~\eqref{eq-mises} attains the leading-order form
\begin{subequations}\label{problem-u}
\begin{equation}\label{eq-lead-mises}
u_t + \partial_x^2 \big(x^{\alpha+2} \, g_\alpha(\partial_x^2 u)\big) = f \quad \text{for } t,x > 0,
\end{equation}
where $f = N(u)$ contains all terms of order higher than $|u|^\alpha$:
\begin{align}
N(u) &\coloneqq (1-  (1+u)^{\frac{3}{2}}) \,  \partial_x^2 (x^{\alpha+2} \, g_{\alpha}(\partial_x^2 u))
+ (1+u)^{\frac{3}{2}}\, \partial_x^2\big( (1- (1+u)^{\frac{\alpha}{2}}) x^{\alpha+2} \, g_{\alpha}(\partial_x^2 u) \big), \label{def-nu}
\end{align}
From \eqref{angle-mises} and \eqref{def-u} we infer that the dependent variable $u$ additionally satisfies the boundary condition
\begin{equation}\label{bc-u}
u = 0 \quad \text{at } x = 0,
\end{equation}
%
and from \eqref{contact-vel-mises} we obtain that
\begin{equation}\label{bc-contact-vel}
\lim_{x\downarrow0} x^{\alpha+1} g_\alpha(\partial_x^2 u)
= 2 \partial_t Y(t,0).
\end{equation}
\end{subequations}
\subsection{Heuristics: leading-order estimates}
\label{sec:leading-order estimates}
On assuming that a sufficiently regular solution $u \colon (0,\infty)^2 \to \R$ to \eqref{problem-u} already exists, in this subsection we derive a-priori estimates for $u$.
To this end, we test \eqref{eq-lead-mises} with $u$ in space and, integrating by parts twice, we obtain
\begin{subequations}\label{est-lead-weak-max-1}
\begin{equation}\label{est-lead-weak-1}
\frac{1}{2} \frac{\d}{\d t} \int_0^\infty u^2 \, \d x + \int_0^\infty x^{\alpha+2} \, |\partial_x^2 u|^{\alpha+1} \, \d x = \int_0^\infty f \, u \, \d x.
\end{equation}
\bigskip

Next, we test \eqref{eq-lead-mises} with $\partial_x^2 (x^{\alpha+2} \, g_\alpha(\partial_x^2 u))$ and get with
\begin{align*}
\int_0^\infty u_t \, \partial_x^2 (x^{\alpha+2} \, g_\alpha(\partial_x^2 u)) \, \d x = \int_0^\infty x^{\alpha+2} \, (\partial_x^2 u_t) \, g_\alpha(\partial_x^2 u) \, \d x = \frac{1}{\alpha+1} \frac{\d}{\d t} \int_0^\infty x^{\alpha+2} \, |\partial_x^2 u|^{\alpha+1} \, \d x
\end{align*}
the identity
\[
\frac{1}{\alpha+1} \frac{\d}{\d t} \int_0^\infty x^{\alpha+2} |\partial_x^2 u|^{\alpha+1} \, \d x + \int_0^\infty \big|\partial_x^2 (x^{\alpha+2} \, g_\alpha(\partial_x^2 u))\big|^2 \, \d x = \int_0^\infty f \, \partial_x^2 (x^{\alpha+2} \, g_\alpha(\partial_x^2 u)) \, \d x,
\]
which by Young's inequality entails the estimate
\begin{equation}\label{est-lead-max-1}
\frac{2}{\alpha+1} \frac{\d}{\d t} \int_0^\infty x^{\alpha+2} |\partial_x^2 u|^{\alpha+1} \, \d x + \int_0^\infty \big|\partial_x^2 (x^{\alpha+2} \, g_\alpha(\partial_x^2 u))\big|^2 \, \d x \le \int_0^\infty f^2 \, \d x.
\end{equation}
\end{subequations}
In time-integrated form, the identity \eqref{est-lead-weak-1} entails
\begin{subequations}\label{est-lead-weak-max}
\begin{align}
\sup_{t \ge 0} \int_0^\infty u^2 \, \d x + 2 \int_0^\infty \int_0^\infty x^{\alpha+2} \, |\partial_x^2 u|^{\alpha+1} \, \d x \, \d t \le 2 \int_0^\infty u_0^2 \, \d x + 4 \int_0^\infty \int_0^\infty |f| \, |u| \, \d x \, \d t, \label{est-lead-weak-2}
\end{align}
where $u_0 \colon (0,\infty) \to \R$ is the initial data. Time integration of estimate~\eqref{est-lead-max-1} leads to
\begin{align}\nonumber
& \frac{2}{\alpha+1} \sup_{t \ge 0} \int_0^\infty x^{\alpha+2} |\partial_x^2 u|^{\alpha+1} \, \d x + \int_0^\infty \int_0^\infty \big|\partial_x^2 (x^{\alpha+2} \, g_\alpha(\partial_x^2 u))\big|^2 \, \d x \, \d t \\
& \quad \le \frac{4}{\alpha+1} \int_0^\infty x^{\alpha+2} |\partial_x^2 u_0|^{\alpha+1} \, \d x + 2 \int_0^\infty \int_0^\infty f^2 \, \d x \, \d t. \label{est-lead-max-2}
\end{align}
\end{subequations}
On noting that $\alpha > 1$, estimates~\eqref{est-lead-weak-max} suggest that $\sup_{t, x \ge 0} |u|$ is controlled by these inequalities in conjunction with an interpolation inequality. Indeed, for $p > 1$ we have, using the fundamental theorem of calculus and H\"older's inequality,
\begin{align*}
\sup_{x \ge 0} |u|^p &\le p \int_0^\infty |u|^{p-1} |\partial_x u| \, \d x = p \int_0^\infty \big(x^{- \frac{1}{\alpha+1}} |u|^{p-1}\big) \big(x^{\frac{1}{\alpha+1}} |\partial_x u|\big) \, \d x \\
&\le p \Big(\int_0^\infty x^{-\frac 1 \alpha} \, |u|^{\frac{(p-1) (\alpha+1)}{\alpha}} \, \d x\Big)^{\frac{\alpha}{\alpha+1}} \Big(\int_0^\infty x \, |\partial_x u|^{\alpha+1} \, \d x\Big)^{\frac{1}{\alpha+1}}.
\end{align*}
By Hardy's inequality, the second term on the right-hand side can be estimated as
\[
\int_0^\infty x \, |\partial_x u|^{\alpha+1} \, \d x \lesssim_\alpha \int_0^\infty x^{\alpha+2} \, |\partial_x^2 u|^{\alpha+1} \, \d x.
\]
Furthermore, for the first term we obtain by H\"older's inequality,
\begin{align*}
\int_0^\infty x^{-\frac 1 \alpha} \, |u|^{\frac{(p-1) (\alpha+1)}{\alpha}} \, \d x &= \int_0^\infty \big(x^{-\alpha} \, |u|^{\alpha+1}\big)^{\frac{1}{\alpha^2}} \, |u|^{\frac{(p-1) (\alpha+1)}{\alpha} - \frac{\alpha+1}{\alpha^2}} \, \d x \\
&\le \Big(\int_0^\infty x^{-\alpha} \, |u|^{\alpha+1} \, \d x\Big)^{\frac{1}{\alpha^2}} \Big(\int_0^\infty |u|^{\frac{p \alpha - \alpha - 1}{\alpha-1}} \, \d x\Big)^{\frac{\alpha^2-1}{\alpha^2}}.
\end{align*}
Applying Hardy's inequality twice yields
\[
\int_0^\infty x^{-\alpha} \, |u|^{\alpha+1} \, \d x \lesssim_\alpha \int_0^\infty x^{\alpha+2} \, |\partial_x^2 u|^{\alpha+1} \, \d x.
\]
Furthermore, choosing $p \coloneq \frac{3\alpha-1}{\alpha} = 3 - \frac 1 \alpha$ yields $\frac{p \alpha - \alpha - 1}{\alpha-1} = 2$, so that we obtain
\[
\sup_{t, x \ge 0} |u| \lesssim_\alpha \Big(\sup_{t \ge 0} \int_0^\infty u^2 \, \d x\Big)^{\frac{\alpha-1}{3\alpha-1}} \Big(\sup_{t \ge 0} \int_0^\infty x^{\alpha+2} \, |\partial_x^2 u|^{\alpha+1} \, \d x\Big)^{\frac{1}{3\alpha-1}}.
\]
We will generalize this estimate to a weighted version in \eqref{eq:weight xu} of Lemma~\ref{lem:weight xu} below.
Smallness of $\sup_{t, x \ge 0} |u|$ implies by \eqref{def-u} that $\sup_{t,x \ge 0} |F-1|$ is small, which by \eqref{f-mises} entails that $\sup_{t,x \ge 0} |Y_x-1|$ is small, and thus by \eqref{mises-der} that the von-Mises transform \eqref{mises} is well-defined provided $\sup_{t, x \ge 0} |u|$ is sufficiently small. Hence, we anticipate that with \eqref{est-lead-weak-max} we obtain existence of stable strong solutions $u$ of perturbations of $u = 0$, the latter corresponding to the linear profile \eqref{linear-profile} in the original variables.

\subsection{Heuristics on the kernel and singular expansion: the restriction $\alpha > 2$}
\label{sec:kernel}
We compute the solution space to
\[
\partial_x^2 \big(x^{\alpha+2} \, g_\alpha(\partial_x^2 u)\big) = 0 \quad \text{for } x > 0,
\]
where $u$ satisfies $u = 0$ at $x = 0$. Two integrations yield (with $a, b \in \R$ generic constants)
\[
x^{\alpha+2} \, g_\alpha(\partial_x^2 u) = a x + b \quad \Longleftrightarrow \quad g_\alpha(\partial_x^2 u) = a x^{-\alpha-1} + b x^{-\alpha-2}.
\]
A finite contact-line velocity according to \eqref{bc-contact-vel} implies $b = 0$, but in order for it to be non-trivial, we must allow for $a \ne 0$. Hence, if $a \ne 0$ we get with \eqref{def-galpha-intro} on re-defining $a$,
\[
\partial_x^2 u = a x^{-1-\frac 1 \alpha}.
\]
Integrating twice and using $u = 0$ at $x = 0$ yields with generic constants $a,b \in \R$,
\[
u =  a x^\beta + b x \quad \text{with } \beta \coloneqq \frac{\alpha-1}{\alpha}.
\]
Since the nonlinearity $f = N(u)$ (cf.~\eqref{def-nu}) in equation \eqref{eq-lead-mises} mixes $x^\beta$ and $x$, we then expect that general solutions to \eqref{problem-mises} will have an expansion of the form
\[
F^2 = 1 + u = 1 + \sum_{j + k \ge 1} a_{j+\beta k}(t) x^{j+\beta k} \quad \text{as } x \downarrow 0,
\]
with generic constants $a_{j+\beta k}(t)$, at least when $\beta$ is irrational (otherwise, resonances may appear, leading to logarithmic corrections in the expansion). We now use
\[
\partial_x^2 u = a x^{\beta-2} + b x^{2\beta-2} + o(x^{2\beta-2}) \quad \text{as } x \downarrow 0,
\]
with $a,b \in \R$ being generic constants again,
so that
\[
g_\alpha(\partial_x^2 u) = a x^{-\alpha-1} + b x^{\beta-\alpha-1} + o(x^{\beta-\alpha-1}) \quad \text{as } x \downarrow 0.
\]
Thus
\[
x^{\alpha+2} \, g_\alpha(\partial_x^2 u) = a x + b x^{\beta+1} + o(x^{\beta+1}) \quad \text{as } x \downarrow 0,
\]
i.e.,
\[
\partial_x^2 \big(x^{\alpha+2} \, g_\alpha(\partial_x^2 u)\big) = a x^{\beta-1} + o(x^{\beta-1}) \quad \text{as } x \downarrow 0.
\]
We thus deduce the leading-order terms for $0 < \eps \ll 1$,
\[
\int_0^\eps u^2 \, \d x \sim \int_0^\eps x^{2\beta} \, \d x < \infty, \quad \int_0^\eps x^{\alpha+2} \, |\partial_x^2 u|^{\alpha+1} \, \d x \sim \int_0^\eps x^{- \frac 1 \alpha} \, \d x < \infty \quad \text{since } \alpha > 1,
\]
and
\[
\int_0^\eps \big(\partial_x^2 (x^{\alpha+2} \, g_\alpha(\partial_x^2 u))\big)^2 \, \d x \sim \int_0^\eps x^{2\beta-2} \, \d x = \int_0^\eps x^{-\frac 2 \alpha} \, \d x < \infty,
\]
provided $\alpha > 2$. This explains why we will restrict our result to the strongly shear-thinning regime in which $\alpha > 2$. This constraint appears for instance in the proof of Lemma~\ref{lem:con-lem-sup-uxx} below.

\section{Main results and discussion}
\subsection{Main results}
Motivated by the considerations in \S\ref{sec:leading-order estimates}, we define the linear space for the initial datum as
\begin{subequations}\label{def-ualpha-space-norm}
\begin{equation}\label{def-ualpha}
U_\alpha\coloneq \big\{u \in L^2(0,\infty) \colon x^{\frac{\alpha+2}{\alpha+1}} \partial_x^2 u \in L^{\alpha+1}(0,\infty), \; u = 0 \text{ at } x = 0\big\}.
\end{equation}
The corresponding norm is given by
\begin{equation}
\|u\|_{U_\alpha} \coloneq \|u\|_{L^2(0,\infty)} + \big\|x^{\frac{\alpha+2}{\alpha+1}} \partial_x^2 u\big\|_{L^{\alpha+1}(0,\infty)}.
\end{equation}
\end{subequations}
By Lemma~\ref{lem-approx-ualpha} below the test functions $C_\mathrm{c}^\infty((0,\infty))$ are dense in $U_\alpha$ and by estimate~\eqref{eq:weight xu} of Lemma~\ref{lem:weight xu} the boundary condition $u = 0$ at $x = 0$ is well-defined. We have the following existence result:
\begin{theorem}[Existence and a-priori bounds]\label{th:ex}
Suppose $\alpha > 2$. For initial data $u_0$ such that $\|u_0\|_{U_\alpha}$ is sufficiently small, there exists a 
locally integrable $u \colon [0,\infty)^2 \to \R$ such that
the a-priori estimate
\begin{align}\nonumber
& \sup_{t \ge 0} \int_0^\infty \big(u^2 + x^{\alpha+2} |\partial_x^2 u|^{\alpha+1}\big) \, \d x + \int_0^\infty \int_0^\infty \big((\partial_t u)^2 + x^{\alpha+2} |\partial_x^2 u|^{\alpha+1} + (\partial_x^2 (x^{\alpha+2} g_\alpha(\partial_x^2 u)))^2\big) \, \d x \, \d t \nonumber\\
& \quad \le C \int_0^\infty \big(u_0^2 + x^{\alpha+2} |\partial_x^2 u_0|^{\alpha+1}\big) \, \d x \label{apriori-u}
\end{align}
is satisfied for some $C < \infty$, $u = 0$ at $x = 0$, the strong formulation
\begin{equation}\label{eq-strong}
\partial_t u + \partial_x^2\big(x^{\alpha+2} g_\alpha(\partial_x^2 u)\big) = N(u) \quad \text{almost everywhere in } (0,\infty)^2
\end{equation}
holds true, and $u(0,\cdot) = u_0$ is attained in $U_\alpha$.
This implies stability of the linear profile $u = 0$ (cf.~\eqref{linear-profile}, \eqref{mises}, \eqref{f-mises}, \eqref{def-u}). Additionally, we have H\"older continuity in time, that is, $u \in C^{\frac 1 2}([0,\infty);L^2(0,\infty))$, and continuous in $U_\alpha$, that is, $u \in C^0([0,\infty);U_\alpha)$. Furthermore, we have the a-priori estimate
\begin{align}\nonumber
& \Big(\sup_{t, x \ge 0} x^{\beta_1} |u|\Big)^{\frac{3 \alpha-1}{(\alpha-1) (\beta_1+1) + 1}} + \int_0^\infty \Big(\sup_{x \ge 0} x^{\beta_1'} |u - c \delta_1 x^{\frac{\alpha-1}{\alpha}}|\Big)^{\frac{3\alpha-1}{1-\alpha-(\alpha+1) \beta_1'}} \, \d t + \Big(\sup_{t, x \ge 0} x^{\beta_2} |\partial_x u|\Big)^{\frac{3 \alpha-1}{(\alpha-1) \beta_2 + 1}} \\
& + \int_0^\infty \Big(\sup_{x \ge 0} x^{\beta_2'} |\partial_x (u - c \delta_2 x^{\frac{\alpha-1}{\alpha}})|\Big)^{\frac{3\alpha-1}{2-(\alpha+1)\beta_2'}} \, \d t + \int_0^\infty \Big(\sup_{x \ge 0} x^{\beta_3} |\partial_x^2 (u- c \delta_3 x^{\frac{\alpha-1}{\alpha}})|\Big)^{\frac{3\alpha-1}{\alpha+3 - (\alpha+1)\beta_3}} \, \d t \nonumber \\
& \quad \le C \int_0^\infty \big(u_0^2 + x^{\alpha+2} |\partial_x^2 u_0|^{\alpha+1}\big) \, \d x, \label{embed-main}
\end{align}
for some $C < \infty$, where $c \in \R$ is such that $\partial_x w(0) = g_\alpha((1-\alpha) c/\alpha^2)$, and
\begin{align*}
\beta_1 \in \Big[\frac{1-\alpha}{\alpha+1},\frac{\alpha+2}{5\alpha+3}\Big), \quad \beta_1' \in \Big[\frac{1-2\alpha}{2\alpha},\frac{1-\alpha}{\alpha+1}\Big), \quad \beta_2 \in \Big[\frac{2}{\alpha+1},\frac{3(\alpha+2)}{5\alpha +3}\Big), \quad \beta_2' \in \Big[\frac{1}{2\alpha}, \frac{2}{\alpha+1}\Big), \\
\beta_3 \in \Big[\frac{1-2\alpha}{2\alpha},\frac{1-\alpha}{\alpha+1}\Big], \quad \delta_1 \coloneq \begin{cases} 1 & \text{if } \beta_1' < \frac{1-\alpha}{\alpha}, \\ 0 & \text{if } \beta_1' \ge \frac{1-\alpha}{\alpha}, \end{cases} \quad \delta_2 \coloneq \begin{cases} 1 & \text{if } \beta_2' < \frac 1 \alpha, \\ 0 & \text{if } \beta_2' \ge \frac 1 \alpha, \end{cases} \quad \delta_3 \coloneq \begin{cases} 1 & \text{if } \beta_3 < \frac{\alpha+1}{\alpha}, \\ 0 & \text{if }  \beta_3 \ge \frac{\alpha+1}{\alpha}. \end{cases}
\end{align*}
Lastly, we have asymptotic stability according to 
\begin{subequations}
\begin{align}\label{decay-main}
\int_0^\infty
x^{\alpha+2} |\partial_x^2 u|^{\alpha+1}
\, \d x &= o(t^{-1}), \\
\sup_{x \ge 0} x^{\beta_1} |u| &=
o(t^{\frac{2\beta_1-1}{3\alpha-1}}),
\label{decay-xbu}\\
\sup_{x \ge 0} x^{\beta_2} |\partial_x u| &= o(t^{\frac{2\beta_2-3}{3\alpha-1}}),
\label{decay-xbu'}
\end{align}
\end{subequations}
as $t \to \infty$.
\end{theorem}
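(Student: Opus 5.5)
The plan is to construct solutions by an implicit time discretization (Rothe scheme) of \eqref{eq-strong}, derive the discrete analogues of the energy estimates \eqref{est-lead-weak-max}, absorb the nonlinearity $N(u)$ using smallness, and then pass to the limit by compactness. Fix $\tau>0$ and, given $u^{k-1}\in U_\alpha$ small, seek $u^k$ solving
\[
\frac{u^k-u^{k-1}}{\tau}+\partial_x^2\big(x^{\alpha+2}g_\alpha(\partial_x^2u^k)\big)=N(u^{k-1}) .
\]
The nonlinearity is frozen explicitly, or semi-implicitly with the factors $(1+u^{k-1})^{3/2}$, $(1+u^{k-1})^{\alpha/2}$, so as to keep the monotone structure. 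This step is a variational problem. One minimizes
\[
\frac{1}{2\tau}\int (u-u^{k-1})^2\,\d x+\frac1{\alpha+1}\int x^{\alpha+2}|\partial_x^2u|^{\alpha+1}\,\d x-\int N(u^{k-1})\,u\,\d x
\]
over $U_\alpha$. It is strictly convex and coercive, so it has a unique minimizer. The density result Lemma~\ref{lem-approx-ualpha} gives the weak Euler--Lagrange equation against $C^\infty_{\mathrm c}$. The boundary condition $u=0$ at $x=0$ is inherited through \eqref{eq:weight xu} of Lemma~\ref{lem:weight xu}. Elliptic-type regularity then shows $\partial_x^2(x^{\alpha+2}g_\alpha(\partial_x^2u^k))\in L^2$, because it equals $N(u^{k-1})-(u^k-u^{k-1})/\tau$. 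This requires $N(u^{k-1})\in L^2$, which must be propagated by induction.

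Next come the discrete a-priori estimates. Testing with $u^k$ and using $u^k(u^k-u^{k-1})\ge\frac12((u^k)^2-(u^{k-1})^2)$ gives the discrete version of \eqref{est-lead-weak-2}. Testing with $\partial_x^2(x^{\alpha+2}g_\alpha(\partial_x^2u^k))$ and using convexity of $v\mapsto\frac1{\alpha+1}|v|^{\alpha+1}$ gives the discrete \eqref{est-lead-max-2}. Testing instead with $(u^k-u^{k-1})/\tau$ gives the bound on $\partial_tu$ in $L^2$. The \textbf{main obstacle} is bounding $\int N(u)^2$ and $\int N(u)u$ by (small)$\times$(dissipation). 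Expanding \eqref{def-nu}, the first term is bounded by $\|u\|_\infty^2\int(\partial_x^2(x^{\alpha+2}g_\alpha))^2$. The second term produces products such as
\[
\partial_x u\,\partial_x(x^{\alpha+2}g_\alpha(\partial_x^2u))\quad\text{and}\quad \big((\partial_xu)^2+|\partial_x^2u|\big)\,x^{\alpha+2}|\partial_x^2u|^{\alpha}.
\]
These I would control with weighted sup bounds of the type \eqref{eq:weight xu}, such as $\sup x^{\beta}|\partial_xu|$ and $\sup x^{\beta}|\partial_x^2u|$, interpolated between the $L^2$ norm, $\int x^{\alpha+2}|\partial_x^2u|^{\alpha+1}$ and the dissipation. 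This is presumably the content of Lemma~\ref{lem:con-lem-sup-uxx}, and it is where $\alpha>2$ enters, since the singular part $c x^{\beta}$ must remain square integrable after two weighted derivatives. One also needs $\int_0^\infty\|u\|_{L^2}^2\,\d t$ to control $\int\!\!\int N(u)u$ over infinite time. Here I would use the gain $|N(u)u|\lesssim|u|^2\cdot$(dissipation-type terms) together with Hardy's inequality rather than a Gronwall argument. Under a smallness bootstrap on the sup norms, everything is absorbed and yields \eqref{apriori-u} uniformly in $\tau$.

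Finally I pass to the limit $\tau\to0$. The piecewise-linear interpolants are bounded in $H^1(0,T;L^2)$ and in $L^\infty(0,T;U_\alpha)$. Aubin--Lions on compact subsets of $(0,\infty)$ then gives local strong $L^2$ convergence. To identify the limit of $g_\alpha(\partial_x^2u_\tau)$ I would use Minty's trick. Strong convergence of $\partial_x^2u_\tau$ on compacts, which follows from the weighted second-order bounds and a local $W^{2,\alpha+1}$ compactness via $\partial_x^2(x^{\alpha+2}g_\alpha)\in L^2$, then lets me pass to the limit in $N(u_\tau)$. This yields \eqref{eq-strong} almost everywhere, and the estimate \eqref{apriori-u} follows by lower semicontinuity. $C^{1/2}$ in $L^2$ comes from $\partial_tu\in L^2L^2$. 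Continuity in $U_\alpha$ follows from the energy identity, since $t\mapsto\int x^{\alpha+2}|\partial_x^2u|^{\alpha+1}$ is absolutely continuous; combined with weak continuity and uniform convexity of $L^{\alpha+1}$ this gives strong continuity. The estimates \eqref{embed-main} follow from the interpolation lemmas applied to $\sup_t$ and time-integrated quantities, after subtracting the singular profile $c x^{(\alpha-1)/\alpha}$ read off from $w=x^{\alpha+2}g_\alpha(\partial_x^2u)$ and $\partial_xw(0)$.

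For decay, the quantity $E(t)=\int x^{\alpha+2}|\partial_x^2u|^{\alpha+1}$ is integrable in time, and by \eqref{est-lead-max-1} with absorbed $N$ it is almost nonincreasing. Then $tE(t)\le 2\int_{t/2}^tE\to0$, so $E=o(t^{-1})$. Inserting this and the bounded $L^2$ norm into the weighted interpolation gives \eqref{decay-xbu} and \eqref{decay-xbu'}.
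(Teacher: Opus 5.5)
Your architecture matches the paper's. It has the Rothe scheme with $N(u_{j-1})$ frozen, minimization of the convex functional on $U_\alpha$, testing with $u_j$ and with $\partial_x^2(x^{\alpha+2}g_\alpha(\partial_x^2u_j))$, and absorption by smallness. It also uses Aubin--Lions with Minty, a convex chain rule with uniform convexity for continuity in $U_\alpha$, the interpolation lemmas, and monotonicity plus time integrability for the decay.

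\textbf{The gap.} The step you yourself flag as the main obstacle is not closed, and the mechanism you name would not close it. Write $w=x^{\alpha+2}g_\alpha(\partial_x^2v)$. The top-order products $\int(\partial_xv)^2(\partial_xw)^2$ and $\int(\partial_xv)^4w^2$ in $N(v)$ do not close by weighted sup bounds and H\"older with the available interpolation estimates:
\begin{itemize}
\item as $x\downarrow0$, $\partial_xw\to a=\partial_xw(0)$, which is in general nonzero;
\item for large $x$, Lemma~\ref{lem-sup-wx} only gives $|\partial_xw-a|\lesssim x^{1/2}C^{1/2}$;
\item pairing with $\int x^{-2}(\partial_xw-a)^2\,\d x\lesssim C$ would need $\sup x|\partial_xv|$, but Lemma~\ref{lem:weight xu'} stops at weights $x^\beta$ with $\beta<\frac{3(\alpha+2)}{5\alpha+3}<1$.
\end{itemize}

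The missing idea is integration by parts that trades a derivative on $\partial_xv$ for an undifferentiated factor $v$, which is controlled by $\sup|v|$. For example,
$\int(\partial_xv)^2(\partial_xw)^2\,\d x=-2\int v\,\partial_xv\,\partial_xw\,\partial_x^2w\,\d x-\int v\,\partial_x^2v\,(\partial_xw)^2\,\d x$.
The first term is absorbed after Young. The second, after one more integration by parts and further absorption, reduces to $\int(x^{\alpha+2}|\partial_x^2v|^{\alpha+1})^2\,\d x=\int(\partial_x^2v)^2w^2\,\d x$. This $L^2$ quantity, not a weighted sup, is what Lemma~\ref{lem:con-lem-sup-uxx} controls. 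Likewise, $\int(\partial_xv)^4w^2$ is integrated by parts into seven terms, each carrying $v^2$ or $v^3$.

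The remaining pieces are as follows. Boundary terms at $x=0$ are $O(x^{(\alpha-2)/\alpha})$ or better by Lemma~\ref{lem-valpha}, which is another place $\alpha>2$ enters. Inhomogeneous terms in $a$ and $c$ are removed by scaling in $x$. Everything is then collapsed to $(\sup|v|)^2\int(\partial_x^2w)^2$ using Lemma~\ref{lem:B2-AC}.

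The same issue affects $\int N(u)u$. You neither have nor need $\int_0^\infty\|u\|_{L^2}^2\,\d t$, and pointwise bounds do not help. For instance, $\int u^2|\partial_x^2w|\,\d x$ only gives $\sup|u|\,\|u\|_{L^2}\|\partial_x^2w\|_{L^2}$, which is not integrable in time. The paper instead integrates by parts twice:
$\int N(v)u\,\d x=\int w\,\partial_x^2\bigl(u(1-(1+v)^{3/2})\bigr)\,\d x+\int(1-(1+v)^{\alpha/2})\,w\,\partial_x^2\bigl(u(1+v)^{3/2}\bigr)\,\d x$.
This leads to Proposition~\ref{prop:N(v)u}: each term has $u$-exponents summing to one and $v$-exponents summing to at least $\alpha+1$. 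After Young, each is therefore a small factor times the dissipation $\int x^{\alpha+2}|\partial_x^2\cdot|^{\alpha+1}$ of the first energy estimate.

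\textbf{Smaller points.}
\begin{itemize}
\item \emph{Initial data.} With $N(u_{j-1})$ frozen explicitly, the base case $N(u_0)\in L^2$ of your induction fails for general $u_0\in U_\alpha$. The discrete estimate also carries the extra term $h\int(\partial_x^2(x^{\alpha+2}g_\alpha(\partial_x^2u_0)))^2\,\d x$. The paper first takes $u_0\in C^\infty_{\mathrm c}((0,\infty))$, then removes this restriction by density (Lemma~\ref{lem-approx-ualpha}) and a second compactness step.
\item \emph{Passing to the limit in $N(u_h)$.} You use local strong convergence of $\partial_x^2u_h$; the paper instead proves weak $L^2$ convergence of $x^{\alpha+2}|\partial_x^2u_h|^{\alpha+1}$ via an integration-by-parts identity, plus $C^1$ convergence of $u_h$. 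Your route is viable and makes Minty redundant, but it needs an argument. On compacts of $(0,\infty)$, $\partial_x^2u_h=g_\alpha^{-1}(x^{-\alpha-2}w_h)$ is bounded in $L^{2\alpha}_tC^{0,1/\alpha}_x$ by Lemma~\ref{lem-convex} and the local $H^2$ bound on $w_h$. Interpolating this against strong $L^2$ convergence gives local strong convergence in $L^1_tC^2_x$.
\item \emph{Continuity in $U_\alpha$.} Absolute continuity of $t\mapsto\int x^{\alpha+2}|\partial_x^2u|^{\alpha+1}\,\d x$ needs a rigorous chain rule for convex functionals (the paper uses Brezis' lemma), not just the formal energy identity.
\end{itemize}
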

Notice that in view of the rescaling of time according to \eqref{rescale-time}, the vertically-averaged horizontal velocity of the fluid film is given by
\[
V \coloneq 2^{\alpha-1} \partial_t Y \stackrel{\eqref{y-t}}{=} 2^{\alpha-1} h^{\alpha+1} |h_{yyy}|^{\alpha-1} h_{yyy},
\]
which is in line with viewing \eqref{tfe} as a continuity equation $\partial_t h + \partial_y (h V) = 0$ in $\{h > 0\}$. Using \eqref{mises}, \eqref{trafo-der}, \eqref{hy-mises}, and \eqref{def-u} leads to
\begin{equation}\label{velocity}
V = \frac 1 2 x^{\alpha+1} (F^2)^{\frac\alpha 2} g_\alpha(\partial_x^2 F^2) = \frac 1 2 x^{\alpha+1} (1+u)^{\frac\alpha 2} g_\alpha(\partial_x^2 u).
\end{equation}
We have the following a-priori bound for the velocity $V$:
\begin{theorem}[A-priori estimate for the velocity]\label{th-velocity}
In the situation of Theorem~\ref{th:ex}, there exists a constant $C < \infty$ such that
\begin{equation}\label{a-priori-contact-vel}
\|V\|_{L^2(0,\infty;BC^0([0,\infty)))}^2 = \int_0^\infty \sup_{x \ge 0} |V(t,x)|^2 \, \d t \le C \int_0^\infty \big(u_0^2 + x^{\alpha+2} |\partial_x^2 u_0|^{\alpha+1}\big) \, \d x.
\end{equation}
\end{theorem}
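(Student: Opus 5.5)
We plan to bound $V$ pointwise by the dissipation quantity $w \coloneq x^{\alpha+2} g_\alpha(\partial_x^2 u)$, which is controlled in \eqref{apriori-u}. By \eqref{velocity} we have $|V| = \frac12 x^{\alpha+1}|1+u|^{\alpha/2}|g_\alpha(\partial_x^2u)| = \frac12 |1+u|^{\alpha/2}\,|w|/x$. Since \eqref{embed-main} with $\beta_1 = 0$ (admissible, as $0 \in [\frac{1-\alpha}{\alpha+1},\frac{\alpha+2}{5\alpha+3})$) shows that $\sup_{t,x}|u|$ is small for small data, the factor $|1+u|^{\alpha/2}$ is bounded by a constant. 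It therefore suffices to prove
\[
\int_0^\infty \sup_{x>0} \frac{|w(t,x)|^2}{x^2}\,\d t \lesssim \int_0^\infty\int_0^\infty (\partial_x^2 w)^2 \,\d x\,\d t + \int_0^\infty\int_0^\infty x^{\alpha+2}|\partial_x^2u|^{\alpha+1}\,\d x\,\d t .
\]
Both terms on the right-hand side are bounded by \eqref{apriori-u}.

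For fixed $t$, I first identify the boundary behaviour of $w$ at $x=0$. Since $\partial_x^2 w \in L^2$, the derivative $\partial_x w$ is continuous up to $0$ and $w$ is $C^1$ there. I claim $w(0)=0$. This follows because $\int_0^\infty x^{-\alpha-1}\cdot x^{-?}$-type finiteness holds: $\int x^{\alpha+2}|\partial_x^2u|^{\alpha+1}\,\d x = \int x^{-\frac{(\alpha+2)}{\alpha}}|w|^{\frac{\alpha+1}{\alpha}}\,\d x<\infty$ forces $w(0)=0$, since the weight $x^{-(\alpha+2)/\alpha}$ is not integrable at $0$. This is consistent with the kernel heuristics of \S\ref{sec:kernel}, where $b=0$. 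Then $w(x)/x = \frac1x\int_0^x \partial_x w$, so $\sup_{x\le 1}|w/x| \le \sup_{x\le1}|\partial_x w|$.

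Next I control $\sup|\partial_x w|$. For $x\le R$ I write $\partial_x w(x) = \partial_x w(y) - \int_x^y \partial_x^2 w$ and average over $y\in[R,2R]$. This gives $|\partial_x w(x)| \lesssim R^{1/2}\|\partial_x^2w\|_{L^2} + R^{-1}\fint_R^{2R}|\partial_x w|$-type terms. I handle the averaged term by integrating by parts once more, which yields $R^{-2}\fint_{R}^{3R}|w|$. Combined with $|w|/x$ at $x\ge R$, I bound everything by the weighted $L^{(\alpha+1)/\alpha}$ norm $\int x^{-(\alpha+2)/\alpha}|w|^{(\alpha+1)/\alpha}\,\d x = D_1(t)$ via H\"older on dyadic intervals. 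Writing $D_2(t)=\|\partial_x^2 w\|_{L^2}^2$, scaling gives an interpolation inequality of the form
\[
\sup_x |w/x|^2 \lesssim D_2^{\theta} D_1^{\gamma}
\]
with exponents fixed by dimensional analysis. Here $w/x$ scales like $x^{-1}$ times $w$, $D_2$ like $w^2x^{-3}$, and $D_1$ like $w^{(\alpha+1)/\alpha}x^{-2/\alpha}$. Matching powers of $w$ and of $x$ gives $2=2\theta+\gamma\frac{\alpha+1}{\alpha}$ and $-2 = -3\theta - \frac{2}{\alpha}\gamma$. Solving, $\gamma(\frac{3(\alpha+1)}{2\alpha} - \frac{2}{\alpha}) = 1$, so $\gamma = \frac{2\alpha}{3\alpha-1}$ and $\theta = 1-\frac{\alpha+1}{3\alpha-1}=\frac{2\alpha-2}{3\alpha-1}$. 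Note $\theta+\gamma = \frac{4\alpha-2}{3\alpha-1}>1$, so the two exponents are not conjugate.

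Time integration is the main obstacle. Because $\theta+\gamma\neq 1$, Young's inequality alone does not give $D_1+D_2$. I instead use the sup-bound $\sup_t \int u^2 + x^{\alpha+2}|\partial_x^2 u|^{\alpha+1} \lesssim \epsilon_0$ from \eqref{apriori-u}, where $\epsilon_0$ is the data size. Since $D_1(t)$ equals this energy, it is pointwise bounded in $t$. I write $D_1^\gamma = D_1^{1-\theta}D_1^{\gamma-1+\theta}$, bound the excess factor by $(\sup_t D_1)^{\gamma+\theta-1}$, and then apply Young to get $D_2^\theta D_1^{1-\theta}\le D_1+D_2$. Integrating in $t$ and invoking \eqref{apriori-u} gives the right-hand side times $(C\int u_0^2+\dots)^{\gamma+\theta-1}$, which is at most a constant for small data. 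This is enough for \eqref{a-priori-contact-vel}.

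If the dyadic H\"older step in the interpolation fails near $x=0$, I would fall back on the identity $w/x = \int_0^1 \partial_x w(sx)\,\d s$ together with the Hardy-type bound on $\partial_x w$ from \eqref{embed-main}, using $\beta_3$ with $\delta_3$.
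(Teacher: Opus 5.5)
Your overall architecture is the paper's. You bound $|V|\lesssim |w|/x$, prove for a.e.\ $t$
\[
\sup_{x>0}\frac{|w|^2}{x^2}\lesssim D_1^{\frac{2\alpha}{3\alpha-1}}D_2^{\frac{2(\alpha-1)}{3\alpha-1}},
\]
and integrate in time by extracting $(\sup_t D_1)^{\frac{\alpha-1}{3\alpha-1}}$ and applying Young. Your exponents and the time-integration step are correct. Since $\sup_x |w|/x=\big(\sup_x x^{\frac{\alpha+1}{\alpha}}|\partial_x^2u|\big)^{\alpha}$, this inequality is exactly what the paper reads off from \eqref{sup-uxx-alt} with $\beta=\frac{\alpha+1}{\alpha}$, $\delta=0$, together with \eqref{est-coeff-c}.

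The gap is in your own derivation of that inequality, at large $x$. Your small-$x$ step is fine, up to misplaced powers of $R$. It gives $\sup_{x\le R}|\partial_x w|\lesssim R^{\frac12}D_2^{\frac12}+R^{-\frac{\alpha-1}{\alpha+1}}D_1^{\frac{\alpha}{\alpha+1}}$, and optimizing in $R$ produces the right product, but only on $[0,R_*]$ for the optimal $R_*$.

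For $x>R_*$, H\"older on dyadic intervals controls only averages of $w$ by $D_1$. Turning averages into point values needs a derivative bound, and the one available from $D_2$ is $|\partial_x w(x)|\le |a|+x^{\frac12}\sup_y y^{-\frac12}|\partial_x w-a|\lesssim |a|+x^{\frac12}D_2^{\frac12}$ (cf.~\eqref{sup-wx}). On an interval of length $\sim x$ this contributes $x^{\frac12}D_2^{\frac12}$ to $|w(x)|/x$, which is unbounded. Scaling cannot repair this, since it only fixes exponents once a bound valid for all $x$ is in hand.

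Tellingly, your argument never uses $\alpha>2$, yet the inequality is false for $1<\alpha<\frac32$. Take disjoint bumps at $x_k=2^k$ of height $x_k^{1+\epsilon}$ and width $x_k^{\lambda}$ with $\frac{2+2\epsilon}{3}<\lambda<\frac{1-\epsilon(\alpha+1)}{\alpha}$. They give $w(0)=0$ and $D_1,D_2<\infty$, but $\sup_x|w|/x=\infty$.

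What is missing is a local interpolation at an optimized sub-dyadic length. Averaging over $[x,x+\ell]$ with $\ell\le x$ gives
\[
|w(x)|\lesssim\big(\ell^{-1}x^{\frac{\alpha+2}{\alpha}}D_1\big)^{\frac{\alpha}{\alpha+1}}+\ell\big(|a|+x^{\frac12}D_2^{\frac12}\big).
\]
Optimizing in $\ell$ yields $|w(x)|\lesssim x^{\frac{3\alpha+4}{2(2\alpha+1)}}D_1^{\frac{\alpha}{2\alpha+1}}D_2^{\frac{\alpha}{2(2\alpha+1)}}$, plus a term in $|a|$ that is harmless by \eqref{est-coeff-a}; this is the endpoint case of \eqref{sup-w-alt}. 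The resulting bound on $|w(x)|/x$ stays bounded (indeed nonincreasing) on $[R_*,\infty)$ precisely when $\frac{3\alpha+4}{2(2\alpha+1)}\le 1$, i.e.\ $\alpha\ge 2$. Evaluating it at $x=R_*$ returns your product bound.

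This is exactly where the hypothesis $\alpha>2$ enters. In the paper it appears as the requirement that $\beta=\frac{\alpha+1}{\alpha}$ lie in the admissible range of Lemma~\ref{lem-sup-uxx}.

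Your fallback via $w/x=\int_0^1\partial_x w(sx)\,\d s$ does not address this. It again reduces to $\sup_{[0,x]}|\partial_x w|$, which may grow like $x^{\frac12}$. Once Lemma~\ref{lem-sup-uxx}, or the local argument above, is inserted for a.e.\ $t$, the rest of your proof goes through.
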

\subsection{Discussion}
Theorem~\ref{th-velocity} implies in particular that the contact-line velocity
\[
V_{|x = 0} = (\partial_t Y)(\cdot,0) \stackrel{\eqref{bc-contact-vel}}{=} \frac 1 2 \lim_{x\downarrow0} x^{\alpha+1} g_\alpha(\partial_x^2 u)
\]
is controlled in $L^2(0,\infty)$. Specifically, we note that a nonzero contact-line velocity requires that $\lim_{x \downarrow 0} x^{\frac{\alpha+1}{\alpha}} (\partial_x^2 u)(t,x) \ne 0$, which in view of \eqref{def-ualpha-space-norm} is an admissible choice for the initial data $u_0$. We have thus indeed found a solution concept, for which nonzero contact-line velocities are admissible, occur, and are controlled, and have thus rigorously resolved the no-slip paradox for the free-boundary problem \eqref{free}. 

\bigskip

This sets our work apart from previous results like \cite{AnsiniGiacomelli2004} by Ansini and Giacomelli, in which for the first time weak solutions and their qualitative properties are derived for the more general class of PDEs
\[
h_t + (h^n |h_{yyy}|^{\alpha-1} h_{yyy})_y = 0 \quad \text{in } \{h > 0\},
\]
where $n \ge 0$ and $\alpha \ge 1$ (which reduces to the no-slip case \eqref{tfe} on setting $n = \alpha+2$). There, it is proved that the speed of propagation is finite (cf.~\cite[Theorem~2]{AnsiniGiacomelli2004} with a propagation rate matching the self-similar scaling), and that $h^n |h_{yyy}|^{\alpha-1} h_{yyy} \in L^1(\{h > 0\})$ for the flux, which is insufficient for obtaining control on the contact-line velocity. The previous works \cite{King2001,King_2001} by King deal with formal asymptotics at the contact line and the analysis of special solutions (both in the shear-thickening and the shear-thinning case), so that our analysis can be viewed as a rigorous treatment of the contact-line asymptotics in \cite{King2001,King_2001}. In \cite{ansini_giacomelli_2002}, Ansini and Giacomelli use formal asymptotics near the contact line to show that the related Ellis thin-film equation
\begin{equation*}
    h_t + \bigl(h^3 (1 + |hh_{yyy}|^{\alpha-1}) h_{yyy})_y = 0 \quad \text{in } \{h > 0\},
\end{equation*}
admits advancing traveling-wave solutions, whereby they suggest shear-thinning behavior as a possible remedy for the no-slip paradox. Moreover, they derive a scaling law in time for macroscopic quantities by analyzing a class of quasi-self-similar solutions to the Ellis thin-film equation in the limit of a Newtonian rheology. Existence and uniqueness of solutions to the Ellis thin-film equation emerging from positive initial values is provided in \cite{LienstrombergMueller2020}. An analysis of the long-time behavior of such solutions is provided in \cite{jansen_lienstromberg_nik2023} for both the power-law thin-film equation and the Ellis thin-film equation.

\bigskip

We mention that a corresponding theory of classical solutions (existence and uniqueness of strong solutions) has been developed for the Newtonian thin-film equation \eqref{tfe-n} starting with \cite{GiacomelliKnuepferOtto2008,BringmannGiacomelliKnuepferOtto2016,GnannIbrahimMasmoudi2019} treating \eqref{tfe-n} with linear mobility ($n = 1$) and zero contact angle in the half space, which was further upgraded to compactly supported solutions and higher dimensions in \cite{Gnann2015,John2015,Seis2018}, and to nonlinear mobilities in \cite{GiacomelliGnannKnuepferOtto2014,Gnann2016,GnannPetrache2018,GnannWisse2025}. The case of nonzero contact angles was covered in \cite{Knuepfer2011,Knuepfer2015}. Our hope is that the analysis at hand may serve as a natural first step towards a corresponding analytic theory in the non-Newtonian case.

\bigskip

In \cite{Knuepfer_Velazquez2025}, the authors study the no-slip thin-film equation as the asymptotic limit of models with different regularized slip conditions at the liquid-solid interface. Their results suggest that the Newtonian thin-film equation with no-slip boundary condition admits a large class of physically admissible solutions but that sufficient regularity of solutions implies that the contact line cannot move. This is in line with our findings for the free-boundary problem to the non-Newtonian thin-film equation \eqref{free} as stated in Theorem~\ref{th-velocity} and the heuristic discussion in \S\ref{sec:kernel}, where a singularity drives the contact line. The afore-mentioned well-posedness theory to the Newtonian thin-film equation \eqref{tfe-n} is, unless $n = 1$, in line with this finding, too.

\subsection{Outlook}
In future work, we would be very interested in lifting the analysis at hand to also address uniqueness of solutions. Here, our leading-order estimates need to be more refined, and perturbations around the solution $u$ including additional regularity need to be addressed in order to obtain a contraction. Notably, in case of Ellis' law, local well-posedness of the corresponding thin-film model without contact line has been proved in \cite{LienstrombergMueller2020}, and we expect this case to be more feasible compared to the situation at hand when it comes to proving uniqueness.

\bigskip

A future research direction concerns the limit $\alpha \downarrow 1$, or which, as pointed out in \S\ref{sec:kernel}, the singular expansion at the contact line comes into play in a more pronounced way, and a more careful subtraction of these terms is necessary even for the leading-order estimates.

\bigskip

Another direction of research could be the rigorous analysis of droplet rupture. A rigorous result in this regard for the thin-film equation coming from Darcy dynamics in the Hele-Shaw cell (the PDE \eqref{tfe-n} with $n = 1$) is due to Constantin, Elgindi, Nguyen, and Vicol in \cite{ConstantinElgindiNguyenVicol2018}, in which finite- or infinite-time pinch off in a confined geometry is proved. It would be desirable to extend these results to the non-Newtonian setting or nonlinear mobilities. The analysis at hand may then serve as a first step allowing for an extension of the solution beyond pinch off.

\subsection{Conventions}
We write $A \lesssim_S B$, whenever a constant $C < \infty$ only depending on the set of parameters $S$ exists such that $A \le C B$. We write $\N = \{1,2,3,\ldots\}$ for the integers larger than $0$.

\subsection{Outline}
The rest of the paper is structured as follows: In \S\ref{sec:approx-embed} we derive approximation estimates and weighted interpolation estimates that are essential for treating higher-order nonlinear terms. Specifically, in \S\ref{sec:ualpha} we characterize the initial-data or trace space $U_\alpha$, for which we derive suitable embeddings. Notably, here an approximation with test functions (cf.~Lemma~\ref{lem-approx-ualpha} below) is possible, so that the boundary asymptotics as in \S\ref{sec:kernel} do not come into play. This is different when additionally assuming finiteness of the integral $\int_0^\infty (\partial_x^2 (x^{\alpha+2} g_\alpha(\partial_x^2 v)))^2 \, \d x$ for $v \in U_\alpha$. We write $V_\alpha$ for the corresponding linear space and derive suitable embeddings and interpolation estimates in \S\ref{sec:valpha}. These are then utilized in \S\ref{sec:higher-est} to bound the higher-order nonlinear terms $N(u)$ from \eqref{def-nu}. The paper is concluded with a time-stepping procedure in \S\ref{sec:time-discretization}, in which the higher-order nonlinear estimates are employed and passage to the time-continuum version \eqref{problem-u} is achieved with a compactness argument. As a result, we obtain Theorems~\ref{th:ex} and \ref{th-velocity}.

\section{Approximation and weighted interpolation estimates}\label{sec:approx-embed}
%
\subsection{Approximation and interpolation estimates in $U_\alpha$}\label{sec:ualpha}
We will repeatedly use the following Hardy-type inequalities, obtained from the Hardy inequality applied twice for the first estimate (using the boundary condition $u = 0$ at $x = 0$) and once for the second.
\begin{lemma}\label{lem:Hardy}
For $\alpha > 1$, we have 
\begin{equation}\label{eq:Hardy}
\int_0^\infty x^{-\alpha} |u|^{\alpha+1} \, \d x \lesssim_\alpha \int_0^\infty x^{\alpha+2} |\partial_x^2 u|^{\alpha+1} \, \d x \quad \text{and} \quad \int_0^\infty x |\partial_x u|^{\alpha+1} \, \d x \lesssim_\alpha \int_0^\infty x^{\alpha+2} |\partial_x^2 u|^{\alpha+1} \, \d x
\end{equation}
for all $u \in U_\alpha$.
\end{lemma}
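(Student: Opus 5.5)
The plan is to reduce both estimates in \eqref{eq:Hardy} to the classical one-dimensional weighted Hardy inequality with exponent $p \coloneq \alpha+1$. It comes in two variants. Let $v$ be locally absolutely continuous on $(0,\infty)$ and $\gamma \in \R$. If $\gamma > p-1$ and $v(x) = -\int_x^\infty v'(y) \, \d y$, then
\[
\int_0^\infty x^{\gamma-p} |v|^p \, \d x \le \Big(\frac{p}{\gamma-p+1}\Big)^p \int_0^\infty x^\gamma |v'|^p \, \d x.
\]
If $\gamma < p-1$ and $v(x) = \int_0^x v'(y) \, \d y$, the same holds with constant $\big(\frac{p}{p-1-\gamma}\big)^p$. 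I will take these as standard; they follow from Minkowski's integral inequality after writing $v$ as the averaged integral of $v'$. The work lies in checking the correct representation formula at $0$ or at $\infty$ for functions in $U_\alpha$. I will not use the density result (Lemma~\ref{lem-approx-ualpha}), since it appears only later.

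\textbf{Step 1 (second inequality).} Apply the first variant to $v = \partial_x u$ with $\gamma = \alpha+2$. Then $\gamma - p = 1$ and $\gamma - p + 1 = 2 > 0$, so this yields exactly $\int_0^\infty x |\partial_x u|^{\alpha+1} \, \d x \le \big(\tfrac{\alpha+1}{2}\big)^{\alpha+1} \int_0^\infty x^{\alpha+2} |\partial_x^2 u|^{\alpha+1} \, \d x$. I must justify the representation $\partial_x u(x) = -\int_x^\infty \partial_x^2 u \, \d y$. H\"older's inequality gives
\[
\int_x^\infty |\partial_x^2 u| \, \d y \le \big\| x^{\frac{\alpha+2}{\alpha+1}} \partial_x^2 u \big\|_{L^{\alpha+1}} \Big(\int_x^\infty y^{-\frac{\alpha+2}{\alpha}} \, \d y\Big)^{\frac{\alpha}{\alpha+1}} < \infty,
\]
since $\frac{\alpha+2}{\alpha} > 1$. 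Hence $\lim_{x\to\infty} \partial_x u(x) \eqqcolon L$ exists. If $L \ne 0$, then $|u|$ grows linearly at infinity, which contradicts $u \in L^2(0,\infty)$. So $L = 0$ and the representation holds.

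\textbf{Step 2 (first inequality).} Apply the second variant to $v = u$ with $\gamma = 1 < \alpha = p-1$; this is where $\alpha > 1$ enters. It gives $\int_0^\infty x^{-\alpha} |u|^{\alpha+1} \, \d x \le \big(\tfrac{\alpha+1}{\alpha-1}\big)^{\alpha+1} \int_0^\infty x |\partial_x u|^{\alpha+1} \, \d x$. Chaining this with Step 1 yields the first estimate in \eqref{eq:Hardy}. The needed representation is $u(x) = \int_0^x \partial_x u \, \d y$. Step 1 already shows $x^{1/(\alpha+1)} \partial_x u \in L^{\alpha+1}$, and H\"older's inequality gives
\[
\int_0^1 |\partial_x u| \, \d y \le \Big(\int_0^1 y |\partial_x u|^{\alpha+1} \, \d y\Big)^{\frac{1}{\alpha+1}} \Big(\int_0^1 y^{-\frac 1\alpha} \, \d y\Big)^{\frac{\alpha}{\alpha+1}} < \infty,
\]
again using $\alpha > 1$. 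Thus $u$ extends continuously to $x = 0$, and the boundary condition $u = 0$ at $x = 0$ from \eqref{def-ualpha} gives the representation.

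\textbf{Main obstacle.} I expect the only delicate point to be the boundary behaviour: making sure the correct vanishing condition is available at the correct endpoint. For $\partial_x u$ the condition is at infinity and comes from the $L^2$ control of $u$; for $u$ it is at the origin and comes from the built-in boundary condition. Both also require the local integrability shown above, so that the representation formulas make sense. Once these are in place, the remaining steps are direct applications of the classical Hardy inequality with explicit constants depending only on $\alpha$.
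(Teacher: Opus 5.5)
Your proof is correct and takes the same route as the paper, which only states that the first estimate follows from Hardy's inequality applied twice (using $u=0$ at $x=0$) and the second from a single application. Your version also supplies the endpoint justifications the paper leaves implicit: the vanishing of $\partial_x u$ at infinity, obtained from $u\in L^2(0,\infty)$, and the continuous extension of $u$ to $x=0$, obtained without the later density lemma, which would otherwise be circular.
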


We have the following approximation result for the space $U_\alpha$ of the initial data:
\begin{lemma}\label{lem-approx-ualpha}
For $\alpha > 1$ the space of test functions $C_\mathrm{c}^\infty((0,\infty))$ is dense in $U_\alpha$.
\end{lemma}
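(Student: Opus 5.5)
We prove density by a two-step approximation, first truncating at infinity and then cutting off near the origin, and finally mollifying. Fix $u \in U_\alpha$ and write $\|u\|_{U_\alpha}$ as in \eqref{def-ualpha-space-norm}. The weights make the two ends of $(0,\infty)$ behave differently. Near $x=0$ the weight $x^{\alpha+2}$ degenerates and the boundary condition $u(0)=0$ together with Lemma~\ref{lem:Hardy} gives the integrability $x^{-\alpha}|u|^{\alpha+1}, x|\partial_x u|^{\alpha+1} \in L^1$. Near $x=\infty$ we only have $u\in L^2$ and $x^{\alpha+2}|\partial_x^2u|^{\alpha+1}\in L^1$, which by the second estimate in Lemma~\ref{lem:Hardy} also yields $x|\partial_x u|^{\alpha+1}\in L^1$.

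\textbf{Step 1: cutoff at infinity.} Let $\eta\in C^\infty(\R)$ with $\eta=1$ on $(-\infty,1]$ and $\eta=0$ on $[2,\infty)$, and set $\eta_R(x)=\eta(x/R)$. Then $\eta_R u\to u$ in $L^2$ by dominated convergence. Moreover,
\[
\partial_x^2(\eta_R u)=\eta_R\partial_x^2 u+2R^{-1}\eta'(x/R)\partial_x u+R^{-2}\eta''(x/R)u .
\]
The first term converges by dominated convergence. For the second, $x^{\alpha+2}R^{-\alpha-1}|\partial_xu|^{\alpha+1}\lesssim x|\partial_xu|^{\alpha+1}$ on $[R,2R]$, so its contribution is bounded by the tail $\int_R^{2R}x|\partial_xu|^{\alpha+1}\,\d x\to0$. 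For the third, on $[R,2R]$ we have $x^{\alpha+2}R^{-2(\alpha+1)}|u|^{\alpha+1}\lesssim x^{-\alpha}|u|^{\alpha+1}$, which is integrable by the first estimate of Lemma~\ref{lem:Hardy}, so this contribution is again a vanishing tail.

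\textbf{Step 2: cutoff at zero.} Use $1-\eta(x/\eps)$ with $\eps\downarrow0$. The $L^2$ part is immediate. The derivative terms live on $[\eps,2\eps]$ and become
\[
\int_\eps^{2\eps} x^{\alpha+2}\eps^{-\alpha-1}|\partial_xu|^{\alpha+1}\,\d x\lesssim\int_\eps^{2\eps}x|\partial_xu|^{\alpha+1}\,\d x\to0
\quad\text{and}\quad
\int_\eps^{2\eps}x^{\alpha+2}\eps^{-2\alpha-2}|u|^{\alpha+1}\,\d x\lesssim\int_\eps^{2\eps}x^{-\alpha}|u|^{\alpha+1}\,\d x\to0,
\]
again using Lemma~\ref{lem:Hardy}. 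This step is where the boundary condition $u=0$ at $x=0$ is used, through the first Hardy inequality. The resulting function vanishes near $0$, so its own boundary condition holds trivially.

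\textbf{Step 3: mollification.} The function now has support in a compact $[\eps,2R]\subset(0,\infty)$. On this set the weight is bounded above and below, so the function lies in $W^{2,\alpha+1}\cap L^2$ there. Standard mollification with radius smaller than $\eps/2$ converges in $L^2$ and in $W^{2,\alpha+1}$, hence in $U_\alpha$, and stays in $C_\mathrm{c}^\infty((0,\infty))$.

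The main obstacle is Lemma~\ref{lem:Hardy} itself, which we are allowed to assume. Note, however, that it is stated for $u\in U_\alpha$, so it must be applied to the original $u$; this is what we do, since the bounds above only involve tails of $u$. The remaining subtlety is whether $x|\partial_xu|^{\alpha+1}$ is integrable at infinity for a general $u \in U_\alpha$. The second inequality asserts exactly this, and it is consistent because $\partial_xu\to0$ at infinity is forced by $u\in L^2$ together with the weighted bound on $\partial_x^2u$. If one prefers to avoid this, Step 1 can instead rely solely on the first inequality, with the cross term handled by the interpolation $|\partial_x u|\lesssim$ (averages of $u$ and $\partial_x^2u$) on dyadic intervals.
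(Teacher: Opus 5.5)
Your proof is correct and follows the same two-stage strategy as the paper: first truncate, controlling the commutator terms by the integrability of $x^{-\alpha}|u|^{\alpha+1}$ and $x|\partial_x u|^{\alpha+1}$ from Lemma~\ref{lem:Hardy}, then mollify. The differences are only in implementation. The paper passes to $s=\ln x$, uses a single logarithmic cut-off $\eta(s/k)$ whose errors carry an explicit factor $k^{-(\alpha+1)}$, and then mollifies in $s$, handling the exponential weight by hand; your cut-offs $\eta(x/R)$ and $1-\eta(x/\eps)$ instead give errors bounded by vanishing tails of the same integrands, and because the weight is bounded above and below on the compact support, your last step reduces to standard $W^{2,\alpha+1}$ mollification.
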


The proof follows the lines of standard approximation results of Lebesgue and Sobolev spaces.
\begin{proof}[Proof of Lemma~\ref{lem-approx-ualpha}]
For $u \in U_\alpha$ define $v(s) \coloneq u(e^s)$. It then holds
\begin{align*}
\|u\|_{L^2(0,\infty)}^2 &= \int_\R e^s v^2 \, \d s, \\
\big\|x^{\frac{\alpha+2}{\alpha+1}} \partial_x^2 u\big\|_{L^{\alpha+1}(0,\infty)}^{\alpha+1} &= \int_0^\infty x^{-\alpha} |(x \partial_x - 1) x \partial_x u|^{\alpha+1} \, \d x = \int_\R e^{(1-\alpha) s} |(\partial_s - 1) \partial_s v|^{\alpha+1} \, \d s.
\end{align*}
By \eqref{eq:Hardy} of Lemma~\ref{lem:Hardy} we additionally have
\[
\int_0^\infty x^{-\alpha} |u|^{\alpha+1} \, \d x = \int_\R e^{(1-\alpha)s} |v|^{\alpha+1} \, \d s < \infty \quad \text{and} \quad \int_0^\infty x |\partial_x u|^{\alpha+1} \, \d x = \int_\R e^{(1-\alpha)s} |\partial_s v|^{\alpha+1} \, \d s < \infty.
\]
Now take $\eta \in C^\infty_\mathrm{c}(\R)$ with $0 \le \eta \le 1$, $\eta|_{[-1,1]} = 1$ and $\eta|_{(-\infty,-2] \cup [2,\infty)} = 0$, define for $k \in \N$ the function $\eta_k(s) \coloneq \eta(s/k)$, where $s \in \R$. It then holds $0 \le \eta_k \le 1$ and $\eta_k \to 1$ as $k \to \infty$ point-wise. Define $u_k(x) \coloneq \eta_k(\ln x) v(\ln x)$. Then
\[
\|u-u_k\|_{L^2(0,\infty)}^2 = \int_\R (1-\eta_k)^2 e^s v^2 \, \d s \to 0 \quad \text{as } k \to \infty
\]
by dominated convergence with dominating function $e^s v^2$. Furthermore,
\begin{align*}
\big\|x^{\frac{\alpha+2}{\alpha+1}} \partial_x^2 (u-u_k)\big\|_{L^{\alpha+1}(0,\infty)}^{\alpha+1} &\lesssim_\alpha \int_\R e^{(1-\alpha) s} |1-\eta_k|^{\alpha+1} |(\partial_s - 1) \partial_s v|^{\alpha+1} \, \d s \\
&\phantom{\lesssim_\alpha} + \frac{1}{k^{\alpha+1}} \int_\R e^{(1-\alpha)s} (|v|^{\alpha+1} + |\partial_s v|^{\alpha+1}) \, \d s \to 0 \quad \text{as } k \to \infty,
\end{align*}
by dominated convergence. Since by Sobolev embedding $u$ and $v$ are continuous, this proves that $C^0_\mathrm{c}((0,\infty))$ is dense in $U_\alpha$.

\bigskip

Next, we take $\phi \in C^\infty_\mathrm{c}(\R)$ with $\phi \ge 0$ and $\int_\R \phi \, \d s = 1$. We set $\phi_k(s) \coloneq k \phi(k s)$ and have for $u \in U_\alpha \cap C^0_\mathrm{c}((0,\infty))$ (note that functions in $U_\alpha$ are continuous by Sobolev embedding), $v(s) \coloneq u(e^s)$, $v_k(s) \coloneq (\phi_k * v)(s) \coloneq \int_\R \phi_k(s') v(s-s') \, \d s'$, and $u_k(x) \coloneq v_k(\ln x)$. We then use
\[
v(s) - v_k(s) = \int_{\R} \phi_k(s') (v(s)-v(s-s')) \, \d s' = \int_\R \phi(\tau) (v(s)-v(s-\tau/k)) \, \d\tau \to 0 \quad \text{as } k \to \infty
\]
by dominated convergence, as $v$ is continuous and bounded. Furthermore,
\[
\|v_k\| \le \|\phi_k\|_{L^1(\R)} \|v\|_{L^\infty(\R)} = \|v\|_{L^\infty(\R)}
\]
by Young's convolution inequality and since the set $\{s \in \R \colon v(s) \ne 0 \text{ or } v_k(s) \ne 0 \text{ for a } k \in \N\}$ is pre-compact, it follows
\[
\|u-u_k\|_{L^2(0,\infty)}^2 = \int_\R e^s (v-v_k)^2 \, \d s \to 0 \quad \text{as } k \to \infty
\]
by applying dominated convergence again. Next, observe
\begin{align*}
\big\|x^{\frac{\alpha+2}{\alpha+1}} \partial_x^2 (u-u_k)\big\|_{L^{\alpha+1}(0,\infty)}^{\alpha+1} = \int_\R e^{(1-\alpha) s} |(\partial_s - 1) \partial_s v - \phi_k * ((\partial_s - 1) \partial_s v)|^{\alpha+1} \, \d s.
\end{align*}
We use
\begin{align*}
e^{\frac{1-\alpha}{1+\alpha} s} (\phi_k * ((\partial_s - 1) \partial_s v))(s) &= e^{\frac{1-\alpha}{1+\alpha} s} \int_\R \phi_k(s') ((\partial_s - 1) \partial_s v)(s-s') \, \d s' \\
&= \int_\R (e^{\frac{1-\alpha}{1+\alpha} s'} \phi_k(s')) (e^{\frac{1-\alpha}{1+\alpha} (s-s')} ((\partial_s - 1) \partial_s v)(s-s')) \, \d s' \\
&= (\psi_k * w)(s),
\end{align*}
where
\[
\psi_k(s) \coloneq e^{\frac{1-\alpha}{1+\alpha} s} \phi_k(s) \qquad \text{and} \qquad w(s) \coloneq e^{\frac{1-\alpha}{1+\alpha} s} ((\partial_s - 1) \partial_s v)(s).
\]
We then have
\begin{align*}
\big\|x^{\frac{\alpha+2}{\alpha+1}} \partial_x^2 (u-u_k)\big\|_{L^{\alpha+1}(0,\infty)}^{\alpha+1} = \int_\R |w(s) - (\psi_k * w
)(s)|^{\alpha+1} \, \d s.
\end{align*}
Lusin's theorem implies that there exists for every $\ell \in \N$ a function $w_\ell \in C^0_\mathrm{c}(\R)$ such that $\|w - w_\ell\|_{L^{\alpha+1}(\R)} < \frac 1 \ell$.  It then holds
\begin{align*}
w - \psi_k * w = (w - w_\ell) + (w_\ell - \psi_k * w_\ell) + \psi_k * (w_\ell-w).
\end{align*}
For $\eps > 0$ we choose $\ell \in \N$ such that $\ell > \frac 3 \eps$ and thus $\|w - w_\ell\|_{L^{\alpha+1}(\R)} < \frac \eps 3$. Secondly, by Young's convolution inequality we have
\[
\|\psi_k * (w-w_\ell)\|_{L^{\alpha+1}(\R)} \le \|\psi_k\|_{L^1(\R)} \|w - w_\ell\|_{L^{\alpha+1}(\R)} \le \|w - w_\ell\|_{L^{\alpha+1}(\R)} \int_\R e^{\frac{|1-\alpha|}{1+\alpha} \tau} \phi(\tau) \, \d \tau,
\]
so that $\|\psi_k * (w-w_\ell)\|_{L^{\alpha+1}(\R)} < \frac \eps 3$ upon enlarging $\ell$. Lastly, observe
\begin{align*}
(w_\ell - \psi_k * w_\ell)(s) &= \int_{\R} \psi_k(s') (w_\ell(s)-w_\ell(s-s')) \, \d s' + w_\ell(s) \int_{\R} (1-\psi_k(s')) \, \d s' \\
&= \int_{\R} e^{\frac{1-\alpha}{1+\alpha} \frac \tau k} \phi(\tau) (w_\ell(s)-w_\ell(s-\tfrac \tau k)) \, \d \tau + w_\ell(s) \int_{\R} (1-e^{\frac{1-\alpha}{1+\alpha} \frac \tau k} \phi(\tau)) \, \d \tau 
\end{align*}
We note that for every $s \in \R$
\[
\int_{\R} e^{\frac{1-\alpha}{1+\alpha} \frac \tau k} \phi(\tau) (w_\ell(s)-w_\ell(s-\tfrac \tau k)) \, \d \tau \to 0 \qquad \text{and} \qquad w_\ell(s) \int_{\R} \big(1-e^{\frac{1-\alpha}{1+\alpha} \frac \tau k} \phi(\tau)\big) \, \d \tau \to 0 \quad \text{as } k \to \infty
\]
by dominated convergence, as $w_\ell$ and $\phi$ are continuous with compact support. Furthermore,
\[
|(w_\ell - \psi_k * w_\ell)(s)| = \Big|w_\ell(s) - \int_\R e^{\frac{1-\alpha}{1+\alpha} \frac \tau k} \phi(\tau) w_\ell(s-\tfrac \tau k) \, \d\tau\Big| \le \Big|1 + \int_\R e^{\frac{|1-\alpha|}{1+\alpha} \tau} \phi(\tau) \, \d\tau\Big| \|w_\ell\|_{L^\infty(\R)} < \infty,
\]
so that by bounded convergence on a compact interval we have $\|w_\ell - \psi_k * w_\ell\|_{L^{\alpha+1}(\R)} < \frac \eps 3$ for $k$ sufficiently large. Thus we obtain
\[
\big\|x^{\frac{\alpha+2}{\alpha+1}} \partial_x^2 (u-u_k)\big\|_{L^{\alpha+1}(0,\infty)} \to 0 \quad \text{as } k \to \infty. \qedhere
\]
\end{proof}

We need the following auxiliary result:
\begin{lemma}\label{lem:recursion-dxu}
We have for $\alpha > 1$, $\nu \in \R$, and $r > 1$
\begin{equation}\label{eq:recursion-dxu}
\begin{split}
\int_0^\infty x^{\nu r} |\partial_x u|^r \, \d x
\lesssim_{\alpha,\nu,r} & \Big(  \int_0^\infty  u^2 \, \d x \Big)^{\frac{\alpha+1}{\alpha+3}(  1- \theta^k)} 
\Big( \int_0^\infty x^{\gamma_k} |\partial_x u|^{r_k} \, \d x \Big)^{\theta^k}
\Big( \int_0^\infty  x^{\alpha+2} |\partial_x^2 u|^{\alpha+1} \, \d x\Big)^{\frac{2}{\alpha+3}(1- \theta^k)}
\end{split}
\end{equation}
for all $u \in C^\infty_\mathrm{c}((0,\infty))$, where $\theta = \frac{\alpha-1}{2 (\alpha+1)}$ and
\begin{subequations}\label{eq:gamma1-r1}
\begin{align}
\gamma_k &
= \theta^{-k} \Big( \nu r - \frac{2(\alpha+2)}{\alpha+3} \Big) + \frac{2(\alpha+2)}{\alpha+3}, \label{eq:gamma1} \\
r_k &
= \theta^{-k} \Big( r - \frac{4(\alpha+1)}{\alpha+3} \Big) + \frac{4(\alpha+1)}{\alpha+3}.\label{eq:r1}
\end{align}
\end{subequations}
\end{lemma}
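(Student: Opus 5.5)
The plan is to prove the case $k=1$ by one integration by parts followed by a three-factor H\"older inequality, and then to obtain general $k$ by induction, re-applying the one-step bound to the new weighted integral. The case $k=0$ is trivial, since $\gamma_0 = \nu r$ and $r_0 = r$.

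\emph{One step.} For $u \in C^\infty_\mathrm{c}((0,\infty))$ there are no boundary terms, so
\[
\int_0^\infty x^{\nu r} |\partial_x u|^r \, \d x = - \int_0^\infty u \, \partial_x\big(x^{\nu r} |\partial_x u|^{r-2} \partial_x u\big) \, \d x ,
\]
and the derivative on the right splits into two terms:
\[
\nu r \, x^{\nu r -1} |\partial_x u|^{r-2} \partial_x u \qquad \text{and} \qquad (r-1) \, x^{\nu r} |\partial_x u|^{r-2} \partial_x^2 u .
\]
We apply H\"older's inequality with exponents $2$, $\alpha+1$ and $q = \theta^{-1}$; these are admissible because $\frac12 + \frac{1}{\alpha+1} + \theta = 1$.

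In the second-derivative term we distribute the weight as
\[
u \cdot \big(x^{\frac{\alpha+2}{\alpha+1}} \partial_x^2 u\big) \cdot \big(x^{\nu r - \frac{\alpha+2}{\alpha+1}} |\partial_x u|^{r-2}\big).
\]
This gives $\|u\|_{L^2}$, the $U_\alpha$-seminorm, and $\big(\int_0^\infty x^{\gamma_1} |\partial_x u|^{r_1} \, \d x\big)^\theta$. Here
\[
\gamma_1 = \theta^{-1}\Big(\nu r - \frac{\alpha+2}{\alpha+1}\Big), \qquad r_1 = \theta^{-1}(r-2).
\]
A short computation using $1-\theta = \frac{\alpha+3}{2(\alpha+1)}$ shows that these coincide with \eqref{eq:gamma1}--\eqref{eq:r1} for $k=1$.

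In the lower-order term we instead write
\[
x^{\nu r-1} |\partial_x u|^{r-1} = \big(x^{\frac{1}{\alpha+1}} |\partial_x u|\big) \cdot \big(x^{\nu r - \frac{\alpha+2}{\alpha+1}} |\partial_x u|^{r-2}\big).
\]
H\"older then yields $\|u\|_{L^2}$, the factor $\big(\int_0^\infty x |\partial_x u|^{\alpha+1} \, \d x\big)^{\frac{1}{\alpha+1}}$, and the same third factor $\big(\int_0^\infty x^{\gamma_1}|\partial_x u|^{r_1}\,\d x\big)^\theta$. The middle factor is controlled by the second Hardy inequality in \eqref{eq:Hardy} of Lemma~\ref{lem:Hardy}. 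The resulting powers are $\frac12 = \frac{\alpha+1}{\alpha+3}(1-\theta)$ and $\frac{1}{\alpha+1} = \frac{2}{\alpha+3}(1-\theta)$, which is exactly \eqref{eq:recursion-dxu} for $k=1$.

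\emph{Iteration.} Suppose \eqref{eq:recursion-dxu} holds for $k$. We apply the one-step estimate to $\int_0^\infty x^{\gamma_k} |\partial_x u|^{r_k} \, \d x$, with $\nu r$ replaced by $\gamma_k$ and $r$ by $r_k$. The new weight and exponent are $\theta^{-1}(\gamma_k - \frac{\alpha+2}{\alpha+1})$ and $\theta^{-1}(r_k-2)$. Since the affine maps $y \mapsto \theta^{-1}(y - \frac{\alpha+2}{\alpha+1})$ and $y \mapsto \theta^{-1}(y-2)$ have fixed points $\frac{2(\alpha+2)}{\alpha+3}$ and $\frac{4(\alpha+1)}{\alpha+3}$ respectively, these equal $\gamma_{k+1}$ and $r_{k+1}$. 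Raising the one-step bound to the power $\theta^k$, the exponents of $\int_0^\infty u^2 \, \d x$ accumulate as
\[
\tfrac12\,(1 + \theta + \dots + \theta^{k}) = \frac{\alpha+1}{\alpha+3}\,(1-\theta^{k+1}),
\]
and similarly for the $U_\alpha$-seminorm. This closes the induction.

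\emph{Main obstacle.} The delicate point is the legitimacy of the one-step bound when $r_k$ drops below $2$, or even to $1$ or below, since $|\partial_x u|^{r-2}$ is then singular where $\partial_x u = 0$. To handle this, I would first run the argument with the regularization $(\eps^2 + (\partial_x u)^2)^{\frac{r-2}{2}}$ in place of $|\partial_x u|^{r-2}$ and let $\eps \downarrow 0$ by monotone or dominated convergence; compact support of $u$ keeps all weighted integrals finite. The constants depend only on $\alpha$, $\nu$, $r$ (and $k$), because each step uses only $\nu r$ and $r$ in the integration-by-parts coefficients and the fixed Hardy constant.
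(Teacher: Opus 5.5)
Your proposal is correct and follows essentially the paper's proof: integration by parts, H\"older's inequality with exponents $2$, $\alpha+1$ and $\theta^{-1}=\frac{2(\alpha+1)}{\alpha-1}$, Hardy's inequality for the lower-order term, and iteration of the affine recursions for $\gamma_k$ and $r_k$ (the paper only differs cosmetically, bundling both terms into the factor $|\partial_x^2 u|+x^{-1}|\partial_x u|$ and applying Jensen before Hardy). The regularization you propose is never actually needed: it only matters at steps with $r_k<2$, but then $r_{k+1}=\theta^{-1}(r_k-2)<0$, so for $u\not\equiv 0$ the right-hand side is $+\infty$ because $\partial_x u$ vanishes off the compact support (so your remark that compact support keeps all weighted integrals finite fails exactly in that case), and every nontrivial step has $r_k\ge 2$, where $s\mapsto |s|^{r_k-2}s$ is $C^1$ and the integration by parts is classical.
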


\begin{proof}
Consider the integral
\[
\int_0^\infty x^{\gamma_k} |\partial_x u|^{r_k} \, \d x, \quad k\in \N,
\]
where $\gamma_k \in \R$ and $r_k > 1$ are parameters depending on $\alpha$, $\nu$, $r$, and $k$, and where $\gamma_0 = \nu r$ and $r_0 = r$. Using integration by parts and H\"older’s inequality, we get
\begin{align}
\int_0^\infty x^{\gamma_k} |\partial_x u|^{r_k} \, \d x 
&= \int_0^\infty x^{\gamma_k} |\partial_x u|^{r_k-2} (\partial_x u)^2 \, \d x \nonumber \\
&= - (r_k-1) \int_0^\infty x^{\gamma_k} u |\partial_x u|^{r_k-2} \partial_x^2 u \, \d x - \gamma_k \int_0^\infty x^{\gamma_k-1} |\partial_x u|^{r_k-2} (\partial_x u) u \, \d x \nonumber \\
&\lesssim_{\gamma_k, r_k} 
\int_0^\infty |u| x^{\gamma_k-\frac{\alpha+2}{\alpha+1}}  |\partial_x u|^{r_k -2} 
x^{\frac{\alpha+2}{\alpha+1}} \left( |\partial_x^2 u| + x^{-1} |\partial_x u|\right) \d x \nonumber \\
&\leq \Big(  \int_0^\infty  u^2 \, \d x \Big)^{\frac{1}{2}} 
\Big( \int_0^\infty x^{(\gamma_k - \frac{\alpha+2}{\alpha +1})\frac{2(\alpha+1)}{\alpha-1}}
 |\partial_x u|^{(r_k-2)\frac{2(\alpha +1)}{\alpha -1}} \, \d x \Big)^{\frac{\alpha -1}{2(\alpha+1)}} \nonumber \\
&\quad
\times \Big( \int_0^\infty x^{\alpha +2} 
\big( |\partial_x^2 u| + x^{-1} |\partial_x u|\big)^{\alpha +1} \, \d x \Big)^{\frac{1}{\alpha+1}}. \label{eq:weight xu9}
\end{align}
Next, we apply Jensen’s inequality and Hardy’s inequality to the last integral on the right-hand side of \eqref{eq:weight xu9}, leading to
\begin{align} \label{eq:weight xu10}
\int_0^\infty x^{\gamma_k} |\partial_x u|^{r_k} \, \d x &\lesssim_{\alpha,\gamma_k, r_k}  \Big(  \int_0^\infty  u^2 \, \d x \Big)^{\frac{1}{2}} \Big( \int_0^\infty x^{\gamma_{k+1}}
|\partial_x u|^{r_{k+1}} \, \d x \Big)^{\frac{\alpha -1}{2(\alpha+1)}} 
\Big( \int_0^\infty  x^{\alpha+2} |\partial_x^2 u|^{\alpha+1} \, \d x\Big)^{\frac{1}{\alpha+1}},
\end{align}
where we have introduced the recursive relations for the exponents $\gamma_k$ and $r_k$ as 
\begin{equation}\label{eq:gamma r}
	\gamma_{k+1} = \theta^{-1} \gamma_k - \frac{2(\alpha +2)}{\alpha -1} 
\qquad \text{and}\qquad r_{k+1} = \theta^{-1} r_k - \frac{4(\alpha +1)}{\alpha -1}. 
\end{equation}
Both follow the form 
\begin{equation*}
	x_k= \theta^{-1} x_{k-1} -b, \quad \theta,b > 0. 
\end{equation*}
To express $x_k$ explicitly in terms of $x_0$, $\theta$, and $b$, we introduce the substitution $x_k = y_k \theta^{-k}$. Using the recursive definition of $x_k$ and the formula for the geometric sum yields
\begin{equation*}
	y_k = y_{k-1} - b \theta^k = y_{k-2} - b \theta^k - b \theta^{k-1} = \dots = y_0 - b \theta \sum_{j=0}^{k-1} \theta^j = y_0 - b \theta \frac{1-\theta^k}{1-\theta} 
\end{equation*}
Substituting back $y_k = x_k \theta^{k}$ gives the explicit form of $x_k$, namely
\begin{equation*}
	x_k= \theta^{-k} y_k = \theta^{-k} \Big( x_0 - \frac{b \theta}{1-\theta} \Big) + \frac{b \theta}{1-\theta},
\end{equation*}
with which we can now rewrite $\gamma_k$ and $r_k$ in terms of their initial values. Using $\gamma_0 = \nu r$ and $b = \frac{2(\alpha+2)}{\alpha-1}$ for the iteration of $\gamma_k$, we obtain \eqref{eq:gamma1}. Likewise, for \(r_k\) with $r_0 = r$ and \(b=\frac{4(\alpha +1)}{\alpha -1}\), we find \eqref{eq:r1}. 

\medskip

Two of the integrals in \eqref{eq:weight xu10} are already in the desired form and it remains to estimate the second integral containing $\partial_x u$. Using the explicit forms \eqref{eq:gamma1-r1}, we can estimate \eqref{eq:weight xu10} iteratively and obtain
\begin{equation*}
\begin{split}
 & \int_0^\infty x^{\nu r} |\partial_x u|^{r}\, \d x \\
 & \quad \lesssim_{\alpha,\beta,r}  \Big(  \int_0^\infty  u^2 \, \d x \Big)^{\frac{1}{2}\sum_{j=0}^{k-1}\theta^j}
\Big( \int_0^\infty x^{\gamma_k}
 |\partial_x u|^{r_k} \, \d x \Big)^{\theta^k}
 \Big( \int_0^\infty  x^{\alpha+2} |\partial_x^2 u|^{\alpha+1} \, \d x\Big)^{\frac{1}{\alpha+1}\sum_{j=0}^{k-1}\theta^j}
 \\
& \quad = \Big(  \int_0^\infty  u^2 \, \d x \Big)^{\frac{\alpha+1}{\alpha+3}(  1-\theta^k)}
\Big( \int_0^\infty x^{\gamma_k}
|\partial_x u|^{r_k} \, \d x \Big)^{\theta^k}
\Big( \int_0^\infty  x^{\alpha+2} |\partial_x^2 u|^{\alpha+1} \, \d x\Big)^{\frac{2}{\alpha+3}(1- \theta^k)}
 \end{split}
 \end{equation*}
which is \eqref{eq:recursion-dxu}.
\end{proof}
\begin{lemma}\label{lem:weight xu}
For \(\alpha >1\) and \(\beta \in \big[\frac{1-\alpha}{\alpha +1}, \frac{\alpha+2}{5\alpha +3}\big)\),
we have
\begin{equation}\label{eq:weight xu}
\sup_{x \ge 0} x^\beta |u| \lesssim_{\alpha,\beta} \Big(\int_0^\infty u^2 \, \d x\Big)^{\frac{\alpha -1 + (\alpha +1)\beta}{3\alpha -1}} \Big(\int_0^\infty x^{\alpha+2} \, |\partial_x^2 u|^{\alpha+1} \, \d x\Big)^{\frac{1-2\beta}{3\alpha -1}}
\end{equation}
for all $u \in U_\alpha$.
\end{lemma}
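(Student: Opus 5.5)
By Lemma~\ref{lem-approx-ualpha} it suffices to prove \eqref{eq:weight xu} for $u \in C^\infty_\mathrm{c}((0,\infty))$. The estimate extends to $U_\alpha$ because both sides are continuous under convergence in $U_\alpha$: the right-hand side obviously, and the left-hand side by applying the estimate to differences. A scaling check ($u \mapsto \lambda u$ and $x \mapsto \mu x$) shows that the exponents $\frac{\alpha-1+(\alpha+1)\beta}{3\alpha-1}$ and $\frac{1-2\beta}{3\alpha-1}$ are the only admissible ones, and that both depend affinely on $\beta$. I would therefore treat two model cases and then cover the rest of the range by interpolation or by the iteration of Lemma~\ref{lem:recursion-dxu}.

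\emph{Endpoint $\beta_0 = \frac{1-\alpha}{\alpha+1}$.} Here the $L^2$-exponent vanishes and the other exponent equals $\frac{1}{\alpha+1}$. Since $u(0)=0$, we write $u(x) = \int_0^x \partial_x u \, \d x'$ and apply H\"older with weights $x'^{\pm \frac{1}{\alpha+1}}$:
\[
|u(x)| \le \Big(\int_0^x x'^{-\frac1\alpha}\, \d x'\Big)^{\frac{\alpha}{\alpha+1}} \Big(\int_0^\infty x' |\partial_x u|^{\alpha+1}\, \d x'\Big)^{\frac1{\alpha+1}} \lesssim_\alpha x^{\frac{\alpha-1}{\alpha+1}} \Big(\int_0^\infty x^{\alpha+2}|\partial_x^2 u|^{\alpha+1}\, \d x\Big)^{\frac1{\alpha+1}}.
\]
The last step is the second Hardy inequality in \eqref{eq:Hardy}; the first integral is finite because $\alpha > 1$.

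\emph{Case $\beta = 0$.} This is exactly the heuristic computation in \S\ref{sec:leading-order estimates} with $p = 3 - \frac1\alpha$. It uses the fundamental theorem of calculus, H\"older, and both inequalities of Lemma~\ref{lem:Hardy}.

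\emph{Range $\beta \in [\beta_0, 0]$.} Pointwise log-convexity gives
\[
x^\beta|u| = \big(x^{\beta_0}|u|\big)^{\beta/\beta_0}\,|u|^{1-\beta/\beta_0}.
\]
Taking suprema and combining the two model estimates yields \eqref{eq:weight xu}, because the exponents are affine in $\beta$.

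\emph{Range $\beta \in (0, \frac{\alpha+2}{5\alpha+3})$.} I expect this to be the main obstacle, since interpolation with the endpoints no longer helps. I would integrate from $x$ to $\infty$ (compact support) with some $p > 1$:
\[
x^{p\beta}|u|^p \le p\int_0^\infty x^{p\beta}|u|^{p-1}|\partial_x u|\,\d x + p\beta\int_0^\infty x^{p\beta-1}|u|^p\,\d x.
\]
The second term is handled by H\"older between $\int x^{-\alpha}|u|^{\alpha+1}$ (first Hardy inequality in \eqref{eq:Hardy}) and $\int u^2$, after choosing $p$ so that the weights balance.

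In the first term I would estimate $|u|^{p-1}$ by H\"older against $\int u^2$ and $\int x^{-\alpha}|u|^{\alpha+1}$. This leaves a factor of the form $\big(\int_0^\infty x^{\nu r}|\partial_x u|^r\,\d x\big)^{1/r}$ for some $\nu, r$ determined by $\beta$ and $p$. That factor is exactly what Lemma~\ref{lem:recursion-dxu} controls. I would iterate the lemma $k$ times, letting $k \to \infty$ when $\theta^k \to 0$ kills the remainder. Alternatively, I would choose $p$ and $r$ so that $(\gamma_k, r_k)$ becomes a fixed point ($\nu r = \frac{2(\alpha+2)}{\alpha+3}$, $r = \frac{4(\alpha+1)}{\alpha+3}$), in which case the recursion closes immediately and $\int x^{\nu r}|\partial_x u|^r$ is absorbed.

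The condition $\beta < \frac{\alpha+2}{5\alpha+3}$ should appear exactly as the requirement that these H\"older exponents stay admissible, i.e.\ $r > 1$ and the weights compatible with the fixed point of the recursion. Verifying this bookkeeping is the delicate part. Once the estimate is established, the exponents are forced by the scaling check above.
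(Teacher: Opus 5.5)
Your treatment of the endpoint $\beta_0=\frac{1-\alpha}{\alpha+1}$ is correct; the direct bound via $u(x)=\int_0^x\partial_x u\,\d x'$ is a little shorter than the paper's Young-inequality argument. The case $\beta=0$, the log-convexity interpolation on $[\beta_0,0]$, and the density extension are also correct.

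In the upper range, however, only your fixed-point variant works. The variant ``iterate Lemma~\ref{lem:recursion-dxu} and let $k\to\infty$'' fails because the recursion is expanding. With $r^*=\frac{4(\alpha+1)}{\alpha+3}$ and $\gamma^*=\frac{2(\alpha+2)}{\alpha+3}$, one has $r_k-r^*=\theta^{-k}(r-r^*)$ and $\gamma_k-\gamma^*=\theta^{-k}(\nu r-\gamma^*)$.
\begin{itemize}
\item If $r<r^*$, then $r_k$ drops below $2$ after finitely many steps and the lemma no longer applies.
\item If $r>r^*$, the remainder $\big(\int_0^\infty x^{\gamma_k}|\partial_x u|^{r_k}\,\d x\big)^{\theta^k}$ does not disappear. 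It converges to $\big(\sup_x x^{\sigma}|\partial_x u|\big)^{r-r^*}$ with $\sigma=\frac{\nu r-\gamma^*}{r-r^*}$. You therefore end up with a weighted $L^\infty$ bound on $\partial_x u$ on the right-hand side, rather than a product of powers of $\int u^2$ and $B\coloneq\int_0^\infty x^{\alpha+2}|\partial_x^2u|^{\alpha+1}\,\d x$.
\end{itemize}
The paper avoids this by stopping after finitely many steps. It tunes $(\beta,p)=(\beta_k,p_k)$ so that $(\gamma_k,r_k)=(1,\alpha+1)$, where Hardy's inequality closes. This gives the estimate for $\beta_k\uparrow\frac{\alpha+2}{5\alpha+3}$, and Step~3 then interpolates with $\beta_0$.

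Your fixed-point variant does close, and it is a genuinely simpler route than the paper's.
\begin{itemize}
\item Use only $\int u^2$ in the H\"older step. For $\beta>0$, integrating from $x$ to $\infty$, the term $-p\beta\int_x^\infty y^{p\beta-1}|u|^p\,\d y$ is nonpositive and can be dropped. Hence $x^{p\beta}|u|^p\le p\big(\int u^2\big)^{\frac{p-1}{2}}\big(\int x^{\frac{2p\beta}{3-p}}|\partial_x u|^{\frac{2}{3-p}}\big)^{\frac{3-p}{2}}$.
\item The conditions $\frac{2}{3-p}=r^*$ and $\frac{2p\beta}{3-p}=\gamma^*$ give $p=\frac{5\alpha+3}{2(\alpha+1)}\in(2,3)$ and $\beta=\frac{\alpha+2}{5\alpha+3}$.
\item Set $I\coloneq\int x^{\gamma^*}|\partial_x u|^{r^*}\,\d x$. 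A single step \eqref{eq:weight xu10} gives $I\lesssim_\alpha\big(\int u^2\big)^{\frac12}I^{\theta}B^{\frac{1}{\alpha+1}}$. Since $I<\infty$ for test functions and $\theta<1$, absorbing yields $I\lesssim_\alpha\big(\int u^2\big)^{\frac{\alpha+1}{\alpha+3}}B^{\frac{2}{\alpha+3}}$, which is \eqref{eq:weight xu lp'2}.
\item This gives \eqref{eq:weight xu} at $\beta=\frac{\alpha+2}{5\alpha+3}$ itself, with exactly the predicted exponents $\frac{2\alpha+1}{5\alpha+3}$ and $\frac{1}{5\alpha+3}$.
\end{itemize}
Your log-convexity interpolation with $\beta_0$ then covers the whole range. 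So your remark that interpolation no longer helps is wrong once this point is available. Adding $\int x^{-\alpha}|u|^{\alpha+1}$ to the H\"older step only lowers the reachable $\beta$, and the strict upper bound is not forced by admissibility of the exponents.

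Compared with the paper, this route avoids the $k$-fold iteration with $k$-dependent constants and the Hardy reduction of the lower-order term. It even yields the estimate on the closed interval $\big[\frac{1-\alpha}{\alpha+1},\frac{\alpha+2}{5\alpha+3}\big]$.
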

\begin{proof}
We first establish \eqref{eq:weight xu} for $u \in C^\infty_\mathrm{c}((0,\infty))$
and then extend to $u \in U_\alpha$ by approximation in Step~4.

\bigskip

\noindent\textbf{Step~1. Weighted estimate for \(\beta= \frac{1-\alpha}{\alpha +1}\).}
Let $\beta = \frac{1-\alpha}{\alpha+1}$. In this step, we derive a weighted estimate for the critical case where the $L^2$-norm of $u$ does not appear on the right-hand side of the estimate \eqref{eq:weight xu}. To this end, we apply the fundamental theorem of calculus leading to
\begin{equation*}
\sup_{x \ge 0} x^{1-\alpha} |u|^{\alpha +1} \lesssim_{\alpha,\beta} \int_0^\infty \left( x^{-\alpha} |u|^{\alpha +1}  +  x^{1-\alpha} |u|^{\alpha}|\partial_x u| \right) \d x \lesssim_{\alpha} \int_0^\infty \left( x^{-\alpha} |u|^{\alpha +1}  +  x |\partial_x u|^{\alpha+1} \right) \d x,
\end{equation*}
where Young's inequality was used in the second step. With help of Hardy’s inequality \eqref{eq:Hardy} of Lemma~\ref{lem:Hardy}, we obtain the upgraded weighted estimate
\begin{equation}\label{eq:weight xu7}
\sup_{x \ge 0} x^{\frac{1-\alpha}{\alpha + 1}} |u| \lesssim_{\alpha} \Big(\int_0^\infty  x^{\alpha+2} \, |\partial_x^2 u|^{\alpha+1} \, \d x\Big)^{\frac{1}{\alpha +1}}.
\end{equation}
\bigskip

\noindent\textbf{Step~2. Critical bound $\beta \uparrow \frac{\alpha+2}{5\alpha+3}$.} Next we raise the value of $\beta$. Therefore, suppose $p > 1$ and $\beta \in \R$. Applying the fundamental theorem of calculus yields
\begin{align*}
\sup_{x \ge 0} x^{\beta p} |u|^p & \le \int_0^\infty \big|\partial_x \big(x^{\beta p} (u^2)^{\frac p 2}\big)\big| \, \d x 
\le \int_0^\infty |\beta| p \, x^{\beta p -1} |u|^p \, \d x + \int_0^\infty x^{\beta p} \big|\tfrac p 2 (u^2)^{\frac p 2 - 1} 2 u \partial_x u\big| \, \d x \\
&\lesssim_{\beta,p} \int_0^\infty  |u|^{p-1} x^{\beta p -1} \left( |u| + x |\partial_x u| \right) \d x. 
\end{align*}
Applying H\"older’s, Jensen’s, and Hardy’s inequality, we obtain for $p\in(1,3)$
\begin{align}
	\sup_{x \ge 0} x^{\beta p} |u|^p
& \lesssim_{p,\beta} \Big( \int_0^\infty  u^{2}  \, \d x \Big)^{\frac{p-1}{2}}\Big( \int_0^\infty  x^{\frac{2(\beta p-1)}{3-p}} \left( |u| + x |\partial_x u|\right)^{\frac{2}{3-p}} \, \d x\Big)^{\frac{3-p}{2}} \nonumber \\
& \lesssim_{p,\beta}  \Big( \int_0^\infty  u^{2}  \, \d x \Big)^{\frac{p-1}{2}} 
\Big( \int_0^\infty  x^{\frac{2(\beta p-1)}{3-p}} |u| ^{\frac{2}{3-p}} \, \d x 
+  \int_0^\infty  x^{\frac{2(\beta p-1)}{3-p} + \frac{2}{3-p}} |\partial_x u| ^{\frac{2}{3-p}} \, \d x
\Big)^{\frac{3-p}{2}} \nonumber \\
& \lesssim_{p,\beta}  \Big( \int_0^\infty  u^{2}  \, \d x \Big)^{\frac{p-1}{2}} 
\Big( \int_0^\infty  x^{\frac{2 \beta p}{3-p}} |\partial_x u| ^{\frac{2}{3-p}} \, \d x
\Big)^{\frac{3-p}{2}}.
\label{eq:weight xu8}
\end{align}
To further estimate the second integral on the right-hand side of \eqref{eq:weight xu8}, we apply Lemma~\ref{lem:recursion-dxu} to reduce the integral into a form suitable for Hardy’s inequality \eqref{eq:Hardy} of Lemma~\ref{lem:Hardy}. 
Inserting estimate \eqref{eq:recursion-dxu} with $\nu =\beta p$ and $r = \frac{2}{3-p}$ in estimate  \eqref{eq:weight xu8} leads to
\begin{align}
\sup_{x \ge 0} x^{\beta p} |u|^p 
& \lesssim_{\beta,p}  \Big( \int_0^\infty  u^{2}  \, \d x \Big)^{\frac{p-1}{2}} 
\Big( \int_0^\infty  x^{\frac{2 \beta p}{3-p}} |\partial_x u| ^{\frac{2}{3-p}} \, \d x
\Big)^{\frac{3-p}{2}} \nonumber \\
&\lesssim_{\alpha,\beta,p} \Big(  \int_0^\infty  u^2 \, \d x \Big)^{\frac{p-1}{2} + \frac{(3-p)(\alpha+1)}{2(\alpha+3)}(1- \theta^k)} 
\Big( \int_0^\infty x^{\gamma_k}
|\partial_x u|^{r_k} \, \d x \Big)^{\frac{3-p}{2}\theta^k} \nonumber \\
&\phantom{\lesssim_{\alpha,\beta,p}} \times
\Big( \int_0^\infty  x^{\alpha+2} |\partial_x^2 u|^{\alpha+1} \, \d x\Big)^{\frac{3-p}{\alpha+3}(  1- \theta^k)},
	\label{eq:weight xu11}
\end{align}
where $\theta = \frac{\alpha-1}{2(\alpha+1)}$, and $\gamma_k$ and $r_k$ are as in \eqref{eq:gamma1-r1}. To apply Hardy’s inequality \eqref{eq:Hardy} to the last integral on the right-hand side of \eqref{eq:weight xu11}, we need to enforce the conditions
\begin{equation}\label{eq:gammak rk}
\gamma_k = 1\qquad \text{and} \qquad r_{k} = \alpha +1. 
\end{equation}
By choosing \(\beta=\beta_k\) and \(p=p_k\) in \eqref{eq:weight xu11}, and applying \eqref{eq:gammak rk} alongside Hardy’s inequality, we deduce
\begin{equation}\label{eq:weight xu12}
\sup_{x \ge 0} x^{\beta_k p_k} |u|^{p_k} 
\lesssim_{\alpha,\beta_k,p_k} \Big(  \int_0^\infty  u^2 \, \d x \Big)^{\frac{p_k-1}{2} + \frac{(3-p_k)(\alpha+1)}{2(\alpha+3)}(1- \theta^k)} 
\Big( \int_0^\infty x^{\alpha+2}|\partial_x^2 u|^{\alpha+1} \, \d x \Big)^{\frac{3-p_k}{\alpha+3} + \frac{(3-p_k)(\alpha+1)}{2(\alpha+3)}\theta^k}.
\end{equation}
Next, we determine $\beta_k$ and $p_k$ by combining \eqref{eq:gamma1-r1} and \eqref{eq:gammak rk}. Starting with $p_k$, from \eqref{eq:r1} and \eqref{eq:gammak rk}, we obtain
\begin{equation*}
\theta^{-k} \Big( \frac{2}{3-p_k} - \frac{4(\alpha+1)}{\alpha+3} \Big) + \frac{4(\alpha+1)}{\alpha+3} = \alpha +1,
\end{equation*}
which gives
\begin{equation*}
    p_k = \frac{2(5\alpha+3) + 3 (\alpha-1) (\alpha+1) \theta^k}{(\alpha+1) (4+(\alpha-1)\theta^k)} = \frac{5 \alpha +3}{2(\alpha+1)}(1+ O(\theta^k)). 
\end{equation*}
Similarly, using \eqref{eq:gamma1} and \eqref{eq:gammak rk}, we solve for $\beta_k$:
\begin{equation*}
\theta^{-k} \Big( \frac{2 \beta_k  p_k}{3-p_k} - \frac{2(\alpha+2)}{\alpha+3} \Big) + \frac{2(\alpha+2)}{\alpha+3} = 1,
\end{equation*}
which gives
\begin{equation}\label{eq:betak_u}
    \beta_k = \frac{2(\alpha+2) - (\alpha+1) \theta^k}{2 (5\alpha+3) + 3(\alpha+1) (\alpha-1)\theta^k} = \frac{\alpha +2}{5\alpha+3}(1+O(\theta^k)).  
\end{equation}
Substituting $\beta_k$ and $p_k$ back into \eqref{eq:weight xu12} provides the weighted estimate
\begin{equation}\label{eq:weight xu13}
\sup_{x \ge 0} x^{\beta_k} |u| 
\lesssim_{\alpha,k} \Big(  \int_0^\infty  u^2 \, \d x \Big)^{\frac{(\alpha+1) \beta_k+\alpha-1}{3\alpha-1}}  \Big( \int_0^\infty x^{\alpha+2}
|\partial_x^2 u|^{\alpha+1} \, \d x \Big)^{\frac{1-2\beta_k}{3\alpha-1}}, \quad \beta_k \uparrow \frac{\alpha +2}{5\alpha+3}.
\end{equation}
\bigskip

\noindent\textbf{Step~3. Combination of bounds of Steps 1 and 2.}
We employ the weighted estimates \eqref{eq:weight xu7} and \eqref{eq:weight xu13} obtained in Steps 1 and 2
by writing $\beta$ as a convex combination
\[
\beta = \frac{1-\alpha}{\alpha+1} \varrho_k + \beta_k (1-\varrho_k), \quad \varrho_k \in [0,1], \quad \beta_k \uparrow \frac{\alpha +2}{5\alpha+3},
\]
with $\beta_k$ as in \eqref{eq:betak_u} and where
\[
\varrho_k = \frac{(\alpha+1) (\beta_k-\beta)}{(\alpha+1)\beta_k+\alpha-1} \quad \text{and} \quad 1-\varrho_k = \frac{(\alpha+1)\beta+\alpha-1}{(\alpha+1)\beta_k+\alpha-1},
\]
so that
\begin{align*}
\sup_{x \ge 0} x^{\beta} |u| \; \quad &\le_{\phantom{\alpha,k}} \; \big(\sup_{x \ge 0} x^{\frac{1-\alpha}{\alpha+1}} |u|\big)^{\varrho_k} \big(\sup_{x \ge 0} x^{\beta_k} |u|\big)^{1-\varrho_k} \\
&\stackrel{\mathclap{\eqref{eq:weight xu7}, \eqref{eq:weight xu13}}}{\lesssim}_{\alpha,k} \; \Big(\int_0^\infty  x^{\alpha+2} \, |\partial_x^2 u|^{\alpha+1} \, \d x\Big)^{\frac{\varrho_k}{\alpha +1}} \Big( \int_0^\infty  u^{2} \, \d x \Big)^{\frac{(\alpha+1) \beta_k+\alpha-1}{3\alpha-1} (1-\varrho_k)}
\\
&\phantom{\lesssim_{\alpha,k}} \; \times 
\Big(\int_0^\infty  x^{\alpha+2} \, |\partial_x^2 u|^{\alpha+1} \, \d x\Big)^{\frac{1-2\beta_k}{3\alpha-1} (1-\varrho_k)} \\
&=_{\phantom{\alpha,k}} \; \Big(\int_0^\infty   u^{2} \, \d x\Big)^{\gamma}  \Big(\int_0^\infty x^{\alpha+2} \, |\partial_x^2 u|^{\alpha+1} \, \d x\Big)^{\delta}, 
\end{align*}
where an straight-forward computation shows that the exponents \(\gamma\) and \(\delta\) are given by
\begin{equation*}
\gamma =\frac{(\alpha+1) \beta_k+\alpha-1}{3\alpha-1} (1-\varrho_k) = \frac{(\alpha +1)\beta+\alpha-1}{3\alpha -1}, \quad \delta = \frac{\varrho_k}{\alpha+1} + \frac{1-2\beta_k}{3\alpha-1} (1-\varrho_k)= \frac{1-2\beta}{3\alpha -1}.
\end{equation*}
Since $\beta_k \uparrow \frac{\alpha+2}{5\alpha+3}$, this finishes the proof of \eqref{eq:weight xu} for \(\beta \in \big[\frac{1-\alpha}{\alpha +1}, \frac{\alpha+2}{5\alpha +3}\big)\).

\bigskip

\noindent\textbf{Step~4. Extension from $C^\infty_\mathrm{c}((0,\infty))$ to $U_\alpha$.}
Steps~1--3 establish \eqref{eq:weight xu} for $u \in C^\infty_\mathrm{c}((0,\infty))$.
For general $u \in U_\alpha$, Lemma~\ref{lem-approx-ualpha} provides a sequence
$(u_k)_{k \in \N} \subset C^\infty_\mathrm{c}((0,\infty))$ with $u_k \to u$ in $U_\alpha$. Applying \eqref{eq:weight xu} to
$u_k - u_m \in C^\infty_\mathrm{c}((0,\infty))$ 
yields
\begin{align*}
\sup_{x \ge 0} x^\beta |u_k - u_m|
&\leq
\|u_k - u_m\|_{U_\alpha}^{\frac{\alpha+(\alpha-1)\beta}{3\alpha-1}}
\to 0 \qquad \text{as } k,m \to \infty,
\end{align*}
so that $(x^\beta u_k)_{k \in \N}$ is a Cauchy sequence in $BC^0([0,\infty))$ and thus converges uniformly to some
$\varphi \in BC^0([0,\infty))$. Thus $x^\beta u$ admits a representative in $\varphi \in BC^0([0,\infty))$, again
denoted by $x^\beta u$, and 
the reverse triangle inequality gives
\begin{equation*}
    \sup_{x \ge 0} x^\beta |u_k| \to \sup_{x \ge 0} x^\beta |u|. 
\end{equation*}
Since $u_k \to u$ in $U_\alpha$ implies convergence of both norms on the
right-hand side of \eqref{eq:weight xu}, passing to the limit in \eqref{eq:weight xu} applied to $u_k$ yields the estimate for $u \in U_\alpha$.
\end{proof}
\begin{lemma}\label{lem:weight xu'}
For \(\alpha >1\) and \(\beta \in \big[\frac{2}{\alpha +1}, \frac{3(\alpha+2)}{5\alpha +3}\big)\), we have
\begin{equation}\label{eq:weight xu'}
\sup_{x \ge 0} x^\beta |\partial_x u| \lesssim_{\alpha,\beta} \Big(\int_0^\infty u^2 \, \d x\Big)^{\frac{(\alpha +1)\beta -2}{3\alpha -1}} \Big(\int_0^\infty x^{\alpha+2} \, |\partial_x^2 u|^{\alpha+1} \, \d x\Big)^{\frac{3-2\beta}{3\alpha -1}}
\end{equation}
for all  $u \in U_\alpha$.
\end{lemma}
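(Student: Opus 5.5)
The plan is to mirror the proof of Lemma~\ref{lem:weight xu}: establish \eqref{eq:weight xu'} first for $u \in C^\infty_\mathrm{c}((0,\infty))$, then extend to $U_\alpha$ via Lemma~\ref{lem-approx-ualpha}, exactly as in Step~4 there. The argument proceeds in the same three steps: an endpoint bound at $\beta = \frac{2}{\alpha+1}$ that involves only the weighted second-derivative integral, an upper endpoint bound as $\beta \uparrow \frac{3(\alpha+2)}{5\alpha+3}$ obtained from the recursion in Lemma~\ref{lem:recursion-dxu}, and interpolation between the two.

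\textbf{Lower endpoint.} Take $\beta = \frac{2}{\alpha+1}$. The fundamental theorem of calculus gives
\[
\sup_x x^{2}|\partial_x u|^{\alpha+1} \lesssim \int_0^\infty \big(x|\partial_x u|^{\alpha+1} + x^{2}|\partial_x u|^{\alpha}|\partial_x^2 u|\big)\,\d x .
\]
Young's inequality bounds the second term by $\int_0^\infty (x|\partial_x u|^{\alpha+1} + x^{\alpha+2}|\partial_x^2u|^{\alpha+1})\,\d x$, since the weights match: $x^{2} = x^{\frac{\alpha}{\alpha+1}}\, x^{\frac{\alpha+2}{\alpha+1}}$. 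The second Hardy inequality in \eqref{eq:Hardy} then yields
\[
\sup_x x^{\frac{2}{\alpha+1}}|\partial_x u| \lesssim \Big(\int_0^\infty x^{\alpha+2}|\partial_x^2u|^{\alpha+1}\,\d x\Big)^{\frac1{\alpha+1}},
\]
which is consistent with the exponents claimed in \eqref{eq:weight xu'}.

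\textbf{Upper endpoint.} For $p>1$ the fundamental theorem of calculus gives
\[
\sup_x x^{\beta p}|\partial_x u|^p \lesssim \int_0^\infty |\partial_x u|^{p-1}x^{\beta p-1}\big(|\partial_x u| + x|\partial_x^2 u|\big)\,\d x .
\]
The term containing $x|\partial_x^2 u|$ is handled by Hölder's inequality, pairing $x^{\frac{\alpha+2}{\alpha+1}}|\partial_x^2u| \in L^{\alpha+1}$ against $x^{\beta p - \frac{\alpha+2}{\alpha+1}+1}\,|\partial_x u|^{p-1} \in L^{\frac{\alpha+1}{\alpha}}$. The term $x^{\beta p-1}|\partial_x u|^p$ is placed in the same conjugate space, via Jensen's or Hardy's inequality, or simply absorbed. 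This reduces the estimate to a single integral $\int_0^\infty x^{\nu r}|\partial_x u|^r\,\d x$ with $r = \frac{(p-1)(\alpha+1)}{\alpha}$. That integral is then estimated with Lemma~\ref{lem:recursion-dxu}, choosing $k$ and $(p,\beta) = (p_k,\beta_k)$ so that $\gamma_k = 1$ and $r_k = \alpha+1$. At that point Hardy's inequality closes the bound in terms of $\|u\|_{L^2}$ and $\int_0^\infty x^{\alpha+2}|\partial_x^2u|^{\alpha+1}\,\d x$.

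Solving the linear relations \eqref{eq:gamma1-r1} should give $\beta_k \uparrow \frac{3(\alpha+2)}{5\alpha+3}$. This value is three times the threshold of Lemma~\ref{lem:weight xu}, which is consistent with $\partial_x u$ gaining roughly one power of $x$ relative to $u$, up to the change of $p$. The exponents on the right-hand side are then fixed by scaling. Under $u(x) \mapsto \lambda u(\mu x)$, matching homogeneity forces the exponents $\frac{(\alpha+1)\beta-2}{3\alpha-1}$ and $\frac{3-2\beta}{3\alpha-1}$. It is therefore enough to track them only up to this check rather than computing them in full.

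\textbf{Interpolation.} Write $\beta$ as a convex combination of $\frac{2}{\alpha+1}$ and $\beta_k$, and use
\[
\sup_x x^{\beta}|\partial_x u| \le \Big(\sup_x x^{\frac{2}{\alpha+1}}|\partial_x u|\Big)^{\varrho}\Big(\sup_x x^{\beta_k}|\partial_x u|\Big)^{1-\varrho},
\]
as in Step~3 of the proof of Lemma~\ref{lem:weight xu}. Since the exponents in \eqref{eq:weight xu'} are affine in $\beta$, the powers combine to the claimed ones.

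The main obstacle is the upper endpoint. The Hölder splitting must produce an integral of exactly the form required by Lemma~\ref{lem:recursion-dxu}, with $r>1$ and the correct $\nu$, while at the same time pulling out $\|u\|_{L^2}$ to a positive power. Here that power enters only through the recursion, not directly as in \eqref{eq:weight xu8}. If the direct splitting fails, I would fall back on an alternative. First apply integration by parts once to write $\sup_x x^{\beta p}|\partial_x u|^p$ in terms of $u\,\partial_x^2 u$ weighted integrals, which brings $\|u\|_{L^2}$ in via Cauchy--Schwarz, in the spirit of \eqref{eq:weight xu9}. Then feed the remaining $\partial_x u$ integral into Lemma~\ref{lem:recursion-dxu}. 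Finally, verify that the resulting threshold is still $\frac{3(\alpha+2)}{5\alpha+3}$.
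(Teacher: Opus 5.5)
Your proposal is correct and follows the paper's proof essentially step for step. The paper likewise obtains the endpoint $\beta=\frac{2}{\alpha+1}$ from the fundamental theorem of calculus, H\"older/Young and Hardy; it obtains the upper endpoint by reducing to $\int_0^\infty x^{\nu r}|\partial_x u|^r\,\d x$ with $\nu r=\frac{\beta p(\alpha+1)-(\alpha+2)}{\alpha}$ and $r=\frac{(p-1)(\alpha+1)}{\alpha}$, then applying Lemma~\ref{lem:recursion-dxu} with $\gamma_k=1$ and $r_k=\alpha+1$; it finishes by interpolating convexly and extending by density. One small slip: the H\"older partner of $x^{\frac{\alpha+2}{\alpha+1}}|\partial_x^2 u|$ is $x^{\beta p-\frac{\alpha+2}{\alpha+1}}|\partial_x u|^{p-1}$, with no extra $+1$ in the exponent, and only that weight gives $\beta_k\uparrow\frac{3(\alpha+2)}{5\alpha+3}$.
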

\begin{proof}
The proof follows a similar approach as the one for Lemma~\ref{lem:weight xu}, but we include it here for completeness. We first establish \eqref{eq:weight xu'} for $u \in C^\infty_\mathrm{c}((0,\infty))$ and then extend to $u \in U_\alpha$ by approximation in Step~4.

\bigskip 

\noindent\textbf{Step~1. Weighted estimate and lower bound for \(\beta\).}
Let $p>1$ and $\beta \in \R$. In this step, we derive a weighted estimate for the critical case where the $L^2$-norm of $u$ does not appear on the right-hand side of inequality \eqref{eq:weight xu'}. 
Applying the fundamental theorem of calculus and H\"older’s inequality, we obtain
\begin{align}
\sup_{x \ge 0} x^{\beta p} |\partial_x u|^p &\le \int_0^\infty \big|\partial_x (x^{\beta p} ((\partial_x u)^2)^{\frac p 2})\big| \, \d x \nonumber \\
&= \beta p \int_0^\infty x^{\beta p - 1} |\partial_x u|^p \, \d x + \int_0^\infty x^{\beta p} \big| \tfrac p 2 ((\partial_x u)^2)^{\frac p 2 - 1} 2 (\partial_x u) (\partial_x^2 u)\big| \, \d x \nonumber \\
&\lesssim_{\beta,p} \int_0^\infty x^{\beta p -1} |\partial_x u|^{p}  \, \d x + \int_0^\infty x^{\beta p} |\partial_x u|^{p-1} |\partial_x^2 u| \, \d x \nonumber \\
&= \int_0^\infty x^{\beta p -\frac{\alpha+2}{\alpha +1}} |\partial_x u|^{p-1} x^{\frac{\alpha+2}{\alpha +1}}\left(|\partial_x^2 u| + x^{-1}|\partial_x u|\right)  \, \d x \nonumber \\
& \leq \Big( \int_0^\infty x^{\frac{\beta p (\alpha+1) - (\alpha+2)}{\alpha}} |\partial_x u|^{\frac{(p-1)(\alpha +1)}{\alpha}} \, \d x \Big)^{\frac{\alpha}{\alpha+1}} \Big( \int_0^\infty x^{\alpha+2} \left( |\partial_x^2 u| + x^{-1}|\partial_x u| \right)^{\alpha +1} \, \d x\Big)^{\frac{1}{\alpha +1}}. \label{eq:weight xu'1}
\end{align}
Now, applying Jensen’s inequality followed by Hardy’s inequality \eqref{eq:Hardy} of Lemma~\ref{lem:Hardy}, we estimate the second integral on the right-hand side of \eqref{eq:weight xu'1} and obtain
\begin{equation}\label{eq:weight xu'2}
\begin{split}
   &\sup_{x \ge 0} x^{\beta p} |\partial_x u|^p \lesssim_{\alpha,\beta,p} \Big( \int_0^\infty x^{\frac{\beta p(\alpha +1) -(\alpha+2)}{\alpha}} |\partial_x u|^{\frac{(p-1)(\alpha +1)}{\alpha}} \, \d x \Big)^{\frac{\alpha}{\alpha+1}}\Big(\int_0^\infty  x^{\alpha+2} \, |\partial_x^2 u|^{\alpha+1} \, \d x\Big)^{\frac{1}{\alpha +1}}.
\end{split}
\end{equation}
To apply Hardy’s inequality again to the first integral on the right-hand side of \eqref{eq:weight xu'2}, we require the exponents to satisfy
\begin{equation*} 
\frac{(p-1)(\alpha +1)}{\alpha} = \alpha +1 \qquad \text{and} \qquad 
\frac{\beta p(\alpha +1) -(\alpha+2)}{\alpha} = 1. 
\end{equation*}
Solving for $\beta$ and $p$, we find  
\begin{equation*}
    \beta = \frac{2}{\alpha + 1} \quad \text{and} \quad p = \alpha + 1,
\end{equation*}
and substituting these values into \eqref{eq:weight xu'2}, we deduce the weighted estimate
\begin{equation}\label{eq:weight xu'3}
\sup_{x \ge 0} x^{\frac{2}{\alpha + 1}} |\partial_x u| \lesssim_{\alpha} \Big(\int_0^\infty  x^{\alpha+2} \, |\partial_x^2 u|^{\alpha+1} \, \d x\Big)^{\frac{1}{\alpha +1}}. 
\end{equation}
\bigskip

\noindent\textbf{Step~2. Weighted estimate and upper bound for $\beta$.} 
We now use estimate~\eqref{eq:recursion-dxu} of Lemma~\ref{lem:recursion-dxu}, where we set $\nu r = \frac{\beta p(\alpha +1) -(\alpha+2)}{\alpha}$ and $r = \frac{(p-1) (\alpha+1)}{\alpha}$, and where $\gamma_k$ and $r_k$ are given by \eqref{eq:gamma1-r1}. Thus, \eqref{eq:weight xu'2} can be further estimated as
\begin{align}
\sup_{x \ge 0} x^{\beta p} |\partial_x u|^p 
&\lesssim_{\alpha,p,\beta} \Big(  \int_0^\infty  u^2 \, \d x \Big)^{\frac{\alpha}{\alpha+3} (1-\theta^k)} \Big( \int_0^\infty x^{\gamma_k}
|\partial_x u|^{r_k} \, \d x \Big)^{\frac{\alpha}{\alpha+1} \theta^k}
\nonumber \\
&\phantom{\lesssim_{\alpha,p,\beta}} \times
\Big( \int_0^\infty  x^{\alpha+2} |\partial_x^2 u|^{\alpha+1} \, \d x\Big)^{\frac{3}{\alpha+3} - \frac{2 \alpha}{(\alpha+1)(\alpha+3)} \theta^k}. \label{eq:weight xu'4}
\end{align}
In order to use Hardy’s inequality \eqref{eq:Hardy} of Lemma~\ref{lem:Hardy} on the second integral on the right-hand side of \eqref{eq:weight xu'4}, we impose the conditions
\begin{equation}\label{eq:gammak rk'}
    \gamma_k = 1\qquad \text{and} \qquad r_{k} = \alpha +1
 \end{equation}
which we solve for $\beta = \beta_k$ and $p = p_k$ further below. 
Using this in \eqref{eq:weight xu'4} and applying \eqref{eq:gammak rk'} along with Hardy’s inequality, we deduce
\begin{equation}\label{eq:weight xu'5}
 \sup_{x \ge 0} x^{\beta_k p_k} |\partial_x u|^{p_k} 
 \lesssim_{\alpha,\beta_k,p_k} \Big(  \int_0^\infty  u^2 \, \d x \Big)^{\frac{\alpha}{\alpha+3} (1-\theta^k)}  \Big( \int_0^\infty x^{\alpha+2}
 |\partial_x^2 u|^{\alpha+1} \, \d x \Big)^{\frac{3}{\alpha+3} + \frac{\alpha}{\alpha+3}\theta^k}.
 \end{equation}
 Next, we determine $\beta_k$ and $p_k$ by combining \eqref{eq:gamma1-r1} and \eqref{eq:gammak rk'}. Starting with $p_k$, from \eqref{eq:r1} and \eqref{eq:gammak rk'}, we obtain
\begin{equation*}
\theta^{-k} \Big( \frac{(p_k-1)(\alpha+1)}{\alpha} - \frac{4(\alpha+1)}{\alpha+3} \Big) + \frac{4(\alpha+1)}{\alpha+3} = \alpha +1
 \end{equation*}
which simplifies to
\begin{equation*}
p_k=\frac{5 \alpha +3 + \alpha (\alpha-1) \theta^k}{\alpha+3} \downarrow \frac{5\alpha+3}{\alpha+3}.
\end{equation*}
Similarly, using \eqref{eq:gamma1} and \eqref{eq:gammak rk'}, we solve for $\beta_k$
\begin{equation*}
\theta^{-k} \Big(\frac{\beta_k p_k (\alpha+1)}{\alpha} - 3 \frac{(\alpha+1)(\alpha+2)}{\alpha (\alpha+3)} \Big) + 2 \frac{\alpha+2}{\alpha+3} = 1
 \end{equation*}
and obtain
\begin{equation}\label{eq:betak_dxu}
\beta_k = \frac{3(\alpha+2) - \alpha \theta^k}{5\alpha+3+\alpha (\alpha-1)\theta^k} \uparrow 3 \frac{\alpha +2}{5\alpha+3} .
\end{equation}
Substituting $\beta_k$ and $p_k$ back into \eqref{eq:weight xu'5} provides the weighted estimate
\begin{equation}\label{eq:weight xu'6}
\sup_{x \ge 0} x^{\beta_k} |\partial_x u| 
\lesssim_{\alpha,k} \Big(\int_0^\infty  u^2 \, \d x \Big)^{\frac{(\alpha+1)\beta_k-2}{3\alpha-1}}  \Big( \int_0^\infty x^{\alpha+2}|\partial_x^2 u|^{\alpha+1} \, \d x \Big)^{\frac{3-2\beta_k}{3\alpha-1}}.
\end{equation}
\bigskip

\noindent\textbf{Step~3. Combination of estimates of Steps 1 and 2.}
We write $\beta$ as a convex combination
\begin{equation*}
\beta = \frac{2}{\alpha+1} \varrho_k + \beta_k (1-\varrho_k), \quad \varrho_k \in [0,1], \quad \beta_k \uparrow 3 \frac{\alpha+2}{5\alpha+3},
\end{equation*}
where $\beta_k$ is as in \eqref{eq:betak_dxu} and where
\[
\varrho_k = \frac{(\alpha+1) (\beta_k-\beta)}{(\alpha+1) \beta_k - 2} \quad \text{and} \quad 1-\varrho_k = \frac{(\alpha+1) \beta - 2}{(\alpha+1) \beta_k - 2},
\]
so that we obtain the estimate 
\begin{align*}
\sup_{x \ge 0} x^{\beta} |\partial_x u| \;\quad &\le_{\phantom{\alpha,k}} \; \big(\sup_{x \ge 0} x^{\frac{2}{\alpha+1}} |\partial_x u|\big)^{\varrho_k} \big(\sup_{x \ge 0} x^{\beta_k} |\partial_x u|\big)^{1-\varrho_k} \\
&\stackrel{\mathclap{\eqref{eq:weight xu'3}, \eqref{eq:weight xu'6}}}{\lesssim}_{\alpha,k} \; \Big(\int_0^\infty  x^{\alpha+2} \, |\partial_x^2 u|^{\alpha+1} \, \d x\Big)^{\frac{\varrho_k}{\alpha +1}} \Big(  \int_0^\infty  u^2 \, \d x \Big)^{\frac{(\alpha+1)\beta_k-2}{3\alpha-1} (1-\varrho_k)} \\
&\phantom{\lesssim_{\alpha,k}} \; \times \Big( \int_0^\infty x^{\alpha+2}
 		|\partial_x^2 u|^{\alpha+1} \, \d x \Big)^{\frac{3-2\beta_k}{3\alpha-1} (1-\varrho_k)} \\
   &=_{\phantom{\alpha,k}} \; \Big(\int_0^\infty u^{2} \, \d x\Big)^\gamma  \Big(\int_0^\infty x^{\alpha+2} \, |\partial_x^2 u|^{\alpha+1} \, \d x\Big)^\delta. 
\end{align*}
The exponents \(\gamma\) and \(\delta\) can be readily computed as
\begin{equation*}
\gamma = \frac{(\alpha+1) \beta_k - 2}{3 \alpha - 1} (1-\varrho_k) = \frac{(\alpha +1)\beta-2}{3\alpha -1}, \qquad \delta = \frac{\varrho_k}{\alpha+1} + \frac{3-2\beta_k}{3 \alpha - 1} (1-\varrho_k) = \frac{3-2\beta}{3\alpha -1}.
\end{equation*}
Since $\beta_k \uparrow 3 \frac{\alpha+2}{5\alpha+3}$, this proves \eqref{eq:weight xu'} for $\beta \in \big[\frac{2}{\alpha +1}, \frac{3(\alpha+2)}{5\alpha +3}\big)$. 

\bigskip

\noindent\textbf{Step~4. Extension from $C^\infty_\mathrm{c}((0,\infty))$ to $U_\alpha$.}
The argument is analogous to Step~4 of Lemma~\ref{lem:weight xu}, that is, a sequence $u_k \in C_\mathrm{c}^\infty((0,\infty))$ is such that $u_k \to u$ in $U_\alpha$, which implies that $x^\beta \partial_x u_k \to \varphi$ uniformly by \eqref{eq:weight xu'}. This implies $x^\beta \partial_x u = \varphi$ and taking the limit in \eqref{eq:weight xu'} with $u = u_k$ finishes the proof.
\end{proof}
\begin{lemma}\label{lem:xu-int}
For $\alpha >1$, the following weighted estimate holds:
\begin{equation}\label{eq:weight xu lp}
\int_0^\infty x^{\nu p} | u|^p \, \d x \lesssim_{\alpha,\nu,p} \Big(\int_0^\infty u^2 \, \d x\Big)^{\gamma} \Big(\int_0^\infty x^{\alpha+2} \, |\partial_x^2 u|^{\alpha+1} \, \d x\Big)^{\delta}
\end{equation}
for all $u \in U_\alpha$, where the exponents $\gamma$ and $\delta$ are given by 
\begin{equation*}
\gamma = \frac{(\alpha -1)p + (\alpha +1)(\nu p +1)}{3\alpha -1} \quad \text{ and } \quad \delta= \frac{(1-2\nu)p-2}{3\alpha -1}
\end{equation*}
and one of the following conditions is satisfied:
\begin{enumerate}[(i)]
\item\label{it:lp-cond1} it holds $2 \leq p \leq \alpha +1$ and $\nu$ must satisfy $\nu = 0$ if $p = 2$ and
\begin{equation*}
\nu \in \Big[- \frac{\alpha(p-2)}{(\alpha -1)p}, \frac{(\alpha +2)(p-2)}{(5\alpha +3)p}\Big)
\end{equation*}
if $p > 2$;
\item\label{it:lp-cond2} it holds $p \geq \alpha +1$ and $\nu$  must satisfy 
\begin{equation*}
 \nu \in \Big[- \frac{\alpha +1 + (\alpha-1)p}{(\alpha +1)p}, \frac{(\alpha +2)(p-2)}{(5\alpha +3)p}\Big).
\end{equation*}
\end{enumerate}
\end{lemma}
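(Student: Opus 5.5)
The plan is to interpolate between the $L^2$ norm and the pointwise weighted bounds of Lemma~\ref{lem:weight xu}, splitting the integrand as $x^{\nu p}|u|^p = u^2 \cdot x^{\nu p}|u|^{p-2}$. For $p=2$, $\nu=0$ the claim is trivial with $\gamma=1$, $\delta=0$. For $p>2$ we write $x^{\nu p}|u|^{p-2} = (x^{\beta}|u|)^{p-2}$ with $\beta \coloneq \frac{\nu p}{p-2}$. This gives
\[
\int_0^\infty x^{\nu p}|u|^p\,\d x \le \Big(\sup_{x\ge0} x^\beta |u|\Big)^{p-2}\int_0^\infty u^2\,\d x .
\]
Inserting \eqref{eq:weight xu} yields exponent $1+(p-2)\frac{\alpha-1+(\alpha+1)\beta}{3\alpha-1}$ on $\int u^2$ and $(p-2)\frac{1-2\beta}{3\alpha-1}$ on the weighted $\partial_x^2 u$ integral. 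A short computation with $(p-2)\beta=\nu p$ shows these equal $\gamma$ and $\delta$ as stated. The admissible range $\beta\in[\frac{1-\alpha}{\alpha+1},\frac{\alpha+2}{5\alpha+3})$ translates into $\nu<\frac{(\alpha+2)(p-2)}{(5\alpha+3)p}$ for the upper bound, which matches both (i) and (ii), and into a lower bound $\nu\ge\frac{(1-\alpha)(p-2)}{(\alpha+1)p}$.

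That lower bound is weaker than the ones stated, so to reach the full lower ranges a second interpolation endpoint is needed, namely the Hardy bound \eqref{eq:Hardy}:
\[
\int_0^\infty x^{-\alpha}|u|^{\alpha+1}\,\d x\lesssim\int_0^\infty x^{\alpha+2}|\partial_x^2u|^{\alpha+1}\,\d x .
\]
In case (i), $2\le p\le\alpha+1$, Hölder's inequality interpolates between $\int u^2$ (weight $x^0$, power $2$) and this Hardy integral (weight $x^{-\alpha}$, power $\alpha+1$). Choosing $p=2(1-\lambda)+(\alpha+1)\lambda$ gives weight exponent $-\alpha\lambda$ with $\lambda=\frac{p-2}{\alpha-1}$, i.e.\ $\nu p=-\frac{\alpha(p-2)}{\alpha-1}$. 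This is exactly the lower endpoint in (i), with exponents $(1-\lambda, \lambda)$, and one checks these coincide with $\gamma,\delta$.

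In case (ii), $p\ge\alpha+1$, the lower endpoint is instead obtained by splitting $x^{\nu p}|u|^p = x^{-\alpha}|u|^{\alpha+1}\,(x^{\frac{1-\alpha}{\alpha+1}}|u|)^{p-\alpha-1}$. This factors into the Hardy integral times \eqref{eq:weight xu7} raised to $p-\alpha-1$. It gives $\nu p = -\alpha+\frac{(1-\alpha)(p-\alpha-1)}{\alpha+1} = -\frac{\alpha+1+(\alpha-1)p}{\alpha+1}$, matching (ii), and it has $\gamma=0$, consistent with the formula. Intermediate $\nu$ in each case then follow by Hölder interpolation of the integral between the lower-endpoint estimate and the sup-based estimate near the upper endpoint. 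Since $\gamma,\delta$ are affine in $\nu p$ for fixed $p$, the exponents interpolate correctly, and the upper endpoint being open causes no trouble because we only interpolate with $\beta$ strictly inside its range.

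All estimates are first done for $u\in C^\infty_{\mathrm c}((0,\infty))$ and extended to $U_\alpha$ by Lemma~\ref{lem-approx-ualpha} and Fatou's lemma, as in Step~4 of Lemma~\ref{lem:weight xu}. The main obstacle is the bookkeeping of exponents: verifying that each endpoint estimate reproduces precisely the stated $\gamma,\delta$, and that the log-convex (Hölder-in-weight) interpolation between endpoints is legitimate. For the latter I would use $x^{\nu p}=(x^{\nu_0 p})^{s}(x^{\nu_1 p})^{1-s}$ with the same $p$, which is valid since both endpoint estimates hold at the same $p$.
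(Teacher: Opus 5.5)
Your proposal is correct and follows the paper's proof essentially step by step. The matching ingredients are:
\begin{itemize}
\item the Hölder interpolation between $\int_0^\infty u^2\,\d x$ and the Hardy integral $\int_0^\infty x^{-\alpha}|u|^{\alpha+1}\,\d x$ for the lower endpoint when $2\le p\le\alpha+1$;
\item the splitting into $\big(\sup_{x\ge0} x^{\frac{1-\alpha}{\alpha+1}}|u|\big)^{p-\alpha-1}$ times the Hardy integral for the lower endpoint when $p\ge\alpha+1$;
\item the sup-based upper-endpoint bound via Lemma~\ref{lem:weight xu} with $\beta=\frac{\nu p}{p-2}$;
\item Hölder interpolation in the weight at fixed $p$;
\item the extension to $U_\alpha$ by density and Fatou's lemma.
\end{itemize}
I checked the exponent bookkeeping you describe, and it reproduces the stated $\gamma$ and $\delta$ at each endpoint.
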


\begin{proof}
We first establish \eqref{eq:weight xu lp} for $u \in C^\infty_\mathrm{c}((0,\infty))$
and then extend to $u \in U_\alpha$ by approximation at the end of the proof. The proof follows the same reasoning as the proofs of Lemmata \ref{lem:weight xu} and \ref{lem:weight xu'}.

\bigskip

\noindent\textbf{Step~1. Weighted estimate and lower bound for \(\nu\).}
We begin with the case $2\leq p \leq \alpha+1$ and set $\nu =- \frac{(p-2)\alpha}{p(\alpha-1)}$.
Applying H\"older’s inequality followed by Hardy’s inequality (cf.~Lemma~\ref{lem:Hardy}), we obtain
\begin{align}
\int_0^\infty x^{- \frac{\alpha (p-2)}{\alpha-1}} \, |u|^p \, \d x 
\;\, &=_{\phantom\alpha} \int_0^\infty |u|^{\frac{2(\alpha +1 -p)}{\alpha-1}} \, x^{- \frac{\alpha (p-2)}{\alpha-1}} \,|u|^{\frac{(\alpha+1)(p-2)}{\alpha-1}} \, \d x \nonumber \\
& \leq_{\phantom\alpha} \Big( \int_0^\infty u^2 \, \d x\Big)^{\frac{\alpha+1-p}{\alpha -1}} \Big( \int_0^\infty x^{-\alpha} | u|^{\alpha +1} \, \d x \Big)^{\frac{p-2}{\alpha-1}} \nonumber \\
& \stackrel{\mathclap{\eqref{eq:Hardy}}}{\lesssim}_{\alpha} \Big( \int_0^\infty u^2 \, \d x\Big)^{\frac{\alpha+1-p}{\alpha -1}} \Big( \int_0^\infty x^{\alpha+2} |\partial_x^2 u|^{\alpha +1} \, \d x \Big)^{\frac{p-2}{\alpha-1}}, \label{eq:weight xu lp1}
\end{align}
where we have used
\[
p \ge 2 \quad \Rightarrow \quad \frac{2(\alpha +1 -p)}{\alpha-1} \le 2 \qquad \text{and} \qquad p \le \alpha+1 \quad \Rightarrow \quad \frac{(\alpha+1)(p-2)}{\alpha-1} \le \alpha+1.
\]
For $p\geq \alpha +1$, we apply H\"older's inequality again, which gives
\begin{equation}\label{eq:weight xu lp2}
\int_0^\infty x^{\nu p} \, |u|^p \, \d x 
\leq   \Big( \sup_{x \ge 0} x^{ \frac{1-\alpha}{\alpha+1}} \, |u|\Big)^{p-(\alpha +1)} 
\int_0^\infty x^{\nu p + (p-(\alpha +1))\frac{(\alpha-1)}{\alpha+1} } \, |u|^{\alpha +1} \, \d x.
\end{equation}
We require the condition
\begin{equation*}
\nu p + (p-(\alpha +1))\frac{(\alpha-1)}{\alpha+1} = -\alpha
\end{equation*}
so that Hardy's inequality \eqref{eq:Hardy} can be used on the second term. Solving for $\nu$, we obtain
\begin{equation*}
\nu = - \frac{(\alpha -1) p + \alpha +1}{p(\alpha +1)}.
\end{equation*}
With this choice of $\nu$, it follows from \eqref{eq:weight xu lp2} and Lemma~\ref{lem:weight xu}
\begin{align}\label{eq:weight xu lp3}
\int_0^\infty x^{ - \frac{p(\alpha -1) + \alpha +1}{\alpha +1}} \, |u|^p \, \d x 
\;\, &\leq_{\phantom\alpha}   \Big( \sup_{x \ge 0} x^{ \frac{1-\alpha}{\alpha+1}} \, |u|\Big)^{p-(\alpha +1)}
\int_0^\infty x^{-\alpha} \, |u|^{\alpha +1} \, \d x  \nonumber \\
& \stackrel{\mathclap{\eqref{eq:weight xu}}}{\lesssim}_{\alpha} \Big( \int_0^\infty x^{\alpha+2} |\partial_x^2 u|^{\alpha +1} \, \d x \Big)^{\frac{p}{\alpha+1}}.
\end{align}
This is confirms the claimed inequality \eqref{eq:weight xu lp} for $p\geq \alpha +1$, with the critical case where the $L^2$-norm of $u$ does not appear at the lower bound $\nu = - \frac{p(\alpha -1) + \alpha +1}{p(\alpha +1)}$. 
Thus, we have proved \eqref{eq:weight xu lp} for both cases \eqref{it:lp-cond1} and \eqref{it:lp-cond2} and the respective lower bounds of $\nu$, completing Step~1. 

\bigskip

\noindent\textbf{Step~2. Weighted estimate and upper bound for $\nu$.} Let $p > 2$  and $\nu \in \R$. 
Applying H\"older's inequality, we arrive at
\begin{equation*}
\int_0^\infty x^{\nu p} \, |u|^p \, \d x 
\leq \Big( \sup_{x \ge 0}x^{\frac{\nu p}{p-2}} \,|u|\Big)^{p-2} \int_0^\infty u^2 \, \d x.
\end{equation*}
For $\nu \in \big[\frac{(1-\alpha)(p-2)}{(\alpha +1) p}, \frac{(\alpha+2) (p-2)}{(5\alpha +3) p}\big)$ we can apply Lemma \ref{lem:weight xu} to the first-term on the right-hand side, which gives
\begin{align}
\int_0^\infty x^{\nu p} \, |u|^p \, \d x 
\,\; &\leq_{\phantom\alpha} \Big( \sup_{x \ge 0}x^{\frac{\nu p}{p-2}} \,|u|\Big)^{p-2} \int_0^\infty u^2 \, \d x \nonumber \\
&\stackrel{\mathclap{\eqref{eq:weight xu}}}{\lesssim}_{\alpha} \Big( \int_0^\infty u^2 \, \d x\Big)^{\frac{(\alpha -1) p + (\alpha+1) (\nu p+1)}{3\alpha -1}} \Big( \int_0^\infty x^{\alpha+2} |\partial_x^2 u|^{\alpha +1} \, \d x \Big)^{\frac{p-2 - 2 \nu p}{3\alpha-1}}. \label{eq:weight xu lp4}
\end{align}
This establishes the upper bound for $\nu$ in \eqref{eq:weight xu lp}. 
\bigskip

\noindent\textbf{Step~3. Combination of estimates of Steps 1 and 2.}  We write $\nu$ as a convex combination
\begin{equation*}
    \nu = \begin{cases} -\frac{\alpha (p-2)}{(\alpha -1) p} \varrho + \tilde \nu (1-\varrho) & \text{for $2 \leq p \leq \alpha +1$,}\\[0.4cm]
     - \frac{\alpha +1 + (\alpha-1)p}{(\alpha +1) p} \varrho + \tilde \nu (1-\varrho) & \text{for $ p \geq \alpha +1$,}
    \end{cases}
  \end{equation*}
where $\varrho \in [0,1]$ and $\tilde\nu \in \big[\frac{(1-\alpha)(p-2)}{(\alpha +1) p}, \frac{(\alpha+2) (p-2)}{(5\alpha +3) p}\big)$. We can explicitly compute
 \[
 \varrho = \begin{cases} \frac{(\alpha-1) (\tilde\nu-\nu) p}{\alpha (p-2) + (\alpha-1) \tilde\nu p} & \text{for $2 \leq p \leq \alpha +1$,}\\[0.4cm]
 \frac{(\alpha+1) (\tilde\nu-\nu) p}{(\alpha-1) p + (\alpha+1) (\tilde\nu p + 1)} & \text{for $ p \geq \alpha +1$,}
 \end{cases}
 \]
 and
\[
 1-\varrho = \begin{cases} \frac{\alpha (p-2) + (\alpha-1) \nu p}{\alpha (p-2) + (\alpha-1) \tilde\nu p} & \text{for $2 \leq p \leq \alpha +1$,}\\[0.4cm]
 \frac{(\alpha-1) p + (\alpha+1) (\nu p + 1)}{(\alpha-1) p + (\alpha+1) (\tilde\nu p + 1)} & \text{for $ p \geq \alpha +1$.}
 \end{cases}
 \]
Using H\"older's inequality we infer
\begin{align}
   \int_0^\infty x^{\nu p} \, |u|^p \, \d x 
     &\le \begin{cases} \big(\int_0^\infty x^{-\frac{\alpha (p-2)}{(\alpha -1)}} |u|^p \, \d x\big)^\varrho \big(\int_0^\infty x^{\tilde\nu p} |u|^p \, \d x\big)^{1-\varrho} & \text{for $2 \leq p \leq \alpha +1$,} \\[0.4cm]
     \big(\int_0^\infty x^{-\frac{\alpha +1 + (\alpha-1)p}{\alpha +1}} |u|^p \, \d x\big)^\varrho \big(\int_0^\infty x^{\tilde\nu p} |u|^p \, \d x\big)^{1-\varrho} & \text{for $p \geq \alpha +1$,} \end{cases} \nonumber \\[0.2cm]
     &\lesssim_{\alpha,\nu,p} \Big( \int_0^\infty u^2 \, \d x\Big)^{\gamma} \Big( \int_0^\infty x^{\alpha+2} |\partial_x^2 u|^{\alpha +1} \, \d x \Big)^{\delta}, \label{eq:scaling lp}
 \end{align}
 where estimates \eqref{eq:weight xu lp1}, \eqref{eq:weight xu lp3}, and \eqref{eq:weight xu lp4} were used in the last step, and the exponents \(\gamma\) and \(\delta\) satisfy
\begin{equation*}
 \gamma = \begin{cases}\frac{\alpha+1-p}{\alpha-1} \varrho + \frac{(\alpha-1) p + (\alpha+1) (\tilde\nu p + 1)}{3\alpha-1} (1-\varrho) & \text{for $2 \leq p \leq \alpha +1$,} \\[0.2cm]
 \frac{(\alpha-1) p + (\alpha+1) (\tilde\nu p + 1)}{3\alpha-1} (1-\varrho) & \text{for $p \geq \alpha +1$,} \end{cases} \Bigg\} = \frac{(\alpha -1)p + (\alpha +1)(\nu p +1)}{3\alpha -1} 
 \end{equation*}
 and
 \begin{equation*}
 \delta = \begin{cases}\frac{p-2}{\alpha-1} \varrho + \frac{p-2-2\tilde{\nu} p}{3\alpha-1} (1-\varrho) & \text{for $2 \leq p \leq \alpha +1$,} \\[0.2cm]
 \frac{p}{\alpha+1} \varrho + \frac{p-2-2\tilde{\nu} p}{3\alpha-1} (1-\varrho) & \text{for $p \geq \alpha +1$,} \end{cases} \Bigg\} = \frac{p(1-2\nu) -2}{3\alpha -1}.
 \end{equation*}
 This proves \eqref{eq:weight xu lp} provided conditions \eqref{it:lp-cond1} or \eqref{it:lp-cond2} are satisfied.
\bigskip

 \noindent\textbf{Step~4. Extension from $C^\infty_\mathrm{c}((0,\infty))$ to $U_\alpha$.}
 For $u \in U_\alpha$, let $(u_k)_{k \in \N} \subset C^\infty_\mathrm{c}((0,\infty))$
 with $u_k \to u$ in $U_\alpha$. Since Lemma~\ref{lem:weight xu} has been
 extended to $U_\alpha$, applying it to $u_k - u$ gives $u_k \to u$ pointwise,
 and Fatou's lemma yields
 \begin{align*}
 \int_0^\infty x^{\nu p} |u|^p \, \d x
 &\leq \liminf_{k \to \infty} \int_0^\infty x^{\nu p} |u_k|^p \, \d x
 \\
 &\lesssim_{\alpha,\nu,p} 
 \lim_{k \to \infty} \|u_k\|_{L^2((0,\infty))}^{\gamma}
 \big\|x^{\frac{\alpha+2}{\alpha+1}} \partial_x^2 u_k\big\|_{L^{\alpha+1}((0,\infty))}^{\delta}
 = \|u\|_{L^2((0,\infty))}^{\gamma}
 \big\|x^{\frac{\alpha+2}{\alpha+1}} \partial_x^2 u\big\|_{L^{\alpha+1}((0,\infty))}^{\delta},   
 \end{align*}
 which is \eqref{eq:weight xu lp} for $u \in U_\alpha$.
\end{proof}
\begin{lemma}\label{lem:weight xu lp'}
Let $\alpha >1$. Then the following estimate holds:
 \begin{equation}\label{eq:weight xu lp'}
\int_0^\infty x^{\nu p} |\partial _x u|^p \, \d x \lesssim_\alpha \Big(\int_0^\infty u^2 \, \d x\Big)^\gamma \Big(\int_0^\infty x^{\alpha+2} \, |\partial_x^2 u|^{\alpha+1} \, \d x\Big)^\delta
 \end{equation}
 for all $u \in U_\alpha$, where the exponents $\gamma$ and $\delta$ are given by
 \begin{equation*}
     \gamma = \frac{\alpha+1+((\alpha+1)\nu - 2) p}{3\alpha -1} \quad \text{ and } \quad \delta= \frac{(3-2\nu) p - 2}{3\alpha -1},
 \end{equation*}
where either $p = \frac{4(\alpha+1)}{\alpha+3}$ and $\nu = \frac{\alpha+2}{2(\alpha+1)}$, or $p>\frac{4(\alpha+1)}
{\alpha+3}$ and 
 \begin{equation*}
\nu \in \Big[\frac{2}{\alpha +1} + \frac{2(\alpha-2)}{(\alpha +3) p}, \frac{\alpha +2}{5\alpha +3} \big(3 - \tfrac 2 p \big)\Big) 
\end{equation*}
\end{lemma}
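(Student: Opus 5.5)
We follow the same pattern as Lemmata~\ref{lem:weight xu}, \ref{lem:weight xu'} and \ref{lem:xu-int}. We first prove \eqref{eq:weight xu lp'} for $u \in C^\infty_\mathrm{c}((0,\infty))$, then treat the two endpoint values of $\nu$ for given $p$ and interpolate between them by H\"older's inequality. The extension to $U_\alpha$ is by Lemma~\ref{lem-approx-ualpha} and Fatou's lemma, exactly as in Step~4 of Lemma~\ref{lem:xu-int}. Pointwise convergence of $\partial_x u_k$ on $(0,\infty)$ is supplied by Lemma~\ref{lem:weight xu'} applied to $u_k - u$.

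\textbf{Step~1: the endpoint $p = \frac{4(\alpha+1)}{\alpha+3}$, $\nu = \frac{\alpha+2}{2(\alpha+1)}$.} This is the fixed point of the recursion in Lemma~\ref{lem:recursion-dxu}: with $r = \frac{4(\alpha+1)}{\alpha+3}$ and $\nu r = \frac{2(\alpha+2)}{\alpha+3}$ one gets $\gamma_k = \gamma_0$ and $r_k = r_0$ for all $k$. We therefore use the single-step estimate \eqref{eq:weight xu10} with $k = 0$, in which the integral $I \coloneq \int_0^\infty x^{\nu p} |\partial_x u|^p \, \d x$ reappears on the right-hand side with power $\theta < 1$. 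Absorbing it (legitimate since $I < \infty$ for compactly supported smooth $u$) gives
\[
I^{1-\theta} \lesssim_\alpha \Big(\int_0^\infty u^2 \, \d x\Big)^{\frac 1 2} \Big(\int_0^\infty x^{\alpha+2} |\partial_x^2 u|^{\alpha+1} \, \d x\Big)^{\frac{1}{\alpha+1}}.
\]
Since $1-\theta = \frac{\alpha+3}{2(\alpha+1)}$, this yields the exponents $\frac{\alpha+1}{\alpha+3}$ and $\frac{2}{\alpha+3}$. These agree with the claimed $\gamma$ and $\delta$ at this choice of $(p,\nu)$, as a direct computation confirms.

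\textbf{Step~2: the case $p > \frac{4(\alpha+1)}{\alpha+3}$.} For the upper range of $\nu$ we factor out a supremum:
\[
\int_0^\infty x^{\nu p} |\partial_x u|^p \, \d x \le \Big(\sup_{x \ge 0} x^{\tilde\beta} |\partial_x u|\Big)^{p - p_*} \int_0^\infty x^{\nu_* p_*} |\partial_x u|^{p_*} \, \d x,
\]
where $p_* = \frac{4(\alpha+1)}{\alpha+3}$, $\nu_* = \frac{\alpha+2}{2(\alpha+1)}$ and $\tilde\beta (p - p_*) = \nu p - \nu_* p_*$. We then apply Lemma~\ref{lem:weight xu'} to the supremum and Step~1 to the integral. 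Multiplying the two bounds should give the stated exponents, which are affine in $\nu p$ and in $p$, exactly as in Step~2 of Lemma~\ref{lem:xu-int}. The constraint $\tilde\beta \in [\frac{2}{\alpha+1}, \frac{3(\alpha+2)}{5\alpha+3})$ translates into
\[
\nu \in \Big[\frac{2}{\alpha+1} + \frac{2(\alpha-2)}{(\alpha+3)p}, \frac{3(\alpha+2)}{5\alpha+3} - \frac{2(\alpha+2)}{(5\alpha+3)p}\Big).
\]
The lower endpoint follows from $\frac{2}{\alpha+1}(p - p_*) + \nu_* p_* = \frac{2p}{\alpha+1} + \frac{2(\alpha-2)}{\alpha+3}$. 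The upper endpoint requires checking that $\frac{3(\alpha+2)}{5\alpha+3}(p - p_*) + \nu_* p_*$ equals $\frac{\alpha+2}{5\alpha+3}(3p-2)$, using $\frac{2(\alpha+2)}{\alpha+3} - \frac{12(\alpha+2)(\alpha+1)}{(5\alpha+3)(\alpha+3)} = -\frac{2(\alpha+2)}{5\alpha+3}$. This is exactly the range in the statement, so no separate convex-combination step is needed. The lower endpoint corresponds to the $L^2$-free case of Lemma~\ref{lem:weight xu'} combined with Step~1.

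\textbf{Main obstacle.} The most delicate point is Step~1. Lemma~\ref{lem:recursion-dxu} is stated in iterated form and degenerates exactly at the fixed point, so one has to go back to \eqref{eq:weight xu9}--\eqref{eq:weight xu10} and justify the absorption argument. This requires finiteness of $I$ for test functions, which is clear, and checking that the Jensen/Hardy step used to remove the $x^{-1} |\partial_x u|$ term is valid with $\gamma_0 = \frac{2(\alpha+2)}{\alpha+3}$ and $r_0 = p_*$. Everything else is bookkeeping of exponents.
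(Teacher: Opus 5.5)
Your proposal is correct and matches the paper's proof step by step. In Step~1 the paper applies Lemma~\ref{lem:recursion-dxu} at its fixed point $p=\frac{4(\alpha+1)}{\alpha+3}$, $\nu=\frac{\alpha+2}{2(\alpha+1)}$ and tacitly absorbs the integral that reappears on the right-hand side, which is exactly your one-step absorption. In Step~2 the paper factors out $\sup_x x^{\tilde\beta}|\partial_x u|$ to the power $p-\frac{4(\alpha+1)}{\alpha+3}$, applies Lemma~\ref{lem:weight xu'}, reads off the stated $\nu$-range, and extends to $U_\alpha$ by Fatou's lemma, just as you do. As you note yourself, the H\"older interpolation between endpoints announced in your opening plan is not needed.
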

\begin{proof}
We first prove the estimate for $u \in C^\infty_\mathrm{c}((0,\infty))$ and extend to $u \in U_\alpha$ at the end of the proof by approximation.

\bigskip

\noindent\textbf{Step~1. Critical case $p = \frac{4(\alpha+1)}{\alpha+3}$.}
We start by approaching the upper bound on $\nu$.
From estimate~\eqref{eq:recursion-dxu} of Lemma~\ref{eq:recursion-dxu}, that is,
\begin{equation*}
\int_0^\infty x^{\nu p} |\partial_x u|^p \, \d x
 \lesssim_{\alpha,\nu,p}   \Big(  \int_0^\infty  u^2 \, \d x \Big)^{\frac{\alpha+1}{\alpha+3}(  1- \theta^k)}
\Big( \int_0^\infty x^{\gamma_k}
 |\partial_x u|^{r_k} \, \d x \Big)^{\theta^k}
 \Big( \int_0^\infty  x^{\alpha+2} |\partial_x^2 u|^{\alpha+1} \, \d x\Big)^{\frac{2}{\alpha+3}(1- \theta^k)}
\end{equation*}
and equations~\eqref{eq:gamma1-r1}, that is,
\begin{align*}
\gamma_k = \theta^{-k} \Big( \nu p - \frac{2(\alpha+2)}{\alpha+3} \Big) + \frac{2(\alpha+2)}{\alpha+3} \quad \text{and} \quad
r_k = \theta^{-k} \Big( p - \frac{4(\alpha+1)}{\alpha+3} \Big) + \frac{4(\alpha+1)}{\alpha+3},
\end{align*}
where $\theta = \frac{\alpha-1}{2(\alpha+1)}$, we find for the critical case $p=\frac{4(\alpha +1)}
{\alpha +3}$ and $\nu = \frac{\alpha +2}{2 (\alpha +1)}$ that $\gamma_k = \nu p$ and $r_k = p$, so 
that \eqref{eq:recursion-dxu} simplifies to
\begin{equation}\label{eq:weight xu lp'2}
\int_0^\infty x^{\frac{2(\alpha +2)}{\alpha+3}} |\partial_x u|^{\frac{4(\alpha +1)}{\alpha+3}} \, \d x \lesssim_{\alpha} \Big( \int_0^\infty  u^2 \, \d x \Big)^{\frac{\alpha +1}{\alpha +3}}  \Big( \int_0^\infty x^{\alpha+2}
|\partial_x^2 u|^{\alpha+1} \, \d x \Big)^{\frac{2}{\alpha+3}}.
\end{equation}
\bigskip

\noindent\textbf{Step~2. Super-critical Case $p > \frac{4(\alpha+1)}{\alpha+3}$.}
With estimate~\eqref{eq:weight xu lp'2} in hand, we improve upon it by considering general $p > \frac{4(\alpha+1)}{\alpha+3}$ and $\nu$, allowing us to obtain a refined bound
\begin{align}
\int_0^\infty x^{\nu p} |\partial_x u|^{p} \, \d x &\leq \Big(\sup_{x \ge 0} x^{\nu p -\frac{2(\alpha +2)}{\alpha+3}} |\partial_x u|^{p-\frac{4(\alpha +1)}{\alpha+3}}\Big) \int_0^\infty x^{\frac{2(\alpha +2)}{\alpha+3}} |\partial_x u|^{\frac{4(\alpha +1)}{\alpha+3}} \, \d x \nonumber \\
& \lesssim_{\alpha} \sup_{x \ge 0} x^{\nu p -\frac{2(\alpha +2)}{\alpha+3}} |\partial_x u|^{p-\frac{4(\alpha +1)}{\alpha+3}} \Big( \int_0^\infty  u^2 \, \d x \Big)^{\frac{\alpha +1}{\alpha +3}}  \Big( \int_0^\infty x^{\alpha+2}
|\partial_x^2 u|^{\alpha+1} \, \d x \Big)^{\frac{2}{\alpha+3}}. \label{eq:weight xu lp'3}
\end{align}
For the first term on the right-hand side of \eqref{eq:weight xu lp'3}, we apply Lemma \ref{lem:weight xu'} 
and therefore choose $\nu$ such that 
\begin{equation*}
    \frac{2}{\alpha +1} \le \frac{\nu p - \frac{2(\alpha +2)}{\alpha +3}}{p-\frac{4(\alpha +1)}{\alpha +3}} < \frac{3(\alpha +2)}{5\alpha +3}
\end{equation*}
Since $p > \frac{4(\alpha +1)}{\alpha +3}$, we get after rearranging 
\begin{equation}\label{eq:nu}
 \frac{2}{\alpha +1} + \frac{2(\alpha-2)}{(\alpha +3) p} 
 \le \nu < \frac{\alpha+2}{5\alpha+3} \big(3 - \tfrac 2 p\big) .
\end{equation}
Thus, the combination of estimate \eqref{eq:weight xu'} of Lemma \ref{lem:weight xu'} and \eqref{eq:weight xu lp'3} gives the estimate 
\begin{equation}\label{eq:weight xu lp'4}
\int_0^\infty x^{\nu p} |\partial_x u|^{p} \, \d x
\lesssim_{\alpha}  \Big( \int_0^\infty  u^2 \, \d x \Big)^{\gamma}  \Big( \int_0^\infty x^{\alpha+2}
|\partial_x^2 u|^{\alpha+1} \, \d x \Big)^{\delta},
\end{equation}
when either $p = \frac{4(\alpha+1)}{\alpha+3}$ and $\nu = \frac{\alpha+2}{2(\alpha+1)}$, or
$p>\frac{4(\alpha+1)}{\alpha+3}$ and 
 \begin{equation*}
\nu \in \Big[\frac{2}{\alpha +1} + \frac{2(\alpha-2)}{(\alpha +3) p}, \frac{\alpha +2}{5\alpha +3}
\big( 3-\tfrac{2}{p}\big)\Big).
\end{equation*}
The exponents $\gamma$ and $\delta$ follow from scaling balance and satisfy 
\begin{equation*}
p= 2\gamma + (\alpha +1)\delta \qquad \text{and} \qquad 
\nu p +1 -p= \gamma + \delta (1-\alpha). 
\end{equation*}
Solving for \(\gamma\) and \(\delta\), we obtain
\begin{equation*}
\gamma = \frac{\alpha+1+((\alpha+1)\nu - 2) p}{3\alpha -1} \quad \text{ and } \quad \delta= \frac{(3-2\nu) p - 2}{3\alpha -1}
\end{equation*}
as desired. 
\bigskip

\noindent\textbf{Step~3. Extension from $C^\infty_\mathrm{c}((0,\infty))$ to $U_\alpha$.}
This follows as in Step~4 of the proof of Lemma~\ref{lem:xu-int}, that is, for $u \in U_\alpha$ and $(u_k)_{k \in \N} \subset C^\infty_\mathrm{c}((0,\infty))$
with $u_k \to u$ in $U_\alpha$ we have $\partial_x u_k \to \partial_x u$ point-wise by \eqref{eq:weight xu'} of Lemma~\ref{lem:weight xu'}, and the claim \eqref{eq:weight xu lp'} for $u$ follows with Fatou's lemma.
\end{proof}
\subsection{Asymptotics and interpolation estimates in $V_\alpha$}
\label{sec:valpha}
In what follows, it is convenient to introduce the space
\begin{equation}\label{def-valpha}
V_\alpha \coloneq \Big\{v \in U_\alpha \colon \int_0^\infty (\partial_x^2 (x^{\alpha+2} g_\alpha(\partial_x^2 v)))^2 \, \d x < \infty\Big\}.
\end{equation}
\begin{lemma}\label{lem-valpha}
Suppose $\alpha > 1$. For $v \in V_\alpha$ define $w \coloneq x^{\alpha+2} g_\alpha(\partial_x^2 v)$. Then
\begin{subequations}\label{asymptotic-vw}
\begin{align}
w &= a x (1+o(x^{\frac 1 2})) && \text{as } x \downarrow 0, \label{as-w} \\
\partial_x w &= a (1+o(x^{\frac 1 2})) && \text{as } x \downarrow 0, \label{as-dxw} \\
v &= c x^{\frac{\alpha-1}\alpha} (1+o(x^{\frac 1 2})) && \text{as } x \downarrow 0, \label{as-d2xw} \\
\partial_x v &= \frac{\alpha-1}{\alpha} c x^{-\frac 1 \alpha} (1+o(x^{\frac 1 2})) && \text{as } x \downarrow 0, \\
\partial_x^2 v &= \frac{1-\alpha}{\alpha^2} c x^{- \frac{\alpha+1}{\alpha}} (1+o(x^{\frac 1 2})) && \text{as } x \downarrow 0,
\end{align}
\end{subequations}
where $\partial_x w(0) = a = g_\alpha((1-\alpha) c/\alpha^2)$ and $c \in \R$. Furthermore,
\begin{equation}\label{fundamental-w}
w(x) = a x + \int_0^x \int_0^{x'} (\partial_x^2 w)(x'') \, \d x'' \, \d x' \qquad \text{and} \qquad (\partial_x w)(x) = a + \int_0^x (\partial_x^2 w)(x') \, \d x'.
\end{equation}
\end{lemma}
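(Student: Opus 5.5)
The plan is to derive everything from the single piece of information $\partial_x^2 w \in L^2(0,\infty)$, together with the finiteness of the $U_\alpha$-norm of $v$. The work goes from $w$ to $\partial_x w$, then to $\partial_x^2 v$, and finally, by integration, to $\partial_x v$ and $v$.

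\emph{Step 1: asymptotics of $\partial_x w$.} By Cauchy--Schwarz, for $0<x<1$,
\[
\int_0^x |\partial_x^2 w| \, \d x' \le x^{\frac 1 2} \Big(\int_0^x (\partial_x^2 w)^2 \, \d x'\Big)^{\frac 1 2} = o(x^{\frac 1 2}) \quad \text{as } x \downarrow 0 .
\]
Hence $\partial_x^2 w \in L^1(0,1)$. It follows that $\partial_x w$ is absolutely continuous on $[0,1]$, with a limit $a \coloneq \partial_x w(0)$. This gives the second identity in \eqref{fundamental-w} and the estimate $\partial_x w = a + o(x^{\frac 1 2})$.

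\emph{Step 2: $w(0)=0$ and asymptotics of $w$.} By Step 1, $w$ extends continuously to $x=0$. Since $|\partial_x^2 v|^{\alpha+1} x^{\alpha+2} = x^{-\frac{\alpha+2}{\alpha}} |w|^{\frac{\alpha+1}{\alpha}}$ and $\frac{\alpha+2}{\alpha}>1$, finiteness of $\int_0^\infty x^{\alpha+2}|\partial_x^2 v|^{\alpha+1}\,\d x$ forces $w(0)=0$. Integrating Step 1 once more yields the first identity in \eqref{fundamental-w}, and also $w = ax + o(x^{\frac 3 2})$, which is \eqref{as-w}. (If $a=0$, all relative statements are read as absolute ones, i.e.\ $w=o(x^{\frac 3 2})$, etc.)

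\emph{Step 3: asymptotics of $\partial_x^2 v$ and integration.} Invert $g_\alpha$, whose inverse is $s\mapsto |s|^{\frac1\alpha-1}s$, to obtain
\[
\partial_x^2 v = g_\alpha^{-1}\big(x^{-\alpha-2} w\big) = g_\alpha^{-1}(a)\, x^{-\frac{\alpha+1}{\alpha}} \big(1+o(x^{\frac12})\big),
\]
where the expansion $(1+\eps)^{1/\alpha} = 1+O(\eps)$ is used. Define $c$ through $g_\alpha^{-1}(a) = \frac{1-\alpha}{\alpha^2} c$, i.e.\ $a = g_\alpha((1-\alpha)c/\alpha^2)$. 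Then integrate twice:
\[
\partial_x v(x) = \partial_x v(1) - \int_x^1 \partial_x^2 v\,\d x', \qquad v(x) = \int_0^x \partial_x v \,\d x' .
\]
The second formula uses $v(0)=0$, which holds because $v \in U_\alpha$ and \eqref{eq:weight xu} of Lemma~\ref{lem:weight xu} with $\beta = \frac{1-\alpha}{\alpha+1}<0$ gives $|v| \lesssim x^{\frac{\alpha-1}{\alpha+1}} \to 0$. The leading singular terms $\frac{\alpha-1}{\alpha} c\, x^{-\frac1\alpha}$ and $c\,x^{\frac{\alpha-1}{\alpha}}$ are produced exactly. The remainder $o(x^{-\frac{\alpha+1}{\alpha}+\frac12})$ integrates to the expected relative order $o(x^{\frac12})$ whenever it is non-integrable at $0$; when it is integrable at $0$ (e.g.\ $\alpha>2$ for $\partial_x v$), it integrates to a constant plus $o(x^{\frac12-\frac1\alpha})$.

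\emph{Main obstacle: integration constants.} The delicate step is controlling the constants created in Step 3. In $\partial_x v$ a constant $b$ appears, corresponding to the kernel element $bx$ of \S\ref{sec:kernel}. It is lower order than $x^{-\frac1\alpha}$, but in relative terms it contributes only $O(x^{\frac1\alpha})$. I would first try to rule it out or absorb it using the weighted bounds of Lemmas~\ref{lem:weight xu} and \ref{lem:weight xu'} together with $v\in L^2$. If that fails, I would record the asymptotics for $v$ and $\partial_x v$ with the remainder written as an explicit linear term plus $o$ of the leading term. In that reading, the $o(x^{\frac12})$ for $v$, $\partial_x v$ is understood modulo the regular linear part $bx$ (respectively $b$). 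The statements for $w$, $\partial_x w$ and $\partial_x^2 v$ are unaffected, since the constant disappears under differentiation.
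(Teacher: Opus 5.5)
Your Steps 1--2 coincide with the paper's proof. You integrate $\partial_x^2 w\in L^2\subset L^1_{\mathrm{loc}}([0,\infty))$ twice and exclude $w(0)\neq0$, because otherwise $x^{\alpha+2}|\partial_x^2 v|^{\alpha+1}=x^{-\frac{\alpha+2}{\alpha}}|w|^{\frac{\alpha+1}{\alpha}}$ would not be integrable at $0$. Inverting $g_\alpha$ for $a\neq0$ is what the paper means by ``identifying $a=g_\alpha((1-\alpha)c/\alpha^2)$''. So \eqref{fundamental-w} and the lines for $w$, $\partial_x w$, $\partial_x^2 v$ are fine.

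For $v$ and $\partial_x v$ the paper gives no further argument, and here you are ahead of it. The integration constant is genuinely free, so your first plan, excluding $b$ via Lemmas~\ref{lem:weight xu}, \ref{lem:weight xu'} and $v\in L^2$, cannot work. Take $\alpha\ge2$, $b,c\neq0$, a smooth cut-off $\chi$ with $\chi|_{[0,1]}=1$ and $\chi|_{[2,\infty)}=0$, and set $v\coloneqq\chi\,(c\,x^{\frac{\alpha-1}{\alpha}}+bx)$. On $(0,1]$ one has $\partial_x^2 v=\frac{1-\alpha}{\alpha^2}c\,x^{-\frac{\alpha+1}{\alpha}}$, so $x^{\alpha+2}|\partial_x^2 v|^{\alpha+1}\sim x^{-\frac1\alpha}$, and $w=ax$, hence $\partial_x^2 w=0$ there. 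On $[1,2]$, $w$ has bounded second derivative because $g_\alpha\in W^{2,\infty}_{\mathrm{loc}}(\R)$ for $\alpha\ge2$. Hence $v\in V_\alpha$. Yet near $0$,
$\partial_x v=\frac{\alpha-1}{\alpha}c\,x^{-\frac1\alpha}\bigl(1+\frac{\alpha b}{(\alpha-1)c}\,x^{\frac1\alpha}\bigr)$ and $v=c\,x^{\frac{\alpha-1}{\alpha}}\bigl(1+\frac bc\,x^{\frac1\alpha}\bigr)$, and $x^{\frac1\alpha}$ is not $o(x^{\frac12})$. The lines of \eqref{asymptotic-vw} for $v$ and $\partial_x v$ are therefore false as written, precisely in the regime $\alpha>2$ used later.

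Your fallback is the correct statement. For $\alpha>2$ and $a\neq0$, your Step 3 gives, for some $b\in\R$, $\partial_x v=\frac{\alpha-1}{\alpha}c\,x^{-\frac1\alpha}+b+o(x^{\frac12-\frac1\alpha})$ and $v=c\,x^{\frac{\alpha-1}{\alpha}}+bx+o(x^{\frac32-\frac1\alpha})$. For $1<\alpha<2$ the constant is absorbed, since $x^{\frac1\alpha}=o(x^{\frac12})$, and the lemma holds as stated. This corrected form suffices for the later uses; for example, the boundary terms in the proof of Proposition~\ref{prop-est-nv} only need $v=O(x^{\frac{\alpha-1}{\alpha}})$ and $\partial_x v=O(x^{-\frac1\alpha})$.

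One further correction: your parenthetical reading for $a=0$ does not extend to the $v$-lines. The gain $x^{\frac12}$ survives inversion of $g_\alpha$ only because $(1+\eps)^{\frac1\alpha}=1+O(\eps)$ linearizes around $a\neq0$. Since $g_\alpha^{-1}$ is merely $\frac1\alpha$-H\"older at $0$, $w=o(x^{\frac32})$ yields only $\partial_x^2 v=o(x^{-1-\frac{1}{2\alpha}})$. This is weaker than $o(x^{-\frac12-\frac1\alpha})$; take, e.g., $w=x^{\frac32+\delta}$ near $0$ with $0<\delta<\frac{\alpha-1}{2}$.
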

\begin{proof}
We first begin by noticing that the regularity assumptions on $v$ entail that $\partial_x^2 w \in L^2(0,\infty)$, so that two integrations yield
\[
w(x) = a x + b + \int_0^x \int_0^{x'} (\partial_x^2 w)(x'') \, \d x'' \, \d x',
\]
for constants $a,b \in \R$. If $b \ne 0$, we have
\[
x^{\alpha+2} |\partial_x^2 v|^{\alpha+1} = x^{-\frac{\alpha+2}{\alpha}} |w|^{\frac{\alpha+1}{\alpha}} = |b|^{\frac{\alpha+1}{\alpha}} x^{-\frac{\alpha+2}{\alpha}} (1+o(1)) \quad \text{as } x \downarrow 0,
\]
which contradicts $\int_0^\infty x^{\alpha+2} |\partial_x^2 v|^{\alpha+1} \, \d x \stackrel{\eqref{def-ualpha}}{<} \infty$ because $v \in U_\alpha \stackrel{\eqref{def-valpha}}{\subset} V_\alpha$. Hence, we must have $b = 0$, but do have $a \ne 0$ in general (which physically corresponds to a nonzero contact-line velocity, see the heuristic discussion in \S\ref{sec:kernel}). This yields \eqref{fundamental-w}. For $w$ this entails the asymptotics \eqref{as-w}, \eqref{as-dxw}, \eqref{as-d2xw} on using
\begin{align*}
\int_0^x \int_0^{x'} (\partial_x^2 w)(x'') \, \d x'' \, \d x' &\le \|\partial_x^2 w\|_{L^2(0,x)} \int_0^x \Big(\int_0^{x'} \d x''\Big)^{\frac 1 2} \, \d x' = o(x^{\frac 3 2}) && \text{as } x \downarrow 0, \\
\int_0^x (\partial_x w)(x') \, \d x' &\le \|\partial_x w\|_{L^2(0,x)} \Big(\int_0^{x} \d x'\Big)^{\frac 1 2} = o(x^{\frac 1 2}) && \text{as } x \downarrow 0, \\
\int_0^\infty (\partial_x^2 w)^2 \, \d x &< \infty \quad \Rightarrow \quad \partial_x^2 w = o(x^{-\frac 1 2}) && \text{as } x \downarrow 0.
\end{align*}
The terms with $v$ follow on identifying $a = g_\alpha((1-\alpha) c/\alpha^2)$.
\end{proof}
\begin{lemma}\label{lem-convex}
For $\alpha > 1$ it holds $|x_1-x_2| \le 2^{\frac{\alpha-1}{\alpha}} |g_\alpha(x_1)-g_\alpha(x_2)|^{\frac 1 \alpha}$ for $x_1,x_2 \in \R$.
\end{lemma}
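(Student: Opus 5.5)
Set $y_i \coloneq g_\alpha(x_i)$, so that $x_i = g_\alpha^{-1}(y_i) = |y_i|^{\frac1\alpha - 1} y_i$. Writing $\phi(y) \coloneq |y|^{\frac 1\alpha -1} y$, the claim is equivalent to
\[
|\phi(y_1)-\phi(y_2)| \le 2^{1-\frac 1 \alpha} |y_1-y_2|^{\frac 1 \alpha} \qquad \text{for all } y_1,y_2 \in \R .
\]
So the plan is to prove that the odd power function with exponent $\frac1\alpha \in (0,1)$ is $\frac1\alpha$-Hölder continuous with constant $2^{1-\frac1\alpha}$. The cases split according to the signs of $y_1$ and $y_2$. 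Without loss of generality take $y_1 > y_2$; if $y_1 = y_2$ there is nothing to prove.

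\textbf{Same sign.} By oddness of $\phi$ we may assume $y_1 > y_2 \ge 0$. Then the statement reads $y_1^{s} - y_2^{s} \le (y_1-y_2)^{s}$ with $s \coloneq \frac 1\alpha \in (0,1)$. This is the standard subadditivity of $t \mapsto t^s$, namely $(a+b)^s \le a^s + b^s$, applied with $a = y_1 - y_2$ and $b = y_2$. Subadditivity itself follows from concavity together with $0^s = 0$: after normalizing, $\frac{a}{a+b} \le \big(\frac{a}{a+b}\big)^s$ and likewise for $b$, and summing gives the claim. This case already yields constant $1 \le 2^{1-s}$.

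\textbf{Opposite signs.} Here $y_1 > 0 > y_2$. Write $a \coloneq y_1$ and $b \coloneq -y_2$, both positive. Then $|\phi(y_1)-\phi(y_2)| = a^s + b^s$ and $|y_1 - y_2| = a+b$. Since $s<1$, the map $t \mapsto t^s$ is concave, so Jensen's inequality (equivalently, the power-mean inequality) gives
\[
\frac{a^s+b^s}{2} \le \Big(\frac{a+b}{2}\Big)^s ,
\]
that is, $a^s + b^s \le 2^{1-s}(a+b)^s$. This is exactly the constant claimed. It is also sharp, with equality at $a = b$.

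There is no real obstacle here. The only point requiring care is the bookkeeping in the reformulation: one must check that $g_\alpha$ is a bijection of $\R$ with inverse $\phi$ (true because $\alpha>1$, so $g_\alpha$ is odd and strictly increasing), and that the exponent $\frac{\alpha-1}{\alpha}$ on the constant matches $1 - s$. Both checks are immediate.
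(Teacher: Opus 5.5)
Your proof is correct, but it takes a different route from the paper. The paper does not invert $g_\alpha$. It writes $(g_\alpha(x_1)-g_\alpha(x_2))(x_1-x_2)=\alpha|x_1-x_2|^{\alpha+1}\int_0^1\big|\tfrac{x_2}{x_1-x_2}+\tau\big|^{\alpha-1}\,\d\tau$ using the integral form of the mean-value theorem. It then bounds the integral from below by $\int_{-1/2}^{1/2}|\tau|^{\alpha-1}\,\d\tau=2^{1-\alpha}/\alpha$: among intervals of unit length, the one centered at the origin minimizes the integral of the even function $|\tau|^{\alpha-1}$, which is nondecreasing in $|\tau|$ for $\alpha>1$. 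That single computation handles all sign configurations at once. It also gives the lemma in the strong-monotonicity form $(g_\alpha(x_1)-g_\alpha(x_2))(x_1-x_2)\ge 2^{1-\alpha}|x_1-x_2|^{\alpha+1}$ familiar from $p$-Laplacian theory.

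You instead pass to the inverse $\phi$ and reduce everything to two elementary facts about $t\mapsto t^{1/\alpha}$, with no integrals: subadditivity when the signs agree, and the two-point Jensen (power-mean) inequality when they differ. Your case split also shows that the constant $2^{\frac{\alpha-1}{\alpha}}$ is needed only when the signs differ, since equal signs give constant $1$. The constant is attained at $x_1=-x_2$, which is exactly where the paper's bound is sharp too, because $\tfrac{x_2}{x_1-x_2}=-\tfrac12$ there.

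One small remark: $g_\alpha$ is an odd increasing bijection of $\R$ for every $\alpha>0$. So the hypothesis $\alpha>1$ enters your argument only through $1/\alpha<1$ (concavity), not through invertibility.
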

\begin{proof}
For $x_1,x_2 \in \R$ with $x_1 \ne x_2$ it holds
\begin{align*}
(g_\alpha(x_1)-g_\alpha(x_2))(x_1-x_2) &= (x_1-x_2)^2 \int_0^1 g_\alpha'(x_2 + \tau (x_1-x_2)) \, \d \tau \\
&= \alpha |x_1-x_2|^{\alpha+1} \int_0^1 \big|\tfrac{x_2}{x_1-x_2} + \tau\big|^{\alpha-1} \, \d\tau \\
&\ge \alpha |x_1-x_2|^{\alpha+1} \int_{-\frac 1 2}^{\frac 1 2} |\tau|^{\alpha-1} \, \d\tau = 2^{1-\alpha} |x_1-x_2|^{\alpha+1},
\end{align*}
which implies the desired result.
\end{proof}
\begin{lemma}\label{lem:controlhigher}
For $\alpha >1$ suppose $v \in V_\alpha$. Then with the notation of Lemma~\ref{lem-valpha} it holds
 \begin{align}
\int_0^\infty x^{2\alpha} |\partial_x^2 (v -c x^{\frac{\alpha-1}{\alpha}})|^{2\alpha} \, \d x \lesssim_\alpha \int_0^\infty x^{2\alpha} (g_\alpha(\partial_x^2 v) - a x^{-1-\alpha})^2 \, \d x
&\lesssim_{\alpha} \int_0^\infty (\partial_x^2 w)^2 \, \d x \,  \label{eq:proofcontrolhigher1}
\end{align}
where $w \coloneq x^{\alpha+2} g_\alpha(\partial_x^2 v)$.
\end{lemma}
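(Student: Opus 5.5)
The plan is to prove the two inequalities in \eqref{eq:proofcontrolhigher1} separately. The first follows pointwise from Lemma~\ref{lem-convex}. The second is a double Hardy inequality applied to the representation \eqref{fundamental-w} of Lemma~\ref{lem-valpha}.

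\textbf{First inequality.} Set $\kappa \coloneq (1-\alpha)c/\alpha^2$. Then $\partial_x^2 (c x^{\frac{\alpha-1}{\alpha}}) = \kappa x^{-\frac{\alpha+1}{\alpha}}$. Since $g_\alpha$ is odd and positively $\alpha$-homogeneous, $g_\alpha(\kappa x^{-\frac{\alpha+1}{\alpha}}) = g_\alpha(\kappa)\, x^{-\alpha-1} = a x^{-\alpha-1}$, using $a = g_\alpha(\kappa)$ from Lemma~\ref{lem-valpha}. For every $x>0$ we apply Lemma~\ref{lem-convex} with $x_1 = \partial_x^2 v(x)$ and $x_2 = \kappa x^{-\frac{\alpha+1}{\alpha}}$. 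Raising the result to the power $2\alpha$ gives
\[
|\partial_x^2 (v - c x^{\frac{\alpha-1}{\alpha}})|^{2\alpha} \le 2^{2(\alpha-1)} \big(g_\alpha(\partial_x^2 v) - a x^{-1-\alpha}\big)^2 .
\]
Multiplying by $x^{2\alpha}$ and integrating over $(0,\infty)$ yields the first estimate.

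\textbf{Second inequality.} By definition of $w$, we have $x^{\alpha}\big(g_\alpha(\partial_x^2 v) - a x^{-1-\alpha}\big) = x^{-2}(w - a x)$. Hence
\[
\int_0^\infty x^{2\alpha} (g_\alpha(\partial_x^2 v) - a x^{-1-\alpha})^2 \, \d x = \int_0^\infty x^{-4} (w-ax)^2 \, \d x .
\]
By \eqref{fundamental-w}, $w(x) - ax = \int_0^x \varphi(x') \, \d x'$ with $\varphi(x') \coloneq (\partial_x w)(x') - a = \int_0^{x'} (\partial_x^2 w)(x'') \, \d x''$. Both primitives start at $x=0$, and $\partial_x^2 w \in L^2(0,\infty)$ because $v \in V_\alpha$.

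We now apply the classical Hardy inequality in its integral-operator form twice. This form holds for arbitrary nonnegative measurable integrands, so we use it with $|\varphi|$ and $|\partial_x^2 w|$:
\[
\int_0^\infty x^{-4} \Big(\int_0^x |\varphi| \, \d x'\Big)^2 \d x \le \big(\tfrac 2 3\big)^2 \int_0^\infty x^{-2} \varphi^2 \, \d x,
\qquad
\int_0^\infty x^{-2} \Big(\int_0^x |\partial_x^2 w| \, \d x'\Big)^2 \d x \le 4 \int_0^\infty (\partial_x^2 w)^2 \, \d x .
\]
Chaining these two bounds gives $\int_0^\infty x^{-4}(w-ax)^2 \, \d x \lesssim \int_0^\infty (\partial_x^2 w)^2 \, \d x$, which is the second estimate.

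\textbf{Main point of care.} The only delicate step is that Hardy's inequality requires vanishing boundary terms at the origin, i.e.\ that $w-ax$ and $\partial_x w - a$ really are primitives starting at $0$. This is exactly the content of \eqref{fundamental-w}: it excludes the constant $b$, which was ruled out in Lemma~\ref{lem-valpha} via $v \in U_\alpha$. Once this is in place, using the integral-operator form of Hardy's inequality removes any need for further approximation or control at infinity.
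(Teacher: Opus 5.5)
Your proof is correct and follows the paper's argument. The paper likewise gets the first inequality pointwise from Lemma~\ref{lem-convex} via $a = g_\alpha((1-\alpha)c/\alpha^2)$, and the second by rewriting the middle integral as $\int_0^\infty x^{-4}(w-ax)^2\,\d x$ and applying Hardy's inequality twice using \eqref{fundamental-w}; your explicit constants and your use of the integral-operator form of Hardy's inequality simply make that terse argument fully precise.
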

\begin{proof}
We use \eqref{fundamental-w} of Lemma~\ref{lem-valpha} and apply Hardy’s inequality twice, which allows us to estimate the weighted integral on the left-hand side of \eqref{eq:proofcontrolhigher1},
\begin{align}
\int_0^\infty x^{2\alpha} (g_\alpha(\partial_x^2 v) - a x^{-1-\alpha})^2 \, \d x &= \int_0^\infty x^{-4} (w - ax)^2 \, \d x \lesssim_{\alpha} \int_0^\infty x^{-2} (\partial_x w - a)^2 \, \d x \nonumber\\
&\lesssim_{\alpha} \int_0^\infty (\partial_x^2 w)^2 \, \d x, \label{eq:proofcontrolhigher1-alt}
\end{align}
and verifies the second estimate in \eqref{eq:proofcontrolhigher1}. The first follows from $a = g_\alpha((1-\alpha) c/\alpha^2)$ by Lemma~\ref{lem-valpha} and Lemma~\ref{lem-convex}.
\end{proof}
\begin{lemma}\label{lem-sup-wx}
Suppose $\alpha > 1$. For $v \in V_\alpha$ and $w \coloneq x^{\alpha+2} g_\alpha(\partial_x^2 v)$ it holds
\begin{equation}\label{sup-wx}
\sup_{x \ge 0} x^{-\frac 3 2} |w - a x| + \sup_{x \ge 0} x^{-\frac 1 2} |\partial_x w - a| \lesssim \Big(\int_0^\infty (\partial_x^2 w)^2 \, \d x\Big)^{\frac 1 2},
\end{equation}
where $a \coloneq \partial_x w(0)$ is as in Lemma~\ref{lem-valpha}. Furthermore, we have
\begin{equation}\label{est-coeff-a}
|(\partial_x w)(0)| = |a| \lesssim_\alpha \Big(\int_0^\infty x^{\alpha+2} |\partial_x^2 v|^{\alpha+1} \, \d x\Big)^{\frac{\alpha}{3\alpha-1}} \Big(\int_0^\infty (\partial_x^2 w)^2 \, \d x\Big)^{\frac{\alpha-1}{3\alpha-1}}.
\end{equation}
Lastly, we have with $\beta \in [- \frac 3 2, -\frac{3\alpha+4}{2(2\alpha+1)}]$
\begin{align}\label{sup-w-alt}
\sup_{x \ge 0} x^{\beta} |w - \delta a x| \lesssim_\alpha \Big(\int_0^\infty x^{\alpha+2} |\partial_x^2 v|^{\alpha+1} \, \d x\Big)^{\frac{\alpha (2\beta+3)}{3\alpha-1}} \Big(\int_0^\infty (\partial_x^2 w)^2 \, \d x\Big)^{\frac{- (\alpha+1) \beta - 2}{3\alpha-1}}, \quad \delta \coloneq \begin{cases} 1 & \text{if } \beta < - 1, \\ 0 & \text{if } \beta \ge -1. 
\end{cases}
\end{align} 
\end{lemma}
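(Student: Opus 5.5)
The plan is to prove the three estimates of Lemma~\ref{lem-sup-wx} in order, each resting on the representation \eqref{fundamental-w} of Lemma~\ref{lem-valpha}. Write $D \coloneq \int_0^\infty (\partial_x^2 w)^2 \, \d x$ and $E \coloneq \int_0^\infty x^{\alpha+2}|\partial_x^2 v|^{\alpha+1}\,\d x$.

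\textbf{Estimate \eqref{sup-wx}.} By \eqref{fundamental-w}, $\partial_x w - a = \int_0^x \partial_x^2 w$. Cauchy--Schwarz then gives $|\partial_x w - a| \le x^{1/2} D^{1/2}$. Integrating once more yields
\[
|w - ax| \le \int_0^x (x')^{1/2}\,\d x' \; D^{1/2} = \tfrac23 x^{3/2} D^{1/2}.
\]
This is \eqref{sup-wx}, and it needs no further input.

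\textbf{Estimate \eqref{est-coeff-a}.} I would combine the pointwise lower bound on $w$ near the origin with finiteness of $E$, written in terms of $w$ as $E = \int_0^\infty x^{-\frac{\alpha+2}{\alpha}} |w|^{\frac{\alpha+1}{\alpha}} \, \d x$. Set $\ell \coloneq (|a|^2/D)$ up to a small constant. For $x \le \ell$, \eqref{sup-wx} gives $|w - ax| \le \tfrac23 x^{3/2}D^{1/2} \le \tfrac12 |a| x$, hence $|w| \ge \tfrac12 |a| x$ on $(0,\ell)$. Therefore
\[
E \gtrsim |a|^{\frac{\alpha+1}{\alpha}} \int_0^{\ell} x^{-\frac1\alpha}\,\d x \sim |a|^{\frac{\alpha+1}{\alpha}} \ell^{\frac{\alpha-1}{\alpha}} = |a|^{\frac{\alpha+1}{\alpha} + \frac{2(\alpha-1)}{\alpha}} D^{-\frac{\alpha-1}{\alpha}} = |a|^{\frac{3\alpha-1}{\alpha}} D^{-\frac{\alpha-1}{\alpha}},
\]
where the integral converges because $\alpha > 1$. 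Solving for $|a|$ gives exactly $|a| \lesssim E^{\frac{\alpha}{3\alpha-1}} D^{\frac{\alpha-1}{3\alpha-1}}$, with the exponents matching \eqref{est-coeff-a}.

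\textbf{Estimate \eqref{sup-w-alt}.} This is the step I expect to be the main obstacle, namely producing the $E$-exponent for intermediate weights. At the endpoint $\beta = -\tfrac32$ we have $\delta = 1$, and \eqref{sup-wx} gives the claim with exponents $(0, \tfrac12)$, which agree with $\frac{\alpha(2\beta+3)}{3\alpha-1} = 0$ and $\frac{-(\alpha+1)\beta-2}{3\alpha-1} = \tfrac12$.

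For the other endpoint I would first derive an estimate for $\sup_x x^{\beta'}|w|$ with no subtraction, by a localization argument. For each fixed $x_0$, split $(x_0/2, x_0)$ into two cases:
\begin{itemize}
\item if $|w|$ is large compared with $x_0^{3/2}D^{1/2}$, then $|w|$ is comparable to $|w(x_0)|$ on a subinterval of length $\sim \min\{x_0, |w(x_0)|^2 x_0^{-1}D^{-1}\}$, using the $C^{1,1/2}$-type control of $w$ (specifically, that $|\partial_x w|$ stays bounded, i.e.\ $|\partial_x w| \lesssim |a| + x^{1/2}D^{1/2}$);
\item otherwise $|w(x_0)|$ is already small compared with $x_0^{3/2}D^{1/2}$.
\end{itemize}
In the first case, integrating $x^{-\frac{\alpha+2}{\alpha}}|w|^{\frac{\alpha+1}{\alpha}}$ over that subinterval bounds $|w(x_0)|$ by a product of powers of $E$ and $D$. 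A similar balancing, combined with \eqref{est-coeff-a} to control the term $ax$, yields the interpolated exponent. The endpoint $\beta = -\frac{3\alpha+4}{2(2\alpha+1)}$ should correspond to the regime where the subinterval length $|w|^2x_0^{-1}D^{-1}$ is the active constraint.

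The remaining values of $\beta$ in $[-\frac32, -\frac{3\alpha+4}{2(2\alpha+1)}]$ then follow by interpolating the two endpoint bounds pointwise, as in Step~3 of Lemma~\ref{lem:weight xu}, writing $x^\beta = (x^{\beta_0})^{\varrho}(x^{\beta_1})^{1-\varrho}$. The case distinction in $\delta$ is absorbed by the bound $|a|x^{\beta+1} \le |a|\sup x^{\beta+1}$, which is only finite uniformly when handled jointly with \eqref{est-coeff-a}. I will check that the scaling $w \mapsto \lambda w$, $x \mapsto \mu x$ forces the stated exponents, and use that check to fix the constants.
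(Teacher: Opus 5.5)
Your proofs of \eqref{sup-wx} and \eqref{est-coeff-a} are correct, and they are more direct than the paper's. The paper gets \eqref{sup-wx} from the fundamental theorem of calculus, Young's inequality and Hardy's inequality twice. It gets \eqref{est-coeff-a} from $|a|\lesssim\int_1^2|w-ax|\,\d x+\int_1^2|w|\,\d x$, a Hardy bound, and then optimization over dilations $x\mapsto\lambda x$. Your Cauchy--Schwarz argument on \eqref{fundamental-w}, and your lower bound $|w|\ge\frac12|a|x$ on $(0,\frac{9}{16}a^2/D)$, give both estimates in a few lines. They also make the role of $\alpha>1$ (integrability of $x^{-1/\alpha}$ at the origin) explicit.

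The gap is in \eqref{sup-w-alt}, and the first concrete failure is at the endpoint $\beta_*=-\frac{3\alpha+4}{2(2\alpha+1)}$.
\begin{itemize}
\item \textbf{Wrong interval length.} The Lipschitz bound $|\partial_x w|\le|a|+(2x_0)^{1/2}D^{1/2}$ on $[x_0/2,2x_0]$ keeps $|w|\ge\frac12|w(x_0)|$ on an interval of length $\sim\min\{x_0,\,|w(x_0)|/(|a|+x_0^{1/2}D^{1/2})\}$. It is not $|w(x_0)|^2x_0^{-1}D^{-1}$, which has the dimension of a squared length. Carried through, your length gives $x_0^{-\frac{2(\alpha+1)}{3\alpha+1}}|w(x_0)|\lesssim(ED)^{\frac{\alpha}{3\alpha+1}}$, which is not invariant under $x\mapsto\mu x$.
\item \textbf{Wrong case split.} In the case $|w(x_0)|\lesssim x_0^{3/2}D^{1/2}$ you stop. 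That only yields the $\beta=-\frac32$ bound, and $x_0^{\beta_*+\frac32}D^{\frac12}$ is unbounded. Conversely, $|w(x_0)|\gg x_0^{3/2}D^{1/2}$ can only occur when $|a|x_0$ dominates.
\item \textbf{The split that works is $x_0\gtrless a^2/D$.}
\begin{itemize}
\item If $x_0\ge a^2/D$, then $|w(x_0)|\le\frac53x_0^{3/2}D^{1/2}$ and the Lipschitz constant is $\lesssim x_0^{1/2}D^{1/2}$. So the interval has length $\sim|w(x_0)|x_0^{-1/2}D^{-1/2}\lesssim x_0$. Integrating $E=\int_0^\infty x^{-\frac{\alpha+2}{\alpha}}|w|^{\frac{\alpha+1}{\alpha}}\,\d x$ over it gives $|w(x_0)|^{\frac{2\alpha+1}{\alpha}}\lesssim ED^{\frac12}x_0^{\frac{3\alpha+4}{2\alpha}}$, which is exactly $\beta_*$. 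This is the pointwise counterpart of the paper's FTC estimate for $x^{-\frac{3\alpha+4}{2\alpha}}|w-\eta ax|^{\frac{2\alpha+1}{\alpha}}$.
\item If $x_0<a^2/D$, use $|w-ax|\le\frac23x_0^{3/2}D^{1/2}$ when $\beta_*<-1$, respectively $|w|\le\frac53|a|x_0$ when $\beta_*\ge-1$. Either way the bound is $\lesssim|a|^{2\beta_*+3}D^{-\beta_*-1}$, and \eqref{est-coeff-a} finishes it. The same bound absorbs the correction term $|a|x_0^{\beta_*+1}$ for $x_0\ge a^2/D$ when $\beta_*<-1$.
\end{itemize}
\end{itemize}

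The second failure is the passage to intermediate $\beta$. Pointwise interpolation $x^\beta|f|=(x^{-3/2}|f|)^\varrho(x^{\beta_*}|f|)^{1-\varrho}$ needs the same $f$ at both endpoints. For $\alpha\ge2$, however, the endpoint $-\frac32$ controls $w-ax$ while $\beta_*\ge-1$ controls $w$. Your remedy $|a|x^{\beta+1}\le|a|\sup_{x}x^{\beta+1}$ is void, because that supremum is $+\infty$ unless $\beta=-1$. Instead, split at the intrinsic length $L=E^{\frac{2\alpha}{3\alpha-1}}D^{-\frac{\alpha+1}{3\alpha-1}}$:
\begin{itemize}
\item for $x\le L$: $x^\beta|w-\delta ax|\le\frac23L^{\beta+\frac32}D^{\frac12}+(1-\delta)|a|L^{\beta+1}$;
\item for $x\ge L$: $x^\beta|w-\delta ax|\le L^{\beta-\beta_*}\sup_{y}y^{\beta_*}|w-\delta_*ay|+(\delta-\delta_*)|a|L^{\beta+1}$.
\end{itemize}
Here $\delta_*$ is the value of $\delta$ at $\beta_*$. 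Each line uses $-\frac32\le\beta\le\beta_*$ and the sign of $\beta+1$ appropriate to the value of $\delta$. With \eqref{est-coeff-a} for $|a|$, every term then carries exactly the exponents in \eqref{sup-w-alt}.
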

\begin{proof}
We split the proof into three parts.

\bigskip

\noindent\textbf{Step~1. Proof of \eqref{sup-wx}.}
Using Lemma~\ref{lem-valpha}, it follows
\begin{align*}
\sup_{x \ge 0} x^{-3} (w-ax)^2 \quad &\stackrel{\mathclap{\eqref{as-w}}}{\le} \quad 3 \int_0^\infty x^{-4} (w-ax)^2 \, \d x + 2 \int_0^\infty x^{-3} |w-ax| |\partial_x w - a| \, \d x \\
&\le \quad 4 \int_0^\infty x^{-4} (w-ax)^2 \, \d x + \int_0^\infty x^{-2} (\partial_x w - a)^2 \, \d x \lesssim \int_0^\infty (\partial_x^2 w)^2 \, \d x, \\
\sup_{x \ge 0} x^{-1} (\partial_x w - a)^2 \quad &\stackrel{\mathclap{\eqref{as-dxw}}}{\le} \quad \int_0^\infty x^{-2} (\partial_x w - a)^2 \, \d x + 2 \int_0^\infty x^{-1} |\partial_x w - a| |\partial_x^2 w| \, \d x \\
&\le \quad 2 \int_0^\infty x^{-2} (\partial_x w - a)^2 \, \d x + \int_0^\infty (\partial_x^2 w)^2 \, \d x \lesssim \int_0^\infty (\partial_x^2 w)^2 \, \d x,
\end{align*}
where the fundamental theorem of calculus as well as Young's and Hardy's inequality were used.

\bigskip

\noindent\textbf{Step~2. Proof of \eqref{est-coeff-a}.}
Observe that by the triangle and Cauchy-Schwarz inequality
\begin{align*}
|a| &\lesssim \int_1^2 |w - a x| \, \d x + \int_1^2 |w| \, \d x \lesssim_{\alpha}  \Big(\int_0^\infty x^{-4} (w-ax)^2 \, \d x\Big)^{\frac 1 2} + \Big(\int_0^\infty x^{\alpha+2} |\partial_x^2 v|^{\alpha+1} \, \d x\Big)^{\frac{\alpha}{\alpha+1}} \\
&\lesssim_{\alpha}  \Big(\int_0^\infty (\partial_x^2 w)^2 \, \d x\Big)^{\frac 1 2} + \Big(\int_0^\infty x^{\alpha+2} |\partial_x^2 v|^{\alpha+1} \, \d x\Big)^{\frac{\alpha}{\alpha+1}},
\end{align*}
where we have applied Hardy's inequality in the last line. Scaling $x \mapsto \lambda x$ with $\lambda > 0$ leads to
\[
|a| \lesssim_\alpha \lambda^{-\frac 1 2} \Big(\int_0^\infty (\partial_x^2 w)^2 \, \d x\Big)^{\frac 1 2} + \lambda^{\frac{\alpha-1}{\alpha+1}} \Big(\int_0^\infty x^{\alpha+2} |\partial_x^2 v|^{\alpha+1} \, \d x\Big)^{\frac{\alpha}{\alpha+1}},
\]
so that after optimizing in $\lambda$ we arrive at \eqref{est-coeff-a}.

\bigskip

\noindent\textbf{Step~3. Proof of \eqref{sup-w-alt}.}
For a cut-off function $\eta \in C^\infty([0,\infty))$ such that $0 \le \eta \le 1$, $\eta|_{[0,1]} = 1$, and $\eta_{|[2,\infty)} = 0$, we have
\begin{align*}
\sup_{x \ge 0} x^{- \frac{3\alpha+4}{2\alpha}} |w-\eta a x|^{\frac{2\alpha+1}{\alpha}} 
&\leq \frac{3\alpha+4}{2\alpha} \int_0^\infty x^{- \frac{5\alpha+4}{2\alpha}} |w-\eta a x|^{\frac{2\alpha+1}{\alpha}} \, \d x \\
&\phantom{\lesssim} + \frac{2\alpha+1}{\alpha} \int_0^\infty x^{- \frac{3\alpha+4}{2\alpha}} |w-\eta a x|^{\frac{\alpha+1}{\alpha}} |\partial_x (w - \eta a x)| \, \d x \\
&\lesssim_\alpha \big(\sup_{x \ge 0} x^{-\frac 3 2} |w-\eta ax| + \sup_{x \ge 0} x^{-\frac 1 2} |\partial_x (w - \eta a x)|\big) \\
&\phantom{\lesssim_\alpha} \times \int_0^\infty x^{- \frac{\alpha+2}{\alpha}} |w-\eta a x|^{\frac{\alpha+1}{\alpha}} \, \d x \\
&\lesssim_\alpha \big(\sup_{x \ge 0} x^{-\frac 3 2} |w- ax| + \sup_{x \ge 0} x^{-\frac 1 2} |\partial_x w - a| + |a| \sup_{x \ge 1} (x^{-\frac 1 2} + x^{\frac 1 2} |\partial_x \eta|)\big) \\
&\phantom{\lesssim_\alpha} \times \Big( \int_0^\infty x^{\alpha+2} |\partial_x^2 v|^{\alpha+1} \, \d x + |a|^{\frac{\alpha+1}{\alpha}} \int_0^2 x^{-\frac 1 \alpha} \, \d x \Big),
\end{align*}
which implies with \eqref{sup-wx}, \eqref{est-coeff-a}, $\delta = 1$ if $1 < \alpha < 2$, and $\delta = 0$ if $\alpha \ge 2$,
\begin{align*}
& \sup_{x \ge 0} x^{- \frac{3\alpha+4}{2\alpha}} |w - \delta a x|^{\frac{2\alpha+1}{\alpha}} \\
&\quad \lesssim_\alpha \bigg(\Big(\int_0^\infty (\partial_x^2 w)^2 \, \d x\Big)^{\frac 1 2} + \Big(\int_0^\infty x^{\alpha+2} |\partial_x^2 v|^{\alpha+1} \, \d x\Big)^{\frac{\alpha}{3\alpha-1}} \Big(\int_0^\infty (\partial_x^2 w)^2 \, \d x\Big)^{\frac{\alpha-1}{3\alpha-1}} \bigg) \\
& \quad \phantom{\lesssim_\alpha} \times \bigg(\int_0^\infty x^{\alpha+2} |\partial_x^2 v|^{\alpha+1} \, \d x  + \Big(\int_0^\infty x^{\alpha+2} |\partial_x^2 v|^{\alpha+1} \, \d x\Big)^{\frac{\alpha+1}{3\alpha-1}} \Big(\int_0^\infty (\partial_x^2 w)^2 \, \d x\Big)^{\frac{\alpha^2-1}{\alpha (3\alpha-1)}}\bigg).
\end{align*}
Scaling both $v$ and $x$ leads to
\[
\sup_{x \ge 0} x^{- \frac{3\alpha+4}{2 (2\alpha+1)}} |w - \delta a x| \lesssim_\alpha \Big(\int_0^\infty x^{\alpha+2} |\partial_x^2 v|^{\alpha+1} \, \d x\Big)^{\frac{\alpha}{2\alpha+1}} \Big(\int_0^\infty (\partial_x^2 w)^2 \, \d x\Big)^{\frac{\alpha}{2 (2\alpha+1)}}.
\]
The combination with \eqref{sup-wx} yields \eqref{sup-w-alt} on noting that the exponents on the right-hand side of \eqref{sup-w-alt} follow from scaling in $x$ and $v$ (see Step~3 in the proof of Lemma~\ref{lem:weight xu} for an analogous case).
\end{proof}
\begin{lemma}\label{lem-sup-uxx}
We have for $\beta \in [\frac{2\alpha+1}{2\alpha},\frac{4\alpha+7}{2(2\alpha+1)}]$,
\begin{align}\nonumber
\sup_{x \ge 0} x^\beta |\partial_x^2 (v - \delta c x^{\frac{\alpha-1}{\alpha}})| &\lesssim_\alpha \Big(\int_0^\infty x^{\alpha+2} |\partial_x^2 v|^{\alpha+1} \, \d x\Big)^{\frac{2\beta\alpha-2\alpha-1}{3\alpha-1}} \Big(\int_0^\infty (\partial_x^2 w)^2 \, \d x\Big)^{\frac{\alpha+3 - (\alpha+1)\beta}{3\alpha-1}},  \\
\delta &\coloneq \begin{cases} 1 & \text{if } \beta < \frac{\alpha+1}{\alpha}, \\ 0 & \text{if } \beta \ge \frac{\alpha+1}{\alpha}, \end{cases} \label{sup-uxx-alt}
\end{align}
for all $v \in V_\alpha$, where $w \coloneq x^{\alpha+2} g_\alpha(\partial_x^2 v)$, and $c$ is as in Lemma~\ref{lem-valpha}. Furthermore,
\begin{equation}\label{est-coeff-c}
|c| \lesssim_\alpha \Big(\int_0^\infty x^{\alpha+2} |\partial_x^2 v|^{\alpha+1} \, \d x\Big)^{\frac{1}{3\alpha-1}} \Big(\int_0^\infty (\partial_x^2 w)^2 \, \d x\Big)^{\frac{\alpha-1}{\alpha(3\alpha-1)}}.
\end{equation}
\end{lemma}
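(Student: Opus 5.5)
The plan is to reduce everything to the bounds on $w$ in Lemma~\ref{lem-sup-wx}. Two facts make this work. First, $g_\alpha$ is invertible with $g_\alpha^{-1}(s) = |s|^{\frac1\alpha - 1} s$. Second, by Lemma~\ref{lem-convex}, $g_\alpha^{-1}$ is $\frac1\alpha$-Hölder continuous.

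\textbf{Bound on $c$.} I start with \eqref{est-coeff-c}, which is immediate. By Lemma~\ref{lem-valpha} we have $a = g_\alpha((1-\alpha)c/\alpha^2)$, hence $|c| \eqsim_\alpha |a|^{\frac1\alpha}$. Taking the $\frac1\alpha$-th power of \eqref{est-coeff-a} gives exactly the exponents $\frac{1}{3\alpha-1}$ and $\frac{\alpha-1}{\alpha(3\alpha-1)}$.

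\textbf{Pointwise identity for $\partial_x^2 v$.} By definition of $w$, $\partial_x^2 v = g_\alpha^{-1}(x^{-\alpha-2} w)$. By Lemma~\ref{lem-valpha}, the singular profile satisfies
\[
\partial_x^2 (c x^{\frac{\alpha-1}{\alpha}}) = \tfrac{1-\alpha}{\alpha^2} c\, x^{-\frac{\alpha+1}{\alpha}} = g_\alpha^{-1}(a x^{-\alpha-1}) = g_\alpha^{-1}(x^{-\alpha-2}\, a x).
\]
So for $\delta\in\{0,1\}$,
\[
\partial_x^2(v - \delta c x^{\frac{\alpha-1}{\alpha}}) = g_\alpha^{-1}(x^{-\alpha-2} w) - g_\alpha^{-1}(x^{-\alpha-2}\delta a x).
\]

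\textbf{Hölder step.} For $\delta = 1$, Lemma~\ref{lem-convex} (applied with $x_i = g_\alpha^{-1}(\cdot)$) yields
\[
|\partial_x^2(v - c x^{\frac{\alpha-1}{\alpha}})| \le 2^{\frac{\alpha-1}{\alpha}} x^{-\frac{\alpha+2}{\alpha}} |w - a x|^{\frac1\alpha}.
\]
For $\delta=0$, the same bound with $w$ in place of $w-ax$ is trivial since $|g_\alpha^{-1}(s)| = |s|^{1/\alpha}$. Either way,
\[
x^\beta |\partial_x^2(v - \delta c x^{\frac{\alpha-1}{\alpha}})| \lesssim_\alpha \big(x^{\gamma} |w - \delta a x|\big)^{\frac1\alpha}, \qquad \gamma \coloneq \alpha\beta - \alpha - 2 .
\]

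\textbf{Matching the ranges.} The map $\beta\mapsto\gamma$ sends $\beta = \frac{2\alpha+1}{2\alpha}$ to $\gamma=-\frac32$ and $\beta = \frac{4\alpha+7}{2(2\alpha+1)}$ to $\gamma=-\frac{3\alpha+4}{2(2\alpha+1)}$. This is precisely the admissible range in \eqref{sup-w-alt}. The case split $\gamma<-1$ is equivalent to $\beta<\frac{\alpha+1}{\alpha}$, so the $\delta$'s agree.

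\textbf{Applying \eqref{sup-w-alt}.} Taking the supremum over $x$ and raising \eqref{sup-w-alt} to the power $\frac1\alpha$ gives the claimed exponents. For the first, $\frac{2\gamma+3}{3\alpha-1} = \frac{2\alpha\beta-2\alpha-1}{3\alpha-1}$. For the second,
\[
\frac{-(\alpha+1)\gamma - 2}{\alpha(3\alpha-1)} = \frac{\alpha+3-(\alpha+1)\beta}{3\alpha-1}.
\]
No approximation argument is needed, since everything is pointwise for $v \in V_\alpha$ once Lemma~\ref{lem-valpha} provides $a$ and $c$.

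\textbf{Main obstacle.} The only delicate point is checking that the endpoint ranges and the $\delta$-switch transfer exactly through $\gamma = \alpha\beta-\alpha-2$. If \eqref{sup-w-alt} were only available on a smaller range, I would instead interpolate directly between the two endpoint estimates, as in Step~3 of the proof of Lemma~\ref{lem:weight xu}.
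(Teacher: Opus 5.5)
Your proposal is correct and follows the paper's proof. As in the paper, \eqref{est-coeff-c} follows from \eqref{est-coeff-a} via $|c| = \frac{\alpha^2}{\alpha-1}|a|^{1/\alpha}$, and \eqref{sup-uxx-alt} follows by transferring the bounds of Lemma~\ref{lem-sup-wx} on $w - \delta a x$ to $\partial_x^2 v$ through Lemma~\ref{lem-convex}. Your explicit substitution $\gamma = \alpha\beta - \alpha - 2$ into \eqref{sup-w-alt}, with the endpoints and the $\delta$-switch matched exactly, is in fact cleaner than the paper's write-up, which cites only \eqref{sup-wx} (covering just $\beta = \frac{2\alpha+1}{2\alpha}$) and leaves the rest of the range implicit.
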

\begin{proof}
Note that estimate~\eqref{est-coeff-c} is immediate from \eqref{est-coeff-a} of Lemma~\ref{lem-sup-wx} on noting that by Lemma~\ref{lem-valpha} it holds $|c| = \frac{\alpha^2}{\alpha-1} |a|^{\frac 1 \alpha}$, where $a = (\partial_x w)(0)$.

\bigskip

Next, observe that from \eqref{sup-wx} of Lemma~\ref{lem-sup-wx} it follows
\[
\sup_{x \ge 0} x^{\alpha\beta} \big| g_\alpha(\partial_x^2 v) - a x^{-\alpha-1}\big|^{\frac{1}{\alpha}} \lesssim_\alpha \Big(\int_0^\infty x^{\alpha+2} |\partial_x^2 v|^{\alpha+1} \, \d x\Big)^{\frac{\alpha (2\beta\alpha-2\alpha-1)}{3\alpha-1}} \Big(\int_0^\infty (\partial_x^2 w)^2 \, \d x\Big)^{\frac{\alpha (\alpha+3 - (\alpha+1)\beta)}{3\alpha-1}}.
\]
With $g_\alpha((\alpha-1) c / \alpha^2) = a$ (cf.~Lemma~\ref{lem-valpha}), we infer by Lemma~\ref{lem-convex}
\begin{align*}
\sup_{x \ge 0} x^\beta \big|\partial_x^2 (v-c x^{\frac{\alpha-1}{\alpha}})\big| &\lesssim_\alpha \sup_{x \ge 0} x^{\alpha\beta} \big| g_\alpha(\partial_x^2 v) - a x^{-\alpha-1}\big|^{\frac{1}{\alpha}} \lesssim_\alpha \Big(\int_0^\infty (\partial_x^2 w)^2 \, \d x\Big)^{\frac{1}{2\alpha}} \\
&\lesssim_\alpha \Big(\int_0^\infty x^{\alpha+2} |\partial_x^2 v|^{\alpha+1} \, \d x\Big)^{\frac{2\beta\alpha-2\alpha-1}{3\alpha-1}} \Big(\int_0^\infty (\partial_x^2 w)^2 \, \d x\Big)^{\frac{\alpha+3 - (\alpha+1)\beta}{3\alpha-1}},
\end{align*}
which is \eqref{sup-uxx-alt}.
\end{proof}
\begin{lemma}\label{lem-sup-u-alt}
Suppose $\alpha >1$. For $\beta \in [\frac{1-2\alpha}{2\alpha},\frac{1-\alpha}{\alpha+1}]$, we have
\begin{align}\nonumber
\sup_{x \ge 0} x^\beta |v - \delta c x^{\frac{\alpha-1}{\alpha}}| &\lesssim_\alpha \Big(\int_0^\infty x^{\alpha+2} |\partial_x^2 v|^{\alpha+1} \, \d x\Big)^{\frac{2\beta\alpha+2\alpha-1}{3\alpha-1}} \Big(\int_0^\infty (\partial_x^2 w)^2 \, \d x\Big)^{\frac{1-\alpha- (\alpha+1)\beta}{3\alpha-1}},  \\
\delta &\coloneq \begin{cases} 1 & \text{if } \beta < \frac{1-\alpha}{\alpha}, \\ 0 & \text{if } \beta \ge \frac{1-\alpha}{\alpha}, \end{cases} \label{sup-u-alt}
\end{align}
for all $v \in V_\alpha$, where $w \coloneq x^{\alpha+2} g_\alpha(\partial_x^2 v)$, and $c$ is as in Lemma~\ref{lem-valpha}.
\end{lemma}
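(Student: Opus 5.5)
The plan is to prove \eqref{sup-u-alt} at the two endpoints of the $\beta$-range, then fill in the interior by a splitting/interpolation argument. At every stage the exponents on the right-hand side are forced by the two-parameter scaling $x \mapsto \lambda x$, $v \mapsto \mu v$, as in Step~3 of the proof of Lemma~\ref{lem:weight xu}. Throughout, write $D \coloneq v - c x^{\frac{\alpha-1}{\alpha}}$, and use the shorthands
\[
A \coloneq \int_0^\infty x^{\alpha+2}|\partial_x^2 v|^{\alpha+1}\,\d x, \qquad B \coloneq \int_0^\infty (\partial_x^2 w)^2\,\d x .
\]

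\emph{Upper endpoint $\beta = \frac{1-\alpha}{\alpha+1}$.} Since $\frac{1-\alpha}{\alpha+1} > \frac{1-\alpha}{\alpha}$, we have $\delta = 0$ here. The exponents in \eqref{sup-u-alt} reduce to $\frac{1}{\alpha+1}$ on $A$ and $0$ on $B$. The estimate is then exactly \eqref{eq:weight xu7} from Step~1 of the proof of Lemma~\ref{lem:weight xu}, which was extended to $U_\alpha \supset V_\alpha$ in its Step~4.

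\emph{Lower endpoint $\beta = \frac{1-2\alpha}{2\alpha}$.} Here $\delta = 1$, and the exponents are $0$ on $A$ and $\frac{1}{2\alpha}$ on $B$. I would start from Lemma~\ref{lem-sup-uxx} with $\beta = \frac{2\alpha+1}{2\alpha}$; this is the bound displayed in its proof, namely $|\partial_x^2 D| \lesssim_\alpha x^{-\frac{2\alpha+1}{2\alpha}} B^{\frac{1}{2\alpha}}$. I would then integrate twice from $x = 0$. By Lemma~\ref{lem-valpha}, $\partial_x D = o(x^{-\frac 1\alpha + \frac 12})$ and $D = o(x^{\frac{\alpha-1}{\alpha}+\frac12})$ as $x \downarrow 0$. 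Both are $o$ of the powers $x^{-\frac{1}{2\alpha}}$ and $x^{\frac{2\alpha-1}{2\alpha}}$ produced by integration, since $\frac12 > \frac1{2\alpha}$. Hence the boundary terms at $0$ vanish and
\[
|\partial_x D| \lesssim_\alpha x^{-\frac{1}{2\alpha}} B^{\frac{1}{2\alpha}}, \qquad |D| \lesssim_\alpha x^{\frac{2\alpha-1}{2\alpha}} B^{\frac{1}{2\alpha}} .
\]
The weight $x^{-\frac{2\alpha+1}{2\alpha}}$ is integrable twice because its first primitive has exponent $-\frac1{2\alpha} > -1$. This gives the claim.

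\emph{Interior, including the switch of $\delta$.} For $\beta \in [\frac{1-\alpha}{\alpha},\frac{1-\alpha}{\alpha+1}]$ with $\delta = 0$, and for $\beta < \frac{1-\alpha}{\alpha}$ with $\delta = 1$, I would split at a scale $\lambda > 0$.
\begin{itemize}
\item For $x \le \lambda$ I use $|D| \lesssim x^{\frac{2\alpha-1}{2\alpha}} B^{\frac{1}{2\alpha}}$. When $\delta = 0$, I add $|c|\, x^{\frac{\alpha-1}{\alpha}}$ and bound $|c|$ by \eqref{est-coeff-c} of Lemma~\ref{lem-sup-uxx}.
\item For $x \ge \lambda$ I use $|v| \lesssim x^{\frac{\alpha-1}{\alpha+1}} A^{\frac{1}{\alpha+1}}$. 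When $\delta = 1$, I add $|c|\, x^{\frac{\alpha-1}{\alpha}}$ together with \eqref{est-coeff-c}.
\end{itemize}
One must check that, in each case, the chosen power $x^\beta$ makes all resulting terms monotone in the right direction on the respective region. This is precisely where the dichotomy at $\beta = \frac{1-\alpha}{\alpha}$ enters: the term $x^{\beta}\,|c|\, x^{\frac{\alpha-1}{\alpha}}$ is bounded near $0$ iff $\beta \ge \frac{1-\alpha}{\alpha}$, and bounded near $\infty$ iff $\beta \le \frac{1-\alpha}{\alpha}$. One then optimizes in $\lambda$. The $|c|$-contributions are consistent with this, since at $\beta = \frac{1-\alpha}{\alpha}$ the exponents of \eqref{sup-u-alt} coincide with those of \eqref{est-coeff-c}. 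The resulting exponents are then the ones dictated by scaling.

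The main obstacle I expect is the bookkeeping of this last step. Once the endpoints are in hand it is routine, but one has to verify that the $\lambda$-optimization reproduces exactly the exponents $\frac{2\beta\alpha+2\alpha-1}{3\alpha-1}$ and $\frac{1-\alpha-(\alpha+1)\beta}{3\alpha-1}$ throughout the range. It is also delicate to verify that the vanishing of the boundary terms at $x = 0$ in the lower-endpoint integration is justified by the $o(x^{\frac12})$ corrections of Lemma~\ref{lem-valpha}.
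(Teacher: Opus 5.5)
Your upper endpoint and your interior argument are sound. Splitting at $x=\lambda$ with $\lambda=A^{\frac{2\alpha}{3\alpha-1}}B^{-\frac{\alpha+1}{3\alpha-1}}$ reproduces exactly the exponents of \eqref{sup-u-alt}, and the $|c|$-contributions, bounded via \eqref{est-coeff-c}, scale identically. This is a more explicit realization of the paper's ``by scaling in $x$ and $v$''.

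The step that fails is the first integration at the lower endpoint. From $|\partial_x^2 D|\lesssim_\alpha x^{-\frac{2\alpha+1}{2\alpha}}B^{\frac{1}{2\alpha}}$ you cannot integrate from $x=0$. Since $\frac{2\alpha+1}{2\alpha}>1$, this weight is not integrable at $0$. For $0<y<x$ you only get $|\partial_x D(x)|\le|\partial_x D(y)|+2\alpha C B^{\frac{1}{2\alpha}}\big(y^{-\frac{1}{2\alpha}}-x^{-\frac{1}{2\alpha}}\big)$, which degenerates as $y\downarrow0$ no matter how small $\partial_x D(y)$ is. So information at $x=0$, such as $\partial_x D=o(x^{-\frac{1}{2\alpha}})$, cannot produce $|\partial_x D|\lesssim x^{-\frac{1}{2\alpha}}B^{\frac{1}{2\alpha}}$. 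Your remark that the weight is ``integrable twice'' is correct only if the first primitive is taken from $\infty$.

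The repair is to do the first integration from infinity. By \eqref{eq:weight xu'} with $\beta=\frac{2}{\alpha+1}$ we have $\partial_x v\to0$ as $x\to\infty$, and also $c x^{-\frac{1}{\alpha}}\to0$. Hence $\partial_x D(x)=-\int_x^\infty\partial_x^2 D\,\d s$, which gives $|\partial_x D(x)|\lesssim_\alpha x^{-\frac{1}{2\alpha}}B^{\frac{1}{2\alpha}}$. Only the second integration starts at $0$, and it uses merely $D(0)=v(0)=0$ and the integrability of $x^{-\frac{1}{2\alpha}}$ at the origin.

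With this fix, your lower endpoint is a pointwise analogue of the paper's argument. The paper applies the fundamental theorem of calculus to $x^{1-2\alpha}|D|^{2\alpha}$, then Young's inequality, two Hardy inequalities, and Lemma~\ref{lem:controlhigher}. Its second Hardy step, $\int_0^\infty|\partial_x D|^{2\alpha}\,\d x\lesssim\int_0^\infty x^{2\alpha}|\partial_x^2 D|^{2\alpha}\,\d x$, likewise uses vanishing of $\partial_x D$ at infinity rather than at $0$.

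The fix also removes any need for the $o(x^{\frac{1}{2}})$ expansions of $v$ and $\partial_x v$ from Lemma~\ref{lem-valpha}, which in fact fail in general for $\alpha>2$. Take $v=x\chi(x)$ with $\chi$ smooth, compactly supported, and $\chi=1$ on $[0,1]$. Then $v\in V_\alpha$ with $a=c=0$, yet $\partial_x D=1$ near $0$.
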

\begin{proof}
We have with the fundamental theorem of calculus, 
\begin{align*}
\sup_{x \ge 0} \big(x^{\frac{1-2\alpha}{2\alpha}} |v - c x^{\frac{\alpha-1}{\alpha}}|\big)^{2\alpha} &\lesssim_\alpha \int_0^\infty x^{1-2\alpha} |v - c x^{\frac{\alpha-1}{\alpha}}|^{2\alpha-1} |\partial_x(v - c x^{\frac{\alpha-1}{\alpha}})| \, \d x \\
&\phantom{\lesssim_\alpha} + \int_0^\infty x^{-2\alpha} |v - c x^{\frac{\alpha-1}{\alpha}}|^{2\alpha} \, \d x \\
&\lesssim_\alpha \int_0^\infty x^{-2\alpha} |v - c x^{\frac{\alpha-1}{\alpha}}|^{2\alpha} \, \d x + \int_0^\infty |\partial_x(v - c x^{\frac{\alpha-1}{\alpha}})|^{2\alpha} \, \d x \\
&\lesssim_\alpha \int_0^\infty x^{2\alpha} |\partial_x^2(v - c x^{\frac{\alpha-1}{\alpha}})|^{2\alpha} \, \d x \\
&\stackrel{\mathclap{\eqref{eq:proofcontrolhigher1}}}{\lesssim_\alpha} \int_0^\infty (\partial_x^2 w)^2 \, \d x,
\end{align*}
where Young's and Hardy's inequality have been used in the second and third step, respectively, and Lemma~\ref{lem:controlhigher} in the last step. Combining this with~\eqref{eq:weight xu} of Lemma~\ref{lem:weight xu} for
$\beta = \tfrac{1-\alpha}{\alpha+1}$, we obtain~\eqref{sup-u-alt} by scaling
in $x$ and $v$ (cf.~Step~3 in the proof of Lemma~\ref{lem-sup-wx}).
\end{proof}
\begin{lemma}\label{lem-sup-ux-alt}
Suppose $\alpha>1$. For $\beta \in [\frac{1}{2\alpha},\frac{2}{\alpha+1}]$, we have
\begin{align}\nonumber
\sup_{x \ge 0} x^\beta |\partial_x (v - \delta c x^{\frac{\alpha-1}{\alpha}})| &\lesssim_\alpha \Big(\int_0^\infty x^{\alpha+2} |\partial_x^2 v|^{\alpha+1} \, \d x\Big)^{\frac{2\beta\alpha-1}{3\alpha-1}} \Big(\int_0^\infty (\partial_x^2 w)^2 \, \d x\Big)^{\frac{2 - (\alpha+1)\beta}{3\alpha-1}},  \\
\delta &\coloneq \begin{cases} 1 & \text{if } \beta < \frac{1}{\alpha}, \\ 0 & \text{if } \beta \ge \frac{1}{\alpha}, \end{cases} \label{sup-ux-alt} 
\end{align}
for all $v \in V_\alpha$, where $w \coloneq x^{\alpha+2} g_\alpha(\partial_x^2 v)$, and $c$ is as in Lemma~\ref{lem-valpha}.
\end{lemma}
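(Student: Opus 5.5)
We follow the pattern of the proof of Lemma~\ref{lem-sup-u-alt}: we establish the two endpoint estimates at $\beta = \frac{1}{2\alpha}$ and at $\beta = \frac{2}{\alpha+1}$, and then interpolate between them.

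\textbf{Lower endpoint $\beta = \frac{1}{2\alpha}$ (here $\delta = 1$).} Set $\tilde v \coloneq v - c x^{\frac{\alpha-1}{\alpha}}$. By Lemma~\ref{lem-valpha}, $\partial_x \tilde v = o(x^{\frac 1 2 - \frac 1 \alpha})$ as $x \downarrow 0$, so $x^{\frac{1}{2\alpha}} \partial_x \tilde v \to 0$ at the origin. Apply the fundamental theorem of calculus to $\big(x^{\frac{1}{2\alpha}} |\partial_x \tilde v|\big)^{2\alpha} = x |\partial_x \tilde v|^{2\alpha}$. This gives
\[
\sup_{x \ge 0} x |\partial_x \tilde v|^{2\alpha} \lesssim_\alpha \int_0^\infty |\partial_x \tilde v|^{2\alpha} \, \d x + \int_0^\infty x |\partial_x \tilde v|^{2\alpha-1} |\partial_x^2 \tilde v| \, \d x .
\]
By Young's inequality, the second integral is bounded by the first plus $\int_0^\infty x^{2\alpha} |\partial_x^2 \tilde v|^{2\alpha} \, \d x$ (the weights match: $x = x^{\frac{2\alpha-1}{2\alpha}\cdot 0} \cdot x^{\frac{2\alpha}{2\alpha}}$). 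For the first integral we use Hardy's inequality in the form $\int_0^\infty |\partial_x \tilde v|^{2\alpha} \, \d x \lesssim_\alpha \int_0^\infty x^{2\alpha} |\partial_x^2 \tilde v|^{2\alpha} \, \d x$, which is exactly the step already used inside the proof of Lemma~\ref{lem-sup-u-alt}. Lemma~\ref{lem:controlhigher} then yields $\sup_{x \ge 0} x^{\frac{1}{2\alpha}} |\partial_x \tilde v| \lesssim_\alpha \big(\int_0^\infty (\partial_x^2 w)^2 \, \d x\big)^{\frac{1}{2\alpha}}$. This agrees with the claimed exponents at $\beta = \frac{1}{2\alpha}$: the power of $\int_0^\infty x^{\alpha+2}|\partial_x^2 v|^{\alpha+1}\,\d x$ is $0$ and the power of $\int_0^\infty(\partial_x^2 w)^2\,\d x$ is $\frac{2 - \frac{\alpha+1}{2\alpha}}{3\alpha-1} = \frac{1}{2\alpha}$.

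\textbf{Upper endpoint $\beta = \frac{2}{\alpha+1}$ (here $\delta = 0$).} This is estimate \eqref{eq:weight xu'3} from Step~1 of the proof of Lemma~\ref{lem:weight xu'}. It only uses $v \in U_\alpha$, and its exponents $\frac{1}{\alpha+1}$ and $0$ agree with the claim.

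\textbf{Interpolation.} For $\beta \in [\frac{1}{2\alpha}, \frac{1}{\alpha})$, where $\delta = 1$, we need the weighted bound on $\partial_x \tilde v$ at weight $\frac{2}{\alpha+1}$. We write $\partial_x \tilde v = \partial_x v - \frac{\alpha-1}{\alpha} c\, x^{-\frac 1 \alpha}$. For $x \le \lambda$ we interpolate pointwise between the lower endpoint bound and the bound on $\partial_x v$, controlling the term involving $c$ via \eqref{est-coeff-c}. Concretely, we write $x^\beta |\partial_x \tilde v| \le (x^{\frac{1}{2\alpha}} |\partial_x \tilde v|)^{\varrho} (x^{\frac{2}{\alpha+1}} |\partial_x \tilde v|)^{1-\varrho}$ with $\beta = \frac{\varrho}{2\alpha} + \frac{2(1-\varrho)}{\alpha+1}$, and bound $x^{\frac{2}{\alpha+1}} |c|\, x^{-\frac 1 \alpha}$ on bounded $x$-ranges. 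Cleaner is the route used in the earlier lemmas: we obtain the right-hand sides for all $\beta$ by scaling $x \mapsto \lambda x$ and $v \mapsto \mu v$, and then optimize in $\lambda$ and $\mu$ ("scaling in $x$ and $v$"). For $\beta \ge \frac{1}{\alpha}$, where $\delta = 0$, we use the bound on $\partial_x v$ directly at the upper endpoint. At weights between $\frac 1\alpha$ and $\frac{2}{\alpha+1}$, $|c|\,x^{\beta - \frac1\alpha}$ is no longer integrable against the tail, so we handle near-field and far-field separately. For $x \le 1$ we use the $\delta = 1$ bound plus $|c|\, x^{\beta-\frac 1\alpha} \le |c|$ together with \eqref{est-coeff-c}. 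For $x \ge 1$ we use \eqref{eq:weight xu'3}, since $x^\beta \le x^{\frac{2}{\alpha+1}}$ there. We then scale.

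\textbf{Main obstacle.} The delicate step is the middle range, around $\beta = \frac{1}{\alpha}$, where we must switch between subtracting and not subtracting the singular term. We have to check that the scaling exponents come out exactly as $\frac{2\beta\alpha-1}{3\alpha-1}$ and $\frac{2-(\alpha+1)\beta}{3\alpha-1}$, and that the contribution of $c$, estimated by \eqref{est-coeff-c}, has the same homogeneity. Since \eqref{est-coeff-c} is itself scaling-consistent, this should work out.
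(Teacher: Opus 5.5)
Your proposal is correct and follows the paper's proof: the lower endpoint comes from the fundamental theorem of calculus, Young, Hardy and Lemma~\ref{lem:controlhigher}, the upper endpoint is \eqref{eq:weight xu'3}, and the two are then combined and the result is obtained by scaling in $x$ and $v$. You handle the switch in $\delta$ at $\beta=\frac1\alpha$ explicitly (split at $x=1$, absorb $|c|\,x^{\beta-\frac1\alpha}$ via \eqref{est-coeff-c}, then scale), which is more careful than the paper's one-line ``by scaling''; the same split also closes the $\delta=1$ far field you left implicit, since $x^\beta|\partial_x(v-cx^{\frac{\alpha-1}{\alpha}})| \le x^{\frac{2}{\alpha+1}}|\partial_x v| + |c|$ for $x\ge 1$.
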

\begin{proof}
We have with the fundamental theorem of calculus, 
\begin{align*}
\sup_{x \ge 0} \big(x^{\frac{1}{2\alpha}} |\partial_x(v - c x^{\frac{\alpha-1}{\alpha}})|\big)^{2\alpha} &\lesssim_\alpha \int_0^\infty x |\partial_x(v - c x^{\frac{\alpha-1}{\alpha}})|^{2\alpha-1} |\partial_x^2(v - c x^{\frac{\alpha-1}{\alpha}})| \, \d x \\
&\phantom{\lesssim_\alpha} + \int_0^\infty |\partial_x(v - c x^{\frac{\alpha-1}{\alpha}})|^{2\alpha} \, \d x \\
&\lesssim_\alpha \int_0^\infty |\partial_x(v - c x^{\frac{\alpha-1}{\alpha}})|^{2\alpha} \, \d x + \int_0^\infty x^{2\alpha} |\partial_x^2(v - c x^{\frac{\alpha-1}{\alpha}})|^{2\alpha} \, \d x \\
&\lesssim_\alpha \int_0^\infty x^{2\alpha} |\partial_x^2(v - c x^{\frac{\alpha-1}{\alpha}})|^{2\alpha} \, \d x \\
&\stackrel{\mathclap{\eqref{eq:proofcontrolhigher1}}}{\lesssim_\alpha} \int_0^\infty (\partial_x^2 w)^2 \, \d x,
\end{align*}
where Young's and Hardy's inequality have been used in the second and third step, respectively, and Lemma~\ref{lem:controlhigher} in the last step. Combining this with~\eqref{eq:weight xu'} of Lemma~\ref{lem:weight xu'} for 
$\beta = \tfrac{2}{\alpha+1}$, we obtain~\eqref{sup-ux-alt} by scaling in $x$ and $v$ (cf.~Step~3 in the proof of Lemma~\ref{lem-sup-wx}).
\end{proof}

In the sequel, we will frequently use the following estimate, which can be derived from H\"older's inequality and estimates~\eqref{sup-uxx-alt} and \eqref{est-coeff-c} of Lemma~\ref{lem-sup-uxx} with $\beta= \frac{\alpha+2}{\alpha+1}$.
\begin{lemma}\label{lem:con-lem-sup-uxx}
Let $\alpha > 2$. Then we have
\begin{equation}\label{eq:con-lem-sup-uxx}
    \Big(\int_0^\infty \big(x^{\alpha+2} |\partial_x^2 v|^{\alpha +1}\big)^2 \, \d x\Big)^{\frac{1}{2}} \lesssim_\alpha 
    \Big(\int_0^\infty x^{\alpha +2} \, |\partial_x^2 v|^{\alpha +1} \, \d x\Big)^{\frac{2\alpha-1}{3\alpha-1}} 
    \Big(\int_0^\infty (\partial_x^2 w)^2 \, \d x\Big)^{\frac{\alpha+1}{2(3\alpha-1)}}
\end{equation}
for all $v \in V_\alpha$, where $w = x^{\alpha+2} g_\alpha(\partial_x^2 v) = x^{\alpha+2} |\partial_x^2 v|^{\alpha-1} \partial_x^2 v$ as above.
\end{lemma}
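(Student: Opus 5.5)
The plan is to reduce the weighted $L^2$-norm to a pointwise bound via H\"older, split off the singular profile $c x^{\frac{\alpha-1}{\alpha}}$, and fix the exponents by scaling. Throughout, write $A \coloneq \int_0^\infty x^{\alpha+2}|\partial_x^2 v|^{\alpha+1}\,\d x$ and $B \coloneq \int_0^\infty (\partial_x^2 w)^2\,\d x$. Set $r \coloneq \partial_x^2(v - c x^{\frac{\alpha-1}{\alpha}})$, so that $\partial_x^2 v = \frac{1-\alpha}{\alpha^2} c\, x^{-\frac{\alpha+1}{\alpha}} + r$.

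The basic H\"older step is
\[
\int_0^\infty \big(x^{\alpha+2}|\partial_x^2 v|^{\alpha+1}\big)^2\,\d x \le \sup_{x}\big(x^{\frac{\alpha+2}{\alpha+1}}|\partial_x^2 v|\big)^{\alpha+1}\, A .
\]
The sup cannot be taken directly, because $x^{\frac{\alpha+2}{\alpha+1}}|\partial_x^2 v|\sim |c|\, x^{-\frac{1}{\alpha(\alpha+1)}}$ blows up as $x\downarrow 0$ whenever $c\neq 0$. I therefore take $\beta=\frac{\alpha+2}{\alpha+1}$ in Lemma~\ref{lem-sup-uxx}. This $\beta$ lies in the admissible range $[\frac{2\alpha+1}{2\alpha},\frac{4\alpha+7}{2(2\alpha+1)}]$ for $\alpha>1$, and it satisfies $\beta<\frac{\alpha+1}{\alpha}$, so $\delta=1$ and Lemma~\ref{lem-sup-uxx} controls $\sup_x x^{\frac{\alpha+2}{\alpha+1}}|r|$.

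The argument then splits the integral into the two pieces coming from $(a+b)^{2(\alpha+1)}\lesssim a^{2(\alpha+1)}+b^{2(\alpha+1)}$.

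\textbf{Remainder piece.} For $\int_0^\infty x^{2(\alpha+2)}|r|^{2(\alpha+1)}\,\d x$ I will not use $A$ directly. Instead I will apply H\"older with $\sup_x (x^{\frac{\alpha+2}{\alpha+1}}|r|)^{\alpha+1}$ times $\int_0^\infty x^{\alpha+2}|r|^{\alpha+1}\,\d x$. The latter is bounded by $A$ plus the singular part, which again requires a cutoff.

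\textbf{Singular piece.} This term is $|c|^{2(\alpha+1)}\int x^{-\frac 2\alpha}\,\d x$. It is integrable near $0$ exactly because $\alpha>2$ (this is where the hypothesis enters), but it diverges at infinity.

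To handle infinity, I introduce a splitting point $\lambda>0$. On $(0,\lambda)$ I use the decomposition above, which gives $|c|^{2(\alpha+1)}\lambda^{1-\frac2\alpha}$ plus the remainder contribution restricted to $(0,\lambda)$. On $(\lambda,\infty)$ I bound $x^{\frac{\alpha+2}{\alpha+1}}|\partial_x^2 v|\le x^{\frac{\alpha+2}{\alpha+1}}|r| + C|c|\lambda^{-\frac{1}{\alpha(\alpha+1)}}$ and apply the H\"older step with $A$. Inserting \eqref{sup-uxx-alt} and \eqref{est-coeff-c} yields a bound of the form
\[
\sum_i \lambda^{\kappa_i} A^{a_i} B^{b_i}.
\]

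Finally I choose $\lambda$ to balance the terms. Equivalently, I invoke the invariance of all quantities under $x\mapsto \mu x$, $v\mapsto \nu v$ (as in Step~3 of Lemma~\ref{lem-sup-wx}), which forces the exponents $\frac{2\alpha-1}{3\alpha-1}$ on $A$ and $\frac{\alpha+1}{2(3\alpha-1)}$ on $B$. As a check, a term-by-term homogeneity count confirms these are consistent; for instance, \eqref{sup-uxx-alt} at $\beta=\frac{\alpha+2}{\alpha+1}$ gives the powers $A^{\frac{\alpha}{(\alpha+1)(3\alpha-1)}}B^{\frac{1}{3\alpha-1}}$, which after raising to $\alpha+1$ and multiplying by $A$ matches the square of the target.

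I expect the main obstacle to be the bookkeeping of the singular term $c x^{-\frac{\alpha+1}{\alpha}}$. It is not in $L^\infty$ with the natural weight, and its contribution is finite only near $0$ (using $\alpha>2$). The cutoff $\lambda$ and the bound \eqref{est-coeff-c} on $|c|$ must therefore be combined carefully so that every term scales correctly before optimizing in $\lambda$.
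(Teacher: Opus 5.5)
Your argument is correct and is essentially the paper's proof. It uses the same ingredients: Lemma~\ref{lem-sup-uxx} at $\beta=\frac{\alpha+2}{\alpha+1}$ (so $\delta=1$) for the regular part; \eqref{est-coeff-c} together with $\int_0^1 x^{-2/\alpha}\,\d x<\infty$ for the singular profile, which is the only place $\alpha>2$ enters; and scaling to fix the exponents. The paper instead uses a smooth cutoff at scale $1$ and absorbs the singular contribution into the left-hand side via Cauchy--Schwarz, where you split sharply at $\lambda$; the choice $\lambda=A^{\frac{2\alpha}{3\alpha-1}}B^{-\frac{\alpha+1}{3\alpha-1}}$ (your notation) turns each of your terms into exactly the square of the target, so this difference is immaterial.

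One arithmetic slip in your consistency check: at $\beta=\frac{\alpha+2}{\alpha+1}$ the $A$-exponent in \eqref{sup-uxx-alt} is $\frac{\alpha-1}{(\alpha+1)(3\alpha-1)}$, not $\frac{\alpha}{(\alpha+1)(3\alpha-1)}$. Only the former yields $A^{\frac{2(2\alpha-1)}{3\alpha-1}}B^{\frac{\alpha+1}{3\alpha-1}}$ after raising to the power $\alpha+1$ and multiplying by $A$.
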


\begin{proof}
Let $\eta \in C^\infty([0,\infty))$ be a cut off with $0 \le \eta \le 1$, $\eta|_{[0,1]} = 1$, and $\eta|_{[2,\infty)} = 0$. Then, with the notation of Lemma~\ref{lem-valpha} we obtain
\begin{align*}
   &\Big( \int_0^\infty \big(x^{\alpha+2} |\partial_x^2 v|^{\alpha +1}\big)^2 \, \d x\Big)^{\frac{1}{2}} 
   \\
   &\quad\lesssim_\alpha 
   \Big( \int_0^\infty \big(x^{\alpha+2} |\partial_x^2 v + \tfrac{\alpha-1}{\alpha^2} \eta c x^{- \frac{\alpha+1}{\alpha}}|^{\alpha +1} + |\eta|^{\alpha+1} |c|^{\alpha+1} x^{-\frac 1 \alpha}\big) \big(x^{\alpha+2} |\partial_x^2 v|^{\alpha +1}\big) \, \d x\Big)^{\frac{1}{2}} 
   \\
   &\quad\lesssim_\alpha 
   \Big( \sup_{x\geq 0} x^{\frac{\alpha+2}{\alpha+1}} \, |\partial_x^2 (v - c x^{\frac{\alpha-1}{\alpha}})| + |c|\Big)^{\frac{\alpha +1}{2}} \Big( \int_0^\infty x^{\alpha+2} |\partial_x^2 v|^{\alpha +1} \, \d x\Big)^{\frac{1}{2}} 
   \\
   &\quad \phantom{\lesssim_\alpha} + |c|^{\frac{\alpha+1}{2}} \Big(\int_0^2 x^{-\frac 2 \alpha} \, \d x\Big)^{\frac 1 2} 
   \Big(\int_0^\infty \big(x^{\alpha+2} |\partial_x^2 v|^{\alpha +1}\big)^2 \, \d x\Big)^{\frac{1}{2}}.
\end{align*}
Since $\alpha > 2$ and thus $\int_0^2 x^{- \frac 2 \alpha} \, \d x < \infty$, we may apply Young's inequality and absorb the last term on the right-hand side into the left-hand side leading to
\begin{align*}
    &\Big( \int_0^\infty \big(x^{\alpha+2} |\partial_x^2 v|^{\alpha +1}\big)^2 \, \d x\Big)^{\frac{1}{2}} \\
    & \quad \lesssim_\alpha 
    \Big( \sup_{x\geq 0} x^{\frac{\alpha+2}{\alpha+1}} \, |\partial_x^2 (v - c x^{\frac{\alpha-1}{\alpha}})| + |c|\Big)^{\frac{\alpha +1}{2}} \Big( \int_0^\infty x^{\alpha+2} |\partial_x^2 v|^{\alpha +1} \, \d x\Big)^{\frac{1}{2}} + |c|^{\alpha+1}.
\end{align*}
Furthermore, applying \eqref{sup-uxx-alt} and \eqref{est-coeff-c} of Lemma~\ref{lem-sup-uxx} yields
\begin{align*}
    \Big( \int_0^\infty \big(x^{\alpha+2} |\partial_x^2 v|^{\alpha +1}\big)^2 \, \d x\Big)^{\frac{1}{2}}
    &\lesssim_{\alpha}
    \Big( \int_0^\infty x^{\alpha +2} \, |\partial_x^2 v|^{\alpha +1} \, \d x\Big)^{\frac{2\alpha-1}{3\alpha-1}} 
    \Big( \int_0^\infty (\partial_x^2 w)^2 \, \d x\Big)^{\frac{\alpha+1}{2(3\alpha-1)}} \\
    &\phantom{\lesssim_\alpha}
    + \Big( \int_0^\infty x^{\alpha +2}\, |\partial_x^2 v|^{\alpha +1}\, \d x\Big)^{\frac{2\alpha}{3\alpha-1}} 
    \Big( \int_0^\infty (\partial_x^2 w)^2 \, \d x\Big)^{\frac{\alpha^2-1}{2\alpha(3\alpha-1)}} 
    \\
    &\phantom{\lesssim_\alpha} + \Big(\int_0^\infty x^{\alpha+2} |\partial_x^2 v|^{\alpha+1} \, \d x\Big)^{\frac{\alpha+1}{3\alpha-1}} 
    \Big(\int_0^\infty (\partial_x^2 w)^2 \, \d x\Big)^{\frac{\alpha^2-1}{\alpha(3\alpha-1)}}.
\end{align*}
Scaling of $x$ entails that the last two lines can be discarded, so that we end up with \eqref{eq:con-lem-sup-uxx}.
\end{proof}
\begin{lemma}\label{lem:B2-AC}
For $v \in V_\alpha$ with $w \coloneq x^{\alpha+2} g_\alpha(\partial_x^2 v)$, it holds 
\begin{equation}\label{eq:B2-AC-1}
\int_0^\infty x^{\alpha+2} |\partial_x^2 v|^{\alpha+1}\, \d x = \int_0^\infty (\partial_x^2 w)\, v\, \d x. 
\end{equation}
In particular,
\begin{equation}\label{eq:B2-AC}
\int_0^\infty x^{\alpha+2} |\partial_x^2 v|^{\alpha+1}\, \d x
\le \Big(\int_0^\infty v^2\, \d x\Big)^{\frac 1 2}
\Big(\int_0^\infty (\partial_x^2 w)^2\, \d x\Big)^{\frac 1 2}.
\end{equation}
\end{lemma}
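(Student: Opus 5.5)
The identity \eqref{eq:B2-AC-1} is an integration by parts twice, so the plan is to do this on a truncated interval $[\eps,R]$ and show that all boundary terms vanish as $\eps\downarrow 0$ and $R\to\infty$. Since $w\,\partial_x^2 v = x^{\alpha+2}|\partial_x^2 v|^{\alpha+1}$, we have
\[
\int_\eps^R x^{\alpha+2}|\partial_x^2 v|^{\alpha+1}\,\d x = \int_\eps^R w\,\partial_x^2 v\,\d x = \big[w\,\partial_x v - (\partial_x w)\, v\big]_\eps^R + \int_\eps^R (\partial_x^2 w)\, v\,\d x .
\]
The regularity needed to justify this is available: $w\in H^2_{\mathrm{loc}}$ because $\partial_x^2 w\in L^2$, and $v$ is $C^1$ on $(0,\infty)$ by Lemma~\ref{lem:weight xu'}.

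\textbf{Boundary terms at $x=\eps\downarrow 0$.} By Lemma~\ref{lem-valpha}, $w = a x(1+o(1))$, $\partial_x v = O(x^{-1/\alpha})$, $\partial_x w = a+o(1)$ and $v = O(x^{(\alpha-1)/\alpha})$. Hence
\[
w\,\partial_x v = O(x^{1-1/\alpha}) \to 0 \quad\text{and}\quad (\partial_x w)\,v = O(x^{(\alpha-1)/\alpha}) \to 0,
\]
using $\alpha>1$.

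\textbf{Boundary terms at infinity.} This is the main obstacle, since we lack pointwise decay of $w$ and $\partial_x w$. The approach is to avoid pointwise decay altogether and instead find a sequence $R_n\to\infty$ along which the boundary terms vanish.
\begin{itemize}
\item The integrands $w\,\partial_x^2 v$ and $(\partial_x^2 w)v$ are both in $L^1(0,\infty)$, the latter by Cauchy--Schwarz since $v\in L^2$ and $\partial_x^2 w\in L^2$.
\item Therefore $B(R) \coloneq w\,\partial_x v - (\partial_x w)v$ evaluated at $R$ has a finite limit $L$ as $R\to\infty$.
\item To show $L=0$, it suffices to show that $B\in L^1(1,\infty)$, or at least that $\liminf_{R\to\infty}|B(R)|=0$.
\end{itemize}
For $|B|$ we try H\"older's inequality together with the interpolation lemmas. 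Lemma~\ref{lem:weight xu lp'} gives integrability of $x^{\nu p}|\partial_x v|^p$, and $x^{\alpha+2}|\partial_x^2 v|^{\alpha+1}\in L^1$ controls $|w|^{(\alpha+1)/\alpha}x^{-(\alpha+2)/\alpha}\in L^1$. Pairing the two yields $w\,\partial_x v\in L^1$ at infinity after an appropriate choice of exponents.

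For $(\partial_x w)v$ we use a cheaper route. Integrating over $[R,2R]$ and averaging, it suffices that $\int_R^{2R}|\partial_x w|\,|v|\,\d x/R\to 0$. Write $\partial_x w(x) = \partial_x w(R') + \int \partial_x^2 w$. Averaging $\partial_x w$ over a dyadic interval equals a difference quotient of $w$, which is bounded by $R^{-1}\int_R^{2R}|w|$, and this in turn is controlled by the $L^{(\alpha+1)/\alpha}$-type bound on $w$. The fluctuation is bounded by $R^{1/2}\|\partial_x^2 w\|_{L^2(R,\infty)}$. Combining these with $\|v\|_{L^2(R,2R)}\to 0$ should give a vanishing average of $B$ over dyadic blocks, and hence $L=0$.

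\textbf{Conclusion.} Once \eqref{eq:B2-AC-1} holds, the inequality \eqref{eq:B2-AC} follows directly from the Cauchy--Schwarz inequality.

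A cleaner alternative that I would try first is to approximate $v$ by the cut-off $\eta(x/R)v$ and pass to the limit. Either way, the delicate step is the term at infinity.
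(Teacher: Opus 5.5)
Your framework is sound. The terms at $x=\eps\downarrow0$ vanish by Lemma~\ref{lem-valpha}. Since $w\,\partial_x^2v$ and $(\partial_x^2w)v$ are integrable, $B(R)=w\,\partial_xv-(\partial_xw)v$ has a limit $L$ as $R\to\infty$. It therefore suffices that dyadic averages of $|B|$ vanish, and \eqref{eq:B2-AC} then follows by Cauchy--Schwarz.

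The gap is the step at infinity, which you rightly flag as the crux. The claim $w\,\partial_xv\in L^1(1,\infty)$ cannot be obtained as you suggest. Pairing with $x^{-\frac{\alpha+2}{\alpha+1}}w\in L^{\frac{\alpha+1}{\alpha}}$ needs $\int_1^\infty x^{\alpha+2}|\partial_xv|^{\alpha+1}\,\d x<\infty$, i.e.\ $p=\alpha+1$ and $\nu=\frac{\alpha+2}{\alpha+1}$ in Lemma~\ref{lem:weight xu lp'}. This exceeds the admissible upper bound $\frac{(\alpha+2)(3\alpha+1)}{(\alpha+1)(5\alpha+3)}$. In fact the claim is false. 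Superpose blocks $A_k\sin(x/\ell_k)$ (smoothly cut off) on $[2^k,2^{k+1}]$ with $1\ll\ell_k\ll2^k$. With suitable $A_k,\ell_k$ one keeps $\int v^2$, $\int x^{\alpha+2}|\partial_x^2v|^{\alpha+1}$ and $\int(\partial_x^2w)^2$ finite, while $\int_{2^k}^{2^{k+1}}|w\,\partial_xv|\,\d x\to\infty$. So $B\notin L^1(1,\infty)$ in general. You must average this term too, and it closes exactly at the critical weight: by H\"older and \eqref{eq:weight xu'3},
\[
\frac1R\int_R^{2R}|w\,\partial_xv|\,\d x\lesssim\Big(\int_R^{2R}x^{\alpha+2}|\partial_x^2v|^{\alpha+1}\,\d x\Big)^{\frac{\alpha}{\alpha+1}}\sup_{x\ge0}x^{\frac{2}{\alpha+1}}|\partial_xv|\longrightarrow0 .
\]

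For $(\partial_xw)v$ your scaling is off by a factor $R$. Testing $\partial_xw$ against a bump $\phi$ on $[R,2R]$ with $\int\phi\,\d x=1$ and $|\phi'|\lesssim R^{-2}$ gives $\sup_{[R,2R]}|\partial_xw|\lesssim R^{-2}\int_R^{2R}|w|\,\d x+R^{\frac12}\|\partial_x^2w\|_{L^2(R,2R)}$. Hence
\[
\frac1R\int_R^{2R}|\partial_xw|\,|v|\,\d x\lesssim\Big(R^{\frac{1-3\alpha}{2(\alpha+1)}}\Big(\int_R^{2R}x^{\alpha+2}|\partial_x^2v|^{\alpha+1}\,\d x\Big)^{\frac{\alpha}{\alpha+1}}+\|\partial_x^2w\|_{L^2(R,2R)}\Big)\|v\|_{L^2(R,2R)}\longrightarrow0 .
\]
Your bound $R^{-1}\int_R^{2R}|w|\,\d x$ instead leaves a factor $R^{\frac{3-\alpha}{2(\alpha+1)}}$ against $o(1)$ terms with no rate, so it does not close for $\alpha<3$. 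With these two repairs your route works.

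The paper avoids boundary terms altogether. By Lemma~\ref{lem-approx-ualpha} it takes $v_n\in C^\infty_\mathrm{c}((0,\infty))$ with $v_n\to v$ in $U_\alpha$. Then $\int(\partial_x^2w)v_n\,\d x=\int w\,\partial_x^2v_n\,\d x$ is just the definition of the distributional derivative. Both sides are continuous in the $U_\alpha$-norm: Cauchy--Schwarz on the left, and H\"older with $x^{\frac{\alpha(\alpha+2)}{\alpha+1}}g_\alpha(\partial_x^2v)\in L^{\frac{\alpha+1}{\alpha}}$ on the right. The pointwise control you need at infinity, \eqref{eq:weight xu'3}, was itself extended to $U_\alpha$ through that same density lemma. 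So the direct density argument is strictly more economical.
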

\begin{proof}
Let $(v_n)_{n \in \N} \subset C^\infty_\mathrm{c}((0,\infty))$ with
$v_n \to v$ in $U_\alpha$ by Lemma~\ref{lem-approx-ualpha}.
Two integrations by parts
give
\[
\int_0^\infty (\partial_x^2 w)\, v_n\, \d x
= \int_0^\infty w\, \partial_x^2 v_n\, \d x
= \int_0^\infty x^{\alpha+2}\, g_\alpha(\partial_x^2 v)\,
\partial_x^2 v_n\, \d x .
\]
As $n \to \infty$, the left-hand side converges to
$\int_0^\infty (\partial_x^2 w)\, v\, \d x$ by the Cauchy-Schwarz inequality, 
while the right-hand side converges to
$\int_0^\infty x^{\alpha+2}\, |\partial_x^2 v|^{\alpha+1}\, \d x$
by H\"older's inequality. 
Hence \eqref{eq:B2-AC-1} follows. 
\end{proof}
\section{Higher-order nonlinear estimates\label{sec:higher-est}}
In \S\ref{sec:leading-order estimates} we have heuristically motivated the leading-order estimates \eqref{est-lead-weak-max} that are sufficient to give the von-Mises transform \eqref{mises} a meaning. Estimates~\eqref{est-lead-weak-max} will be rigorously derived in \S\ref{sec:time-discretization}. However, in order to treat all nonlinear contributions, this requires suitable estimates on the right-hand side terms
\[
\int_0^\infty f \, u \, \d x \qquad \text{and} \qquad \int_0^\infty f^2 \, \d x
\]
of \eqref{est-lead-weak-max}, where $f = N(u)$ is given by \eqref{def-nu}.
\begin{proposition}\label{prop-est-nv}
Let $\alpha > 2$ and suppose that $v \in V_\alpha$ with  $\|v\|_{U_\alpha} \ll_\alpha 1$. Then, we have the inequality
\begin{align}\label{est-nv}
    \int_0^\infty \big(N(v)\big)^2\, \, \d x
    &\lesssim_\alpha
    A^\frac{2(\alpha-1)}{3\alpha-1}
    B^\frac{2}{3\alpha-1}
    C+
    A^\frac{4(\alpha-1)}{3\alpha-1}
    B^\frac{4}{3\alpha-1}
    C, 
\end{align}
where
\begin{equation}\label{eq:def_ABC_2}
    A \coloneq 
    \int_0^\infty v^2\, \d x,
    \quad
    B \coloneq \int_0^\infty x^{\alpha+2} |\partial_x^2 v|^{\alpha+1}\, \d x,
    \quad \text{and} \quad
    C \coloneq \int_0^\infty (\partial_x^2 w)^2\, \d x.
\end{equation}
\end{proposition}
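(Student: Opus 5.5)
We expand $N(v)$ with $w = x^{\alpha+2} g_\alpha(\partial_x^2 v)$. Writing $\phi(v) \coloneq 1-(1+v)^{\frac 3 2}$ and $\psi(v) \coloneq 1-(1+v)^{\frac\alpha 2}$, the Leibniz rule gives
\[
N(v) = \big(\phi(v) + (1+v)^{\frac 3 2}\psi(v)\big)\partial_x^2 w + 2 (1+v)^{\frac 3 2} \partial_x\psi(v)\, \partial_x w + (1+v)^{\frac 3 2} \partial_x^2\psi(v)\, w .
\]
Lemma~\ref{lem:weight xu} with $\beta = 0$ gives $\sup|v| \lesssim A^{\frac{\alpha-1}{3\alpha-1}}B^{\frac{1}{3\alpha-1}}$, which is small because $\|v\|_{U_\alpha}\ll 1$. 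Hence $|\phi|,|\psi| \lesssim |v|$, $|\partial_x\psi| \lesssim |\partial_x v|$, and $|\partial_x^2\psi| \lesssim |\partial_x^2 v| + (\partial_x v)^2$. This leaves four terms to bound:
\[
\mathrm{I} = \int v^2 (\partial_x^2 w)^2, \quad
\mathrm{II} = \int (\partial_x v)^2 (\partial_x w)^2, \quad
\mathrm{III} = \int (\partial_x^2 v)^2 w^2, \quad
\mathrm{IV} = \int (\partial_x v)^4 w^2 .
\]

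Term $\mathrm{I}$ is immediate: $\mathrm{I} \le (\sup|v|)^2 C \lesssim A^{\frac{2(\alpha-1)}{3\alpha-1}}B^{\frac{2}{3\alpha-1}}C$, which is the first term on the right-hand side of \eqref{est-nv}.

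Term $\mathrm{III}$ we rewrite as $\int x^{2\alpha+4}|\partial_x^2 v|^{2\alpha+2} x^{-2\alpha-4}x^{2\alpha+4}|\partial_x^2 v|^{2}$. More directly, $(\partial_x^2 v)^2 w^2 = x^{-2}\, x^{2}(\partial_x^2 v)^2 \cdot w^2$ and $w^2 = (x^{\alpha+2}|\partial_x^2 v|^{\alpha+1})^2 (\partial_x^2 v)^{-2}$, so $\mathrm{III} = \int (x^{\alpha+2}|\partial_x^2 v|^{\alpha+1})^2\, \d x$ exactly. Lemma~\ref{lem:con-lem-sup-uxx}, which is where $\alpha>2$ enters, then yields $\mathrm{III} \lesssim B^{\frac{2(2\alpha-1)}{3\alpha-1}} C^{\frac{\alpha+1}{3\alpha-1}}$. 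To bring this into the form of \eqref{est-nv} we use \eqref{eq:B2-AC}, namely $B \le A^{\frac12}C^{\frac12}$, to trade powers of $B$ for $A^{\frac12}C^{\frac12}$ until the exponent of $C$ is one. Counting scalings, the target terms have homogeneity consistent with this, and the power $B^{\frac{2(2\alpha-1)}{3\alpha-1}-\frac{4}{3\alpha-1}}$ exchanged gives exactly $A^{\frac{2(\alpha-1)}{3\alpha-1}\cdot\ldots}$. We will verify the exponent bookkeeping by the two scaling identities (in $x$ and in the amplitude of $v$); if it does not close, we instead keep $\sup|v|$ factors from $\partial_x^2\psi$ and obtain the first target term.

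Terms $\mathrm{II}$ and $\mathrm{IV}$ involve $\partial_x w$ and $w$, which we control via Lemma~\ref{lem-sup-wx} and the asymptotics of Lemma~\ref{lem-valpha}. We split $\partial_x w = a + (\partial_x w - a)$ and $w = ax + (w-ax)$. The remainder pieces are bounded by $x^{\frac12}C^{\frac12}$ and $x^{\frac32}C^{\frac12}$ by \eqref{sup-wx}. For these pieces $\int x(\partial_x v)^2$ and $\int x^3(\partial_x v)^4$ must be estimated by Lemma~\ref{lem:weight xu lp'}, with the weights checked against the admissible $\nu$ ranges, or otherwise through Lemma~\ref{lem:weight xu'} and Hardy. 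The constant pieces involve $a^2\int_0^1(\partial_x v)^2$ and $a^2\int_0^1 x^2(\partial_x v)^4$; away from $x\lesssim1$ we use $w$ itself with Hölder against $B$. Here $|a|$ is bounded by \eqref{est-coeff-a}, and the singular part $\partial_x v \sim c x^{-\frac1\alpha}$ is square integrable near $0$, again using $\alpha>2$. The pieces are handled by subtracting $c x^{\frac{\alpha-1}{\alpha}}$ and using Lemma~\ref{lem-sup-ux-alt} together with \eqref{est-coeff-c}. The quartic term $\mathrm{IV}$ produces the higher-order contribution $A^{\frac{4(\alpha-1)}{3\alpha-1}}B^{\frac{4}{3\alpha-1}}C$, via one extra factor $\sup(x^{\beta}|\partial_x v|)^2$.

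Finally, each resulting product of powers of $A,B,C$ is reduced to one of the two target monomials by scaling in $x$ and in the amplitude of $v$, as in the proofs of Lemmata~\ref{lem-sup-wx} and~\ref{lem:con-lem-sup-uxx}. The main obstacle is the exponent bookkeeping for $\mathrm{II}$ and $\mathrm{IV}$: the weights produced by the singular expansion near $x=0$ must fall inside the admissible ranges of Lemmata~\ref{lem:weight xu'}, \ref{lem:weight xu lp'} and \ref{lem-sup-ux-alt}. We would first try the choice $\beta = \frac{2}{\alpha+1}$ throughout.
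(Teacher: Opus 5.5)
Your reduction of $N(v)$ to $\mathrm{I}$--$\mathrm{IV}$ is the paper's (its $I_1,I_{2,4}$; $I_{2,3}$; $I_{2,2}$; $I_{2,1}$), and $\mathrm{I}$ and $\mathrm{III}$ are handled exactly as there. For $\mathrm{III}$ the exponents close without any fallback, since by \eqref{eq:B2-AC}
\[
B^{\frac{2(2\alpha-1)}{3\alpha-1}}C^{\frac{\alpha+1}{3\alpha-1}} = B^{\frac{2}{3\alpha-1}}\,B^{\frac{4(\alpha-1)}{3\alpha-1}}\,C^{\frac{\alpha+1}{3\alpha-1}} \le A^{\frac{2(\alpha-1)}{3\alpha-1}}B^{\frac{2}{3\alpha-1}}C .
\]

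The gap is in $\mathrm{II}$ and $\mathrm{IV}$. The pointwise bounds $|\partial_x w-a|\lesssim x^{\frac12}C^{\frac12}$ and $|w-ax|\lesssim x^{\frac32}C^{\frac12}$ from \eqref{sup-wx} put all the weight on $\partial_x v$. You are then left needing $\int_0^\infty x(\partial_x v)^2\,\d x$ and $\int_0^\infty x^3(\partial_x v)^4\,\d x$ in terms of $A,B$.

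Both quantities are invariant under $x\mapsto\lambda x$, and both lie outside the paper's lemmas:
\begin{enumerate}
\item Lemma~\ref{lem:weight xu lp'} needs $p\ge\frac{4(\alpha+1)}{\alpha+3}>2$, which excludes the first integral.
\item For $p=4$, the value $\nu=\frac34$ violates that lemma's upper bound $\nu<\frac{5(\alpha+2)}{2(5\alpha+3)}$ once $\alpha\ge\frac{11}{5}$.
\item An extra factor $\sup_x(x^\beta|\partial_x v|)^2$ turning the first target monomial into the second must reproduce their $x$-scale-invariant ratio $A^{\frac{2(\alpha-1)}{3\alpha-1}}B^{\frac{2}{3\alpha-1}}$. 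That forces $\beta=1\notin[\frac{2}{\alpha+1},\frac{3(\alpha+2)}{5\alpha+3})$.
\end{enumerate}

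No sharper interpolation inequality can fix this, because these quantities are not controlled at all. Take $\alpha=5$ and $v=\eps\chi(x)x^{-\gamma}\sin(x^\gamma)$ with $\gamma=\frac{11}{20}$, where $\chi$ is smooth, $\chi=0$ on $[0,1]$ and $\chi=1$ on $[2,\infty)$. Then $v^2\lesssim x^{-\frac{11}{10}}$, $x^{7}|\partial_x^2 v|^{6}\lesssim x^{-\frac{17}{10}}$ and $(\partial_x^2 w)^2\lesssim x^{-\frac{23}{10}}$, so $v\in V_\alpha$ with small norm. However, $x(\partial_x v)^2$ and $x^3(\partial_x v)^4$ behave like $\eps^2\gamma^2x^{-1}\cos^2(x^\gamma)$ and $\eps^4\gamma^4x^{-1}\cos^4(x^\gamma)$, so both integrals diverge. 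Meanwhile $\sup_x x^{-\frac12}|\partial_x w|$ and $\sup_x x^{-\frac32}|w|$ are positive (here $a=0$), so your bounds for $\mathrm{II}$ and $\mathrm{IV}$ are infinite. Similar examples exist for every $\alpha>3$. The obstruction is the oscillating tail as $x\to\infty$, which splitting at the contact line does not touch.

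The missing idea is to never estimate $\partial_x v$ in such norms. Instead, integrate by parts so that a factor $\partial_x v$ becomes $v$, which is small in $\sup$ by \eqref{eq:weight xu} with $\beta=0$, and absorb whatever reproduces $\mathrm{II}$. The paper writes
\[
\mathrm{II}=-2\int_0^\infty v\,(\partial_x v)(\partial_x w)(\partial_x^2 w)\,\d x-\int_0^\infty v\,(\partial_x^2 v)(\partial_x w)^2\,\d x .
\]
The first integral is bounded by $2\sup|v|\,C^{\frac12}\,\mathrm{II}^{\frac12}$ and absorbed. The second is integrated by parts again, which produces three pieces:
\begin{itemize}
\item $\sup|v|\,\|x^{\alpha+2}|\partial_x^2 v|^{\alpha+1}\|_{L^2}\,C^{\frac12}$, closed by Lemma~\ref{lem:con-lem-sup-uxx};
\item $\mathrm{II}^{\frac12}\|x^{\alpha+2}|\partial_x^2 v|^{\alpha+1}\|_{L^2}$, which is again absorbed;
\item a $\partial_x^3 v$-term, rewritten via $x^{\alpha+2}\partial_x|\partial_x^2 v|^{\alpha+1}/(\alpha+1)$ and integrated by parts once more.
\end{itemize}

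For $\mathrm{IV}$, a longer chain of integrations by parts turns $(\partial_x v)^4w^2$ into terms carrying at least $v^2$. Each is closed with $\sup_x x^\beta|v|$ and $\sup_x x^\beta|\partial_x v|$ for admissible $\beta$, together with Lemmas~\ref{lem:con-lem-sup-uxx} and~\ref{lem-sup-wx}. Boundary terms vanish at $0$ by Lemma~\ref{lem-valpha} (using $\alpha>2$) and at $\infty$ by cut-offs with the decay from Lemmas~\ref{lem:weight xu} and~\ref{lem:weight xu'}. Without this integration-by-parts-and-absorption mechanism, your plan does not yield \eqref{est-nv}.
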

\begin{proof}
Using the definition \eqref{def-nu} of $N(\cdot)$ and the shorthand $w := x^{\alpha+2} g_\alpha(\partial_x^2 v)$, we obtain 
\begin{equation*}
\begin{split}
    \int_0^\infty (N(v))^2\, \d x
    &\lesssim
    \int_0^\infty \bigl(1 -(1+v)^\frac{3}{2}\bigr)^2\, (\partial_x^2 w)^2\, \d x + \int_0^\infty (1+v)^3 \big(\partial_x^2\bigl(\big(1 - (1+v)^\frac{\alpha}{2}\big) w\big)^2\, \d x
    \\
    &
    \eqqcolon I_1 + I_2.
\end{split}
\end{equation*}
We estimate the integrals $I_1$ and $I_2$ separately.

\bigskip

\noindent\textbf{Step 1: Estimate for $I_1$. }
Thanks to estimate~\eqref{eq:weight xu} of Lemma~\ref{lem:weight xu} and $\|v\|_{U_\alpha} \ll_\alpha 1$, we may assume $\sup_{x \geq 0} |v| \leq \frac{1}{2} < 1$, so that in particular $\sup_{x \ge 0} |1-(1+v)^{\frac{3}{2}}| \lesssim \sup_{x \ge 0} |v|$. Thus, with $w = x^{\alpha+2} |\partial_x^2 v|^{\alpha-1} \partial_x^2 v$, we obtain for $I_1$ the estimate
\begin{align}
    I_1
    &\leq
    \bigl(\sup_{x \geq 0} |v|\bigr)^2 \int_0^\infty (\partial_x^2 w)^2\, \d x \; \stackrel{\mathclap{\eqref{eq:weight xu}}}{\lesssim}_{\alpha}
    \Big(\int_0^\infty v^2\, \d x\Big)^\frac{2(\alpha-1)}{3\alpha-1} 
    \Big(\int_0^\infty x^{\alpha+2} |\partial_x^2 v|^{\alpha+1}\, \d x\Big)^\frac{2}{3\alpha-1}
    \int_0^\infty (\partial_x^2 w)^2\, \d x \nonumber \\
    &\stackrel{\mathclap{\eqref{eq:def_ABC_2}}}{=} \;
    A^\frac{2(\alpha-1)}{3\alpha-1}
    B^\frac{2}{3\alpha-1}
    C. \label{nonlinear-i1}
\end{align}
\bigskip

\noindent\textbf{Step 2: Estimate for $I_2$. }
The estimate for $I_2$ is more involved. First, by computing 
\begin{align*}
    \partial_x^2\big(\big(1-(1+v)^{\frac{\alpha}{2}}\big)\, w\big)
    &= 
    - \tfrac{\alpha (\alpha-2)}{4} (1+v)^{\frac{\alpha -4}{2}} (\partial_x v)^2 \, w - \tfrac{\alpha}{2} (1+v)^{\frac{\alpha -2}{2}} (\partial_x^2 v) \, w \\
    & \quad - 
    \alpha (1+v)^{\frac{\alpha -2}{2}} (\partial_x v) \, (\partial_x w) + \big(1-(1+v)^{\frac{\alpha}{2}}\big) \, (\partial_x^2 w), 
\end{align*}
and using again that $\sup_{x \geq 0} |v| \leq \frac{1}{2} < 1$ thanks to estimate~\eqref{eq:weight xu} of Lemma~\ref{lem:weight xu} and $\|v\|_{U_\alpha} \ll_\alpha 1$, we obtain
\begin{align}\nonumber
    I_2
    &\lesssim_\alpha
    \int_0^\infty (\partial_x v)^4\, w^2\, \d x
    +
    \int_0^\infty (\partial_x^2 v)^2\, w^2\, \d x
    +
    \int_0^\infty (\partial_x v)^2\, (\partial_x w)^2\, \d x
    +
    (\sup_{x\geq 0} |v|)^2 \int_0^\infty (\partial_x^2 w)^2\, \d x
    \\
    &\eqcolon
    I_{2,1}
    +
    I_{2,2}
    +
    I_{2,3}
    +
    I_{2,4}. \label{i2-split}
\end{align}
\bigskip

\noindent\textbf{Step 2.1: Estimate for $I_{2,1}$. }
For estimating $I_{2,1}$, observe that through integration by parts we have 
\begin{align}
I_{2,1} &\stackrel{\mathclap{\eqref{i2-split}}}{=} \int_0^\infty (\partial_x v)^4 w^2 \, \d x = - 3 \int_0^\infty v (\partial_x v)^2 (\partial_x^2 v) w^2 \, \d x - 2 \int_0^\infty v (\partial_x v)^3 w (\partial_x w) \, \d x \nonumber \\
&= -\frac 3 2 \int_0^\infty (\partial_x v^2) (\partial_x v) x^{2 \alpha + 4} |\partial_x^2 v|^{2\alpha} (\partial_x^2 v) \, \d x - \int_0^\infty (\partial_x v^2) (\partial_x v)^2 w (\partial_x w) \, \d x \nonumber \\
&= - \frac 3 2 \int_0^\infty (\partial_x v^2) (\partial_x v) x^{-\frac{\alpha+2}{\alpha}} |w|^{\frac{\alpha+1}{\alpha}} w \, \d x + 2 \int_0^\infty v^2 (\partial_x v) (\partial_x^2 v) w (\partial_x w) \, \d x \nonumber \\
&\phantom{=} + \int_0^\infty v^2 (\partial_x v)^2 (\partial_x w)^2 \, \d x + \int_0^\infty v^2 (\partial_x v)^2 w (\partial_x^2 w) \, \d x \nonumber \\
&= \frac 3 2 \int_0^\infty v^2 (x^{\alpha+2} |\partial_x^2 v|^{\alpha+1})^2 \, \d x - \frac{\alpha+2}{2\alpha} \int_0^\infty (\partial_x v^3) x^{- \frac{2 (\alpha+1)}{\alpha}} |w|^{\frac{\alpha+1}{\alpha}} w \, \d x \nonumber \\
&\phantom{=} + \frac{10\alpha+3}{6 \alpha} \int_0^\infty (\partial_x v^3) x^{-\frac{\alpha+2}{\alpha}} |w|^{\frac{\alpha+1}{\alpha}} (\partial_x w) \, \d x + \frac 1 3 \int_0^\infty (\partial_x v^3) (\partial_x v) (\partial_x w)^2 \, \d x \nonumber \\
&\phantom{=} + \int_0^\infty v^2 (\partial_x v)^2 w (\partial_x^2 w) \, \d x \nonumber \\
&= \frac 3 2 \int_0^\infty v^2 (x^{\alpha+2} |\partial_x^2 v|^{\alpha+1})^2 \, \d x - \frac{(\alpha+1)(\alpha+2)}{\alpha^2} \int_0^\infty v^2 x^{2(\alpha+2)} (x^{-2} v) |\partial_x^2 v|^{2\alpha} (\partial_x^2 v) \, \d x \nonumber \\
&\phantom{=} + \frac{(\alpha+2)(8\alpha+3)}{3\alpha^2} \int_0^\infty v^3 (x^{\alpha+2}|\partial_x^2 v|^\alpha (\partial_x^2 v)) (x^{-1} \partial_x w) \, \d x \nonumber\\
&\phantom{=} - \frac{(3\alpha+1)(4\alpha+3)}{6\alpha^2} \int_0^\infty v^3 (\partial_x^2 v) (\partial_x w)^2 \, \d x \nonumber \\
&\phantom{=} - \frac{10\alpha+3}{6\alpha} \int_0^\infty v^3 (x^{\alpha+2} |\partial_x^2 v|^{\alpha+1}) (\partial_x^2 w) \, \d x + \int_0^\infty v^2 (\partial_x v)^2 (x^{\alpha+2}|\partial_x^2 v|^{\alpha-1} (\partial_x^2 v)) (\partial_x^2 w) \, \d x \nonumber \\
&\phantom{=} - \frac 2 3 \int_0^\infty v^3 (\partial_x v) (\partial_x w) (\partial_x^2 w) \, \d x \eqqcolon \sum_{j = 1}^7 I_{2,1,j}. \label{est-i21}
\end{align}
We note that all boundary terms arising from the integration by parts in \eqref{est-i21} vanish. Indeed, as $x\downarrow0$,
Lemma~\ref{lem-valpha} dictates the asymptotic behavior
\begin{equation*}
v=O\big(x^{\frac{\alpha-1}{\alpha}}\big),\qquad
\partial_x v=O\big(x^{-\frac1\alpha}\big),\qquad
w=O(x),\qquad
\partial_x w=O(1),
\end{equation*}
which ensures that every corresponding boundary expression is 
$O\big(x^{\frac{3\alpha-4}{\alpha}}\big)\to 0$ as 
$x\downarrow0$,
since $\alpha>2$.
As $x\to\infty$, using Lemma~\ref{lem:weight xu} and Lemma~\ref{lem:weight xu'}, we choose
\begin{equation*}
\frac{1-\alpha}{\alpha+1} \le \beta_1 < \frac{\alpha+2}{5\alpha+3},
\qquad
\frac{2}{\alpha+1} \le \beta_2 < \frac{3(\alpha+2)}{5\alpha+3}
\end{equation*}
sufficiently close to the upper endpoints, so that by \eqref{eq:weight xu} and \eqref{eq:weight xu'} we have
\begin{equation*}
|v(x)|\lesssim_{\alpha, \beta_1} x^{-\beta_1},
\qquad
|\partial_x v(x)|\lesssim_{\alpha, \beta_2} x^{-\beta_2}
\qquad\text{for }x>0. 
\end{equation*}
Moreover, \eqref{sup-uxx-alt} and \eqref{est-coeff-c} of Lemma~\ref{lem-sup-uxx} with $\beta=\frac{\alpha+2}{\alpha+1}$ imply
\begin{equation*}
|\partial_x^2 v(x)|\lesssim_\alpha x^{-\frac{\alpha+2}{\alpha+1}}
\qquad \text{for }x \geq 1,
\end{equation*}
since $c x^{-\frac{2 \alpha+1}{\alpha}}$ decays faster than $x^{-\frac{\alpha+2}{\alpha+1}}$. Consequently,
\begin{equation*}
|w(x)|=x^{\alpha+2}|\partial_x^2 v(x)|^\alpha
\lesssim_\alpha x^{\frac{\alpha+2}{\alpha+1}},
\qquad
|\partial_x w(x)|\lesssim_\alpha 1+x^{\frac{1}{2}},
\end{equation*}
where the latter follows from \eqref{fundamental-w}. Therefore each boundary term in
\eqref{est-i21} is of order $O(x^\gamma)$ with the exponent $\gamma$ taking values in 
\begin{align*}
\gamma &\in \{\gamma_1,\gamma_2,\gamma_3,\gamma_4,\gamma_5,\gamma_6\}, \\
\gamma_1 &\coloneq -\beta_1-3\beta_2+\frac{2(\alpha+2)}{\alpha+1} \downarrow - \frac{4 (\alpha+2)}{(\alpha+1) (5\alpha+3)} && \text{for} \quad v(\partial_x v)^3 w^2, \\
\gamma_2 &\coloneq -2\beta_1-2\beta_2+\frac{
3\alpha+5}{2(\alpha+1)} \downarrow - \frac{\alpha^2+14\alpha+17}{2(\alpha+1)(5\alpha+3)} && \text{for} \quad v^2(\partial_x v)^2 w\,(\partial_x w), \\
\gamma_3 &\coloneq -2\beta_1-\beta_2+\frac{\alpha+2}{\alpha+1} \downarrow - \frac{2(\alpha+2)}{(\alpha+1)(5\alpha+3)} && \text{for} \quad v^2(\partial_x v)x^{-\frac{\alpha+2}{\alpha}}|w|^{\frac{\alpha+1}{\alpha}}w, \\
\gamma_4 &\coloneq -3\beta_1+\frac1{\alpha+1} \downarrow - \frac{3\alpha^2+4\alpha+3}{(\alpha+1)(5\alpha+3)} && \text{for} \quad v^3 x^{-\frac{2(\alpha+1)}{\alpha}}|w|^{\frac{\alpha+1}{\alpha}}w,\\
\gamma_5 &\coloneq -3\beta_1+\frac12 \downarrow - \frac{\alpha+9}{2(5\alpha+3)} && \text{for} \quad v^3 x^{-\frac{\alpha+2}{\alpha}}|w|^{\frac{\alpha+1}{\alpha}} (\partial_x w), \\
\gamma_6 &\coloneq-3\beta_1-\beta_2+1 \downarrow - \frac{\alpha+9}{5\alpha+3} && \text{for} \quad v^3 (\partial_x v)(\partial_x w)^2.
\end{align*}
As indicated, near the right endpoint values of $\beta_1$ and $\beta_2$ all of these exponents are negative. Thus, for $\beta_1$ and $\beta_2$ chosen sufficiently
close to their respective upper bounds, we infer that every boundary term also vanishes as $x\to\infty$.

\bigskip

We estimate the seven remaining integrals $I_{2,1,j}, j=1, \ldots, 7$, separately. For $I_{2,1,1}$ we obtain with Lemma~\ref{lem:weight xu} (with $\beta=0$) and Lemma~\ref{lem:con-lem-sup-uxx},
\begin{subequations}\label{i21j}
\begin{align}\nonumber
    I_{2,1,1} \quad &\le_{\phantom{\alpha}} \quad \frac 3 2 \big(\sup_{x \ge 0} |v|\big)^2 \int_0^\infty (x^{\alpha+2} |\partial_x^2 v|^{\alpha+1})^2 \, \d x \\
    &\stackrel{\mathclap{\eqref{eq:weight xu}, \eqref{eq:con-lem-sup-uxx}}}{\lesssim}_{\alpha} \quad \Big(\int_0^\infty v^2 \, \d x\Big)^{\frac{2(\alpha-1)}{3\alpha-1}} \Big( \int_0^\infty x^{\alpha +2} \, |\partial_x^2 v|^{\alpha +1} \, \d x\Big)^{\frac{4\alpha}{3\alpha-1}} \Big( \int_0^\infty (\partial_x^2 w)^2 \, \d x\Big)^{\frac{\alpha+1}{3\alpha-1}}. \label{i211}
\end{align}
Furthermore, by Lemma~\ref{lem:weight xu}, H\"older's and Hardy's inequality, \eqref{asymptotic-vw} of Lemma~\ref{lem-valpha}, and because $\alpha > 2$, it holds
\begin{align} \nonumber
    I_{2,1,2} \quad &\lesssim_{\alpha} \quad \big(\sup_{x > 0} |v|\big)^2 \Big(\int_0^\infty (x^{\alpha+2} |x^{-2} v|^{\alpha+1})^2 \, \d x\Big)^{\frac{1}{2\alpha+2}} \Big(\int_0^\infty (x^{\alpha+2} |\partial_x^2 v|^{\alpha+1})^2 \, \d x\Big)^{\frac{2\alpha+1}{2\alpha+2}} \\
    &\lesssim_{\alpha} \quad \big(\sup_{x > 0} |v|\big)^2 \int_0^\infty (x^{\alpha+2} |\partial_x^2 v|^{\alpha+1})^2 \, \d x \nonumber \\
    &\stackrel{\mathclap{\eqref{eq:weight xu}, \eqref{eq:con-lem-sup-uxx}}}{\lesssim}_{\alpha} \quad \Big(\int_0^\infty v^2 \, \d x\Big)^{\frac{2(\alpha-1)}{3\alpha-1}} \Big(  \int_0^\infty x^{\alpha +2} \, |\partial_x^2 v|^{\alpha +1} \, \d x\Big)^{\frac{4\alpha}{3\alpha-1}} \Big( \int_0^\infty (\partial_x^2 w)^2 \, \d x\Big)^{\frac{\alpha+1}{3\alpha-1}}. \label{i212}
\end{align}
Now, let $\eta \in C^\infty([0,\infty))$ be a cut off with $0 \le \eta \le 1$, $\eta|_{[0,1]} = 1$, and $\eta|_{[2,\infty)} = 0$. Then, with the notation of Lemma~\ref{lem-valpha} we obtain for $I_{2,1,3}$
\begin{align*}
    I_{2,1,3} \qquad &\lesssim_{\alpha} \qquad \int_0^\infty |v|^3 (x^{\alpha+2}|\partial_x^2 v|^{\alpha+1}) |x^{-1} (\partial_x w - \eta a)| \, \d x + \int_0^\infty |v|^3 x^{\alpha+1}|\partial_x^2 v|^{\alpha+1} \eta |a| \, \d x \\
    &\lesssim_\alpha \qquad \big(\sup_{x > 0}|v|\big)^3 \Big(\int_0^\infty (x^{\alpha+2}|\partial_x^2 v|^{\alpha+1})^2 \, \d x\Big)^{\frac 1 2} \Big(\int_0^\infty (\partial_x^2 w)^2 \, \d x + |a|^2\Big)^{\frac 1 2} \\
    &\phantom{\lesssim_\alpha} \qquad + \big(\sup_{x > 0} x^{-\frac{1}{3}} |v|\big)^3 |a| \int_0^\infty x^{\alpha+2}|\partial_x^2 v|^{\alpha+1} \, \d x \\
    &\stackrel{\mathclap{\eqref{eq:weight xu}, \eqref{est-coeff-a}, \eqref{eq:con-lem-sup-uxx}}}{\lesssim}_{\alpha} \qquad \Big(\int_0^\infty v^2 \, \d x\Big)^{\frac{3 (\alpha-1)}{3\alpha -1}} 
    \Big(\int_0^\infty x^{\alpha+2} \, |\partial_x^2 v|^{\alpha+1} \, \d x\Big)^{\frac{2(\alpha+1)}{3\alpha -1}} 
    \Big(\int_0^\infty (\partial_x^2 w)^2 \, \d x\Big)^{\frac{2\alpha}{3\alpha-1}} \\
    &\phantom{\lesssim_\alpha} \qquad + \Big(\int_0^\infty v^2 \, \d x\Big)^{\frac{3 (\alpha-1)}{3\alpha -1}} 
    \Big(\int_0^\infty x^{\alpha+2} \, |\partial_x^2 v|^{\alpha+1} \, \d x\Big)^{\frac{3\alpha+2}{3\alpha -1}} 
    \Big(\int_0^\infty (\partial_x^2 w)^2 \, \d x\Big)^{\frac 1 2} \\
    &\phantom{\lesssim_\alpha} \qquad + \Big(\int_0^\infty v^2 \, \d x\Big)^{\frac{2(\alpha-2)}{3\alpha -1}} 
    \Big(\int_0^\infty x^{\alpha+2} \, |\partial_x^2 v|^{\alpha+1} \, \d x\Big)^{\frac{4(\alpha+1)}{3\alpha -1}} 
    \Big(\int_0^\infty (\partial_x^2 w)^2 \, \d x\Big)^{\frac{\alpha-1}{3\alpha-1}},
\end{align*}
where Hardy's inequality and the Cauchy-Schwarz inequality were used in the second estimate, and Lemma~\ref{lem:weight xu} and Lemma~\ref{lem-sup-wx} in conjunction with $\alpha>2$ in the last estimate. Scaling $x \mapsto \lambda x$, dividing by $\lambda^{1-2\alpha}$, and taking 
$\lambda \downarrow 0$ eliminates the second line $\sim \lambda^{\frac 1 2}$ of the estimate. The last line is absorbed by the first one using estimate~\eqref{eq:B2-AC} of
Lemma~\ref{lem:B2-AC} in form of
\begin{align*}
\Big(\int_0^\infty x^{\alpha+2} |\partial_x^2 v|^{\alpha+1} \, \d x
\Big)^{\frac{2(\alpha+1)}{3\alpha-1}}
\leq
\Big(\int_0^\infty v^2 \, \d x\Big)^{\frac{\alpha+1}{3\alpha-1}}
\Big(\int_0^\infty (\partial_x^2 w)^2 \, \d x\Big)^{\frac{\alpha+1}{3\alpha-1}},
\end{align*}
so that
\begin{align}
    I_{2,1,3} \lesssim_{\alpha} \Big(\int_0^\infty v^2 \, \d x\Big)^{\frac{3(\alpha-1)}{3\alpha-1}} \Big(\int_0^\infty x^{\alpha +2} \, |\partial_x^2 v|^{\alpha +1} \, \d x\Big)^{\frac{2(\alpha+1)}{3\alpha-1}} \Big( \int_0^\infty (\partial_x^2 w)^2 \, \d x\Big)^{\frac{2\alpha}{3\alpha-1}}. \label{i213}
\end{align}
We next estimate the integral $I_{2,1,4}$. We write
\begin{equation*}
    I_{2,1,4} \quad 
    \lesssim_\alpha
    \int_0^\infty |v|^3\, |\partial_x^2 v| |\partial_x w - a|^2\, \d x
    +
    |a|^2 \int_0^\infty |v|^3\, |\partial_x^2 v|\, \d x
    \eqqcolon
    J_{2,1,4} + K_{2,1,4}
\end{equation*}
and estimate the two integrals $J_{2,1,4}$ and $K_{2,1,4}$ separately. 
For $J_{2,1,4}$, we apply H\"older's inequality and then Lemma~\ref{lem:weight xu} (with $\beta=-\frac 1{\alpha+1}$ and $\beta=0$, respectively) and Lemma \ref{lem-sup-wx} to obtain
\begin{align*}
    J_{2,1,4} 
    &\leq \big( \sup_{x>0} x^{-\frac{1}{2}} |\partial_x w - a|\big)^2
    \int_0^\infty x |v|^3 |\partial_x^2 v| \, dx \nonumber\\
    &\leq \big( \sup_{x>0} x^{-\frac{1}{2}} |\partial_x w - a|\big)^2
    \big(\sup_{x>0} x^{-\frac{1}{\alpha+1}}|v|\big)\big(\sup_{x>0} |v|\big)^\frac{2}{\alpha+1}
    \Big(\int_0^\infty v^2\, \d x\Big)^\frac{\alpha}{\alpha+1}
    \Big(\int_0^\infty x^{\alpha+2} |\partial_x^2 v|^{\alpha+1}\, \d x\Big)^\frac{1}{\alpha+1} \nonumber
    \\
    &\stackrel{\mathclap{\eqref{eq:weight xu},\eqref{sup-wx}}}{\lesssim}_{\alpha}\quad
    \Big(\int_0^\infty v^2\, \d x\Big)^\frac{4(\alpha-1)}{3\alpha-1}
    \Big(\int_0^\infty x^{\alpha+2} |\partial_x^2 v|^{\alpha+1}\, \d x\Big)^\frac{4}{3\alpha-1} \int_0^\infty (\partial_x^2 w)^2\, \d x
\end{align*}
For the integral $K_{2,1,4}$ we apply H\"older's inequality and subsequently Lemma~\ref{lem:xu-int} (with $p=\frac{3(\alpha+1)}{\alpha}$ and $\nu = - \frac{\alpha+2}{3(\alpha+1)}$) and Lemma~\ref{lem-sup-wx} to obtain 
\begin{align*}
    K_{2,1,4} &\leq
    |a|^2 \Big(\int_0^\infty x^{\alpha+2}|\partial_x^2 v|^{\alpha+1}\,  \d x\Big)^{\frac{1}{\alpha+1}} \Big(\int_0^\infty x^{-\frac{\alpha+2}{\alpha}}|v|^{\frac{3(\alpha+1)}{\alpha}}\,  \d x\Big)^{\frac{\alpha}{\alpha+1}}
    \nonumber
    \\
    &\stackrel{\mathclap{\eqref{eq:weight xu lp},\eqref{est-coeff-a}}}{\lesssim}_{\alpha} \quad
    \Big(\int_0^\infty v^2\, \d x\Big)^\frac{3\alpha-5}{3\alpha-1}
    \Big(\int_0^\infty x^{\alpha+2} |\partial_x^2 v|^{\alpha+1}\, \d x\Big)^\frac{2(\alpha+3)}{3\alpha-1} \Big(\int_0^\infty (\partial_x^2 w)^2\, \d x\Big)^{\frac{2(\alpha-1)}{3\alpha-1}}
\end{align*}
We use estimate~\eqref{eq:B2-AC} of
Lemma~\ref{lem:B2-AC} in form of
\begin{align*}
\Big(\int_0^\infty x^{\alpha+2} |\partial_x^2 v|^{\alpha+1} \, \d x
\Big)^{\frac{2(\alpha+1)}{3\alpha-1}}
\leq
\Big(\int_0^\infty v^2 \, \d x\Big)^{\frac{\alpha+1}{3\alpha-1}}
\Big(\int_0^\infty (\partial_x^2 w)^2 \, \d x\Big)^{\frac{\alpha+1}{3\alpha-1}},
\end{align*}
so that
\begin{align*}
    K_{2,1,4}
\lesssim_\alpha
    \Big(\int_0^\infty v^2\, \d x
    \Big)^{\frac{4(\alpha-1)}{3\alpha-1}}
    \Big(\int_0^\infty x^{\alpha+2} |\partial_x^2 v|^{\alpha+1}\, \d x
    \Big)^{\frac{4}{3\alpha-1}}
    \int_0^\infty (\partial_x^2 w)^2\, \d x.
\end{align*}
Combining the estimates for $J_{2,1,4}$ and $K_{2,1,4}$, we conclude
\begin{align}\label{eq:I214}
    I_{2,1,4}
    \lesssim_\alpha \Big(\int_0^\infty v^2\, \d x\Big)^\frac{4(\alpha-1)}{3\alpha-1}
    \Big(\int_0^\infty x^{\alpha+2} |\partial_x^2 v|^{\alpha+1}\, \d x\Big)^\frac{4}{3\alpha-1} \int_0^\infty (\partial_x^2 w)^2\, \d x.
\end{align}
Like \eqref{i213} we can estimate with help of Lemma~\ref{lem:con-lem-sup-uxx},
\begin{align} \nonumber
I_{2,1,5} \quad &\lesssim_{\alpha} \quad \big(\sup_{x > 0} |v|\big)^3 \Big(\int_0^\infty (x^{\alpha+2} |\partial_x^2 v|^{\alpha+1})^2 \, \d x\Big)^{\frac 1 2} \Big(\int_0^\infty (\partial_x^2 w)^2 \, \d x\Big)^{\frac 1 2} \\
&\stackrel{\mathclap{\eqref{eq:weight xu}, \eqref{eq:con-lem-sup-uxx}}}{\lesssim}_{\alpha} \quad \Big(\int_0^\infty v^2 \, \d x\Big)^{\frac{3(\alpha-1)}{3\alpha-1}} \Big(\int_0^\infty x^{\alpha +2} \, |\partial_x^2 v|^{\alpha +1} \, \d x\Big)^{\frac{2(\alpha+1)}{3\alpha-1}} \Big( \int_0^\infty (\partial_x^2 w)^2 \, \d x\Big)^{\frac{2\alpha}{3\alpha-1}}. \label{i215}
\end{align}
Next, we obtain with H\"older's and Hardy's inequality together with Lemma~\ref{lem:weight xu} and Lemma~\ref{lem:weight xu'}
\begin{align} \nonumber
I_{2,1,6} \qquad &\le_{\phantom{\alpha}} \qquad \big(\sup_{x > 0} x^{\frac{\alpha}{5\alpha+3}} |v|\big)^2 \big(\sup_{x > 0} x^{\frac{3(\alpha+1)}{5\alpha+3}} |\partial_x v|\big) \Big(\int_0^\infty (x^{\alpha+2} |x^{-1} \partial_x v|^{\alpha+1})^2 \, \d x\Big)^{\frac{1}{2 (\alpha + 1)}} \\
&\phantom{\le_\alpha} \qquad \times \Big(\int_0^\infty (x^{\alpha+2} |\partial_x^2 v|^{\alpha+1})^2 \, \d x\Big)^{\frac{\alpha}{2 (\alpha + 1)}} \Big( \int_0^\infty (\partial_x^2 w)^2 \, \d x\Big)^{\frac 1 2} \nonumber \\
&\lesssim_{\alpha} \qquad \big(\sup_{x > 0} x^{\frac{\alpha}{5\alpha+3}} |v|\big)^2 \big(\sup_{x > 0} x^{\frac{3(\alpha+1)}{5\alpha+3}} |\partial_x v|\Big) \Big(\int_0^\infty (x^{\alpha+2} |\partial_x^2 v|^{\alpha+1})^2 \, \d x\Big)^{\frac{1}{2}} \Big( \int_0^\infty (\partial_x^2 w)^2 \, \d x\Big)^{\frac 1 2} \nonumber \\
&\stackrel{\mathclap{\eqref{eq:weight xu},\eqref{eq:weight xu'},\eqref{eq:con-lem-sup-uxx}}}{\lesssim}_{\alpha} \qquad \Big(\int_0^\infty v^2 \, \d x\Big)^{\frac{3(\alpha-1)}{3\alpha-1}} \Big(\int_0^\infty x^{\alpha+2} |\partial_x^2 v|^{\alpha+1} \, \d x\Big)^{\frac{2 (\alpha+1)}{3\alpha-1}} \Big( \int_0^\infty (\partial_x^2 w)^2 \, \d x\Big)^{\frac{2\alpha}{3\alpha-1}}. \label{i216}
\end{align}
Likewise, for the integral $I_{2,1,7}$ we obtain 
\begin{align*} 
   I_{2,1,7} 
    &\leq 
    \int_0^\infty |v|^3\, |\partial_x v|\, |\partial_x w - \eta a|\, |\partial_x^2 w|\, \d x
    +
    |a| \int_0^2 |v|^3\, |\partial_x v|\, |\partial_x^2 w|\, \d x \nonumber
    \\
    &
    \eqqcolon
    J_{2,1,7} + K_{2,1,7},
\end{align*}
and we  estimate $J_{2,1,7}$ and $K_{2,1,7}$ separately. For $J_{2,1,7}$ we apply the Cauchy--Schwarz inequality and then Lemma~\ref{lem:weight xu}, Lemma~\ref{lem:weight xu'}, and Lemma~\ref{lem-sup-wx} to obtain
\begin{align*}
    J_{2,1,7} \quad
    &\leq \quad
    \big(\sup_{x > 0} |v|\big) \big(\sup_{x > 0} x^{\frac{\alpha}{5\alpha+3}} |v|\big)^2 \big(\sup_{x > 0} x^{\frac{3(\alpha+1)}{5\alpha+3}} |\partial_x v|\big) 
    \Big(\int_0^\infty x^{-2} (\partial_x w - \eta a)^2\, \d x\Big)^\frac{1}{2}
    \\
    &\phantom{\lesssim_\alpha} \quad
    \times
    \Big(\int_0^\infty (\partial_x^2 w)^2\, \d x\Big)^\frac{1}{2}
    \\
    &\stackrel{\mathclap{\eqref{eq:weight xu},\eqref{eq:weight xu'}}}{\lesssim}_{\alpha} \quad
    \Big(\int_0^\infty v^2\, \d x\Big)^\frac{4(\alpha-1)}{3\alpha-1} 
    \Big(\int_0^\infty x^{\alpha+2} |\partial_x^2 v|^{\alpha+1}\, \d x\Big)^\frac{4}{3\alpha-1}
    \Big(\int_0^\infty x^{-2} (\partial_x w - \eta a)^2\, \d x\Big)^\frac{1}{2}
    \\
    &\phantom{\lesssim_\alpha} \quad
    \times
    \Big(\int_0^\infty (\partial_x^2 w)^2\, \d x\Big)^\frac{1}{2}
\end{align*}
Using Hardy's inequality and then Lemma~\ref{lem-sup-wx}, we end up with the estimate
\begin{align*}
    J_{2,1,7}
    \; &\lesssim_\alpha 
    \Big(\int_0^\infty v^2\, \d x\Big)^\frac{4(\alpha-1)}{3\alpha-1} 
    \Big(\int_0^\infty x^{\alpha+2} |\partial_x^2 v|^{\alpha+1}\, \d x\Big)^\frac{4}{3\alpha-1} 
    \Big(
    \Big(\int_0^\infty (\partial_x^2 w)^2\, \d x\Big)^\frac{1}{2} 
    +
    |a|
    \Big) \\
    &\phantom{\lesssim_\alpha} \times \Big(\int_0^\infty (\partial_x^2 w)^2\, \d x\Big)^\frac{1}{2} \nonumber
    \\
    &\stackrel{\mathclap{\eqref{est-coeff-a}}}{\lesssim}_{\alpha}
    \Big(\int_0^\infty v^2\, \d x\Big)^\frac{4(\alpha-1)}{3\alpha-1} 
    \Big(\int_0^\infty x^{\alpha+2} |\partial_x^2 v|^{\alpha+1}\, \d x\Big)^\frac{4}{3\alpha-1} 
    \int_0^\infty (\partial_x^2 w)^2\, \d x \nonumber
    \\
    &\phantom{\lesssim_\alpha}
    +
    \Big(\int_0^\infty v^2\, \d x\Big)^\frac{4(\alpha-1)}{3\alpha-1} 
    \Big(\int_0^\infty x^{\alpha+2} |\partial_x^2 v|^{\alpha+1}\, \d x\Big)^\frac{\alpha+4}{3\alpha-1} 
    \Big(\int_0^\infty (\partial_x^2 w)^2\, \d x\Big)^\frac{5\alpha-3}{2(3\alpha-1)}
\end{align*}
$K_{2,1,7}$ can be estimated by means of H\"older's inequality and then applying Lemma~\ref{lem:weight xu}, Lemma~\ref{lem:weight xu'}, and Lemma~\ref{lem-sup-wx},  
\begin{align*}
    K_{2,1,7} \qquad
    &\leq \qquad
    |a| \big( \sup_{x>0} x^\frac{1-\alpha}{\alpha+1} |v|\big)^3
    \big(\sup_{x>0} x^\frac{2}{\alpha+1} |\partial_x v|\big)
    \Big(\int_0^2 x^\frac{2(3\alpha-5)}{\alpha+1}\, \d x\Big)^\frac{1}{2} \Big(\int_0^\infty (\partial_x^2 w)^2\, \d x\Big)^\frac{1}{2} \nonumber
    \\ 
    & \stackrel{\mathclap{\eqref{eq:weight xu7},\eqref{eq:weight xu'3},\eqref{est-coeff-a}}}
    {\lesssim}_{\alpha} \qquad
    \Big(\int_0^\infty x^{\alpha+2} |\partial_x^2 v|^{\alpha+1}\, \d x\Big)^\frac{\alpha^2 + 13\alpha - 4}{(3\alpha-1)(\alpha+1)} \Big(\int_0^\infty (\partial_x^2 w)^2\, \d x\Big)^{\frac{5\alpha-3}{2(3\alpha-1)}}.
\end{align*}
%
Altogether, we obtain
\begin{align*}
    I_{2,1,7}
    &\lesssim_\alpha
        \Big(\int_0^\infty v^2\, \d x\Big)^\frac{4(\alpha-1)}{3\alpha-1} 
    \Big(\int_0^\infty x^{\alpha+2} |\partial_x^2 v|^{\alpha+1}\, \d x\Big)^\frac{4}{3\alpha-1} 
    \int_0^\infty (\partial_x^2 w)^2\, \d x
    \\
    &\phantom{\lesssim_\alpha}
    +
    \Big(\int_0^\infty v^2\, \d x\Big)^\frac{4(\alpha-1)}{3\alpha-1} 
    \Big(\int_0^\infty x^{\alpha+2} |\partial_x^2 v|^{\alpha+1}\, \d x\Big)^\frac{\alpha+4}{3\alpha-1} 
    \Big(\int_0^\infty (\partial_x^2 w)^2\, \d x\Big)^\frac{5\alpha-3}{2(3\alpha-1)}
    \\
    &\phantom{\lesssim_\alpha}
    +
    \Big(\int_0^\infty x^{\alpha+2} |\partial_x^2 v|^{\alpha+1}\, \d x\Big)^\frac{\alpha^2 + 13\alpha - 4}{(3\alpha-1)(\alpha+1)} \Big(\int_0^\infty (\partial_x^2 w)^2\, \d x\Big)^{\frac{5\alpha-3}{2(3\alpha-1)}} \\
    &\eqcolon R_1+R_2+R_3.
\end{align*}
Scaling $x \mapsto \lambda x$ for $\lambda > 0$ leads to
\[
I_{2,1,7} \mapsto \lambda^{1-2\alpha} I_{2,1,7}, \qquad R_1 \mapsto \lambda^{1-2\alpha} R_1, \qquad R_2 \mapsto \lambda^{\frac 3 2 - 2 \alpha} R_2, \qquad R_3 \mapsto \lambda^{1 - 2\alpha + \frac{9-7\alpha}{2(\alpha+1)}} R_3.
\]
Division by $\lambda^{1-2\alpha}$ and optimizing in $\lambda$, we infer that the second and third line, now $\lambda^{\frac 1 2} R_2 + \lambda^{\frac{9-7\alpha}{2(\alpha+1)}} R_3$, can be estimated against $R_1^{\frac{7\alpha-9}{8(\alpha-1)}} R_2^{\frac{\alpha+1}{8(\alpha-1)}}$, so that
\begin{align} \label{i217}
    I_{2,1,7} 
    &\lesssim_\alpha
    \Big(\int_0^\infty v^2\, \d x\Big)^\frac{4(\alpha-1)}{3\alpha-1} 
    \Big(\int_0^\infty x^{\alpha+2} |\partial_x^2 v|^{\alpha+1}\, \d x\Big)^\frac{4}{3\alpha-1} 
    \int_0^\infty (\partial_x^2 w)^2\, \d x \nonumber
    \\
    &\phantom{\lesssim_\alpha}
    +
    \Big(\int_0^\infty v^2\, \d x\Big)^{\frac{7\alpha-9}{2(3\alpha-1)}}
    \Big(\int_0^\infty x^{\alpha+2} |\partial_x^2 v|^{\alpha+1}\, \d x\Big)^{\frac{\alpha+5}{3\alpha-1}} 
    \Big(\int_0^\infty (\partial_x^2 w)^2\, \d x\Big)^\frac{5\alpha-3}{2(3\alpha-1)}
\end{align}
\end{subequations}
for the integral $I_{2,1,7}$. 
Gathering \eqref{i21j} in \eqref{est-i21} gives 
\begin{align*}
    I_{2,1}
    &\lesssim_\alpha
    \underbrace{A^\frac{2(\alpha-1)}{3\alpha-1}
    B^\frac{4\alpha}{3\alpha-1}
    C^\frac{\alpha+1}{3\alpha-1}
    }_{I_{2,1,1}, \ I_{2,1,2}}
    +
    \underbrace{
    A^\frac{3(\alpha-1)}{3\alpha-1}
    B^\frac{2(\alpha+1)}{3\alpha-1}
    C^\frac{2\alpha}{3\alpha-1}
    }_{I_{2,1,3},\ I_{2,1,5},\ I_{2,1,6}}
    +
    \underbrace{
    A^\frac{4(\alpha-1)}{3\alpha-1}
    B^\frac{4}{3\alpha-1}
    C
    }_{I_{2,1,4},\ I_{2,1,7}}
    +
    \underbrace{A^{\frac{7\alpha-9}{2(3\alpha-1)}} 
    B^{\frac{\alpha+5}{3\alpha-1}} 
    C^\frac{5\alpha-3}{2(3\alpha-1)}}_{I_{2,1,7}}
\end{align*}
We can estimate the last term on the right-hand side against the first and third terms. Indeed, introducing 
\begin{align*}
    S_1 \coloneqq A^\frac{2(\alpha-1)}{3\alpha-1}
    B^\frac{4\alpha}{3\alpha-1}
    C^\frac{\alpha+1}{3\alpha-1} \qquad 
    \text{and}\qquad S_2 \coloneqq A^\frac{4(\alpha-1)}{3\alpha-1}
    B^\frac{4}{3\alpha-1}
    C,
\end{align*}
we can apply Young's inequality to find (notice $\tfrac{\alpha+1}{4(\alpha-1)} \in (0,1)$ and $1 -\tfrac{\alpha+1}{4(\alpha-1)} = \tfrac{3\alpha -5}{4(\alpha-1)}$)
\begin{align*}
    A^{\frac{7\alpha-9}{2(3\alpha-1)}} 
    B^{\frac{\alpha+5}{3\alpha-1}} 
    C^\frac{5\alpha-3}{2(3\alpha-1)} = S_1^\frac{\alpha+1}{4(\alpha-1)}\, S_2^\frac{3\alpha-5}{4(\alpha-1)} \lesssim_\alpha  S_1 + S_2, 
\end{align*}
and consequently, we obtain the reduced estimate
\begin{align}\label{nonlinear-i21}
    I_{2,1}
    &\lesssim_\alpha
    \underbrace{A^\frac{2(\alpha-1)}{3\alpha-1}
    B^\frac{4\alpha}{3\alpha-1}
    C^\frac{\alpha+1}{3\alpha-1}
    }_{I_{2,1,1}, \ I_{2,1,2}, \ I_{2,1,7}}
    +
    \underbrace{
    A^\frac{3(\alpha-1)}{3\alpha-1}
    B^\frac{2(\alpha+1)}{3\alpha-1}
    C^\frac{2\alpha}{3\alpha-1}
    }_{I_{2,1,3},\ I_{2,1,5},\ I_{2,1,6}}
    +
    \underbrace{
    A^\frac{4(\alpha-1)}{3\alpha-1}
    B^\frac{4}{3\alpha-1}
    C
    }_{I_{2,1,4},\ I_{2,1,7}} \;\, \stackrel{\mathclap{\eqref{eq:B2-AC}}}{\lesssim}_\alpha A^\frac{4(\alpha-1)}{3\alpha-1}
    B^\frac{4}{3\alpha-1}
    C,
\end{align}
where we have used Lemma~\ref{lem:B2-AC} in the last estimate.

\bigskip

\noindent\textbf{Step 2.2: Estimate for $I_{2,2}$. }
For the integral $I_{2,2}$ we apply  Lemma~\ref{lem:con-lem-sup-uxx} which leads to
\begin{align} \nonumber
    I_{2,2}
    \; &\stackrel{\mathclap{\eqref{i2-split}}}{=} \;
    \int_0^\infty \big(x^{\alpha+2} |\partial_x^2 v|^{\alpha+1}\big)^2\, \d x \lesssim_\alpha 
    \Big(\int_0^\infty x^{\alpha +2} \, |\partial_x^2 v|^{\alpha +1} \, \d x\Big)^{\frac{2(2\alpha-1)}{3\alpha-1}}
    \Big( \int_0^\infty (\partial_x^2 w)^2 \, \d x\Big)^{\frac{\alpha+1}{3\alpha-1}} \\
    &= \;
    B^{\frac{2(2\alpha-1)}{3\alpha-1}}
    C^{\frac{\alpha+1}{3\alpha-1}}  \;\, \stackrel{\mathclap{\eqref{eq:B2-AC}}}{\lesssim}_\alpha A^\frac{2(\alpha-1)}{3\alpha-1}
    B^\frac{2}{3\alpha-1}
    C, \label{nonlinear-i22}
\end{align}
where Lemma~\ref{lem:B2-AC} was applied in the last estimate.

\bigskip

\noindent\textbf{Step 2.3: Estimate for $I_{2,3}$. }
Applying integration by parts to $I_{2,3}$ on the right-hand side, we deduce
\begin{equation} \label{eq:est-ux2-wx21}
    I_{2,3}
    = 
    -2  \int_0^\infty v \, ( \partial_x v) (\partial_x w) (\partial_x^2 w) \, \d x 
    -  
    \int_0^\infty v \, ( \partial_x^2 v) (\partial_x w)^2 \, \d x \eqqcolon 
    I_{2,3,1} + I_{2,3,2}. 
\end{equation}
Notice that in the above integration by parts, the boundary term 
$v(\partial_x v)(\partial_x w)^2$ vanishes at both endpoints $x = 0$ and $x = \infty$. Indeed, at $x = 0$, this follows from by Lemma~\ref{lem-valpha}, since $v(\partial_x v)(\partial_x w)^2
\stackrel{\eqref{asymptotic-vw}}{=}O(x^{\frac{\alpha-2}{\alpha}}) \to 0$ as $x \downarrow 0$ for $\alpha > 2$. In order to obtain vanishing $x = \infty$, we multiply the integrand of $I_{2,3}$ by a cut off $\chi_R(x)  \coloneq \chi(x/R)$ with
$\chi \in C^\infty_\mathrm{c}([0,\infty))$,
$\chi|_{[0,1]} = 1$, $\chi|_{[2,\infty)} = 0$, and let $R \to \infty$ after integration by parts. Then we obtain for the remainder term with help of the Cauchy-Schwarz inequality, Lemma~\ref{lem:weight xu'}, and Lemma~\ref{lem-sup-wx}, for $\frac{2}{\alpha+1} \le \beta < \frac{3(\alpha+2)}{5\alpha+3}$,
\begin{align*}
& \int_0^\infty |\chi_R'| |v| |\partial_x v| (\partial_x w)^2 \, \d x \\
& \quad \lesssim \qquad R^{-1} \int_R^{2R} |v| |\partial_x v| (\partial_x w)^2 \, \d x \\
& \quad \le \qquad R^{-\beta} \Big(\int_R^{2R} |v| \, \d x\Big) \big(\sup_{R \le x \le 2R} x^\beta |\partial_x v|\big) \big(\sup_{R \le x \le 2R} x^{-\frac 1 2} |\partial_x w -a| + |a| R^{-\frac 1 2}\big)^2 \\
& \quad \stackrel{\mathclap{\eqref{eq:weight xu'}, \eqref{sup-wx}}}{\lesssim}_{\alpha,\beta} \quad R^{\frac 1 2-\beta} \Big(\int_0^\infty v^2 \, \d x\Big)^{\frac{2(\alpha +1)\beta + 3\alpha-5}{2(3\alpha -1)}} \Big(\int_0^\infty x^{\alpha+2} \, |\partial_x^2 v|^{\alpha+1} \, \d x\Big)^{\frac{3-2\beta}{3\alpha -1}} \Big(\int_0^\infty (\partial_x^2 w)^2 \, \d x + a^2 R^{-1}\Big) \\
& \quad \to \qquad 0 \quad \text{as } R \to \infty,
\end{align*}
on choosing $\beta > \frac 1 2$ (a compatible choice). Notice that finiteness of $|a|$ is ensured by \eqref{est-coeff-a}.

\bigskip

By H\"older's inequality, Lemma~\ref{lem:weight xu}, and finally Young's inequality, we can estimate $I_{2,3,1}$ as 
\begin{align} \label{eq:estimate_I_3}
    I_{2,3,1} 
    &\lesssim 
    \, \sup_{x\geq 0} |v| \Big(\int_0^\infty (\partial_x^2 w)^2 \, \d x\Big)^{\frac{1}{2}} \Big(\int_0^\infty (\partial_x v)^2\,(\partial_x w)^2 \, \d x \Big)^{\frac{1}{2}}
    \nonumber\\&\stackrel{\mathclap{\eqref{lem:weight xu}}}
    {\lesssim}_{\alpha}
    \Big(\int_0^\infty v^2\, \d x\Big)^\frac{\alpha-1}{3\alpha-1} \Big(\int_0^\infty x^{\alpha+2} |\partial_x^2 v|^{\alpha+1}\, \d x\Big)^\frac{1}{3\alpha-1}
    \Big(\int_0^\infty (\partial_x^2 w)^2 \, \d x\Big)^{\frac{1}{2}} \Big(\int_0^\infty (\partial_x v)^2\,(\partial_x w)^2 \, \d x \Big)^{\frac{1}{2}}
    \nonumber \\
    &\lesssim_\alpha \varepsilon^{-1}
    \Big(\int_0^\infty v^2\, \d x\Big)^\frac{2(\alpha-1)}{3\alpha-1} \Big(\int_0^\infty x^{\alpha+2} |\partial_x^2 v|^{\alpha+1}\, \d x\Big)^\frac{2}{3\alpha-1}
    \int_0^\infty (\partial_x^2 w)^2 \, \d x
    + \varepsilon I_{2,3}, 
\end{align}
for any $\varepsilon>0$. The last integral will later on be absorbed in the left-hand side of \eqref{eq:est-ux2-wx21}.
For $I_{2,3,2}$, it follows from integration by parts
\begin{align}
\label{eq:u-uxx-wxx1} 
    I_{2,3,2}
    &=  \int_0^\infty v \, ( \partial_x^2 v) w\,  (\partial_x^2 w) \, \d x  +  \int_0^\infty  ( \partial_x v) ( \partial_x^2 v) w \, (\partial_x w)  \, \d x  + \int_0^\infty v \, ( \partial_x^3 v) w\,  (\partial_x w) \, \d x \nonumber \\
    &\eqcolon J_{2,3,2} + K_{2,3,2}+ L_{2,3,2}. 
\end{align}
Again, in the integration by parts the term $v (\partial_x^2 v) w (\partial_x w)$ vanishes at both boundaries $x = 0$ and $x = \infty$. Indeed, by Lemma~\ref{lem-valpha} it holds $v (\partial_x^2 v) w (\partial_x w) \stackrel{\eqref{asymptotic-vw}}{=} O(x^{\frac{\alpha-2}{\alpha}})$ as $x \downarrow 0$, which ensures vanishing because of $\alpha > 2$. As $x \to \infty$, again take a cut off $\chi_R(x)  \coloneq \chi(x/R)$ with
$\chi \in C^\infty_\mathrm{c}([0,\infty))$,
$\chi|_{[0,1]} = 1$, $\chi|_{[2,\infty)} = 0$, and let $R \to \infty$ after integration by parts. This gives for the remainder term with help of Lemma~\ref{lem:weight xu} and Lemma~\ref{lem-sup-wx}, where $\frac{1-\alpha}{\alpha +1} \le \beta < \frac{\alpha+2}{5\alpha +3}$,
\begin{align*}
&\int_0^\infty |\chi_R'| |v| |\partial_x^2 v| |w| |\partial_x w| \, \d x \\
& \quad \lesssim \qquad R^{-1} \int_R^{2R} |v| |\partial_x^2 v| |w| |\partial_x w| \, \d x \\
& \quad \lesssim \qquad R^{-\frac 1 2 - \beta} \big(\sup_{R \le x \le 2R} x^\beta |v|\big) \Big(\int_0^\infty x^{\alpha+2} |\partial_x^2 v|^{\alpha+1} \, \d x\Big) \big(\sup_{R \le x \le 2R} x^{-\frac 1 2} |\partial_x w - a| + |a| R^{-\frac 1 2}\big) \\
& \quad \stackrel{\mathclap{\eqref{eq:weight xu}, \eqref{sup-wx}}}{\lesssim}_{\alpha,\beta} \quad R^{-\frac 1 2 - \beta} \Big(\int_0^\infty v^2 \, \d x\Big)^{\frac{\alpha-1+(\alpha+1)\beta}{3\alpha-1}} \Big(\int_0^\infty x^{\alpha+2} |\partial_x^2 v|^{\alpha+1} \, \d x\Big)^{\frac{3\alpha-2\beta}{3\alpha-1}} \Big(\Big(\int_0^\infty (\partial_x^2 w)^2 \, \d x\Big)^{\frac 1 2} + |a| R^{-\frac 1 2}\Big) \\
& \quad \to \qquad 0 \quad \text{as } R \to \infty,
\end{align*}
on choosing for instance $\beta = 0$. Notice that finiteness of $|a|$ is ensured by \eqref{est-coeff-a}.

\bigskip

We next estimate the terms $J_{2,3,2}$, $K_{2,3,2}$, $L_{2,3,2}$, one by one. 
For $J_{2,3,2}$, we obtain by H\"older's inequality, Lemma~\ref{lem:weight xu}, and Lemma~\ref{lem:con-lem-sup-uxx},
\begin{align}
   &J_{2,3,2} \nonumber \\
   & \quad \stackrel{\mathclap{\eqref{eq:u-uxx-wxx1}}}{\leq} \; 
    \sup_{x\geq 0} |v| \Big( \int_0^\infty (x^{\alpha+2} |\partial_x^2 v|^{\alpha +1})^2 \, \d x\Big)^{\frac{1}{2}} \Big( \int_0^\infty (\partial_x^2 w)^2 \, \d x\Big)^{\frac{1}{2}} \nonumber
    \\
    & \quad \stackrel{\mathclap{\eqref{eq:weight xu}}}{\lesssim}_{\alpha}
    \Big(\int_0^\infty v^2\, \d x\Big)^\frac{\alpha-1}{3\alpha-1}
    \Big(\int_0^\infty x^{\alpha+2} |\partial_x^2 v|^{\alpha+1}\, \d x\Big)^\frac{1}{3\alpha-1} \Big( \int_0^\infty (x^{\alpha+2} |\partial_x^2 v|^{\alpha +1})^2 \, \d x\Big)^{\frac{1}{2}} \Big( \int_0^\infty (\partial_x^2 w)^2 \, \d x\Big)^{\frac{1}{2}} \nonumber \\
    & \quad \stackrel{\mathclap{\eqref{eq:con-lem-sup-uxx}}}{\lesssim}_{\alpha} \Big(\int_0^\infty v^2\, \d x\Big)^\frac{\alpha-1}{3\alpha-1}
    \Big(\int_0^\infty x^{\alpha +2} \, |\partial_x^2 v|^{\alpha +1} \, \d x\Big)^{\frac{2\alpha}{3\alpha-1}}
    \Big( \int_0^\infty (\partial_x^2 w)^2 \, \d x\Big)^{\frac{2\alpha}{3\alpha-1}}. \label{eq:est-u-uxx-w-wxx}
\end{align}
\bigskip

For $K_{2,3,2}$, we use H\"older's inequality, Lemma~\ref{lem:con-lem-sup-uxx}, and Young's inequality to find that 
\begin{align}
   K_{2,3,2}
    \; &\stackrel{\mathclap{\eqref{eq:u-uxx-wxx1}}}{\leq} \;  \Big(\int_0^\infty (\partial_x v)^2\, (\partial_x w)^2 \d x\Big)^\frac{1}{2}
    \Big(\int_0^\infty \big(x^{\alpha+2}\, |\partial_x^2 v|^{\alpha+1}\big)^2 \d x\Big)^\frac{1}{2} \nonumber 
    \\
    &\stackrel{\mathclap{\eqref{eq:con-lem-sup-uxx}}}{\lesssim}_{\alpha} \Big(\int_0^\infty (\partial_x v)^2\,(\partial_x w)^2 \, \d x \Big)^{\frac{1}{2}}
    \Big(  \int_0^\infty x^{\alpha +2} \, |\partial_x^2 v|^{\alpha +1} \, \d x\Big)^{\frac{2\alpha-1}{3\alpha-1}}
    \Big( \int_0^\infty (\partial_x^2 w)^2 \, \d x\Big)^{\frac{\alpha+1}{2(3\alpha-1)}} \nonumber 
    \\
    &\stackrel{\mathclap{\eqref{i2-split}}}{\lesssim}_\alpha
    \eps^{-1}\Big(\int_0^\infty x^{\alpha +2} \, |\partial_x^2 v|^{\alpha +1} \, \d x\Big)^{\frac{2(2\alpha-1)}{3\alpha-1}}
    \Big( \int_0^\infty (\partial_x^2 w)^2 \, \d x\Big)^{\frac{\alpha+1}{3\alpha-1}}+  \eps I_{2,3}, \label{eq:est-ux-uxx-w-wx}
\end{align}
for any $\eps > 0$. The last integral will later on be absorbed in the left-hand side of \eqref{eq:est-ux2-wx21} provided $\eps > 0$ is chosen sufficiently small.

\bigskip

For $L_{2,3,2}$, recalling the definition of $w$ and integrating by parts, we derive 
\begin{align}\label{eq:u-uxxx-w-wx1}
    L_{2,3,2}
    \; &\stackrel{\mathclap{\eqref{eq:u-uxx-wxx1}}}{=} \; \int_0^\infty x^{\alpha+2} \, v\,  \partial_x\Big(\frac{|\partial_x^2 v|^{\alpha +1}}{\alpha+1}\Big) \, \partial_x w \, \d x \nonumber 
    \\
    &\lesssim_{\alpha} \int_0^\infty x^{\alpha+2}  \, |\partial_x v|  |\partial_x^2 v|^{\alpha+1}|\partial_x w| \, \d x 
    + 
    \int_0^\infty x^{\alpha+1}  \,|v|   |\partial_x^2 v|^{\alpha+1}|\partial_x w| \, \d x \nonumber \\
    & \qquad + \int_0^\infty x^{\alpha+2}  \,|v|   |\partial_x^2 v|^{\alpha+1}|\partial_x^2 w| \, \d x 
    \nonumber \\
    &\eqcolon L_{2,3,2}^{(1)} + L_{2,3,2}^{(2)}+L_{2,3,2}^{(3)}, 
\end{align}
where, again, the boundary term $x^{\alpha+2} v |\partial_x^2 v|^{\alpha+1} (\partial_x w)$ arising from the integration by parts vanishes at both boundaries $x=0$ and $x=\infty$. Indeed, by Lemma~\ref{lem-valpha} it follows that 
$x^{\alpha+2} v |\partial_x^2 v|^{\alpha+1} (\partial_x w) \stackrel{\eqref{asymptotic-vw}}{=} O(x^\frac{\alpha-2}{\alpha})$ as $x \downarrow 0$, which vanishes since $\alpha>2$. To handle the limit as $x \to \infty$, we again take a cut off $\chi_R(x) \coloneq \chi(x/R)$ with $\chi \in C^\infty_\mathrm{c}([0,\infty))$, $\chi|_{[0,1]} = 1$, and $\chi|_{[2,\infty)} = 0$, perform integration by parts, and let $R \to \infty$. Applying H\"older's inequality, Lemma~\ref{lem:weight xu}, and Lemma~\ref{lem-sup-wx}, to the resulting remainder term for 
$\frac{1-\alpha}{\alpha +1} \leq \beta < \frac{\alpha+2}{5\alpha +3}$ then yields
\begin{align*}
&\int_0^\infty |\chi_R'| \, x^{\alpha+2} \, |v| |\partial_x^2 v|^{\alpha+1}|\partial_x w| \, \d x \\
& \quad \lesssim_{\phantom{\alpha,\beta}} \; R^{-1} \int_R^{2R} x^{\alpha+2} |v| |\partial_x^2 v|^{\alpha+1}|\partial_x w|  \, \d x \\
& \quad \lesssim_{\phantom{\alpha,\beta}} \; R^{-\frac 1 2 - \beta} \big(\sup_{R \le x \le 2R} x^\beta |v|\big) \Big(\int_0^\infty x^{\alpha+2} |\partial_x^2 v|^{\alpha+1} \, \d x\Big) \big(\sup_{R \le x \le 2R} x^{-\frac 1 2} |\partial_x w -a| + |a| R^{-\frac 1 2}\big) \\
& \quad \stackrel{\mathclap{\eqref{eq:weight xu}, \eqref{sup-wx}}}{\lesssim}_{\alpha,\beta} \; R^{-\frac 1 2 - \beta} \Big(\int_0^\infty v^2 \, \d x\Big)^{\frac{\alpha-1+(\alpha+1)\beta}{3\alpha-1}} \Big(\int_0^\infty x^{\alpha+2} |\partial_x^2 v|^{\alpha+1} \, \d x\Big)^{\frac{3\alpha-2\beta}{3\alpha-1}} \Big(\Big(\int_0^\infty (\partial_x^2 w)^2 \, \d x\Big)^{\frac 1 2} + |a| R^{-\frac 1 2}\Big) \\
& \quad \to_{\phantom{\alpha,\beta}} \; 0 \quad \text{as } R \to \infty,
\end{align*}
upon choosing, for instance, $\beta = 0$, noting again that \eqref{est-coeff-a} guarantees the finiteness of $|a|$. Hence, the integration by parts is justified. We now turn to the integrals $L_{2,3,2}^{(j)}$ for $j=1,2,3$, estimating each of them separately.

\bigskip

For $L_{2,3,2}^{(1)}$, we apply H\"older's inequality, Lemma~\ref{lem:con-lem-sup-uxx}, and then Young's inequality, leading to 
\begin{align} \label{eq:est-xux-uxx-wx}
    L_{2,3,2}^{(1)} 
    \; &\stackrel{\mathclap{\eqref{eq:u-uxxx-w-wx1}}}{\lesssim}_{\alpha} \Big(\int_0^\infty (\partial_x v)^2\,(\partial_x w)^2 \, \d x \Big)^{\frac{1}{2}} 
    \Big(\int_0^\infty \big(x^{\alpha+2}\, |\partial_x^2 v|^{\alpha+1}\big)^2 \d x\Big)^\frac{1}{2} \nonumber 
    \\
    &\stackrel{\mathclap{\eqref{eq:con-lem-sup-uxx}}}{\le}_{\phantom{\alpha}} \Big(\int_0^\infty (\partial_x v)^2\,(\partial_x w)^2 \, \d x \Big)^{\frac{1}{2}} 
    \Big(\int_0^\infty x^{\alpha +2} \, |\partial_x^2 v|^{\alpha +1} \, \d x\Big)^{\frac{2\alpha-1}{3\alpha-1}}
    \Big( \int_0^\infty (\partial_x^2 w)^2 \, \d x\Big)^{\frac{\alpha+1}{2(3\alpha-1)}} \nonumber \\
    &\stackrel{\mathclap{\eqref{i2-split}}}{\leq}_{\phantom{\alpha}}
    \eps^{-1} \Big(\int_0^\infty x^{\alpha +2} \, |\partial_x^2 v|^{\alpha +1} \, \d x\Big)^{\frac{2(2\alpha-1)}{3\alpha-1}}
    \Big(\int_0^\infty (\partial_x^2 w)^2 \, \d x\Big)^{\frac{\alpha+1}{3\alpha-1}} + \eps I_{2,3}, 
\end{align}
for $\eps > 0$ sufficiently small, such that the last integral on the right-hand side can later on be absorbed in the left-hand side of \eqref{eq:est-ux2-wx21}.

\bigskip

In order to estimate $L_{2,3,2}^{(2)}$,  we split the integral
\begin{equation*}
   L_{2,3,2}^{(2)}
    \; \stackrel{\mathclap{\eqref{eq:u-uxxx-w-wx1}}}{\leq} \;
    \int_0^\infty x^{\alpha+1} |v|\, |\partial_x^2 v|^{\alpha+1} |\partial_x w - \eta a|\, \d x
    +
    |a|\, \int_0^2 x^{\alpha+1} |v|\, |\partial_x^2 v|^{\alpha+1}\, \d x
    \eqqcolon 
    L_{2,3,2}^{(2,1)}+ L_{2,3,2}^{(2,2)}, 
\end{equation*}
where $\eta \in C^\infty([0,\infty))$ is again a cut off function satisfying $0 \le \eta \le 1$, $\eta|_{[0,1]} = 1$, and $\eta|_{[2,\infty)} = 0$, and we treat the integrals $ L_{2,3,2}^{(2,1)}$ and $ L_{2,3,2}^{(2,2)}$ separately. 

\bigskip

For $ L_{2,3,2}^{(2,1)}$, applying H\"older's inequality, followed by Lemma \ref{lem:weight xu} and Lemma~\ref{lem:con-lem-sup-uxx} alongside Hardy's inequality, and finally invoking Lemma~\ref{lem-sup-wx}, we deduce
\begin{align} \label{eq:L232_21}
 L_{2,3,2}^{(2,1)} \quad
    &\leq_{\phantom{\alpha}} \quad
    \big(\sup_{x\geq 0} |v|\big)
    \Big( \int_0^\infty \big(x^{\alpha +2} \, |\partial_x^2 v|^{\alpha +1}\big)^2 \, \d x\Big)^\frac{1}{2}
    \Big(\int_0^\infty x^{-2}\,|\partial_x w - \eta a|^2 \, \d x \Big)^{\frac{1}{2}} \nonumber
    \\
    &\stackrel{\mathclap{\eqref{eq:weight xu}, \eqref{eq:con-lem-sup-uxx}}}{\lesssim}_{\alpha}  \quad 
    \Big(\int_0^\infty v^2\, \d x\Big)^\frac{\alpha-1}{3\alpha-1}
    \Big(\int_0^\infty x^{\alpha+2} |\partial_x^2 v|^{\alpha+1}\, \d x\Big)^\frac{2\alpha}{3\alpha-1}
    \Big(  \int_0^\infty (\partial_x^2 w)^2 \, \d x\Big)^\frac{\alpha+1}{2(3\alpha-1)} \nonumber
    \\
    &\phantom{\stackrel{\mathclap{\eqref{eq:weight xu}, \eqref{eq:con-lem-sup-uxx}}}{\lesssim}_{\alpha}} \quad 
    \times
    \bigg(\Big(\int_0^\infty (\partial_x^2 w)^2\, \d x\Big)^\frac{1}{2} + |a| \bigg) \nonumber 
    \\
    &\stackrel{\mathclap{\eqref{est-coeff-a}}}{\lesssim}_{\alpha} \quad 
    \Big(\int_0^\infty v^2\, \d x\Big)^\frac{\alpha-1}{3\alpha-1} \Big(\int_0^\infty x^{\alpha+2} |\partial_x^2 v|^{\alpha+1}\, \d x\Big)^\frac{2\alpha}{3\alpha-1} 
    \Big(\int_0^\infty  (\partial_x^2 w)^2\, \d x\Big)^\frac{2\alpha}{3\alpha-1}\nonumber
    \\
    &\phantom{\lesssim_\alpha} \quad 
    + 
    \Big(\int_0^\infty v^2\, \d x\Big)^\frac{\alpha-1}{3\alpha-1} \Big(\int_0^\infty x^{\alpha+2} |\partial_x^2 v|^{\alpha+1}\, \d x\Big)^\frac{3\alpha}{3\alpha-1} 
    \Big(\int_0^\infty  (\partial_x^2 w)^2\, \d x\Big)^\frac{1}{2}.
\end{align}
\bigskip

To estimate $ L_{2,3,2}^{(2,2)}$, we rewrite $v = (v - cx^{\frac{\alpha-1}{\alpha}}) + cx^{\frac{\alpha-1}{\alpha}}$, where $c$ is from Lemma \ref{lem-valpha}, and then apply H\"older's inequality. Treating only the second resulting integral on the right-hand side via Hardy's inequality and Lemma~\ref{lem-sup-uxx} with $\beta = \tfrac{2\alpha+1}{2\alpha}$, we find
\begin{align*}
   L_{2,3,2}^{(2,2)}
    \; &\leq_{\phantom{\alpha}}
    |a| \int_0^2 x^{\alpha+1} |\partial_x^2 v|^{\alpha+1} |v- cx^{\frac{\alpha-1}{\alpha}}|\, \d x +    |a||c| \int_0^2 x^{-\frac{1}{\alpha}+ (\alpha+2)}  |\partial_x^2 v|^{\alpha+1} \, \d x
    \\
    &\leq_{\phantom{\alpha}}
    |a| \Big( \int_0^\infty  \left( x^{\alpha+2} |\partial_x^2 v|^{\alpha+1} \right)^2 \d x \Big)^\frac{1}{2} \bigg( \Big( \int_0^2 x^{-2} |v-cx^\frac{\alpha-1}{\alpha}|^2 \, \d x \Big)^\frac{1}{2} +
    |c| \Big( \int_0^2 x^{-\frac{2}{\alpha}} \, \d x \Big)^\frac{1}{2} \bigg)\\
    &\stackrel{\mathclap{\eqref{sup-uxx-alt}}}{\lesssim}_{\alpha} |a| \Big( \int_0^\infty  \left( x^{\alpha+2} |\partial_x^2 v|^{\alpha+1} \right)^2 \d x \Big)^\frac{1}{2} \bigg(  \Big(\int_0^\infty  (\partial_x^2 w)^2\, \d x\Big)^\frac{1}{2\alpha} +
    |c| \bigg), 
\end{align*}
where we have used $\alpha > 2$. We then invoke Lemma~\ref{lem-sup-wx} and Lemma~\ref{lem-sup-uxx} to control the coefficients $|a|$ and $|c|$, respectively, as well as Lemma~\ref{lem:con-lem-sup-uxx} for the first integral on the right-hand side, which yields the bound
\begin{align}\label{eq:L232_22}
    L_{2,3,2}^{(2,2)} \quad\qquad
    &\stackrel{\mathclap{\eqref{est-coeff-a}, \eqref{est-coeff-c}, \eqref{eq:con-lem-sup-uxx}}}{\lesssim}_{\alpha} \;\qquad \Big( \int_0^\infty  x^{\alpha+2} |\partial_x^2 v|^{\alpha+1} \d x \Big) \Big(\int_0^\infty  (\partial_x^2 w)^2\, \d x\Big)^\frac{\alpha+1}{2\alpha} \nonumber
    \\
    &\phantom{\stackrel{\mathclap{\eqref{est-coeff-a}, \eqref{est-coeff-c}, \eqref{eq:con-lem-sup-uxx}}}{\lesssim}_{\alpha}} \qquad\;
    + \Big( \int_0^\infty  x^{\alpha+2} |\partial_x^2 v|^{\alpha+1} \d x \Big)^\frac{3\alpha}{3\alpha-1} \Big(\int_0^\infty  (\partial_x^2 w)^2\, \d x\Big)^\frac{3\alpha^2+\alpha -2}{2\alpha(3\alpha-1)}. 
\end{align}
Combining \eqref{eq:L232_21} and \eqref{eq:L232_22}, we arrive at
\begin{align*}
L_{2,3,2}^{(2)}
\, \stackrel{\mathclap{\eqref{eq:def_ABC_2}}}{\lesssim}_{\alpha} A^\frac{\alpha-1}{3\alpha-1} B^\frac{2\alpha}{3\alpha-1} C^\frac{2\alpha}{3\alpha-1} + A^\frac{\alpha-1}{3\alpha-1} B^\frac{3\alpha}{3\alpha-1} 
    C^\frac{1}{2} + B C^\frac{\alpha+1}{2\alpha} + B^\frac{3\alpha}{3\alpha-1} C^\frac{3\alpha^2+\alpha -2}{2\alpha(3\alpha-1)} \eqcolon R_1 + R_2 + R_3 + R_4.
\end{align*}
By scaling $x \mapsto \lambda x$, where $\lambda > 0$, we obtain
\[
L^{(2)}_{2,3,2} \mapsto \lambda^{1-2\alpha} L^{(2)}_{2,3,2}, \quad R_1 \mapsto \lambda^{1-2\alpha} R_1, \quad R_2 \mapsto \lambda^{\frac 3 2 - 2 \alpha} R_2, \quad R_3 \mapsto \lambda^{\frac{\alpha+1}{2\alpha} - 2 \alpha} R_3, \quad R_4 \mapsto \lambda^{\frac{\alpha+2}{2\alpha} - 2\alpha} R_4,
\]
so that
\[
L^{(2)}_{2,3,2} \lesssim_\alpha R_1 + \lambda^{\frac 1 2} R_2 + \lambda^{\frac{1-\alpha}{2\alpha}} R_3 + \lambda^{\frac{2-\alpha}{2\alpha}} R_4.
\]
Without loss of generality suppose $B \ne 0$. Take $\lambda \coloneq B^{-\frac{2\alpha}{3\alpha-1}} C^{\frac{\alpha+1}{3\alpha-1}}$, then
\[
\lambda^{\frac 1 2} R_2 = R_1, \quad \lambda^{\frac{1-\alpha}{2\alpha}} R_3 = B^{\frac{2(2\alpha-1)}{3\alpha-1}} C^{\frac{\alpha+1}{3\alpha-1}}, \quad \lambda^{\frac{2-\alpha}{2\alpha}} R_4 = B^{\frac{2(2\alpha-1)}{3\alpha-1}} C^\frac{\alpha+1}{3\alpha-1},
\]
so that with help of Lemma~\ref{lem:B2-AC},
\begin{equation}\label{eq:estimate_K_2}
L^{(2)}_{2,3,2} \lesssim_\alpha A^\frac{\alpha-1}{3\alpha-1} B^\frac{2\alpha}{3\alpha-1} C^\frac{2\alpha}{3\alpha-1} + B^{\frac{2(2\alpha-1)}{3\alpha-1}} C^\frac{\alpha+1}{3\alpha-1} \; \stackrel{\mathclap{\eqref{eq:B2-AC}}}{\lesssim}_\alpha A^\frac{\alpha-1}{3\alpha-1} B^\frac{2\alpha}{3\alpha-1} C^\frac{2\alpha}{3\alpha-1}.
\end{equation}

\medskip

Finally, $ L_{2,3,2}^{(3)}$ can be estimated by the Cauchy-Schwarz inequality, Lemma~\ref{lem:weight xu}, and Lemma~\ref{lem:con-lem-sup-uxx},
\begin{align} \label{eq:estimate_K_3}
  L_{2,3,2}^{(3)} \;
    \quad\;&\stackrel{\mathclap{\eqref{eq:u-uxxx-w-wx1}}}{\lesssim}_{\alpha}\quad \big(\sup_{x \ge 0} |v|\big) \Big(\int_0^\infty (x^{\alpha+2} |\partial_x^2 v|^{\alpha+1})^2 \, \d x\Big)^{\frac 1 2} \Big(\int_0^\infty (\partial_x^2 w)^2 \, \d x\Big)^{\frac 1 2} \nonumber \\
    &\stackrel{\mathclap{\eqref{eq:weight xu},\eqref{eq:con-lem-sup-uxx}}}{\lesssim}_\alpha \quad 
    \Big(\int_0^\infty v^2\, \d x\Big)^\frac{\alpha-1}{3\alpha-1}
    \Big(  \int_0^\infty x^{\alpha +2} \, |\partial_x^2 v|^{\alpha +1} \, \d x\Big)^{\frac{2\alpha}{3\alpha-1}}
    \Big( \int_0^\infty (\partial_x^2 w)^2 \, \d x\Big)^{\frac{2\alpha}{3\alpha-1}}.
\end{align}
Inserting \eqref{eq:estimate_I_3}, \eqref{eq:u-uxx-wxx1}, \eqref{eq:est-u-uxx-w-wxx}, \eqref{eq:est-ux-uxx-w-wx}, \eqref{eq:u-uxxx-w-wx1}, \eqref{eq:est-xux-uxx-wx}, \eqref{eq:estimate_K_2}, and \eqref{eq:estimate_K_3} into \eqref{eq:est-ux2-wx21} leads to the estimate
\begin{equation}\label{nonlinear-i23}
    I_{2,3}
    \, \stackrel{\mathclap{\eqref{eq:def_ABC_2}}}{\lesssim}_{\alpha}
    A^\frac{2(\alpha-1)}{3\alpha-1} B^\frac{2}{3\alpha-1}
    C
    +
    A^\frac{\alpha-1}{3\alpha-1}
    B^{\frac{2\alpha}{3\alpha-1}}
    C^{\frac{2\alpha}{3\alpha-1}} 
    +
    B^{\frac{2(2\alpha-1)}{3\alpha-1}}
    C^{\frac{\alpha+1}{3\alpha-1}} \;\, \stackrel{\mathclap{\eqref{eq:B2-AC}}}{\lesssim}_\alpha A^\frac{2(\alpha-1)}{3\alpha-1} B^\frac{2}{3\alpha-1}
    C,
\end{equation}
where we have used Lemma~\ref{lem:B2-AC} in the last estimate.

\bigskip

\noindent\textbf{Step 2.4: Estimate for $I_{2,4}$. }
We estimate the integral $I_{2,4}$ using Lemma~\ref{lem:weight xu} to obtain 
\begin{align} \label{eq:estimate_I_24}
    I_{2,4}
    &\stackrel{\mathclap{\eqref{i2-split}}}{=}
    \big(\sup_{x\geq 0} |v|\big)^2 \int_0^\infty (\partial_x^2 w)^2\, \d x
    \stackrel{\mathclap{\eqref{eq:weight xu}}}{\lesssim_{\alpha}}
    \Big(\int_0^\infty v^2\, \d x\Big)^\frac{2(\alpha-1)}{3\alpha-1}
    \Big(\int_0^\infty x^{\alpha+2}\, |\partial_x^2 v|^{\alpha+1} \d x\Big)^\frac{2}{3\alpha-1}
    \int_0^\infty (\partial_x^2 w)^2\, \d x
    \nonumber\\
    &\stackrel{\mathclap{\eqref{eq:def_ABC_2}}}{=} \, 
    A^\frac{2(\alpha-1)}{3\alpha-1}
    B^\frac{2}{3\alpha-1}
    C.
\end{align}
\bigskip

\noindent\textbf{Step 3. }
The combination of \eqref{nonlinear-i1}, \eqref{i2-split}, \eqref{nonlinear-i21}, \eqref{nonlinear-i22}, \eqref{nonlinear-i23}, and \eqref{eq:estimate_I_24} finishes the proof of \eqref{est-nv}.
\end{proof}
\begin{proposition} \label{prop:N(v)u}
Suppose $\alpha > 2$, let $u \in U_\alpha$ and $v \in V_\alpha$ with
$\|u\|_{U_\alpha} \ll_\alpha 1$ and $\|v\|_{U_\alpha} \ll_\alpha 1$. Then we have the inequality
\begin{align}\nonumber
    \int_0^\infty N(v)\, u\, \d x
    & \lesssim_\alpha
    B C^\frac{2(\alpha-1)}{3\alpha-1} D^\frac{3\alpha^2+1}{3\alpha-1} 
    +
    A^\frac{\alpha-1}{3\alpha-1}
    B^\frac{2\alpha}{3\alpha-1}
    C^\frac{\alpha-1}{3\alpha-1} 
    D^\frac{\alpha(3\alpha+1)}{3\alpha-1}
    \\
    &\qquad
    + 
    A^\frac{2(\alpha-1)}{3\alpha-1} B^\frac{\alpha+1}{3\alpha-1}
    C^\frac{2(\alpha-1)}{3\alpha-1} 
    D^\frac{3\alpha(\alpha+1)}{3\alpha-1}
    + 
    A^\frac{2(\alpha-1)}{3\alpha-1} B^\frac{\alpha+1}{3\alpha-1} 
    D^{\alpha+1},\label{est-N(v)u}
\end{align}
where
\begin{align*}
    A &\coloneq \Big(\int_0^\infty u^2\, \d x\Big)^\frac{1}{2},
    \quad
    B \coloneq \Big(\int_0^\infty x^{\alpha+2} |\partial_x^2 u|^{\alpha+1}\, \d x\Big)^\frac{1}{\alpha+1},
    \\
    C & \coloneq \Big(\int_0^\infty v^2\, \d x\Big)^\frac{1}{2},
    \quad
    D \coloneq \Big(\int_0^\infty x^{\alpha+2} |\partial_x^2 v|^{\alpha+1}\, \d x\Big)^\frac{1}{\alpha+1}.
\end{align*}
\end{proposition}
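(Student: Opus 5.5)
The plan is to move both derivatives in $N(v)$ off $w \coloneq x^{\alpha+2} g_\alpha(\partial_x^2 v)$ and onto $u$ and the $v$-dependent coefficients. This is forced, because the right-hand side of \eqref{est-N(v)u} contains no $\int_0^\infty (\partial_x^2 w)^2 \, \d x$.

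\textbf{Reduction to test functions $u$.} By Lemma~\ref{lem-approx-ualpha} we may first take $u \in C^\infty_\mathrm{c}((0,\infty))$. For fixed $v \in V_\alpha$ we have $N(v) \in L^2(0,\infty)$ by Proposition~\ref{prop-est-nv}. Hence the left-hand side of \eqref{est-N(v)u} is continuous in $u$ with respect to $L^2$, and the right-hand side is continuous in $\|u\|_{U_\alpha}$. So the general case follows by density. For compactly supported $u$, integration by parts produces no boundary terms.

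\textbf{Integration by parts.} Write $\phi(v) \coloneq 1-(1+v)^{\frac32}$, $\psi(v) \coloneq 1-(1+v)^{\frac\alpha2}$ and $\Phi(v) \coloneq (1+v)^{\frac32}$. By \eqref{def-nu},
\[
\int_0^\infty N(v) u \, \d x = \int_0^\infty w \, \partial_x^2(\phi(v) u) \, \d x + \int_0^\infty \psi(v) \, w \, \partial_x^2(\Phi(v) u) \, \d x.
\]
Since $\sup|v| \le \frac12$ by Lemma~\ref{lem:weight xu}, we have $|\phi|, |\psi| \lesssim |v|$, and all derivatives of $\phi, \psi, \Phi$ are bounded. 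Expanding $\partial_x^2(\phi(v) u)$ and $\partial_x^2(\Phi(v) u)$, the integrand is bounded by a sum of four terms, each multiplied by $|w| = x^{\alpha+2}|\partial_x^2 v|^\alpha$:
\begin{enumerate}[(i)]
\item $|v|\,|\partial_x^2 u|$;
\item $|\partial_x v|\,|\partial_x u|$;
\item $(\partial_x v)^2 |u|$;
\item $|\partial_x^2 v|\,|u|$.
\end{enumerate}

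\textbf{Estimating the four terms.}
\begin{enumerate}[(i)]
\item By H\"older's inequality and \eqref{eq:weight xu} with $\beta=0$,
\[
\int_0^\infty |v|\,|w|\,|\partial_x^2 u| \, \d x \le \big(\sup_{x\ge0}|v|\big) D^\alpha B \lesssim_\alpha C^{\frac{2(\alpha-1)}{3\alpha-1}} D^{\frac{\alpha+1}{3\alpha-1}} D^\alpha B .
\]
This is exactly the first term $B C^{\frac{2(\alpha-1)}{3\alpha-1}} D^{\frac{3\alpha^2+1}{3\alpha-1}}$.
\item[(iv)] Here $|\partial_x^2 v|\,|w| = x^{\alpha+2}|\partial_x^2 v|^{\alpha+1}$. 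Applying \eqref{eq:weight xu} to $u$ bounds the term by $\big(\sup_{x\ge0}|u|\big) D^{\alpha+1} \lesssim_\alpha A^{\frac{2(\alpha-1)}{3\alpha-1}} B^{\frac{\alpha+1}{3\alpha-1}} D^{\alpha+1}$, which is the last term.
\item[(ii), (iii)] For the mixed terms I would use H\"older's inequality with $D^\alpha$ taken from $|w|$. The remaining factor is then $\big(\int_0^\infty x^{\alpha+2}|\partial_x v|^{\alpha+1}|\partial_x u|^{\alpha+1}\, \d x\big)^{\frac1{\alpha+1}}$ or its analogue with $(\partial_x v)^2|u|$. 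I would pull out $\sup x^{\beta}|\partial_x v|$ via Lemma~\ref{lem:weight xu'} and $\sup x^{\beta'}|u|$ via Lemma~\ref{lem:weight xu}. What is left, $\int_0^\infty x|\partial_x u|^{\alpha+1}\,\d x$ or $\int_0^\infty x^{-\alpha}|u|^{\alpha+1}\,\d x$, is controlled by $B^{\alpha+1}$ through Hardy's inequality \eqref{eq:Hardy}.
\end{enumerate}
The weights have to balance: in (ii) this forces $\beta = \frac{2}{\alpha+1}$ for $\partial_x v$, which is admissible in Lemma~\ref{lem:weight xu'}. Where this gives a non-matching distribution between $A$ and $B$, I would instead split the weights between the $u$- and $v$-suprema, using Lemmas~\ref{lem:weight xu}, \ref{lem:weight xu'}, \ref{lem:xu-int} and \ref{lem:weight xu lp'}. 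The exponents of $A,B,C,D$ are then dictated by scaling in $x$ and in the amplitudes of $u$ and $v$, and I expect them to produce the two middle terms. Any superfluous products would be absorbed by Young's inequality, as at the end of the proof of Proposition~\ref{prop-est-nv}.

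\textbf{Main obstacle.} The hard part is the mixed terms (ii) and (iii). Their $x$-weights must fall inside the admissible ranges of the weighted sup lemmas, and their exponents must match the specific terms in \eqref{est-N(v)u}. If this fails, the fallback is to interpolate between the endpoint weights $\beta = \frac{1-\alpha}{\alpha+1}$ and $\beta = \frac{2}{\alpha+1}$, where the $L^2$ norm does not appear, and then rescale.
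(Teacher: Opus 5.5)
Your reduction to $u\in C^\infty_\mathrm{c}((0,\infty))$, the double integration by parts, the four-term expansion, and your bounds for (i) and (iv) agree with the paper and give the first and fourth terms of \eqref{est-N(v)u}. The gap is in the mixed terms (ii) and (iii). These carry the real content of the proposition, and you do not actually estimate them.

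\textbf{Why your route for (ii) and (iii) fails.} You want to bound $\int_0^\infty x^{\alpha+2}|\partial_x v|^{\alpha+1}|\partial_x u|^{\alpha+1}\,\d x$ by $(\sup_{x\ge0}x^\beta|\partial_x v|)^{\alpha+1}\int_0^\infty x|\partial_x u|^{\alpha+1}\,\d x$. This requires $\alpha+2-(\alpha+1)\beta=1$, i.e.\ $\beta=1$, not $\beta=\frac{2}{\alpha+1}$. With $\beta=\frac{2}{\alpha+1}$ the leftover is $\int_0^\infty x^{\alpha}|\partial_x u|^{\alpha+1}\,\d x$, which Hardy's inequality does not control by $B$. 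The same balance in (iii), with leftover $\int_0^\infty x^{-\alpha}|u|^{\alpha+1}\,\d x$, again forces $\beta=1$.

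But $\beta=1$ exceeds the upper endpoint $\frac{3(\alpha+2)}{5\alpha+3}$ of Lemma~\ref{lem:weight xu'} whenever $\alpha>\frac32$, and this is not a defect of the lemma. The only scaling-admissible bound is $\sup_{x\ge0}x|\partial_x v|\lesssim C^{\frac{2(\alpha-1)}{3\alpha-1}}D^{\frac{\alpha+1}{3\alpha-1}}$. A bump of width $\ell$ centred at $X\gg\ell$ breaks it by a factor $(X/\ell)^{\frac{2\alpha-3}{3\alpha-1}}$.

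Your fallback cannot work either. The term (ii) is homogeneous of degree $1$ in $u$ and degree $\alpha+1$ in $v$, so any bound without $L^2$ norms would have to be $BD^{\alpha+1}$. Under $x\mapsto\lambda x$ this scales like $\lambda^{\frac{(\alpha-1)(\alpha+2)}{\alpha+1}}$, whereas (ii) scales like $\lambda^{\alpha-1}$. No rescaling argument can produce the factors $A$ and $C$.

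\textbf{The missing step.} Avoid a full-power supremum of $\partial_x v$ and split the weight $x^{\alpha+2}$ symmetrically, using H\"older's inequality with exponents $2(\alpha+1)$, $\frac{\alpha+1}{\alpha}$, $2(\alpha+1)$:
\[
\int_0^\infty x^{\alpha+2}|\partial_x^2 v|^{\alpha}|\partial_x v|\,|\partial_x u|\,\d x\le\Big(\int_0^\infty x^{\alpha+2}|\partial_x u|^{2(\alpha+1)}\,\d x\Big)^{\frac{1}{2(\alpha+1)}}D^{\alpha}\Big(\int_0^\infty x^{\alpha+2}|\partial_x v|^{2(\alpha+1)}\,\d x\Big)^{\frac{1}{2(\alpha+1)}}.
\]
Then apply Lemma~\ref{lem:weight xu lp'} with $p=2(\alpha+1)$ and $\nu=\frac{\alpha+2}{2(\alpha+1)}$. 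Its lower admissibility condition $\nu\ge\frac{3\alpha+4}{(\alpha+1)(\alpha+3)}$ is equivalent to $\alpha\ge2$.

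Inside that lemma, Lemma~\ref{lem:weight xu'} is only used at the admissible weight $\frac{\alpha+2}{2(\alpha+1)}$, after the interpolation of Lemma~\ref{lem:recursion-dxu} has brought in the $L^2$ norm. This gives $A^{\frac{\alpha-1}{3\alpha-1}}B^{\frac{2\alpha}{3\alpha-1}}C^{\frac{\alpha-1}{3\alpha-1}}D^{\frac{\alpha(3\alpha+1)}{3\alpha-1}}$, the second term.

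For (iii), take out $\sup_{x\ge0}|u|$ via \eqref{eq:weight xu} with $\beta=0$. Then bound
\[
\int_0^\infty x^{\alpha+2}|\partial_x^2 v|^{\alpha}(\partial_x v)^2\,\d x\le D^\alpha\Big(\int_0^\infty x^{\alpha+2}|\partial_x v|^{2(\alpha+1)}\,\d x\Big)^{\frac{1}{\alpha+1}}
\]
and apply the same lemma to the last factor. This yields the third term.
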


Note that in estimate~\eqref{est-N(v)u} of Proposition~\ref{prop:N(v)u} in each term of the sum on the right-hand side the exponents of $A$ and $B$ sum up to one, while the exponents of $C$ and $D$ sum up to at least $\alpha+1$. This will become important in the time-discretization scheme in \S\ref{sec:time-discretization}.

\begin{proof}
By approximation according to Lemma~\ref{lem-approx-ualpha}, we may assume $u \in C^\infty_\mathrm{c}((0,\infty))$, thus justifying all integrations by parts within this proof. By \eqref{est-nv} of Proposition~\ref{prop-est-nv} we have $N(v) \in L^2(0,\infty)$, so that the left-hand side of \eqref{est-N(v)u} is well-defined. Using the definition \eqref{def-nu} of $N(\cdot)$ and applying integration by parts twice on each term of $N(v)u$, we obtain
\begin{align}\nonumber
    \int_0^{\infty} N(v) u\, \d x 
    &= 
    \int_0^{\infty} u\, \bigl(1-(1+v)^{\frac{3}{2}}\bigr) \, \partial^2_x\bigl(x^{\alpha+2}\, g_{\alpha}(\partial_x^2 v)\bigr) \, \d x 
    \\
    & \quad 
    + \int_0^{\infty} u\,  (1+v)^{\frac{3}{2}} \, \partial^2_x\bigl(
    \bigl(1-(1+v)^{\frac{\alpha}{2}}\bigr) x^{\alpha+2}\, g_{\alpha}(\partial_x^2 v)\bigr) \, \d x 
    \nonumber\\
    &= 
    \int_0^{\infty} x^{\alpha+2}\, g_{\alpha}(\partial_x^2 v)\, 
    \partial^2_x\bigl(u\, \bigl(1-(1+v)^{\frac{3}{2}}\bigr)\bigr) \, \d x 
    \nonumber\\
    &\quad 
    + \int_0^{\infty} \bigl(1-(1+v)^{\frac{\alpha}{2}}\bigr) \, x^{\alpha+2}\, g_{\alpha}(\partial_x^2 v)\, \partial^2_x\bigl(u\,  (1+v)^{\frac{3}{2}}\bigr) \,\d x
    \nonumber\\
    &\eqcolon
    I_1 + I_2. \label{eq:estimate1RHS}
\end{align}
We start by estimating $I_1$. Due to estimate~\eqref{eq:weight xu} of Lemma~\ref{lem:weight xu} and $\|v\|_{U_\alpha} \ll_\alpha 1$, we can assume $\sup_{x\geq 0} |v| \le \frac 1 2$. Thus, computing $\partial^2_x\bigl(u\, \bigl(1-(1+v)^{\frac{3}{2}}\bigr)\bigr)$ and using the definition of $g_\alpha$, we may estimate 
\begin{align}\nonumber
    I_1
    &=
    \int_0^\infty x^{\alpha+2} g_\alpha(\partial_x^2 v)  \bigl(1 - (1 + v)^\frac{3}{2}\bigr)\, \partial_x^2u\, \d x 
    - 
    3 \int_0^\infty x^{\alpha+2} g_\alpha(\partial_x^2 v) (1 + v)^\frac{1}{2} (\partial_x v)\, (\partial_x u)\, \d x
    \\
    & \phantom{=}
    - \frac{3}{4} \int_0^\infty x^{\alpha+2} g_\alpha(\partial_x^2 v) (\partial_x v)^2\, (1 + v)^{-\frac{1}{2}}\, u\,  \d x
    -
    \frac{3}{2} \int_0^\infty x^{\alpha+2} |\partial_x^2 v|^{\alpha+1} (1 + v)^\frac{1}{2} u\, \d x
    \nonumber\\
    & \lesssim
    \sup_{x\geq 0} |v|
    \int_0^\infty x^{\alpha+2} 
    |\partial_x^2 v|^\alpha\, |\partial_x^2 u|\, \d x 
    + 
    \int_0^\infty x^{\alpha+2} 
    |\partial_x^2 v|^\alpha\, |\partial_x v|\, |\partial_x u|\, \d x 
    \nonumber\\
    & \phantom{=} 
    +
    \int_0^\infty x^{\alpha+2} |\partial_x^2 v|^\alpha\, (\partial_x v)^2\, |u|\, \d x
    + 
    \int_0^\infty x^{\alpha+2} |\partial_x^2 v|^{\alpha+1}\, |u|\, \d x
    \nonumber\\
    &=
    J_1 + J_2 + J_3 + J_4. \label{eq:N(v)u_J_i}
\end{align}
Applying H\"older's inequality to $J_1$ and estimating $\sup_{x\geq 0} |v|$ by means of Lemma~\ref{lem:weight xu} with $\beta=0$, we find
\begin{align}\nonumber
    J_1 
    &\leq_{\phantom{\alpha}} 
    \Big(\int_0^\infty x^{\alpha+2} |\partial_x^2 u|^{\alpha+1}\, \d x\Big)^\frac{1}{\alpha+1} \sup_{x \geq 0} |v|\, \Big(\int_0^\infty x^{\alpha+2} |\partial_x^2 v|^{\alpha+1}\, \d x\Big)^\frac{\alpha}{\alpha+1}
    \\
    &\stackrel{\mathclap{\eqref{eq:weight xu}}}{\lesssim}_\alpha 
    \Big(\int_0^\infty x^{\alpha+2} |\partial_x^2 u|^{\alpha+1}\, \d x\Big)^\frac{1}{\alpha+1}
    \Big(\int_0^\infty v^2\, \d x\Big)^\frac{\alpha-1}{3\alpha-1}
    \Big(\int_0^\infty x^{\alpha+2} |\partial_x^2 v|^{\alpha+1}\, \d x\Big)^\frac{3\alpha^2+1}{(3\alpha-1)(\alpha+1)}. \label{eq:N(v)u_J1}
\end{align}
In order to estimate the integral $J_2$, we first apply H\"older's inequality which gives
\begin{equation*}
    J_2
    \leq
    \Big(\int_0^\infty x^{\alpha+2} |\partial_x u|^{2(\alpha+1)}\, \d x\Big)^\frac{1}{2(\alpha+1)}
    \Big(\int_0^\infty x^{\alpha+2} |\partial_x^2 v|^{\alpha+1}\, \d x\Big)^\frac{\alpha}{\alpha+1} 
    \Big(\int_0^\infty x^{\alpha+2} |\partial_x v|^{2(\alpha+1)}\, \d x\Big)^\frac{1}{2(\alpha+1)}.
\end{equation*}
The second factor is benign. To estimate the other two factors, we apply Lemma~\ref{lem:weight xu lp'} with $p = 2(\alpha+1) > \frac{4(\alpha+1)}{\alpha+3}$ and $\nu = \frac{\alpha+2}{2(\alpha+1)}$, where we note that
\begin{align*}
\nu &\ge \frac{2}{\alpha+1} + \frac{2(\alpha-2)}{(\alpha+3)p} = \frac{3\alpha+4}{(\alpha+1)(\alpha+3)} \quad \Longleftarrow \quad \alpha \ge 2, \\
\nu &< \frac{\alpha+2}{5\alpha+3} \big(3-\tfrac 2 p\big) = \frac{(\alpha+2)(3\alpha+2)}{(\alpha+1)(5\alpha+3)} \quad \Longleftarrow \quad \alpha > -1.
\end{align*}
This gives
\begin{equation*}
    \int_0^\infty x^{\alpha+2} |\partial_x u|^{2(\alpha+1)}\, \d x
    \; \stackrel{\mathclap{\eqref{eq:weight xu lp'}}}{\lesssim}_\alpha \;
    \Big(\int_0^\infty u^2\, \d x\Big)^\frac{(\alpha-1)(\alpha+1)}{3\alpha-1}
    \Big(
    \int_0^\infty x^{\alpha+2} |\partial_x^2 u|^{\alpha+1}\, \d x
    \Big)^\frac{4\alpha}{3\alpha-1}
\end{equation*}
for the first factor and the same, with $u$ replaced by $v$, for the third factor. Thus, for $J_2$ we end up with
\begin{align}\nonumber
    J_2
    & \lesssim_\alpha
    \Big(\int_0^\infty
    u^2\, \d x
    \Big)^\frac{\alpha-1}{2(3\alpha-1)}
    \Big(\int_0^\infty
    x^{\alpha+2} |\partial_x^2 u|^{\alpha+1}\, \d x
    \Big)^\frac{2\alpha}{(\alpha+1)(3\alpha-1)} \\
    & \phantom{\lesssim_\alpha} \times \Big(\int_0^\infty
    v^2\, \d x
    \Big)^\frac{\alpha-1}{2(3\alpha-1)} 
    \Big(\int_0^\infty
    x^{\alpha+2} |\partial_x^2 v|^{\alpha+1}\, \d x
    \Big)^\frac{\alpha(3\alpha+1)}{(\alpha+1)(3\alpha-1)}. \label{eq:N(v)u_J2}
\end{align}
Next, we estimate $J_3$. We have
\begin{equation*}
\begin{split}
    J_3
    \leq
    \sup_{x \geq 0} |u| 
    \int_0^\infty x^{\alpha+2} |\partial_x^2 v|^\alpha\, (\partial_x v)^2\, \d x
\end{split}
\end{equation*}
and observe that $\sup_{x \geq 0} |u|$ can again be estimated by estimate~\eqref{eq:weight xu} of Lemma~\ref{lem:weight xu} with $\beta=0$, while the integral can be treated like $J_2$ with $u=v$. This leads to the estimate
\begin{equation}\label{eq:N(v)u_J3}
    J_3
    \lesssim_\alpha
    \Big(\int_0^\infty u^2\, \d x\Big)^\frac{\alpha-1}{3\alpha-1}
    \Big(\int_0^\infty x^{\alpha+2} |\partial_x^2 u|^{\alpha+1} \, \d x\Big)^\frac{1}{3\alpha-1}
    \Big(\int_0^\infty
    v^2\, \d x
    \Big)^\frac{\alpha-1}{3\alpha-1} 
    \Big(\int_0^\infty
    x^{\alpha+2} |\partial_x^2 v|^{\alpha+1}\, \d x
    \Big)^\frac{3\alpha}{3\alpha-1}.
\end{equation}
By applying Lemma~\ref{lem:weight xu} once more, we obtain for $J_4$,
\begin{equation} \label{eq:N(v)u_J4}
    J_4 
    \, \stackrel{\mathclap{\eqref{eq:weight xu}}}{\lesssim}_\alpha 
    \Big(\int_0^\infty u^2\, \d x\Big)^\frac{\alpha-1}{3\alpha-1}
    \Big(\int_0^\infty x^{\alpha+2} |\partial_x^2 u|^{\alpha+1} \, \d x\Big)^\frac{1}{3\alpha-1}
    \int_0^\infty x^{\alpha+2} |\partial_x^2 v|^{\alpha+1}\, \d x.
\end{equation}
Finally, using \eqref{eq:N(v)u_J1}--\eqref{eq:N(v)u_J4} in \eqref{eq:N(v)u_J_i} we end up with
\begin{align}\nonumber
    I_1
    &\lesssim_\alpha
    \Big(\int_0^\infty x^{\alpha+2} |\partial_x^2 u|^{\alpha+1}\, \d x\Big)^\frac{1}{\alpha+1}
    \Big(\int_0^\infty v^2\, \d x\Big)^\frac{\alpha-1}{3\alpha-1}
    \Big(\int_0^\infty x^{\alpha+2} |\partial_x^2 v|^{\alpha+1}\, \d x\Big)^\frac{3\alpha^2+1}{(3\alpha-1)(\alpha+1)}
    \\
    &\phantom{\lesssim_\alpha}
    +
    \Big(\int_0^\infty
    u^2\, \d x
    \Big)^\frac{\alpha-1}{2(3\alpha-1)}
    \Big(\int_0^\infty
    x^{\alpha+2} |\partial_x^2 u|^{\alpha+1}\, \d x
    \Big)^\frac{2\alpha}{(\alpha+1)(3\alpha-1)} \nonumber \\
    &\phantom{\lesssim_\alpha +} \times \Big(\int_0^\infty
    v^2\, \d x
    \Big)^\frac{\alpha-1}{2(3\alpha-1)} 
    \Big(\int_0^\infty
    x^{\alpha+2} |\partial_x^2 v|^{\alpha+1}\, \d x
    \Big)^\frac{\alpha(3\alpha+1)}{(\alpha+1)(3\alpha-1)}
    \nonumber \\
    &\phantom{\lesssim_\alpha}
    +
    \Big(\int_0^\infty u^2\, \d x\Big)^\frac{\alpha-1}{3\alpha-1}
    \Big(\int_0^\infty x^{\alpha+2} |\partial_x^2 u|^{\alpha+1} \, \d x\Big)^\frac{1}{3\alpha-1}
    \Big(\int_0^\infty
    v^2\, \d x
    \Big)^\frac{\alpha-1}{3\alpha-1} 
    \Big(\int_0^\infty
    x^{\alpha+2} |\partial_x^2 v|^{\alpha+1}\, \d x
    \Big)^\frac{3\alpha}{3\alpha-1}
    \nonumber \\
    &\phantom{\lesssim_\alpha}
    +
    \Big(\int_0^\infty u^2\, \d x\Big)^\frac{\alpha-1}{3\alpha-1}
    \Big(\int_0^\infty x^{\alpha+2} |\partial_x^2 u|^{\alpha+1} \, \d x\Big)^\frac{1}{3\alpha-1}
    \int_0^\infty x^{\alpha+2} |\partial_x^2 v|^{\alpha+1}\, \d x. \label{eq:N(v)u_I1}
\end{align} 
We obtain a similar estimate for $I_2$. Indeed, recalling the definition of $g_\alpha$ and the assumption $\sup_{x\geq 0} |v| \leq \frac{1}{2} < 1$ because of \eqref{eq:weight xu} of Lemma~\ref{lem:weight xu} and $\|v\|_{U_\alpha} \ll_\alpha 1$, we  apply the mean-value theorem to the first factor and calculate $\partial^2_x\bigl(u\,  (1+v)^{\frac{3}{2}}\bigr)$. This yields
\begin{equation*}
\begin{split}
    I_2
    &=
    \int_0^{\infty} \bigl(1-(1+v)^{\frac{\alpha}{2}}\bigr) \, x^{\alpha+2}\, g_{\alpha}(\partial_x^2 v)\, \partial^2_x\bigl(u\,  (1+v)^{\frac{3}{2}}\bigr) \,\d x
    \\
    &\lesssim_\alpha
    \sup_{x \geq 0} |v|
    \Big(
    \int_0^\infty x^{\alpha+2} |\partial_x^2 v|^\alpha\, |\partial_x^2u|\, \d x 
    + 
    \int_0^\infty x^{\alpha+2} |\partial_x^2 v|^\alpha |\partial_x v|\, |\partial_x u|\, \d x
    \\
    &\phantom{\lesssim_\alpha
    \sup_{x \geq 0} |v| \Big(}
    + \int_0^\infty x^{\alpha+2} |\partial_x^2 v|^\alpha (\partial_x v)^2\, |u|\, \d x
    +
    \int_0^\infty x^{\alpha+2} |\partial_x^2 v|^{\alpha+1}\, |u|\, \d x
    \Big).
\end{split}
\end{equation*}
The second and third line contain exactly the same integrals $J_1$, $J_2$, $J_3$, and $J_4$ as in estimate~\eqref{eq:N(v)u_J_i} for $I_1$, except the additional pre-factor $\sup_{x \geq 0} |v|$ that appears now in front of all integrals and not only in the first one. However, since $\sup_{x \ge 0} |v| \le \frac 1 2$, estimate~\eqref{eq:N(v)u_I1} is also valid for $I_2$ instead of $I_1$, thus finishing the proof of \eqref{est-N(v)u}.
\end{proof}
\section{Existence of strong solutions using a time-discretization scheme} \label{sec:time-discretization}
Throughout the section we assume $\alpha > 2$. We address problem~\eqref{problem-u}, that is,
\begin{equation*}
u_t + \partial_x^2 \big(x^{\alpha+2} \, g_\alpha(\partial_x^2 u)\big) = f \quad \text{for } t,x > 0,
\end{equation*}
with the nonlinear right-hand side
\begin{align*}
    f\coloneqq
    N(u) &\coloneqq (1-  (1+u)^{\frac{3}{2}}) \,  \partial_x^2 (x^{\alpha+2} \, g_{\alpha}(\partial_x^2 u))
    + (1+u)^{\frac{3}{2}}\, \partial_x^2\big( (1- (1+u)^{\frac{\alpha}{2}}) x^{\alpha+2} \, g_{\alpha}(\partial_x^2 u) \big),
\end{align*}
by means of the following time-discretization scheme:
\begin{subequations}\label{full-discrete}
\begin{equation}\label{eq-full-discrete}
\frac{u_j - u_{j-1}}{h} + \partial_x^2 \big(x^{\alpha+2} g_\alpha(\partial_x^2 u_j)\big) = f_{j-1}, \qquad j \in \N,
\end{equation}
equipped with the boundary condition 
\begin{equation}\label{bc-full-discrete}
u_j = 0 \quad \text{at } x = 0,
\end{equation}
\end{subequations}
where $0 < h < \infty$, $t_j\coloneq jh$ (hence $t_0 =0$), $f_{j-1}$ is given by
\begin{equation}\label{def-fj}
f_{j-1} \coloneq N(u_{j-1})
\end{equation}
(cf.~\eqref{def-nu}), and the prescribed initial data 
$u_0(\cdot)\coloneq u(0,\cdot)$. Thanks to the approximation property of Lemma~\ref{lem-approx-ualpha}, we may first assume $u_0 \in C_\mathrm{c}^\infty((0,\infty))$.
On the linear space
\[
U_\alpha \stackrel{\eqref{def-ualpha}}{=} \big\{u \in L^2(0,\infty) \colon x^{\frac{\alpha+2}{\alpha+1}} \partial_x^2 u \in L^{\alpha+1}(0,\infty), \; u = 0 \text{ at } x = 0\big\},
\]
we introduce the functional 
\begin{equation}\label{def-jalpha}
U_\alpha \owns u \longmapsto J_{\alpha,j}(u) \coloneq \int_0^\infty \big(\tfrac 1 2 u^2 + \tfrac{h}{\alpha+1} x^{\alpha+2} |\partial_x^2 u|^{\alpha+1} - \tilde f_{j-1} u\big) \, \d x,
\end{equation}
where
\begin{equation}\label{def-gj}
\tilde f_j \coloneq h f_j + u_j. 
\end{equation}
Provided $\tilde f_{j-1}\in L^2(0,\infty)$ (in fact, we have the stronger regularity $\tilde f_0\in C^\infty_\mathrm{c}((0,\infty))$ for $j = 0$), the functional $J_{\alpha,j}$ is well-defined and finite on $U_\alpha$. The existence of a solution $u_j$ for \eqref{full-discrete} at step $j$ can be obtained by minimizing the functional $J_{\alpha,j}$ via the direct method in the calculus of variations by using the fact that the space $U_\alpha$ is weakly closed. Observe that the boundary condition \eqref{bc-full-discrete} is closed under weak convergence by estimate~\eqref{eq:weight xu} of Lemma \ref{lem:weight xu} and letting $x \downarrow 0$.

\begin{lemma}[Existence of minimizers for $J_{\alpha,j}$]\label{lem-ex-min}
Provided $\tilde f_{j-1} \in L^2(0,\infty)$, there exists a unique minimizer $u_j \in U_\alpha$ of the functional $J_{\alpha,j}$(cf.~\eqref{def-ualpha}, \eqref{def-jalpha}). 
\end{lemma}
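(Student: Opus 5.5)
We use the direct method of the calculus of variations on the reflexive Banach space $U_\alpha$, with norm $\|u\|_{U_\alpha} = \|u\|_{L^2} + \|x^{\frac{\alpha+2}{\alpha+1}}\partial_x^2 u\|_{L^{\alpha+1}}$.

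\textbf{Well-definedness and coercivity.} For $u \in U_\alpha$ both integrals $\int u^2$ and $\int x^{\alpha+2}|\partial_x^2 u|^{\alpha+1}$ are finite, and $|\int \tilde f_{j-1} u| \le \|\tilde f_{j-1}\|_{L^2}\|u\|_{L^2}$. Hence $J_{\alpha,j}$ is finite on $U_\alpha$. Young's inequality gives
\[
J_{\alpha,j}(u) \ge \tfrac14 \|u\|_{L^2}^2 + \tfrac{h}{\alpha+1}\big\|x^{\frac{\alpha+2}{\alpha+1}}\partial_x^2 u\big\|_{L^{\alpha+1}}^{\alpha+1} - \|\tilde f_{j-1}\|_{L^2}^2,
\]
so $J_{\alpha,j}$ is bounded below and coercive with respect to $\|\cdot\|_{U_\alpha}$. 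Take a minimizing sequence $(u^{(n)})_n$. It is then bounded in $U_\alpha$.

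\textbf{Compactness and closedness.} After passing to a subsequence, we have $u^{(n)} \rightharpoonup u$ weakly in $L^2$ and $x^{\frac{\alpha+2}{\alpha+1}}\partial_x^2 u^{(n)} \rightharpoonup \xi$ weakly in $L^{\alpha+1}$. Both spaces are reflexive, since $\alpha+1>1$. Testing against $\varphi \in C_\mathrm{c}^\infty((0,\infty))$ identifies $\xi = x^{\frac{\alpha+2}{\alpha+1}}\partial_x^2 u$ in the distributional sense:
\[
\int u^{(n)}\partial_x^2\varphi \to \int u\,\partial_x^2\varphi, \qquad \int \partial_x^2 u^{(n)}\varphi = \int \big(x^{\frac{\alpha+2}{\alpha+1}}\partial_x^2 u^{(n)}\big)\big(x^{-\frac{\alpha+2}{\alpha+1}}\varphi\big).
\]
The main point to check is the boundary condition $u=0$ at $x=0$. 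Lemma~\ref{lem:weight xu} with $\beta=\frac{1-\alpha}{\alpha+1}$, applied to $u^{(n)}$, gives $|u^{(n)}(x)| \lesssim x^{\frac{\alpha-1}{\alpha+1}}$ uniformly in $n$. The bounds on $\partial_x^2 u^{(n)}$ in $L^{\alpha+1}_{\mathrm{loc}}((0,\infty))$ yield local $C^1$ compactness via Arzel\`a--Ascoli, hence pointwise convergence on $(0,\infty)$. Therefore $|u(x)| \lesssim x^{\frac{\alpha-1}{\alpha+1}} \to 0$ as $x \downarrow 0$, and $u \in U_\alpha$.

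\textbf{Lower semicontinuity and uniqueness.} The functionals $u \mapsto \frac12\int u^2$ and $u \mapsto \frac{h}{\alpha+1}\int |x^{\frac{\alpha+2}{\alpha+1}}\partial_x^2 u|^{\alpha+1}$ are convex and continuous on their respective spaces, hence weakly lower semicontinuous. The linear term $-\int \tilde f_{j-1}u$ is weakly continuous in $L^2$. Consequently $J_{\alpha,j}(u) \le \liminf_n J_{\alpha,j}(u^{(n)})$, and $u$ is a minimizer. Uniqueness follows from strict convexity: the quadratic term $\frac12\int u^2$ is strictly convex, the $(\alpha+1)$-power term is convex, and the linear term is affine. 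Two distinct minimizers $u_1 \ne u_2$ would then satisfy $J_{\alpha,j}(\frac{u_1+u_2}{2}) < \min J_{\alpha,j}$, a contradiction.

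The step requiring the most care is the identification of the weak limit together with the preservation of the boundary condition. We handle both through the uniform pointwise bound of Lemma~\ref{lem:weight xu}, as indicated in the remark preceding the lemma.
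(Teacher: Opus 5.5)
Your proposal is correct and follows essentially the same direct-method argument as the paper: Young's inequality for coercivity, weak compactness in $L^2$ and $L^{\alpha+1}$, weak closedness of the boundary condition via Lemma~\ref{lem:weight xu}, and weak lower semicontinuity by convexity. The only differences are minor. You spell out why $u=0$ at $x=0$ survives the limit, via the uniform bound $|u^{(n)}(x)|\lesssim x^{\frac{\alpha-1}{\alpha+1}}$, which the paper merely asserts. You also prove uniqueness by the midpoint strict-convexity argument rather than by integrating the second Gateaux derivative.
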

\begin{proof}
\textbf{Step 1: $U_\alpha$ is non-empty. } We have the inclusion $C^\infty_\mathrm{c}((0,\infty)) \subset U_\alpha$ by Lemma~\ref{lem-approx-ualpha}.

\bigskip

\textbf{Step 2: Boundedness from below and coercivity.} We have by Young's inequality
\begin{equation*}
    - \frac 1 2 \|\tilde f_{j-1}\|_{L^2(0,\infty)}^2 
    \leq
    \inf_{u \in U_\alpha} J_{\alpha,j}(u)
    =
    \inf_{u \in U_\alpha}  \int_0^\infty \bigl(\tfrac{1}{2}u^2 + \tfrac{h}{\alpha+1} x^{\alpha+2} |\partial_x^2 u|^{\alpha+1} - \tilde f_{j-1} u\bigr) \d x 
\end{equation*}
by the definition \eqref{def-ualpha} of $U_\alpha$, so the functional is bounded from below.
Moreover, the functional is coercive in the sense that if $(u^{(k)})_{k \in \N} \subset U_\alpha$ is such that $\sup_{k \in \N} J_{\alpha,j}(u^{(k)}) \le C$, then by Young's inequality
\begin{equation*}
    \frac 1 4 \|u^{(k)}\|_{L^2(0,\infty)}^2
    +
    \frac{h}{\alpha+1}\|x^\frac{\alpha+2}{\alpha+1} \partial_x^2 u^{(k)}\|_{L^{\alpha+1}(0,\infty)}^{\alpha+1}
    - \|\tilde{f}_{j-1}\|_{L^2(0,\infty)}^2
    \leq 
    J_{\alpha,j}(u^{(k)}) 
    \le C.
\end{equation*}
Thus, for any minimizing sequence $(u^{(k)})_{k \in \N} \subset U_\alpha$ there exists a function $u_j \in U_\alpha$ such that for a subsequence (not relabeled)
\begin{equation*}
u^{(k)} \rightharpoonup u_j \quad \text{in } L^2(0,\infty)
    \qquad \text{and} \qquad
    x^{\frac{\alpha+2}{\alpha+1}} \partial_x^2 u^{(k)} \rightharpoonup x^{\frac{\alpha+2}{\alpha+1}} \partial_x^2 u_j
    \quad \text{in } L^{\alpha+1}(0,\infty),
\end{equation*}
where $u_j\in U_\alpha$ because $U_\alpha$ is weakly closed.

\bigskip

\textbf{Step 3: Weak lower semi-continuity. } 
This is ensured by the (strict) convexity of the functions $s \mapsto |s|^2$ and $\xi \mapsto |\xi|^{\alpha+1}$. Indeed, the linear space $U_\alpha$ is convex and the second Gateaux derivative is
\begin{equation}\label{second_ja}
    D^2 J_{\alpha,j}(u)(v,v) = \int_0^\infty \big( v^2 + \alpha h x^{\alpha+2} |\partial_x^2 u|^{\alpha-1} (\partial_x^2 v)^2\big) \, \d x \ge 0 \quad \text{for} \quad u,v \in U_\alpha.
\end{equation}
Thus, we have
\begin{equation*}
    J_{\alpha,j}(u_j) \leq \liminf_{k \to \infty} J_{\alpha,j}(u^{(k)})
\end{equation*}
and hence, if $u^{(k)}$ is a minimizing sequence, then $u_j$ is a minimizer.

\bigskip

\textbf{Step 4: Uniqueness. } 
For two minimizers $u_j, u_j' \in U_\alpha$ it holds
\begin{align*}
0 &= D J_{\alpha,j}(u_j)(u_j - u_j') - D J_{\alpha,j}(u_j')(u_j - u_j') \; = \; \int_0^1 D^2 J_{\alpha,j}(\tau u_j + (1-\tau) u_j') (u_j-u_j',u_j-u_j') \, \d \tau \\
&\stackrel{\mathclap{\eqref{second_ja}}}{\ge} \; \int_0^1 \int_0^\infty (u_j - u_j')^2 \, \d x \, \d \tau = \|u_j - u_j'\|_{L^2(0,\infty)}^2,
\end{align*}
so that $u_j = u_j'$ must hold almost everywhere.
\end{proof}
\begin{lemma}[Euler--Lagrange equation corresponding to $J_{\alpha,j}$] \label{lem:Euler-Lagrange_J}
Suppose $\tilde f_{j-1} \in L^2(0,\infty)$ and let $u_j \in U_\alpha$ be a minimizer of $J_{\alpha,j}$. Then we have the strong formulation of the Euler-Lagrange equation
\begin{equation} \label{eq:Euler-Lagrange_I}
     h\partial_x^2 (x^{\alpha+2} g_\alpha(\partial_x^2 u_j)) = \tilde f_{j-1} - u_j \quad \text{almost everywhere.}
\end{equation}
In particular, $\partial_x^2 (x^{\alpha+2} g_\alpha(\partial_x^2 u_j)) \in L^2(0,\infty)$, that is,
\[
u_j \in V_\alpha \stackrel{\eqref{def-valpha}}{=} \Big\{v \in U_\alpha \colon \int_0^\infty \big(\partial_x^2 (x^{\alpha+2} g_\alpha(\partial_x^2 v))\big)^2 \, \d x < \infty\Big\}.
\]
\end{lemma}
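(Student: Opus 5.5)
The plan is to derive the weak Euler--Lagrange equation from minimality of $u_j$ and then upgrade it to a strong one by identifying the distributional second derivative of $w_j \coloneq x^{\alpha+2} g_\alpha(\partial_x^2 u_j)$.

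\emph{Weak form.} Since $U_\alpha$ is a linear space, $u_j + \eps\varphi \in U_\alpha$ for every $\varphi \in C^\infty_\mathrm{c}((0,\infty))$ and $\eps \in \R$, by Lemma~\ref{lem-approx-ualpha}. Hence $\eps \mapsto J_{\alpha,j}(u_j+\eps\varphi)$ has a minimum at $\eps = 0$.

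This map is differentiable. The pointwise derivative of $\tfrac{1}{\alpha+1}|\xi+\eps\eta|^{\alpha+1}$ in $\eps$ is $g_\alpha(\xi+\eps\eta)\eta$. For $|\eps|\le 1$ it is dominated by
\[
x^{\alpha+2}\big(|\partial_x^2 u_j|+|\partial_x^2\varphi|\big)^{\alpha}|\partial_x^2\varphi|,
\]
which is integrable by H\"older's inequality because $x^{\frac{\alpha+2}{\alpha+1}}\partial_x^2 u_j \in L^{\alpha+1}$ and $\varphi$ is compactly supported in $(0,\infty)$. Dominated convergence then gives
\[
\int_0^\infty \big(u_j \varphi + h\, w_j\, \partial_x^2 \varphi - \tilde f_{j-1}\varphi\big)\, \d x = 0 \quad \text{for all } \varphi \in C^\infty_\mathrm{c}((0,\infty)).
\]
Here $w_j \in L^{\frac{\alpha+1}{\alpha}}_{\mathrm{loc}}((0,\infty))$, since $|w_j|^{\frac{\alpha+1}{\alpha}} = x^{\frac{\alpha+2}{\alpha}}|\partial_x^2u_j|^{\alpha+1}$ and the weight is bounded above and below on compacts.

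\emph{Strong form.} The identity above says exactly that the distributional second derivative of $w_j$ on $(0,\infty)$ equals $h^{-1}(\tilde f_{j-1}-u_j)$. This function lies in $L^2(0,\infty)$ because $\tilde f_{j-1}, u_j \in L^2$. So $\partial_x^2 w_j \in L^2(0,\infty)$ and \eqref{eq:Euler-Lagrange_I} holds almost everywhere.

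It follows that $w_j \in W^{2,2}_{\mathrm{loc}}((0,\infty))$, so $w_j$ and $\partial_x w_j$ are locally absolutely continuous. Combined with $u_j \in U_\alpha$, this gives $u_j \in V_\alpha$. No boundary terms appear anywhere, since all test functions are compactly supported in the open half-line. This is also why the argument is insensitive to the singular behaviour at $x=0$ described in Lemma~\ref{lem-valpha}.

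\emph{Main obstacle.} The delicate point is justifying the differentiation under the integral, i.e.\ producing the integrable majorant for the $g_\alpha$ term. With test functions from $C^\infty_\mathrm{c}((0,\infty))$ this is harmless, since the weight is bounded on $\operatorname{supp}\varphi$. I would therefore avoid testing with general $v \in U_\alpha$. A secondary point is making sure the distributional identification of $\partial_x^2 w_j$ is legitimate, which needs only $w_j \in L^1_{\mathrm{loc}}$, and this holds by the computation above.
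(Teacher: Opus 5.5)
Your proposal is correct and follows the same route as the paper. You test the vanishing G\^ateaux derivative against $\varphi\in C^\infty_\mathrm{c}((0,\infty))$ and identify the weak second derivative of $x^{\alpha+2}g_\alpha(\partial_x^2u_j)$ as the $L^2$ function $h^{-1}(\tilde f_{j-1}-u_j)$; the only addition is that you justify differentiating under the integral by dominated convergence, which the paper takes for granted.
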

\begin{proof}
Since $D J_{\alpha,j}(u_j) = 0$ for the Gateaux derivative $D J_{\alpha,j}$ of $J_{\alpha,j}$, we have for $v \in C^\infty_\mathrm{c}((0,\infty))$,
\[
\int_0^\infty h x^{\alpha+2} g_\alpha(\partial_x^2 u_j) (\partial_x^2 v) \, \d x = \int_0^\infty (\tilde f_{j-1} - u_j) v \, \d x.
\]
This entails the identity of weak derivatives $h\partial_x^2 (x^{\alpha+2} g_\alpha(\partial_x^2 u_j)) = \tilde f_{j-1} - u_j \in L^2(0,\infty)$  and that \eqref{full-discrete} is satisfied almost everywhere.
\end{proof}

By testing the equation with $u_j$ we obtain the following lemma, which is the discretized analogue of \eqref{est-lead-weak-1}.
\begin{lemma}\label{lem-lead-weak-discrete}
Suppose that $u_1,\ldots,u_j$, constructed in Lemma~\ref{lem-ex-min} and Lemma~\ref{lem:Euler-Lagrange_J} provided that $\tilde f_0,\ldots, \tilde f_{j-1} \in L^2(0,\infty)$ exist. Then it holds
\begin{equation}\label{est-lead-weak-discrete-ineq}
    \frac{1}{2} \int_0^\infty (u_j^2-u_{j-1}^2)\, \d x 
    + h \int_0^\infty x^{\alpha+2} |\partial_x^2 u_j|^{\alpha+1}\, \d x
    \leq
    h \int_0^\infty f_{j-1}\, u_j\, \d x.
\end{equation}
\end{lemma}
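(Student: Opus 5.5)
We multiply the Euler--Lagrange equation \eqref{eq:Euler-Lagrange_I} of Lemma~\ref{lem:Euler-Lagrange_J} by $u_j$ and integrate over $(0,\infty)$. First we rewrite \eqref{eq:Euler-Lagrange_I} via \eqref{def-gj} as the discrete equation \eqref{eq-full-discrete}:
\[
u_j - u_{j-1} + h\,\partial_x^2 w_j = h f_{j-1}, \qquad w_j \coloneq x^{\alpha+2} g_\alpha(\partial_x^2 u_j),
\]
which holds almost everywhere. Every term lies in $L^2(0,\infty)$: $u_j,u_{j-1}\in U_\alpha$, $\partial_x^2 w_j\in L^2$ since $u_j\in V_\alpha$, and $f_{j-1}=(\tilde f_{j-1}-u_{j-1})/h\in L^2$ by assumption. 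Since $u_j\in L^2$, the product with $u_j$ is integrable.

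For the time-difference term we use the elementary identity
\[
(u_j-u_{j-1})u_j = \tfrac12 (u_j^2-u_{j-1}^2) + \tfrac12 (u_j-u_{j-1})^2,
\]
which gives $\int_0^\infty (u_j-u_{j-1})u_j\,\d x \ge \frac12\int_0^\infty (u_j^2-u_{j-1}^2)\,\d x$. This dropped nonnegative term is the reason the statement is an inequality rather than an identity.

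For the elliptic term we invoke Lemma~\ref{lem:B2-AC} with $v=u_j\in V_\alpha$. Identity \eqref{eq:B2-AC-1} gives
\[
\int_0^\infty (\partial_x^2 w_j)\,u_j\,\d x = \int_0^\infty x^{\alpha+2}|\partial_x^2 u_j|^{\alpha+1}\,\d x.
\]
This is the only step that could be delicate, because of the two integrations by parts and the boundary terms at $x=0$, where $w_j\sim ax$ by Lemma~\ref{lem-valpha}, and at infinity. Lemma~\ref{lem:B2-AC} already handles both, by approximating $u_j$ with test functions (Lemma~\ref{lem-approx-ualpha}) and passing to the limit with Cauchy--Schwarz and H\"older. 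So the step reduces to checking the hypothesis $u_j\in V_\alpha$, which Lemma~\ref{lem:Euler-Lagrange_J} provides.

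Summing the three contributions gives
\[
\tfrac12\int_0^\infty (u_j^2-u_{j-1}^2)\,\d x + h\int_0^\infty x^{\alpha+2}|\partial_x^2 u_j|^{\alpha+1}\,\d x \le h\int_0^\infty f_{j-1}u_j\,\d x,
\]
which is \eqref{est-lead-weak-discrete-ineq}.
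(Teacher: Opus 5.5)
Your proposal is correct and follows the paper's proof essentially verbatim. You test the discrete equation with $u_j$, handle the elliptic term via identity \eqref{eq:B2-AC-1} of Lemma~\ref{lem:B2-AC} (applicable since $u_j \in V_\alpha$ by Lemma~\ref{lem:Euler-Lagrange_J}), and use $(u_j-u_{j-1})u_j = \frac12(u_j^2-u_{j-1}^2)+\frac12(u_j-u_{j-1})^2$, discarding the nonnegative square; your explicit check that every term, including $f_{j-1} = (\tilde f_{j-1}-u_{j-1})/h$, lies in $L^2(0,\infty)$ is a useful detail the paper leaves implicit.
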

\begin{proof}
Testing the discretized equation
\begin{equation*}
    \frac{u_j - u_{j-1}}{h} 
    +
    \partial_x^2 \left(
    x^{\alpha+2} |\partial_x^2 u_j|^{\alpha-1} \partial_x^2 u_j
    \right)
    =
    f_{j-1}
\end{equation*}
with $u_j \in U_\alpha$ and integrating by parts (cf.~\eqref{eq:B2-AC-1} of Lemma~\ref{lem:B2-AC}), where we have used that $\partial_x^2\bigl(x^{\alpha+2} |\partial_x^2 u_j|^{\alpha-1} \partial_x^2 u_j\bigr) \in L^2(0,\infty)$ due to Lemma~\ref{lem:Euler-Lagrange_J}, we obtain
\begin{equation*}
    \int_0^\infty \frac{u_j - u_{j-1}}{h} u_j \d x
    +
    \int_0^\infty x^{\alpha+2} |\partial_x^2 u_j|^{\alpha+1} \d x
    =
    \int_0^\infty f_{j-1} u_j\, \d x.
\end{equation*}
Using the elementary identity
\begin{equation*}
   (u_j-u_{j-1}) u_j = \frac 1 2 (u_j^2 - u_{j-1}^2) + \frac 1 2 (u_j-u_{j-1})^2
\end{equation*}
and multiplying by $h$, we arrive at \eqref{est-lead-weak-discrete-ineq}. 
\end{proof}

By testing the equation with $\partial_x^2\bigl(x^{\alpha+2} |\partial_x^2 u_j|^{\alpha-1} \partial_x^2 u_j\bigr)$ we obtain the following time-discretized analogue of estimate~\eqref{est-lead-max-1}.

\begin{lemma}\label{lem-lead-max-discrete}
Suppose that $u_1,\ldots,u_j$, constructed in Lemma~\ref{lem-ex-min} and Lemma~\ref{lem:Euler-Lagrange_J} provided that $\tilde f_0,\ldots, \tilde f_{j-1} \in L^2(0,\infty)$ exist. Then it holds
\begin{equation}\label{est-lead-max-discrete}
    \frac{2}{\alpha+1} \int_0^\infty x^{\alpha+2} \big(
    |\partial_x^2 u_j|^{\alpha+1} - |\partial_x^2 u_{j-1}|^{\alpha+1} 
    \big) \d x
    +
    h
    \int_0^\infty \big(\partial_x^2(x^{\alpha+2} g_\alpha(\partial_x^2 u_j))\big)^2 \d x
    \le
    h
    \int_0^\infty f_{j-1}^2\, \d x.
\end{equation}
\end{lemma}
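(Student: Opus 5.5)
We test the discretized equation \eqref{eq-full-discrete} with $W_j \coloneq \partial_x^2\bigl(x^{\alpha+2} g_\alpha(\partial_x^2 u_j)\bigr)$. By Lemma~\ref{lem:Euler-Lagrange_J} we have $W_j \in L^2(0,\infty)$, and $f_{j-1} = N(u_{j-1}) \in L^2(0,\infty)$ by Proposition~\ref{prop-est-nv}, since $u_{j-1} \in V_\alpha$ (for $j = 1$ because $u_0 \in C^\infty_\mathrm{c}((0,\infty))$). All three terms are therefore square integrable. Multiplying by $h W_j$ and integrating gives
\[
\int_0^\infty (u_j - u_{j-1})\, W_j \, \d x + h \int_0^\infty W_j^2 \, \d x = h \int_0^\infty f_{j-1} W_j \, \d x \le \frac h 2 \int_0^\infty f_{j-1}^2 \, \d x + \frac h 2 \int_0^\infty W_j^2 \, \d x.
\]

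The central step is the discrete analogue of the time derivative identity, namely the lower bound
\[
\int_0^\infty (u_j - u_{j-1})\, W_j \, \d x \ge \frac{1}{\alpha+1} \int_0^\infty x^{\alpha+2} \bigl(|\partial_x^2 u_j|^{\alpha+1} - |\partial_x^2 u_{j-1}|^{\alpha+1}\bigr) \, \d x.
\]
This follows in two steps.

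First, we integrate by parts twice as in \eqref{eq:B2-AC-1} of Lemma~\ref{lem:B2-AC}, with $w = x^{\alpha+2} g_\alpha(\partial_x^2 u_j)$ and test function $u_j - u_{j-1} \in U_\alpha$. We approximate $u_j - u_{j-1}$ by $C^\infty_\mathrm{c}((0,\infty))$ functions via Lemma~\ref{lem-approx-ualpha}. Cauchy--Schwarz then passes the limit on the side containing $W_j$, and H\"older's inequality with exponents $\tfrac{\alpha+1}{\alpha}$ and $\alpha+1$ passes it on the side containing $x^{\alpha+2} g_\alpha(\partial_x^2 u_j)\, \partial_x^2 (u_j - u_{j-1})$. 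This yields
\[
\int_0^\infty (u_j - u_{j-1})\, W_j \, \d x = \int_0^\infty x^{\alpha+2} g_\alpha(\partial_x^2 u_j)\, \partial_x^2 (u_j - u_{j-1}) \, \d x.
\]

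Second, we use the convexity of $\Phi(s) \coloneq \tfrac{1}{\alpha+1}|s|^{\alpha+1}$, whose derivative is $\Phi' = g_\alpha$. Pointwise this gives $g_\alpha(a)(a - b) \ge \Phi(a) - \Phi(b)$ with $a = \partial_x^2 u_j$ and $b = \partial_x^2 u_{j-1}$. Multiplying by $x^{\alpha+2}$ and integrating gives the claimed lower bound; every integral involved is finite since $u_j, u_{j-1} \in U_\alpha$.

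Inserting the lower bound into the first display and absorbing $\tfrac h2 \int_0^\infty W_j^2 \, \d x$ into the left-hand side, then multiplying by $2$, yields \eqref{est-lead-max-discrete}.

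The step needing the most care is justifying the integration by parts, in particular that no boundary terms survive at $x = 0$ despite the singular behaviour $\partial_x^2 u_j \sim x^{-\frac{\alpha+1}{\alpha}}$ from Lemma~\ref{lem-valpha}. Working through the density argument of Lemma~\ref{lem:B2-AC}, rather than with pointwise asymptotics, avoids this issue entirely. What remains is to check that $u_{j-1}$ lies in $U_\alpha$ (for $j=1$ it is a test function; otherwise it is the previous minimizer), which is immediate.
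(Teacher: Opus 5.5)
Your proof is correct and follows the paper's argument. You test with $W_j$, integrate by parts via the density argument of Lemma~\ref{lem:B2-AC}, and your tangent-line inequality $g_\alpha(a)(a-b)\ge\frac{1}{\alpha+1}\bigl(|a|^{\alpha+1}-|b|^{\alpha+1}\bigr)$ is algebraically identical to the paper's Young inequality with exponents $\frac{\alpha+1}{\alpha}$ and $\alpha+1$, applied there to $\int_0^\infty u_{j-1}W_j\,\d x$. One small correction: deduce $f_{j-1}\in L^2(0,\infty)$ directly from the hypothesis, since $f_{j-1}=(\tilde f_{j-1}-u_{j-1})/h$ with $u_{j-1}\in U_\alpha\subset L^2(0,\infty)$; Proposition~\ref{prop-est-nv} requires the smallness assumption $\|v\|_{U_\alpha}\ll_\alpha 1$, which is not part of this lemma.
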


\begin{proof}
Since $\partial_x^2\bigl(x^{\alpha+2} g_\alpha(\partial_x^2 u_j)\bigr) \in L^2(0,\infty)$ by Lemma \ref{lem:Euler-Lagrange_J}, we may test the discretized equation
\begin{equation*}
    \frac{u_j - u_{j-1}}{h} 
    +
    \partial_x^2 \left(
    x^{\alpha+2} g_\alpha(\partial_x^2 u_j)
    \right)
    =
    f_{j-1}
\end{equation*}
with $\partial_x^2\bigl(x^{\alpha+2} g_\alpha(\partial_x^2 u_j)\bigr)$ and integrate by parts (cf.~\eqref{eq:B2-AC-1} of Lemma~\ref{lem:B2-AC}) to obtain
\begin{equation*}
\begin{split}
    &\int_0^\infty x^{\alpha+2} |\partial_x^2 u_{j}|^{\alpha+1}\, \d x
    +
    h \int_0^\infty \big(\partial_x^2(x^{\alpha+2} g_\alpha(\partial_x^2 u_j))\big)^2 \d x
    -
    \int_0^\infty u_{j-1}\, \partial_x^2(x^{\alpha+2} g_\alpha\big(\partial_x^2 u_j)\big)\, \d x
    \\
    & \quad =
    h\int_0^\infty f_{j-1}\, \partial_x^2\big(x^{\alpha+2} g_\alpha(\partial_x^2 u_j)\big)\, \d x.
\end{split}
\end{equation*}
By Young's inequality, we have for the right-hand side
\begin{equation*}
h\int_0^\infty f_{j-1}\, \partial_x^2\big(x^{\alpha+2} g_\alpha(\partial_x^2 u_j)\big)\, \d x
\le \frac h 2 \int_0^\infty f_{j-1}^2 \, \d x
+ \frac h 2 \int_0^\infty \big(\partial_x^2\big(x^{\alpha+2}
g_\alpha(\partial_x^2 u_j)\big)\big)^2 \, \d x.  
\end{equation*}
Furthermore, integration by parts (justified as in the proof of Lemma~\ref{lem:B2-AC} by employing density of $C^\infty_\mathrm{c}((0,\infty))$ in $U_\alpha$ by Lemma~\ref{lem-approx-ualpha},
$\partial_x^2(x^{\alpha+2} g_\alpha(\partial_x^2 u_j)) \in L^2(0,\infty)$, and $u_{j-1} \in U_\alpha$)
and Young's inequality with exponents $\frac{\alpha+1}{\alpha}$ and $\alpha+1$ entail
\begin{equation*}
\begin{split}
    \int_0^\infty u_{j-1}\, \partial_x^2\big(x^{\alpha+2} g_\alpha(\partial_x^2 u_j)\big)\, \d x
    &=
    \int_0^\infty (\partial_x^2 u_{j-1}) \, x^{\alpha+2} g_\alpha(\partial_x^2 u_j) \, \d x
    \\
    &\leq
    \frac{\alpha}{\alpha+1} \int_0^\infty x^{\alpha+2} |\partial_x^2 u_j|^{\alpha+1}\, \d x
    +
    \frac{1}{\alpha+1} \int_0^\infty x^{\alpha+2} |\partial_x^2 u_{j-1}|^{\alpha+1}\, \d x,
\end{split} 
\end{equation*}
so that we obtain \eqref{est-lead-max-discrete}.
\end{proof}
\begin{proposition}[Time-discretized problem]\label{prop-discrete-est}
Let $u_0 \in C^\infty_\mathrm{c}((0,\infty))$, $\int_0^\infty (u_0^2 + x^{\alpha+2} |\partial_x^2 u_0|^{\alpha+1}) \, \d x < \infty$ be sufficiently small, and $h > 0$ be sufficiently small. Then the iteration scheme \eqref{full-discrete} is well-defined. Defining
\begin{align*}
f^h(t,x) &\coloneq \sum_{j \ge 1} f_{j-1}(x) \chi_{[(j-1)h,jh)}(t), \quad \text{where} \quad f_{j-1} = N(u_{j-1}), \\
u^h(t,x) &\coloneq \sum_{j \ge 1} u_j(x) \chi_{[(j-1)h,jh)}(t), \\
\tilde u^h(t,x) &\coloneq \sum_{j \ge 1} \big(\tfrac{t-(j-1)h}{h} u_j(x) + \tfrac{jh-t}{h} u_{j-1}(x)\big) \chi_{[(j-1)h,jh)}(t),
\end{align*}
we have the approximated equation
\begin{align}\label{eq-uh}
\partial_t \tilde u^h + \partial_x^2 (x^{\alpha+2} g_\alpha(\partial_x^2 u^h)) = f^h \quad \text{for almost every } x > 0,
\end{align}
and the a-priori estimate
\begin{align}\nonumber
& \sup_{t \in [0,T]} \int_0^\infty \big((u^h)^2 + x^{\alpha+2} |\partial_x^2 u^h|^{\alpha+1}\big) \, \d x \\
&+ \int_0^T \int_0^\infty \big((\partial_t \tilde u^h)^2 + x^{\alpha+2} |\partial_x^2 u^h|^{\alpha+1} + (\partial_x^2 (x^{\alpha+2} g_\alpha(\partial_x^2 u^h)))^2\big) \, \d x \, \d t \nonumber\\
& \quad \le C_\alpha \Big(\int_0^\infty \big(u_0^2 + x^{\alpha+2} |\partial_x^2 u_0|^{\alpha+1}\big) \, \d x + h \int_0^\infty (\partial_x^2 (x^{\alpha+2} g_\alpha(\partial_x^2 u_0)))^2 \, \d x\Big) \label{apriori-uh-nonlinear}
\end{align}
for a constant $C_\alpha < \infty$ sufficiently large and depending only on $\alpha$, and all $T \in (0,\infty]$. 
\end{proposition}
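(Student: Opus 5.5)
The plan is an induction on $j$ that combines the discrete energy identities of Lemma~\ref{lem-lead-weak-discrete} and Lemma~\ref{lem-lead-max-discrete} with the nonlinear bounds of Propositions~\ref{prop-est-nv} and \ref{prop:N(v)u}, under a smallness bootstrap. Throughout, write
\[
E_j \coloneq \int_0^\infty \big(u_j^2 + x^{\alpha+2}|\partial_x^2 u_j|^{\alpha+1}\big)\,\d x, \qquad D_j \coloneq \int_0^\infty \big(\partial_x^2(x^{\alpha+2}g_\alpha(\partial_x^2 u_j))\big)^2\,\d x .
\]

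\textbf{Well-definedness.} This follows inductively from Lemma~\ref{lem-ex-min} and Lemma~\ref{lem:Euler-Lagrange_J}. If $u_{j-1}\in V_\alpha$ with $\|u_{j-1}\|_{U_\alpha}$ small, then Proposition~\ref{prop-est-nv} gives $f_{j-1}=N(u_{j-1})\in L^2$, so $\tilde f_{j-1}\in L^2$. A unique minimizer $u_j\in V_\alpha$ then exists and solves \eqref{eq-full-discrete} almost everywhere. For $j=1$ we use $u_0\in C^\infty_\mathrm{c}$, which lies in $V_\alpha$ with $D_0<\infty$. Equation \eqref{eq-uh} is then just \eqref{eq-full-discrete} rewritten with the piecewise constant and piecewise linear interpolants, since $\partial_t\tilde u^h=(u_j-u_{j-1})/h$ on $[(j-1)h,jh)$.

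\textbf{Energy estimate.} Add \eqref{est-lead-weak-discrete-ineq} and \eqref{est-lead-max-discrete} and sum over $j=1,\dots,k$. Besides the dissipation $h\sum_j\int x^{\alpha+2}|\partial_x^2u_j|^{\alpha+1}$ and $h\sum_j D_j$, this produces the right-hand terms $h\sum_j\int f_{j-1}u_j$ and $h\sum_j\int f_{j-1}^2$. Proposition~\ref{prop-est-nv} applied to $v=u_{j-1}$ gives
\[
\int f_{j-1}^2 \lesssim_\alpha \Big(\sup_i E_i^{\,\frac{2\alpha}{3\alpha-1}} + \sup_i E_i^{\,\frac{4\alpha}{3\alpha-1}}\Big) D_{j-1}.
\]
Summing over $j$, $h\sum_j D_{j-1}$ is the dissipation shifted by one step plus the extra term $hD_0$. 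That is exactly why $hD_0$ appears on the right of \eqref{apriori-uh-nonlinear}. Under the bootstrap hypothesis $\sup_{i<k}E_i\le\delta$, the prefactor is small and this term can be absorbed. For $\int f_{j-1}u_j$, apply Proposition~\ref{prop:N(v)u} with $u=u_j$ and $v=u_{j-1}$. In each term the exponents of $A,B$ sum to one, while the exponents of $C,D$ sum to at least $\alpha+1>1+\tfrac{\alpha+1}{\alpha}\cdot 0$. Young's inequality then bounds each term by a small multiple of $B^{\alpha+1}$, i.e.\ of $\int x^{\alpha+2}|\partial_x^2u_j|^{\alpha+1}$, plus the term $C^{a}D^{b}$ raised to the dual power, with $A$ handled through $\sup E$.

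\textbf{Main obstacle.} The main obstacle is this Young step. The resulting $v$-terms must be controlled by $\delta^{\text{power}}\cdot\big(\int x^{\alpha+2}|\partial_x^2u_{j-1}|^{\alpha+1} + D_{j-1}\big)$, i.e.\ they must again involve a dissipative quantity of step $j-1$. To arrange this I would split off $A$ using $A\le E^{1/2}$. Where needed, I would apply Lemma~\ref{lem:B2-AC} in the form $B_{j-1}\le C_{j-1}D_{j-1}^{1/2}$ (in the notation of Proposition~\ref{prop-est-nv}) to trade $D$-powers for $D_{j-1}$ and sup-in-time energy. The surplus homogeneity, namely total degree greater than $2$, is what supplies the small factor. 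The shifted sums again cost only the $j=0$ terms, which are bounded by $E_0$ and $hD_0$.

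\textbf{Closing the bootstrap.} Once absorption is done, we obtain $E_k + h\sum_{j\le k}(\text{dissipation}_j) \le C_\alpha(E_0+hD_0)$. If $E_0+hD_0$ is small, which holds for small data and small $h$, then $E_k\le\delta$, and by continuity in $k$ the bootstrap never breaks. The $(\partial_t\tilde u^h)^2$ term follows from \eqref{eq-uh}, since
\[
(\partial_t\tilde u^h)^2\le 2\big(\partial_x^2(x^{\alpha+2}g_\alpha(\partial_x^2u^h))\big)^2+2(f^h)^2 ,
\]
and $f^h$ is already controlled. Letting $k\to\infty$ gives all $T\in(0,\infty]$.
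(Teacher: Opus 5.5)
Your proposal is correct and follows the same route as the paper. You induct on $j$ via Lemmas~\ref{lem-ex-min} and \ref{lem:Euler-Lagrange_J}, sum \eqref{est-lead-weak-discrete-ineq} and \eqref{est-lead-max-discrete}, and control $h\sum_j\int f_{j-1}^2$ and $h\sum_j\int f_{j-1}u_j$ by Propositions~\ref{prop-est-nv} and \ref{prop:N(v)u} together with Young's inequality under smallness. You then recover $(\partial_t\tilde u^h)^2$ from \eqref{eq-uh}, and your index-shift explanation of the $hD_0$ term is exactly right. The only thing to tighten is the phrase ``continuity in $k$'': in discrete time the smallness of the newest iterate $u_k$ comes instead from the single-step bounds $\|u_k\|_{L^2}\le\|u_{k-1}\|_{L^2}+2h\|f_{k-1}\|_{L^2}$ and $\tfrac{2}{\alpha+1}\int x^{\alpha+2}|\partial_x^2u_k|^{\alpha+1}\,\d x\le\tfrac{2}{\alpha+1}\int x^{\alpha+2}|\partial_x^2u_{k-1}|^{\alpha+1}\,\d x+h\|f_{k-1}\|_{L^2}^2$, both from the same two lemmas, a step the paper also leaves implicit.
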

\begin{proof}
Since $\tilde f_0 \in C^\infty_\mathrm{c}((0,\infty))$ and applying Lemma~\ref{lem-ex-min} and Lemma~\ref{lem:Euler-Lagrange_J}, a unique minimizer $u_1 \in V_\alpha$ exists. Suppose that the iteration scheme $u_1, \ldots, u_N$ is well-defined up to $T = N h$ for some $N \in \N$. We sum estimate~\eqref{est-lead-weak-discrete-ineq} of Lemma~\ref{lem-lead-weak-discrete} and estimate~\eqref{est-lead-max-discrete} of Lemma~\ref{lem-lead-max-discrete} over $j = 1,\ldots,N$ and obtain
\begin{align*}
& \int_0^\infty \big(\tfrac 1 2 u_N^2 + \tfrac{2}{\alpha+1} x^{\alpha+2} |\partial_x^2 u_N|^{\alpha+1}\big) \, \d x \\
& + \sum_{j = 1}^N h \int_0^\infty \big(x^{\alpha+2} |\partial_x^2 u_j|^{\alpha+1} + (\partial_x^2 (x^{\alpha+2} g_\alpha(\partial_x^2 u_j)))^2\big) \, \d x \\
& \quad \le \int_0^\infty \big(\tfrac 1 2 u_0^2 + \tfrac{2}{\alpha+1} x^{\alpha+2} |\partial_x^2 u_0|^{\alpha+1}\big) \, \d x + \sum_{j = 1}^N h \int_0^\infty f_{j-1} u_j \, \d x + \sum_{j = 1}^N h \int_0^\infty f_{j-1}^2 \, \d x.
\end{align*}
On noting that \eqref{eq-uh} is valid by \eqref{eq:Euler-Lagrange_I} of Lemma~\ref{lem:Euler-Lagrange_J}, this entails for $T \in (0,\infty]$,
\begin{align}\nonumber
& \sup_{t \in [0,T]} \int_0^\infty \big(\tfrac 1 2 (u^h)^2 + \tfrac{2}{\alpha+1} x^{\alpha+2} |\partial_x^2 u^h|^{\alpha+1}\big) \, \d x \\
& + \int_0^T \int_0^\infty \big((\partial_t \tilde u^h)^2 + x^{\alpha+2} |\partial_x^2 u^h|^{\alpha+1} + (\partial_x^2 (x^{\alpha+2} g_\alpha(\partial_x^2 u^h)))^2\big) \, \d x \, \d t \nonumber\\
& \quad \le 3 \int_0^\infty \big(\tfrac 1 2 u_0^2 + \tfrac{2}{\alpha+1} x^{\alpha+2} |\partial_x^2 u_0|^{\alpha+1}\big) \, \d x + 3 \int_0^T \int_0^\infty f^h \, u^h \, \d x \, \d t + 5 \int_0^T \int_0^\infty (f^h)^2 \, \d x \, \d t. \label{apriori-uh}
\end{align}
Now we can estimate
\[
\int_0^T \int_0^\infty f^h \, u^h \, \d x \, \d t \qquad \text{and} \qquad \int_0^T \int_0^\infty (f^h)^2 \, \d x \, \d t
\]
by estimate~\eqref{est-N(v)u} of Proposition~\ref{prop:N(v)u} and estimate~\eqref{est-nv} of Proposition~\ref{prop-est-nv}, respectively. This follows iteratively for $T = j h$ with $j = 1,\ldots,N$ on applying Young's inequality provided
\[
\int_0^\infty \big(\tfrac 1 2 u_0^2 + \tfrac{2}{\alpha+1} x^{\alpha+2} |\partial_x^2 u_0|^{\alpha+1}\big) \, \d x
\]
is sufficiently small and $h > 0$ is small enough, as this ensures uniform smallness of
\[
\int_0^\infty \big(\tfrac 1 2 u_j^2 + \tfrac{2}{\alpha+1} x^{\alpha+2} |\partial_x^2 u_j|^{\alpha+1}\big) \, \d x
\]
for each $j = 1,\ldots,N$. This implies the a-priori estimate \eqref{apriori-uh-nonlinear} with a constant $C_\alpha < \infty$ independent of $T$. By estimate~\eqref{est-nv} of Proposition~\ref{prop-est-nv} we then have $f_N \stackrel{\eqref{def-fj}}{=} N(u_N) \in L^2(0,\infty)$, so that by \eqref{def-gj} we have $\tilde f_N \in L^2(0,\infty)$, and applying Lemma~\ref{lem-ex-min} and Lemma~\ref{lem:Euler-Lagrange_J}, $u_{N+1} \in V_\alpha$ exists. The existence of $u_j$ for all $j \in \N$ then follows by complete induction.
\end{proof}

Next we collect convergence results for the approximations $\tilde u^h$ and $u^h$:
\begin{proposition}[Uniform and weak convergence]\label{lem-uniform}
Suppose $u_0 \in C^\infty_\mathrm{c}((0,\infty))$ is such that $\int_0^\infty (u_0^2 + x^{\alpha+2} |\partial_x^2 u_0|^{\alpha+1}) \, \d x < \infty$ is sufficiently small and $h > 0$ is sufficiently small. Define $f^h$, $\tilde u^h$, and $u^h$ as in Proposition~\ref{prop-discrete-est}. Then we have
\begin{enumerate}[(i)]
\item\label{it:weak*} a subsequence of $u^h$, again denoted by $u^h$, converges weak-$*$ in $L^\infty(0,\infty;L^2(0,\infty))$;
\item\label{it:weak}
a subsequence of $u^h$, again denoted by $u^h$, is such that $x^{-\frac{\alpha}{\alpha+1}+j} \partial_x^j u^h$ for $j = 0,1,2$ converges weakly in $L^{\alpha+1}((0,\infty)^2)$ to $x^{-\frac{\alpha}{\alpha+1}+j} \partial_x^j u$ as $h \downarrow 0$, respectively;
\item\label{it:uniform} a subsequence of $\tilde u^h$ converges uniformly on compact subsets of $[0,\infty)^2$ to $u$ as $h \downarrow 0$; consequently, the same holds for $u^h$.
\item\label{it:time} a subsequence of $\partial_t \tilde u^h$ converges weakly to $\partial_t u$ in $L^2((0,\infty)^2)$ as $h \downarrow 0$.
\item\label{it:rhs} a subsequence of $f^h$ converges weakly to some $f$ in $L^2((0,\infty)^2)$ as $h \downarrow 0$;
\item\label{it:minty} a subsequence of $\partial_x^2(x^{\alpha+2} g_\alpha(\partial_x^2 u^h))$ converges weakly to $\partial_x^2 (x^{\alpha+2} g_\alpha(\partial_x^2 u))$ in $L^2((0,\infty)^2)$ as $h \downarrow 0$;
\item\label{it:weak2} 
a subsequence of $x^{\alpha+2} |\partial_x^2 u^h|^{\alpha+1}$, again 
denoted the same, converges weakly in 
$L^{2}((0,\infty)^2)$ to $x^{\alpha+2} |\partial_x^2 u|^{\alpha+1}$ 
as $h \downarrow 0$;
\item\label{it:rhs2} $f = N(u)$.
\end{enumerate}
In particular, we have the weak formulation
\begin{align}\nonumber
& \int_0^\infty v(t,x) u(t,x) \, \d x + \int_0^t \int_0^\infty v(t',x) \partial_x^2 (x^{\alpha+2} g_\alpha(\partial_x^2 u(t',x))) \, \d x \, \d t' \\
& - \int_0^t \int_0^\infty (\partial_t v)(t',x) u(t',x) \, \d x \, \d t' - \int_0^\infty v(0,x) u_0(x) \, \d x - \int_0^t \int_0^\infty v(t',x) N(u(t',x)) \, \d x \, \d t' = 0  \label{eq-u-tested}
\end{align}
for all $v \in C^\infty_\mathrm{c}([0,\infty) \times (0,\infty))$.
\end{proposition}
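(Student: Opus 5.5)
The plan is to extract all limits from the $h$-uniform a-priori estimate \eqref{apriori-uh-nonlinear} of Proposition~\ref{prop-discrete-est}, whose right-hand side is bounded independently of $h$ because $u_0\in C^\infty_\mathrm{c}((0,\infty))$.

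\textbf{Step 1: weak compactness, items (i), (ii), (iv), (v).} Boundedness of $u^h$ in $L^\infty(0,\infty;L^2)$ gives (i) by Banach--Alaoglu. Boundedness of $x^{\frac{\alpha+2}{\alpha+1}}\partial_x^2u^h$ in $L^{\alpha+1}((0,\infty)^2)$, together with Lemma~\ref{lem:Hardy} applied for a.e.\ $t$ and integrated in time, bounds $x^{-\frac{\alpha}{\alpha+1}+j}\partial_x^ju^h$ for $j=0,1$ in $L^{\alpha+1}((0,\infty)^2)$. Reflexivity then gives (ii). The limits are identified as $x^{-\frac{\alpha}{\alpha+1}+j}\partial_x^ju$ by testing against $C^\infty_\mathrm{c}$ functions, once the limit $u$ is fixed in Step~2.

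For (iv) and (v) I will use the $L^2((0,\infty)^2)$ bounds on $\partial_t\tilde u^h$ and on $f^h$. The latter comes from Proposition~\ref{prop-est-nv} combined with the a-priori bound. The same argument bounds $\partial_x^2(x^{\alpha+2}g_\alpha(\partial_x^2u^h))$ in $L^2((0,\infty)^2)$, giving a weak limit $\chi$.

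\textbf{Step 2: uniform convergence, item (iii).} The aim is Arzel\`a--Ascoli on $[0,T]\times[0,R]$.

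\emph{Time regularity.} From $\|\tilde u^h(t)-\tilde u^h(s)\|_{L^2}\le |t-s|^{1/2}\|\partial_t\tilde u^h\|_{L^2L^2}$ we get equicontinuity in $L^2$.

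\emph{Space regularity.} Lemmas~\ref{lem:weight xu} and~\ref{lem:weight xu'} give, uniformly in $t$ and $h$, the bounds $|\tilde u^h|\lesssim x^{\frac{\alpha-1}{\alpha+1}}$ and $|\partial_x\tilde u^h|\lesssim x^{-\frac{2}{\alpha+1}}$. Hence $\tilde u^h(t,\cdot)$ is uniformly H\"older continuous on $[0,R]$, including at $x=0$ where it vanishes.

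\emph{Combination.} A standard interpolation between the $L^2$-H\"older bound in time and the spatial H\"older bound gives joint equicontinuity, so a subsequence converges uniformly on compacts to some $u$. Since $\sup_t\|u^h-\tilde u^h\|_{L^2}\lesssim h^{1/2}$, together with the spatial equicontinuity, $u^h$ converges to the same $u$. The same bounds show $\partial_t\tilde u^h\rightharpoonup\partial_tu$, which completes (iv), and that $u=0$ at $x=0$.

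\textbf{Step 3: identification of the nonlinear limits, items (vi)--(viii).} I expect this to be the main obstacle. I would use a Minty/monotonicity argument for $\mathcal A(v)\coloneq\partial_x^2(x^{\alpha+2}g_\alpha(\partial_x^2v))$.

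First I would show strong convergence of $\partial_x^2u^h$ in $L^{\alpha+1}(x^{\alpha+2}\d x\,\d t)$ on bounded time intervals. By Lemma~\ref{lem-convex},
\[
\int\!\!\int x^{\alpha+2}|\partial_x^2(u^h-u)|^{\alpha+1}\lesssim_\alpha\int\!\!\int x^{\alpha+2}\big(g_\alpha(\partial_x^2u^h)-g_\alpha(\partial_x^2u)\big)\partial_x^2(u^h-u).
\]
By Lemma~\ref{lem:B2-AC} the right-hand side equals $\int\!\!\int(\mathcal A(u^h)-\mathcal A(u))(u^h-u)$ after integrating by parts. I then rewrite $\int\!\!\int\mathcal A(u^h)u^h$ through \eqref{eq-uh} as $\int\!\!\int(f^h-\partial_t\tilde u^h)u^h$.

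The term $\int\!\!\int\partial_t\tilde u^h\,u^h$ is handled by the discrete energy identity, exactly as in Lemma~\ref{lem-lead-weak-discrete}. Its $\limsup$ is controlled by weak lower semicontinuity of $\|u^h(T)\|_{L^2}^2$.

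The term $\int\!\!\int f^hu^h$ pairs a weakly convergent factor with $u^h$. I would pass to the limit by combining the uniform convergence on compacts from Step~2 with a tail estimate for $x\ge R$. That tail estimate uses the decay $\sup_x x^{\beta}|u^h|\lesssim1$ with $\beta>0$ from Lemma~\ref{lem:weight xu}, and should give $\int_0^T\!\int_R^\infty|f^h||u^h|\lesssim R^{-\beta}$ up to a factor bounded uniformly in $h$. If this tail control proves delicate, the fallback is to localize with cut-offs and absorb the commutators using the weighted bounds.

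This yields $\limsup\int\!\!\int\mathcal A(u^h)u^h\le\int\!\!\int\chi u$, hence the strong convergence above, hence $\chi=\mathcal A(u)$, which is (vi). Strong convergence of $\partial_x^2u^h$, combined with the $L^2$ bound of Lemma~\ref{lem:con-lem-sup-uxx}, gives (vii).

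For (viii) I would expand $N(u^h)$ as in the proof of Proposition~\ref{prop-est-nv}. Each term is a product in which:
\begin{itemize}
\item the smooth coefficients in $u^h$ converge locally uniformly;
\item $\partial_xu^h$ converges locally strongly, by the weighted bounds and the compactness above;
\item $\partial_x^2u^h$ converges strongly;
\item at most one factor, $\mathcal A(u^h)$ or $\partial_x w^h$, converges only weakly.
\end{itemize}
So each product converges in $\mathcal D'$, which identifies $f=N(u)$.

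The weak formulation \eqref{eq-u-tested} then follows by testing \eqref{eq-uh} with $v\in C^\infty_\mathrm{c}([0,\infty)\times(0,\infty))$, integrating by parts in time, and passing to the limit using (iii)--(viii).
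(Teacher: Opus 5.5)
Your Steps~1--2 agree in substance with the paper. The Arzel\`a--Ascoli argument with time--space interpolation is equivalent to the paper's use of Aubin--Lions--Simon with the bound $\tilde u^h\in L^\infty(0,T;C^{\frac{\alpha-1}{\alpha+1}}([0,M]))$. Step~3 takes a genuinely different route. You aim for strong convergence of $x^{\frac{\alpha+2}{\alpha+1}}\partial_x^2u^h$ by monotonicity, whereas the paper only identifies the weak limit $\phi$ of $g_\alpha(\partial_x^2u^h)$ by Minty's trick. Two steps of your Step~3 fail as written.

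\emph{First, the global tail estimate does not hold.} Lemma~\ref{lem:weight xu} only allows $\beta<\frac{\alpha+2}{5\alpha+3}<\frac12$, so $x^{-\beta}\notin L^2(R,\infty)$. A bound $\int_0^T\!\int_R^\infty|f^h||u^h|\lesssim R^{-\beta}$ would therefore need $f^h$ bounded in $L^1$. Proposition~\ref{prop-est-nv} controls $f^h$ only in $L^2$, and \eqref{apriori-uh-nonlinear} does not give $L^2$-tightness of $u^h$ as $x\to\infty$. So the cut-off localization is not a fallback but necessary. This is exactly why the paper works with $\eta_k$ and the remainder $R_{h,k}$ in \eqref{est-remainder-minty}.

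\emph{Second, the rewriting is circular.} Writing the right-hand side as $\int\!\!\int(\mathcal A(u^h)-\mathcal A(u))(u^h-u)$ presupposes $\mathcal A(u)\in L^2$, which is item (vi) itself. Keep the term $\int\!\!\int x^{\alpha+2}g_\alpha(\partial_x^2u)\,\partial_x^2(u^h-u)$ in this weak form. It tends to zero by (ii), since $x^{\frac{\alpha(\alpha+2)}{\alpha+1}}g_\alpha(\partial_x^2u)\in L^{\frac{\alpha+1}{\alpha}}$ by lower semicontinuity.

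Once localized, your argument closes and is simpler than planned. Integrating by parts as in Lemma~\ref{lem:B2-AC}, one has
\begin{align*}
&\int_0^T\!\!\int_0^\infty \eta_k\, x^{\alpha+2}\big(g_\alpha(\partial_x^2u^h)-g_\alpha(\partial_x^2u)\big)\,\partial_x^2(u^h-u)\,\d x\,\d t\\
&\quad=\int_0^T\!\!\int_0^\infty \mathcal A(u^h)\,\eta_k(u^h-u)\,\d x\,\d t-R_k^h-\int_0^T\!\!\int_0^\infty \eta_k\, x^{\alpha+2}g_\alpha(\partial_x^2u)\,\partial_x^2(u^h-u)\,\d x\,\d t,
\end{align*}
where $R_k^h\coloneq\int_0^T\int_0^\infty x^{\alpha+2}g_\alpha(\partial_x^2u^h)\big(2\eta_k'\,\partial_x(u^h-u)+\eta_k''(u^h-u)\big)\,\d x\,\d t$. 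Each term vanishes as $h\downarrow0$:
\begin{enumerate}[(a)]
\item The first term tends to zero because $\mathcal A(u^h)$ is bounded in $L^2((0,\infty)^2)$ and $\eta_k(u^h-u)\to0$ uniformly.
\item $R_k^h\to0$ because $\partial_x(u^h-u)\to0$ in $L^{\alpha+1}((0,T)\times(k,2k))$, by interpolating the uniform convergence against the bound on $\partial_x^2u^h$. Alternatively, bound $R_k^h$ by $Ck^{-\gamma}$ as the paper does.
\item The last term tends to zero by (ii).
\end{enumerate}
So neither \eqref{eq-uh} nor the discrete energy inequality is needed. Lemma~\ref{lem-convex} then gives strong convergence of $x^{\frac{\alpha+2}{\alpha+1}}\partial_x^2u^h$ in $L^{\alpha+1}((0,T)\times(0,k))$ for all $T,k$, which suffices for (vi)--(viii).

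\emph{Comparison with the paper.} The paper's Minty argument tests with arbitrary $\omega$ and uses hemicontinuity, so it never yields strong convergence. It must therefore handle the nonlinear quantities separately. For (vii) it integrates $x^{\alpha+2}|\partial_x^2u^h|^{\alpha+1}v$ by parts onto $u^h$. For (viii) it uses Aubin--Lions in $C^0(K;C^1(L))$ together with weak convergence of $g_\alpha(\partial_x^2\check u^h)$ and $\partial_x(x^{\alpha+2}g_\alpha(\partial_x^2\check u^h))$. Your route buys local strong convergence. After that, (vii) follows from the $L^2$ bound of Lemma~\ref{lem:con-lem-sup-uxx}, and (viii) from continuity of Nemytskii operators, with at most one weakly converging factor per product. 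Remember that $f^h(t)=N(u^h(t-h))$ involves the time-shifted sequence. Its local strong convergence follows from that of $u^h$ by continuity of translations.
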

\begin{proof} We prove the statements separately.

\bigskip
\textbf{Proof of \eqref{it:weak*}. }
This follows from the bound \eqref{apriori-uh-nonlinear} of Proposition~\ref{prop-discrete-est} and weak-$*$-compactness by the Banach--Alaoglu theorem.

\bigskip
\textbf{Proof of \eqref{it:weak}. }
This follows from the bound \eqref{apriori-uh-nonlinear} of Proposition~\ref{prop-discrete-est}, weak-$*$-compactness by the Banach--Alaoglu theorem, and Hardy's inequality \eqref{eq:Hardy} of Lemma~\ref{lem:Hardy}.

\bigskip
\textbf{Proof of \eqref{it:uniform}. }
From \eqref{apriori-uh-nonlinear} of Proposition~\ref{prop-discrete-est}, it follows that $\partial_t \tilde u^h$ is bounded in $L^2((0,T)\times(0,\infty))$. Furthermore, using $\alpha > 2$, from estimate~\eqref{eq:weight xu} of Lemma~\ref{lem:weight xu} and estimate~\eqref{eq:weight xu'} of Lemma~\ref{lem:weight xu'} in combination with \eqref{apriori-uh-nonlinear} of Proposition~\ref{prop-discrete-est}, we infer that $\tilde u^h \in L^\infty(0,T;C^\beta([0,M]))$, where $\beta = \frac{\alpha-1}{\alpha+1}$, is bounded for every $M > 0$. The classical Aubin--Lions--Simon lemma \cite[Corollary~4]{simon1986compact} entails that $\tilde u^h$ has a uniformly convergent subsequence in $[0,T] \times [0,M]$. Exhausting $[0,\infty)^2$ with a countable number of compact intervals $[0,T] \times [0,M]$, an iterated subsequence argument implies that a subsequence of $\tilde u^h$, again denoted by $\tilde u^h$, converges uniformly to some $\tilde u$ on compact subsets of $[0,\infty)^2$. 
It remains to identify $\tilde{u} = u$. With the identification $u^h(t) = \tilde u^h(j_{t,h}h)$ with $j_{t,h} \in \N_0$ minimal such that $j_{t,h} h \ge t$ (in particular $j_{t,h}h-t < h$), we have for $K \Subset [0,\infty)^2$:
\[
\sup_{(t,x) \in K} |u^h(t,x) - \tilde u(t,x)| \le \sup_{(t,x) \in K} |\tilde u^h(j_{t,h} h,x) - \tilde u(j_{t,h} h,x)| + \sup_{(t,x) \in K} |\tilde u(j_{t,h} h,x) - \tilde u(t,x)|.
\]
By uniform convergence of $\tilde u^h$ on compact subsets of $[0,\infty)^2$ established above, we have
\[
\sup_{(t,x) \in K} |\tilde u^h(j_{t,h} h,x) - \tilde u(j_{t,h} h,x)| \to 0 \quad \text{as} \quad h \downarrow 0.
\]
Since $\tilde u$ is uniformly continuous on $K$, we have
\[
\sup_{(t,x) \in K} |\tilde u(j_{t,h} h,x) - \tilde u(t,x)| \to 0 \quad \text{as} \quad h \downarrow 0.
\]
Hence, also $u^h$ converges uniformly on compact subsets of $[0,\infty)^2$ to $\tilde u$. Thus, it holds
\[
\int_0^t \int_0^\infty x^{-\frac{\alpha}{\alpha+1}} u^h v \, \d x \, \d t' \longrightarrow \int_0^t \int_0^\infty x^{-\frac{\alpha}{\alpha+1}} \tilde u \, v \, \d x \, \d t' \quad \text{as} \quad h \downarrow 0,
\]
for any $v \in C^\infty_\mathrm{c}((0,\infty)^2)$.  Since by part~\eqref{it:weak} we have
\[
\int_0^t \int_0^\infty x^{-\frac{\alpha}{\alpha+1}} u^h v \, \d x \, \d t' \longrightarrow \int_0^t \int_0^\infty x^{-\frac{\alpha}{\alpha+1}} u \, v \, \d x \, \d t' \quad \text{as} \quad h \downarrow 0,
\]
it follows $\tilde u = u$ by the fundamental lemma of calculus of variations.

\bigskip
\textbf{Proof of \eqref{it:time}. }
Since $\partial_t \tilde u^h$ is bounded in $L^2((0,\infty)^2)$ by \eqref{apriori-uh-nonlinear} of Proposition \ref{prop-discrete-est}, the Banach--Alaoglu theorem allows us to extract a subsequence, again denoted by $\partial_t \tilde u^h$, that converges weakly to some $\check u \in L^2((0,\infty)^2)$ as $h \downarrow 0$. By part~\eqref{it:uniform} it holds for $v \in C^\infty_\mathrm{c}((0,\infty)^2)$,
\[
\int_0^\infty \int_0^\infty (\partial_t \tilde u^h) v \, \d x \, \d t = - \int_0^\infty \int_0^\infty \tilde u^h (\partial_t v) \, \d x \, \d t \longrightarrow - \int_0^\infty \int_0^\infty u \, (\partial_t v) \, \d x \, \d t  \quad \text{as} \quad h \downarrow 0,
\]
which implies $\partial_t u = \check u$.

\bigskip
\textbf{Proof of \eqref{it:rhs}. }
By estimate~\eqref{est-nv} of Proposition~\ref{prop-est-nv} and \eqref{apriori-uh-nonlinear} of Proposition~\ref{prop-discrete-est} the sequence $f^h$ is bounded in $L^2((0,\infty)^2)$, so that the claim follows again by means of the Banach--Alaoglu theorem.

\bigskip
\textbf{Proof of \eqref{it:minty}. }
For $v \in C^\infty_\mathrm{c}([0,\infty) \times (0,\infty))$ it follows from \eqref{eq-uh} of Proposition~\ref{prop-discrete-est} that for $t > 0$,
\begin{align}\nonumber
& \int_0^\infty v(t,x) \tilde u^h(t,x) \, \d x - \int_0^\infty v(0,x) u_0(x) \, \d x - \int_0^t \int_0^\infty (\partial_t v)(t',x) \tilde u^h(t',x) \, \d x \, \d t'
\\
& + \int_0^t \int_0^\infty x^{\alpha+2} (\partial_x^2 v)(t',x) g_\alpha(\partial_x^2 u^h(t',x)) \, \d x \, \d t' - \int_0^t \int_0^\infty v(t',x) f^h(t',x) \, \d x \, \d t' = 0. \label{eq-uh-tested}
\end{align}
Furthermore, since $\int_0^\infty \int_0^\infty |\partial_x^2(x^{\alpha+2} g_\alpha(\partial_x^2 u^h))|^2 \, \d x \, \d t'$ is uniformly bounded by estimate~\eqref{apriori-uh-nonlinear} of Proposition~\ref{prop-discrete-est}, we infer by the Banach--Alaoglu theorem the existence of some $\psi \in L^2((0,\infty)^2)$ such that $\partial_x^2(x^{\alpha+2} g_\alpha(\partial_x^2 u^h)) \rightharpoonup \psi$ in $L^2((0,t) \times (0,\infty))$ for every $t > 0$ as $h \downarrow 0$, upon passing to a subsequence. Then it holds for $v \in C_\mathrm{c}^\infty((0,t) \times (0,\infty))$,
\[
\int_0^t \int_0^\infty v \partial_x^2\big(x^{\alpha+2} g_\alpha(\partial_x^2 u^h)\big) \, \d x \, \d t' = \int_0^t \int_0^\infty x^{\alpha+2} (\partial_x^2 v) g_\alpha(\partial_x^2 u^h) \, \d x \, \d t'.
\]
We can now use Minty's argument to identify the limit $\psi$. To this end, observe that for $a,b \in \R$ and some $\theta \in (0,1)$, we have
\[
(g_\alpha(a)-g_\alpha(b)) (a-b) = \alpha |\theta a + (1-\theta) b|^{\alpha-1} (a-b)^2 \ge 0.
\]
Due to estimate~\eqref{apriori-uh-nonlinear} of Proposition~\ref{prop-discrete-est},
the sequence $x^{\frac{\alpha(\alpha+2)}{\alpha+1}} g_\alpha(\partial_x^2 u^h)$
is bounded in $L^{\frac{\alpha+1}{\alpha}}((0,\infty)^2)$. By the Banach--Alaoglu theorem there exists $\phi$ with $x^{\frac{\alpha(\alpha+2)}{\alpha+1}} \phi\in L^{\frac{\alpha+1}{\alpha}}((0,\infty)^2)$ such that, upon passing to a subsequence,
\[
x^{\frac{\alpha(\alpha+2)}{\alpha+1}} g_\alpha(\partial_x^2 u^h)
\rightharpoonup x^{\frac{\alpha(\alpha+2)}{\alpha+1}} \phi
\quad \text{in } L^{\frac{\alpha+1}{\alpha}}((0,t) \times (0,\infty))
\text{ for every } t > 0 \text{ as } h \downarrow 0.
\]
We then have for $\omega \colon (0,t) \times (0,\infty) \to \R$ locally integrable with $\int_0^t \int_0^\infty x^{\alpha+2} |\omega|^{\alpha+1} \, \d x \, \d t' < \infty$, $\eta \in C^\infty([0,\infty))$ with $\eta|_{[0,1]} = 1$, $\eta|_{[2,\infty)} = 0$, and $\eta_k(x) \coloneq \eta(x/k)$, $k \in \N$, 
\begin{align*}
R_{h,k} &\le \int_0^t \int_0^\infty x^{\alpha+2} \big(g_\alpha(\omega) - g_\alpha(\partial_x^2 u^h)\big) \big(\eta_k \omega - \partial_x^2 (\eta_k u^h)\big) \, \d x \d t' \\
&= \int_0^t \int_0^\infty x^{\alpha+2} |\omega|^{\alpha+1} \eta_k \, \d x \, \d t' - \int_0^t \int_0^\infty x^{\alpha+2} g_\alpha(\partial_x^2 u^h) \omega \eta_k \, \d x \, \d t' \\
& \phantom{=} - \int_0^t \int_0^\infty x^{\alpha+2} g_\alpha(\omega) \partial_x^2 (\eta_k u^h) \, \d x \, \d t' + \int_0^t \int_0^\infty x^{\alpha+2} g_\alpha(\partial_x^2 u^h) \partial_x^2 (\eta_k u^h) \, \d x \, \d t',
\end{align*}
where we can estimate the remainder
\[
R_{h,k} \coloneq \int_0^t \int_0^\infty x^{\alpha+2} \big(g_\alpha(\omega) - g_\alpha(\partial_x^2 u^h)\big) [\partial_x^2,\eta_k]_- u^h \, \d x \d t'
\]
with help of H\"older's inequality according to
\begin{align*}
|R_{h,k}| &\lesssim_\alpha \Big(\int_0^t \int_k^{2k} x^{\alpha+2} |\omega|^{\alpha+1} \, \d x \, \d t' + \int_0^t \int_k^{2k} x^{\alpha+2} |\partial_x^2 u^h|^{\alpha+1} \, \d x \, \d t'\Big)^{\frac{\alpha}{\alpha+1}} \\
&\phantom{\lesssim_\alpha} \times \Big(\int_0^t \int_k^{2k} x^{\alpha+2} \big(k^{-\alpha-1} |\partial_x u^h|^{\alpha+1} + k^{-2\alpha-2} |u^h|^{\alpha+1}\big) \, \d x \, \d t'\Big)^{\frac{1}{\alpha+1}}.
\end{align*}
We then note that
\begin{align*}
\int_0^t \int_k^{2k} x^{\alpha+2} |\omega|^{\alpha+1} \, \d x \, \d t' \;\, &\le_{\phantom{\alpha}} \int_0^t \int_0^\infty x^{\alpha+2} |\omega|^{\alpha+1} \, \d x \, \d t' < \infty, \\
\int_0^t \int_k^{2k} x^{\alpha+2} |\partial_x^2 u^h|^{\alpha+1} \, \d x \, \d t' \;\, &\stackrel{\mathclap{\eqref{apriori-uh-nonlinear}}}{\lesssim}_\alpha \int_0^\infty \big(u_0^2 + x^{\alpha+2} |\partial_x^2 u_0|^{\alpha+1}\big) \, \d x + h \int_0^\infty \big(\partial_x^2 (x^{\alpha+2} g_\alpha(\partial_x^2 u_0))\big)^2 \, \d x
\end{align*}
by Proposition~\ref{prop-discrete-est}. Furthermore, by Lemma~\ref{lem:weight xu}, Lemma~\ref{lem:weight xu'}, and H\"older's inequality,
\begin{align*}
& \Big(\int_0^t \int_k^{2k} x^{\alpha+2} k^{-2\alpha-2} |u^h|^{\alpha+1} \, \d x \, \d t'\Big)^{\frac{1}{\alpha+1}} \\
& \quad \stackrel{\mathclap{\eqref{eq:weight xu}}}{\lesssim}_{\alpha,\beta_1} k^{\frac{1-\alpha - (\alpha+1)\beta_1}{\alpha+1}} \, t^{\frac{2(\alpha -1 + (\alpha +1)\beta_1)}{(\alpha+1)(3\alpha -1)}}\, \Big( \sup_{0 \le t' \le t} \int_0^\infty (u^h)^2 \, \d x\Big)^{\frac{\alpha -1 + (\alpha +1)\beta_1}{3\alpha -1}} \\
& \quad \phantom{\lesssim_{\alpha,\beta_1}} \times \Big(\int_0^t \int_0^\infty x^{\alpha+2} \, |\partial_x^2 u^h|^{\alpha+1} \, \d x \, \d t'\Big)^{\frac{1-2\beta_1}{3\alpha -1}}, \\[0.2cm]
& \Big(\int_0^t \int_k^{2k} x^{\alpha+2} k^{-\alpha-1} |\partial_x u^h|^{\alpha+1} \, \d x \, \d t'\Big)^{\frac{1}{\alpha+1}} \\
& \quad \stackrel{\mathclap{\eqref{eq:weight xu'}}}{\lesssim}_{\alpha,\beta_2} k^{\frac{2-(\alpha+1)\beta_2}{\alpha+1}} \, t^{\frac{2((\alpha+1)\beta_2-2)}{(\alpha+1)(3\alpha-1)}}\, \Big( \sup_{0 \le t' \le t} \int_0^\infty (u^h)^2 \, \d x\Big)^{\frac{(\alpha +1)\beta_2 -2}{3\alpha -1}} \Big(\int_0^t \int_0^\infty x^{\alpha+2} \, |\partial_x^2 u^h|^{\alpha+1} \, \d x \, \d t'\Big)^{\frac{3-2\beta_2}{3\alpha -1}},
\end{align*}
where \(\beta_1 \in \big(\frac{1-\alpha}{\alpha +1}, \frac{\alpha+2}{5\alpha +3}\big)\) and \(\beta_2 \in \big(\frac{2}{\alpha +1}, \frac{3(\alpha+2)}{5\alpha +3}\big)\). By the a-priori estimate \eqref{apriori-uh-nonlinear} of Proposition~\ref{prop-discrete-est} and for $k \ge 1$, there exists a constant $C(\alpha,u_0,t) < \infty$ only depending on $\alpha$, $u_0$ and $t$, as well as a $\gamma > 0$ depending only on $\alpha$ such that
\begin{equation}\label{est-remainder-minty}
|R_{h,k}| \le C(\alpha,u_0,t) k^{-\gamma}.
\end{equation}
Next, we identify the weak limits
\begin{align*}
\int_0^t \int_0^\infty x^{\alpha+2} g_\alpha(\partial_x^2 u^h) \omega \eta_k \, \d x \, \d t' &\to \int_0^t \int_0^\infty x^{\alpha+2} \phi \, \omega \eta_k \, \d x \, \d t' && \text{as } h \downarrow 0, \\
\int_0^t \int_0^\infty x^{\alpha+2} g_\alpha(\omega) \partial_x^2 (\eta_k u^h) \, \d x \, \d t' &\to \int_0^t \int_0^\infty x^{\alpha+2} g_\alpha(\omega) \partial_x^2 (\eta_k u) \, \d x \, \d t' && \text{as } h \downarrow 0,
\end{align*}
where we have used part~\eqref{it:weak}. Now, taking the limit $h \downarrow 0$ in \eqref{eq-uh-tested} while using parts~\eqref{it:uniform} and \eqref{it:rhs}, we obtain the weak formulation
\begin{align}\nonumber
& \int_0^\infty v(t,x) u(t,x) \, \d x - \int_0^\infty v(0,x) u_0(x) \, \d x - \int_0^t \int_0^\infty (\partial_t v)(t',x) u(t',x) \, \d x \, \d t'\\
& + \int_0^t \int_0^\infty x^{\alpha+2} (\partial_x^2 v)(t',x) \phi(t',x) \, \d x \, \d t' - \int_0^t \int_0^\infty v(t',x) f(t',x) \, \d x \, \d t' = 0. \label{eq-u-weak}
\end{align}
This already implies $\partial_t u + \partial_x^2 (x^{\alpha+2} \phi) = f$ almost everywhere on $(0,\infty)^2$ and that $u = u_0$ at $t = 0$ weakly in $L^2(0,\infty)$. By Proposition~\ref{prop-discrete-est} and parts~\eqref{it:uniform}, \eqref{it:time} and \eqref{it:rhs}, we have for every $t>0$,
\begin{align*}
\lim_{h \downarrow 0} \int_0^t \int_0^\infty x^{\alpha+2} g_\alpha(\partial_x^2 u^h) \partial_x^2 (\eta_k u^h) \, \d x \, \d t' \quad &= \quad \lim_{h \downarrow 0} \int_0^t \int_0^\infty \eta_k \, (\partial_x^2 (x^{\alpha+2} g_\alpha(\partial_x^2 u^h))) \, u^h \, \d x \, \d t' \\
&\stackrel{\mathclap{\eqref{eq-uh}}}{=} \quad \lim_{h \downarrow 0} \int_0^t \int_0^\infty \eta_k \, (f^h - \partial_t \tilde u^h) \, u^h \, \d x \, \d t' \\
&= \quad \int_0^t \int_0^\infty \eta_k \, (f - \partial_t u) \, u \, \d x \, \d t' \\
&= \quad \int_0^t \int_0^\infty \eta_k \, (\partial_x^2 (x^{\alpha+2} \phi)) \, u \, \d x \, \d t' \\
&= \quad \int_0^t \int_0^\infty x^{\alpha+2} \, \phi \, \partial_x^2 (\eta_k u) \, \d x \, \d t'.
\end{align*}
With \eqref{est-remainder-minty}, for every $t > 0$ and every $k \ge 1$, this amounts to
\begin{align*}
- C(\alpha,u_0,t) k^{-\gamma} &\le \int_0^t \int_0^\infty x^{\alpha+2} |\omega|^{\alpha+1} \eta_k \, \d x \, \d t' - \int_0^t \int_0^\infty x^{\alpha+2} \phi \,  \omega \eta_k \, \d x \, \d t' \\
& \phantom{=} - \int_0^t \int_0^\infty x^{\alpha+2} g_\alpha(\omega) \partial_x^2 (\eta_k u) \, \d x \, \d t' + \int_0^t \int_0^\infty x^{\alpha+2} \phi \, \partial_x^2 (\eta_k u) \, \d x \, \d t'.
\end{align*}
Taking the limit $k \to \infty$ implies by dominated convergence that for every $t > 0$,
\begin{align*}
0 &\le \int_0^t \int_0^\infty x^{\alpha+2} |\omega|^{\alpha+1} \, \d x \, \d t' - \int_0^t \int_0^\infty x^{\alpha+2} \phi \, \omega \, \d x \, \d t' \\
& \phantom{=} - \int_0^t \int_0^\infty x^{\alpha+2} g_\alpha(\omega) \, (\partial_x^2 u) \, \d x \, \d t' + \int_0^t \int_0^\infty x^{\alpha+2} \phi \, (\partial_x^2 u) \, \d x \, \d t' \\
&= \int_0^t \int_0^\infty x^{\alpha+2} (g_\alpha(\omega) - \phi) (\omega - \partial_x^2 u) \, \d x \, \d t'.
\end{align*}
Replacing $\omega$ by $\partial_x^2 u \pm \tau \omega$ for $\tau > 0$, dividing by $\tau$ and taking the limit $\tau \to 0$, we obtain, for every $t>0$,
\[
\int_0^t \int_0^\infty x^{\alpha+2} (g_\alpha(\partial_x^2 u) - \phi) \omega \, \d x \, \d t' = 0,
\]
which implies $\phi = g_\alpha(\partial_x^2 u)$ almost everywhere on $(0,\infty)^2$. Hence, for every $t > 0$ and all $v \in C^\infty_\mathrm{c}((0,t) \times (0,\infty))$,
\begin{align*}
\int_0^t \int_0^\infty v \psi \, \d x \, \d t' &= \lim_{h \downarrow 0} \int_0^t \int_0^\infty v \partial_x^2(x^{\alpha+2} g_\alpha(\partial_x^2 u^h)) \, \d x \, \d t' \\
&= \lim_{h \downarrow 0} \int_0^t \int_0^\infty x^{\alpha+2} (\partial_x^2 v) g_\alpha(\partial_x^2 u^h) \, \d x \, \d t' \\
&= \int_0^t \int_0^\infty x^{\alpha+2} (\partial_x^2 v) g_\alpha(\partial_x^2 u) \, \d x \, \d t',
\end{align*}
so that the claim follows.

\bigskip
\textbf{Proof of \eqref{it:weak2}. }
By~\eqref{eq:con-lem-sup-uxx} of Lemma~\ref{lem:con-lem-sup-uxx} squared, \eqref{apriori-uh-nonlinear} of Proposition~\ref{prop-discrete-est}, and H\"older's inequality in $t$,
\[
\sup_{h > 0} \int_0^\infty \int_0^\infty \bigl(x^{\alpha+2} |\partial_x^2 u^h|^{\alpha+1}\bigr)^2 \, \d x \, \d t < \infty,
\]
so that the Banach-Alaoglu theorem allows us to extract a subsequence, again denoted by $u^h$, and some $\bar u \in L^2((0,\infty)^2)$ such that $x^{\alpha+2} |\partial_x^2 u^h|^{\alpha+1} \rightharpoonup \bar u$ in $L^2((0,\infty)^2)$ as $h \downarrow 0$. For a test function $v \in C^\infty_\mathrm{c}((0,\infty)^2)$ two integrations by parts give
\begin{align*}
& \int_0^\infty \int_0^\infty x^{\alpha+2} |\partial_x^2 u^h|^{\alpha+1} v \, \d x \, \d t \\
& \quad = \int_0^\infty \int_0^\infty u^h (\partial_x^2(x^{\alpha+2} g_\alpha(\partial_x^2 u^h) v)) \, \d x \, \d t \\
& \quad = \int_0^\infty \int_0^\infty u^h (\partial_x^2(x^{\alpha+2} g_\alpha(\partial_x^2 u^h))) v \, \d x \, \d t + 2 \int_0^\infty \int_0^\infty u^h (\partial_x(x^{\alpha+2} g_\alpha(\partial_x^2 u^h))) (\partial_x v) \, \d x \, \d t \\
& \quad \phantom{=} + \int_0^\infty \int_0^\infty u^h (x^{\alpha+2} g_\alpha(\partial_x^2 u^h)) (\partial_x^2 v) \, \d x \, \d t \eqcolon I_{1,h} + I_{2,h} + I_{3,h}.
\end{align*}
By parts~\eqref{it:uniform} and~\eqref{it:minty}, we have
\[
\lim_{h \downarrow 0} I_{1,h} = \int_0^\infty \int_0^\infty u (\partial_x^2(x^{\alpha+2} g_\alpha(\partial_x^2 u))) v \, \d x \, \d t.
\]
For the limits of $I_{2,h}$ and $I_{3,h}$, we exhaust $(0,\infty)^2$ by a countable collection of compact subsets $K \times L$. By~\eqref{fundamental-w} of Lemma~\ref{lem-valpha}, \eqref{est-coeff-a} of Lemma~\ref{lem-sup-wx}, \eqref{apriori-uh-nonlinear} of Proposition~\ref{prop-discrete-est}, and H\"older's inequality in $t$, the sequence $\partial_x(x^{\alpha+2} g_\alpha(\partial_x^2 u^h))$ is bounded in $L^2(K \times L)$ uniformly in $h$. Using $\lim_{x \downarrow 0} (x^{\alpha+2} g_\alpha(\partial_x^2 u^h))(t,x) = 0$ from~\eqref{asymptotic-vw} of Lemma~\ref{lem-valpha}, Cauchy-Schwarz in $x$ further gives
\[
\bigl|x^{\alpha+2} g_\alpha(\partial_x^2 u^h)(t,x)\bigr|^2 \le x \int_0^x \bigl|\partial_x(x^{\alpha+2} g_\alpha(\partial_x^2 u^h))(t,y)\bigr|^2 \, \d y,
\]
so that $x^{\alpha+2} g_\alpha(\partial_x^2 u^h)$ is bounded in $L^2(K \times L)$ uniformly in $h$ as well. Furthermore, by part~\eqref{it:minty}, integration by parts gives for $j = 0, 1$,
\begin{align*}
\lim_{h \downarrow 0} \int_0^\infty \int_0^\infty (\partial_x^j(x^{\alpha+2} g_\alpha(\partial_x^2 u^h))) (\partial_x^{2-j} v) \, \d x \, \d t &= (-1)^j \lim_{h \downarrow 0} \int_0^\infty \int_0^\infty (\partial_x^2(x^{\alpha+2} g_\alpha(\partial_x^2 u^h))) v \, \d x \, \d t \\
&= (-1)^j \int_0^\infty \int_0^\infty (\partial_x^2(x^{\alpha+2} g_\alpha(\partial_x^2 u))) v \, \d x \, \d t \\
&= \int_0^\infty \int_0^\infty (\partial_x^j(x^{\alpha+2} g_\alpha(\partial_x^2 u))) (\partial_x^{2-j} v) \, \d x \, \d t. 
\end{align*}
The uniform $L^2(K \times L)$-bounds established above combined with this distributional identification against $C^\infty_\mathrm{c}((0,\infty)^2)$-test functions and the Banach-Alaoglu theorem yield
\[
\partial_x^j \bigl(x^{\alpha+2} g_\alpha(\partial_x^2 u^h)\bigr) \;\rightharpoonup\; \partial_x^j \bigl(x^{\alpha+2} g_\alpha(\partial_x^2 u)\bigr) \quad \text{in } L^2(K \times L), \quad j = 0, 1,
\]
for every compact subset $K \times L$ of $(0,\infty)^2$. Combined with the uniform convergence of $u^h$ on compact subsets from part~\eqref{it:uniform}, this entails
\begin{align*}
\lim_{h \downarrow 0} I_{2,h} &= 2 \int_0^\infty \int_0^\infty u (\partial_x(x^{\alpha+2} g_\alpha(\partial_x^2 u))) (\partial_x v) \, \d x \, \d t, \\
\lim_{h \downarrow 0} I_{3,h} &= \int_0^\infty \int_0^\infty u (x^{\alpha+2} g_\alpha(\partial_x^2 u)) (\partial_x^2 v) \, \d x \, \d t.
\end{align*}
Undoing the integrations by parts implies
\[
\lim_{h \downarrow 0} \int_0^\infty \int_0^\infty x^{\alpha+2} |\partial_x^2 u^h|^{\alpha+1} v \, \d x \, \d t = \int_0^\infty \int_0^\infty x^{\alpha+2} |\partial_x^2 u|^{\alpha+1} v \, \d x \, \d t
\]
for every $v \in C^\infty_\mathrm{c}((0,\infty)^2)$. Combined with $\bar u \in L^2((0,\infty)^2)$ and density of $C^\infty_\mathrm{c}((0,\infty)^2)$ in $L^2((0,\infty)^2)$, this identifies $\bar u = x^{\alpha+2} |\partial_x^2 u|^{\alpha+1}$ and proves~\eqref{it:weak2}.

\bigskip
\textbf{Proof of \eqref{it:rhs2}. }
For identifying the limit $f$ of the nonlinear term $f^h$, note that
\[
f^h \stackrel{\eqref{def-nu}}{=} (1-(1+\check u^h)^{\frac 3 2}) \partial_x^2(x^{\alpha+2} g_\alpha(\partial_x^2 \check u^h)) + (1+\check u^h)^{\frac 3 2} \partial_x^2((1-(1+\check u^h)^{\frac \alpha 2}) x^{\alpha+2} g_\alpha(\partial_x^2 \check u^h)),
\]
where $\check u^h(t,x) = u^h(t-h,x)$. On passing to a subsequence, again denoted by $\check u^h$, by part~\eqref{it:uniform} we have $\check u^h \to u$ uniformly on compact subsets of $[0,\infty)^2$, part~\eqref{it:weak} entails $x^{\frac{\alpha+2-j(\alpha+1)}{\alpha+1}} \partial_x^{2-j} \check u^h \rightharpoonup x^{\frac{\alpha+2-j(\alpha+1)}{\alpha+1}} \partial_x^{2-j} u$ in $L^{\alpha+1}((0,\infty)^2)$ as $h \downarrow 0$ for $j = 0,1,2$, part~\eqref{it:weak2} implies that $x^{\alpha+2} |\partial_x^2 \check u^h|^{\alpha+1} \rightharpoonup x^{\alpha+2} |\partial_x^2 u|^{\alpha+1}$ in $L^2((0,\infty)^2)$ as $h \downarrow 0$, and by part~\eqref{it:minty} we have $\partial_x^2(x^{\alpha+2} g_\alpha(\partial_x^2 \check u^h)) \rightharpoonup \partial_x^2(x^{\alpha+2} g_\alpha(\partial_x^2 u))$ in $L^2((0,\infty)^2)$ as $h \downarrow 0$. This entails
\begin{align*}
(1-(1+\check u^h)^{\frac 3 2}) \partial_x^2(x^{\alpha+2} g_\alpha(\partial_x^2 \check u^h)) &\rightharpoonup (1-(1+u)^{\frac 3 2}) \partial_x^2(x^{\alpha+2} g_\alpha(\partial_x^2 u)), \\
(1+\check u^h)^{\frac 3 2} (1-(1+\check u^h)^{\frac \alpha 2}) \partial_x^2(x^{\alpha+2} g_\alpha(\partial_x^2 \check u^h)) &\rightharpoonup (1+u)^{\frac 3 2} (1-(1+u)^{\frac \alpha 2}) \partial_x^2(x^{\alpha+2} g_\alpha(\partial_x^2 u)), \\
(1+\check u^h)^{\frac{\alpha+1}{2}} x^{\alpha+2} |\partial_x^2 \check u^h|^{\alpha+1} &\rightharpoonup (1+u)^{\frac{\alpha+1}{2}} x^{\alpha+2} |\partial_x^2 u|^{\alpha+1}
\end{align*}
in $L^2((0,\infty)^2)$ as $h \downarrow 0$.  Then, in order to establish $f^h \rightharpoonup N(u)$ in $L^2((0,\infty)^2)$ as $h \downarrow 0$, it remains to verify
\begin{align*}
(1+\check u^h)^{\frac{\alpha-1}{2}}
(\partial_x \check u^h)^2 x^{\alpha+2}
g_\alpha(\partial_x^2 \check u^h)
&\rightharpoonup (1+u)^{\frac{\alpha-1}{2}}
(\partial_x u)^2 x^{\alpha+2}
g_\alpha(\partial_x^2 u), \\
(1+\check u^h)^{\frac{\alpha+1}{2}}
(\partial_x \check u^h)
\partial_x(x^{\alpha+2}
g_\alpha(\partial_x^2 \check u^h))
&\rightharpoonup (1+u)^{\frac{\alpha+1}{2}}
(\partial_x u)
\partial_x(x^{\alpha+2}
g_\alpha(\partial_x^2 u))
\end{align*}
in $L^2((0,\infty)^2)$ as $h \downarrow 0$. 
Exhausting
$(0,\infty)^2$ by a countable number of
compact subsets $K \times L$ and using that
$\check u^h$ is bounded in
$L^\infty(K;H^2(L))$ by
estimate~\eqref{apriori-uh-nonlinear}
and $\partial_t \check u^h$ is bounded
in $L^2((0,\infty)^2)$
by~\eqref{apriori-uh-nonlinear},
we infer with the Aubin--Lions--Simon lemma
\cite[Corollary~4]{simon1986compact} that,
upon taking a subsequence, $\check u^h$
converges strongly in $C^0(K;C^1(L))$ to $u$. In particular, for each compact
$K \times L$,
the factors
$(1+\check u^h)^{\frac{\alpha-1}{2}}
(\partial_x \check u^h)^2$
and
$(1+\check u^h)^{\frac{\alpha+1}{2}}
(\partial_x \check u^h)$
converge uniformly on $K \times L$
to
$(1+u)^{\frac{\alpha-1}{2}}
(\partial_x u)^2$
and
$(1+u)^{\frac{\alpha+1}{2}}
(\partial_x u)$
respectively, as $h \downarrow 0$,
by part~\eqref{it:uniform} and the
Aubin--Lions--Simon lemma.
These strongly convergent factors
can be absorbed into the test function,
while the remaining factors
$x^{\frac{\alpha(\alpha+2)}{\alpha+1}}
g_\alpha(\partial_x^2 \check u^h)
\rightharpoonup
x^{\frac{\alpha(\alpha+2)}{\alpha+1}}
g_\alpha(\partial_x^2 u)$
converge weakly in
$L^{\frac{\alpha+1}{\alpha}}(K \times L)$
by part ~\eqref{it:minty}, and 
$\partial_x(x^{\alpha+2}
g_\alpha(\partial_x^2 \check u^h))
\rightharpoonup
\partial_x(x^{\alpha+2}
g_\alpha(\partial_x^2 u))$
converges weakly in $L^2(K \times L)$
by part~\eqref{it:minty}
and integration by parts (see for instance
arguments in the proof of part~\eqref{it:weak2}).
Hence $f^h \rightharpoonup N(u)$
in $L^2((0,\infty)^2)$ as $h \downarrow 0$,
by density of
$C^\infty_\mathrm{c}((0,\infty)^2)$
in $L^2((0,\infty)^2)$ and the uniform
bound on $f^h$ in $L^2((0,\infty)^2)$
from~\eqref{est-nv}
and~\eqref{apriori-uh-nonlinear}.
By part~\eqref{it:rhs}, it follows
$f = N(u)$  almost everywhere
on $(0,\infty)^2$. 

\bigskip
\textbf{Proof of \eqref{eq-u-tested}. }
Parts~\eqref{it:minty} and \eqref{it:rhs2} entail that \eqref{eq-u-weak} of part~\eqref{it:minty} upgrades to \eqref{eq-u-tested}.
\end{proof}

Before proving Theorem~\ref{th:ex}, we
establish a differential inequality that
will be applied in Step~4 of the proof of
Theorem~\ref{th:ex} below to deduce the
asymptotic decay estimate~\eqref{decay-main}.
\begin{lemma}\label{lem-der-uxx-int}
Let $\alpha > 2$ and let $u_0 \in U_\alpha$ with $\|u_0\|_{U_\alpha}$ sufficiently small. Suppose $u$ satisfies \eqref{apriori-u} and \eqref{eq-strong} of Theorem~\ref{th:ex}. Then 
\[
t \mapsto \frac{1}{\alpha+1}
\int_0^\infty x^{\alpha+2}
|\partial_x^2 u|^{\alpha+1}\,\d x
\]
is locally absolutely continuous on $[0,\infty)$ and there exists $C = C(\alpha) < \infty$ such that for
\[
\eps \coloneq C \Big(\int_0^\infty(u_0^2 + x^{\alpha+2}
|\partial_x^2 u_0|^{\alpha+1})\,\d x\Big)^{\frac{\alpha}{3\alpha-1}}
\]
it holds
%
\begin{equation}\label{eq-der-uxx-int}
\frac{\d}{\d t} \frac{1}{\alpha+1} \int_0^\infty x^{\alpha+2} |\partial_x^2 u|^{\alpha+1} \, \d x \le - (1-\eps) \int_0^\infty \big(\partial_x^2 (x^{\alpha+2} g_\alpha(\partial_x^2 u))\big)^2 \, \d x
\end{equation}
for almost every $t \in (0,\infty)$.
\end{lemma}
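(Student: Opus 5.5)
The plan is to make rigorous, for the limit $u$, the formal computation behind \eqref{est-lead-max-1}: test \eqref{eq-strong} with $\psi \coloneq \partial_x^2 (x^{\alpha+2} g_\alpha(\partial_x^2 u))$. By \eqref{apriori-u} we have $\partial_t u,\psi \in L^2((0,\infty)^2)$, and by Proposition~\ref{prop-est-nv} combined with \eqref{apriori-u} also $N(u) \in L^2((0,\infty)^2)$. The identity
\[
\int_0^\infty (\partial_t u)\, \psi \, \d x = \frac{\d}{\d t} \frac{1}{\alpha+1} \int_0^\infty x^{\alpha+2} |\partial_x^2 u|^{\alpha+1} \, \d x
\]
is not directly available, since $\partial_t \partial_x^2 u$ need not exist. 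I would obtain it through a convexity (chain-rule) argument. Set
\[
\Phi(v) \coloneq \frac{1}{\alpha+1} \int_0^\infty x^{\alpha+2} |\partial_x^2 v|^{\alpha+1} \, \d x ,
\]
a convex, lower semicontinuous functional on $L^2(0,\infty)$ (with value $+\infty$ off $U_\alpha$). For $v \in V_\alpha$, Lemma~\ref{lem:B2-AC} together with convexity of $|\cdot|^{\alpha+1}$ gives $\Phi(\tilde v) - \Phi(v) \ge \int_0^\infty \partial_x^2(x^{\alpha+2} g_\alpha(\partial_x^2 v)) (\tilde v - v) \, \d x$ for all $\tilde v \in U_\alpha$, where the integration by parts is justified exactly as in Lemma~\ref{lem:B2-AC}. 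Hence $\psi(t) \in \partial \Phi(u(t))$ for a.e.\ $t$. The standard Brezis chain-rule lemma (if $u \in H^1(0,T;L^2)$ and $\psi \in L^2(0,T;L^2)$ with $\psi(t) \in \partial\Phi(u(t))$ a.e.) then yields that $t \mapsto \Phi(u(t))$ is absolutely continuous on $[0,T]$ for every $T$, and that $\frac{\d}{\d t}\Phi(u) = \int_0^\infty \partial_t u \, \psi \, \d x$ a.e. If one prefers to avoid citing this lemma, it can be reproved directly from the two subgradient inequalities applied at times $t$ and $t+\tau$, followed by difference quotients.

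Next I would insert \eqref{eq-strong}, $\partial_t u = -\psi + N(u)$, to obtain
\[
\frac{\d}{\d t}\Phi(u) = -\int_0^\infty \psi^2 \, \d x + \int_0^\infty N(u)\, \psi \, \d x \le -\int_0^\infty \psi^2 \, \d x + \|N(u)\|_{L^2} \|\psi\|_{L^2}.
\]
Proposition~\ref{prop-est-nv} applies at a.e.\ $t$, because $u(t) \in V_\alpha$ and, by \eqref{apriori-u} and smallness of the data, $\|u(t)\|_{U_\alpha} \ll 1$. It gives $\|N(u)\|_{L^2}^2 \lesssim_\alpha (A^{\frac{2(\alpha-1)}{3\alpha-1}} B^{\frac{2}{3\alpha-1}} + A^{\frac{4(\alpha-1)}{3\alpha-1}} B^{\frac{4}{3\alpha-1}})\, C$, with $C = \|\psi\|_{L^2}^2$. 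Using $\sup_t (A + B) \lesssim E_0 \coloneq \int_0^\infty (u_0^2 + x^{\alpha+2}|\partial_x^2 u_0|^{\alpha+1})\,\d x$ from \eqref{apriori-u}, the prefactor is $\lesssim E_0^{\frac{2\alpha}{3\alpha-1}}$ for small $E_0$ (the exponent sum is $\frac{2(\alpha-1)+2}{3\alpha-1}$). Therefore $\|N(u)\|_{L^2} \le C E_0^{\frac{\alpha}{3\alpha-1}} \|\psi\|_{L^2} = \eps \|\psi\|_{L^2}$, and \eqref{eq-der-uxx-int} follows.

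The main obstacle is the first step, rigorously justifying the chain rule. The delicate point is that the subgradient inequality requires the integration by parts $\int \psi\, (\tilde v - u)\,\d x = \int x^{\alpha+2} g_\alpha(\partial_x^2 u)\, \partial_x^2(\tilde v - u)\,\d x$ with no boundary terms, even though $w = x^{\alpha+2}g_\alpha(\partial_x^2 u)$ has the nontrivial asymptotics $w \sim a x$ of Lemma~\ref{lem-valpha}. This is handled by the density of $C^\infty_\mathrm{c}$ in $U_\alpha$ (Lemma~\ref{lem-approx-ualpha}), exactly as in Lemma~\ref{lem:B2-AC}, so the singular contact-line term does not contribute. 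Once this is in place, the remaining arguments are routine.
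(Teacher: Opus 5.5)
Your proposal is correct and follows essentially the same route as the paper. The paper also shows that $\partial_x^2(x^{\alpha+2}g_\alpha(\partial_x^2 v))$ lies in the subdifferential of the convex functional $\frac{1}{\alpha+1}\int_0^\infty x^{\alpha+2}|\partial_x^2 v|^{\alpha+1}\,\d x$, using the density argument of Lemma~\ref{lem:B2-AC}. It then applies Brezis' chain-rule lemma to get absolute continuity and the derivative formula, substitutes \eqref{eq-strong}, and bounds the $N(u)$ term via Cauchy--Schwarz, \eqref{est-nv} and \eqref{apriori-u}. This yields the same $\eps \sim E_0^{\frac{\alpha}{3\alpha-1}}$.
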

\begin{proof}
Define
$\psi\colon L^2(0,\infty) \to [0,\infty]$
by
\[
\psi(v) \coloneq \frac{1}{\alpha+1}
\int_0^\infty x^{\alpha+2}
|\partial_x^2 v|^{\alpha+1}\,\d x
\]
for $v \in U_\alpha$ and
$\psi(v) \coloneq +\infty$ otherwise.
Then $\psi$ is proper, convex, and
lower semi-continuous, and
$\partial_x^2(x^{\alpha+2}
g_\alpha(\partial_x^2 v))
\in \partial \psi(v)$
for $v \in V_\alpha$
by the convexity inequality
\[
|b|^{\alpha+1} - |a|^{\alpha+1}
\ge (\alpha+1)|a|^{\alpha-1}a(b-a) \quad \text{ for all } a, b \in \R
\]
and integrations by parts via the
density argument from the proof of
Lemma~\ref{lem:B2-AC}.
By \eqref{apriori-u} of
Theorem~\ref{th:ex},
\begin{equation*}
    u \in W^{1,2}(0,\infty;L^2(0,\infty)), 
    \quad 
    \psi \circ u \in L^\infty(0,\infty), \quad \text{and} \quad \partial_x^2(x^{\alpha+2}
    g_\alpha(\partial_x^2 u))
    \in L^2((0,\infty)^2).
\end{equation*}
Applying \cite[Lemma~3.3]{brezis1973} on each finite interval $[0,T]$ with $T \in (0,\infty)$ yields that $\psi \circ u$ is locally absolutely continuous on $[0,\infty)$  and the chain rule 
\[
\frac{\d}{\d t} \frac{1}{\alpha+1}
\int_0^\infty x^{\alpha+2}
|\partial_x^2 u|^{\alpha+1}\,\d x
= \int_0^\infty
\partial_x^2(x^{\alpha+2}
g_\alpha(\partial_x^2 u))\,
\partial_t u\,\d x
\quad \text{for almost every } t \in (0,\infty).
\]
Substituting \eqref{eq-strong},
by the Cauchy--Schwarz inequality together with
estimate~\eqref{est-nv} of
Proposition~\ref{prop-est-nv}
and~\eqref{apriori-u},
\[
\int_0^\infty
\partial_x^2(x^{\alpha+2}
g_\alpha(\partial_x^2 u))\,
N(u)\,\d x
\le \eps \int_0^\infty
\big(\partial_x^2(x^{\alpha+2}
g_\alpha(\partial_x^2 u))\big)^2
\,\d x,
\]
where $\eps \coloneq \sqrt{2 C_2(\alpha)}
K_0^{\frac{\alpha}{3\alpha-1}}$
with
$K_0 \coloneq C_1(\alpha)
\int_0^\infty(u_0^2 + x^{\alpha+2}
|\partial_x^2 u_0|^{\alpha+1})\,\d x
\ll_\alpha 1$,
and $C_1(\alpha), C_2(\alpha) \in (0,\infty)$ are the constants in the
a-priori estimate~\eqref{apriori-u} of Theorem~\ref{th:ex} and in
estimate~\eqref{est-nv} of Proposition~\ref{prop-est-nv}, respectively.
\end{proof}

We are now in position to prove our main theorem:
\begin{proof}[Proof of Theorem~\ref{th:ex}] We split the proof into several steps.

\bigskip
\textbf{Step~1. A-priori estimate and Cauchy problem. }
Passage to the limit $h \downarrow 0$ in \eqref{apriori-uh-nonlinear} entails on taking a subsequence and using Proposition~\ref{lem-uniform}
~\eqref{it:weak*}, \eqref{it:weak}, \eqref{it:time}, and \eqref{it:minty} that the a-priori estimate \eqref{apriori-u} is satisfied by weak lower semi-continuity of the norm. From \eqref{eq-u-tested} of Proposition~\ref{lem-uniform} on noting that $\partial_t u$, $\partial_x^2(x^{\alpha+2} g_\alpha(\partial_x^2 u)), N(u) \in L^2((0,\infty)^2)$, integration by parts entails together with density of test functions $C^\infty_\mathrm{c}([0,\infty) \times (0,\infty))$ in $L^2((0,\infty)^2)$ that \eqref{eq-strong} is satisfied, where the initial value $u(0,\cdot) = u_0$ is attained in the sense of continuous functions uniformly converging on compact subsets of $[0,\infty)$ (cf.~Proposition~\ref{lem-uniform}~\eqref{it:uniform}).

\bigskip
\textbf{Step~2: Initial data in $U_\alpha$. }
For $u_0 \in U_\alpha$, by Lemma~\ref{lem-approx-ualpha} we choose a sequence $u_0^{(k)} \in C^\infty_\mathrm{c}((0,\infty))$ such that
\[
\int_0^\infty \big((u_0 - u_0^{(k)})^2 + x^{\alpha+2} |\partial_x^2 u_0 - \partial_x^2 u_0^{(k)}|^{\alpha+1}\big) \, \d x \to 0 \quad \text{as} \quad k \to \infty.
\]
For each $k$, we find a solution $u^{(k)}$ satisfying the a-priori estimate \eqref{apriori-u} and the strong formulation \eqref{eq-strong}. Following the lines of the just proved compactness argument, Proposition~\ref{lem-uniform}, a subsequence of $u^{(k)}$ converges to some $u$ satisfying \eqref{apriori-u} and the tested and strong formulations \eqref{eq-u-tested} (cf.~Lemma~\ref{lem-uniform}) and \eqref{eq-strong}, respectively.

\bigskip
\textbf{Step~3: (H\"older) continuity in time. }
Using \eqref{eq-strong}, we can further deduce for $t > s \ge 0$
\begin{align*}
\|u(t) - u(s)\|_{L^2(0,\infty)} &\le \int_s^t \|\partial_t u(t')\|_{L^2(0,\infty)} \, \d t'
\\ &\le |t-s|^{\frac 1 2} \Big(2 \int_s^t \int_0^\infty \big(\big(\partial_x^2\big(x^{\alpha+2} g_\alpha(\partial_x^2 u)\big)\big)^2 + (N(u))^2\big) \, \d x \, \d t'\Big)^{\frac 1 2} \\
&\le \Big(2 C \int_0^\infty \big(u_0^2 + x^{\alpha+2} |\partial_x^2 u_0|^{\alpha+1}\big) \, \d x\Big)^{\frac 1 2} |t-s|^{\frac 1 2},
\end{align*}
where \eqref{apriori-u} and \eqref{est-nv} of Proposition~\ref{prop-est-nv} were used in the last inequality, thus proving H\"older continuity $u \in C^{\frac 1 2}([0,\infty);L^2(0,\infty))$. Continuity in $U_\alpha$ then follows by applying Lemma~\ref{lem-der-uxx-int} in conjunction with weak continuity in $U_\alpha$, which comes from the weak formulation of \eqref{eq-strong} together with the uniform $U_\alpha$-bound
from~\eqref{apriori-u}, and the Radon-Riesz property of $L^{\alpha+1}(0,\infty)$ \cite[Proposition~3.32]{brezis2011}.

\bigskip
\textbf{Step~4: Embeddings. }
We obtain \eqref{embed-main} from estimate~\eqref{eq:weight xu} of Lemma~\ref{lem:weight xu},
estimate~\eqref{eq:weight xu'} of Lemma~\ref{lem:weight xu'},
estimate~\eqref{sup-uxx-alt} of Lemma~\ref{lem-sup-uxx},
estimate~\eqref{sup-u-alt} of Lemma~\ref{lem-sup-u-alt},
estimate~\eqref{sup-ux-alt} of Lemma~\ref{lem-sup-ux-alt}, 
and the a-priori estimate \eqref{apriori-u}.

\bigskip
\textbf{Step~5: Asymptotic stability. }
Observe that integrability in time according to
\[
\int_0^\infty \int_0^\infty x^{\alpha+2} |\partial_x^2 u|^{\alpha+1} \, \d x \, \d t < \infty
\]
and the monotonicity
\[
\frac{\d}{\d t} \int_0^\infty x^{\alpha+2} |\partial_x^2 u|^{\alpha+1} \, \d x \le 0 \quad \text{for almost every } t \in (0,\infty)
\]
from inequality \eqref{eq-der-uxx-int} of Lemma~\ref{lem-der-uxx-int} entail $\int_0^\infty x^{\alpha+2} |\partial_x^2 u|^{\alpha+1} \, \d x = o(t^{-1})$
as $t \to \infty$, which is \eqref{decay-main}. Since $\sup_{t \ge 0} \int_0^\infty u^2 \, \d x$ is bounded by \eqref{apriori-u}, we infer from Lemma~\ref{lem:weight xu} that 
\[
\sup_{x \ge 0} x^{\beta_1} |u|
\stackrel{\eqref{eq:weight xu}}{=}
o\big(t^{\frac{2\beta_1-1}{3\alpha-1}}\big)
\quad \text{ as } t \to \infty,
\]
where
$\beta_1 \in \big[\frac{1-\alpha}{\alpha+1},
\frac{\alpha+2}{5\alpha+3}\big)$, which implies \eqref{decay-xbu}.
With the same argumentation, we infer from Lemma~\ref{lem:weight xu'}
that 
\[
\sup_{x \ge 0} x^{\beta_2}
|\partial_x u|
\stackrel{\eqref{eq:weight xu'}}{=}
o\big(t^{\frac{2\beta_2-3}{3\alpha-1}}\big)
\quad \text{ as } t \to \infty,
\]
where
$\beta_2 \in \big[\frac{2}{\alpha+1},
\frac{3(\alpha+2)}{5\alpha+3}\big)$,
which is~\eqref{decay-xbu'}.
\end{proof}
\begin{proof}[Proof of Theorem~\ref{th-velocity}]
From estimates~\eqref{sup-uxx-alt} and \eqref{est-coeff-c} of Lemma~\ref{lem-sup-uxx}, because $\alpha > 2$, and in conjunction with the a-priori estimate \eqref{apriori-u} of Theorem~\ref{th:ex} we infer that $x^{\frac{\alpha+1}{\alpha}} \partial_x^2 u$ is controlled in $L^{2\alpha}(0,\infty;BC^0([0,\infty))$ with
\[
\|x^{\frac{\alpha+1}{\alpha}} \partial_x^2 u\|_{L^{2\alpha}(0,\infty;BC^0([0,\infty)))}^{2\alpha} \lesssim_\alpha \int_0^\infty \big(u_0^2 + x^{\alpha+2} |\partial_x^2 u_0|^{\alpha+1}\big) \, \d x.
\]
By estimate~\eqref{eq:weight xu} of Lemma~\ref{lem:weight xu} and the a-priori estimate \eqref{apriori-u} of Theorem~\ref{th:ex}, we can further infer that $u$ is controlled in $BC^0([0,\infty)^2)$ with $\|u\|_{BC^0([0,\infty)^2)} \lesssim_\alpha 1$. This implies by \eqref{velocity} that $V$ is controlled in $L^2(0,\infty;BC^0([0,\infty)))$, in the sense that \eqref{a-priori-contact-vel} holds true.
\end{proof}

\bigskip

\bibliography{Gnann_Lienstromberg_Nik} 
\bibliographystyle{abbrv} 

\end{document}